\numberwithin{equation}{section}
\theoremstyle{plain}
\newtheorem{theorem}{Theorem}[section]
\newtheorem{lemma}[theorem]{Lemma}
\newtheorem{predl}[theorem]{Proposition}
\newtheorem{corollary}[theorem]{Corollary}
\newtheorem{conjecture}[theorem]{Conjecture}
\theoremstyle{definition}
\newtheorem{definition}[theorem]{Definition}
\newtheorem{remark}[theorem]{Remark}
\newtheorem{example}[theorem]{Example}
\newcommand{\R}{\mathbb R}
\newcommand{\N}{\mathbb N}
\newcommand{\Z}{\mathbb Z}
\renewcommand{\AA}{\mathcal A}
\newcommand{\BB}{\mathcal B}
\renewcommand{\O}{\mathcal O}
\renewcommand{\k}{\mathsf k}
\newcommand{\xra}{\xrightarrow}
\renewcommand{\le}{\leqslant}
\renewcommand{\ge}{\geqslant}
\newcommand{\bul}{\bullet}
\DeclareMathOperator{\Hom}{\textup{Hom}}
\DeclareMathOperator{\Ext}{\textup{Ext}}
\DeclareMathOperator{\End}{\mathrm{End}}
\DeclareMathOperator{\Spec}{\mathrm{Spec}}
\DeclareMathOperator{\coh}{\mathrm{coh}}
\DeclareMathOperator{\modd}{\mathrm{mod{-}}}
\DeclareMathOperator{\Perf}{\mathrm{Perf}}
\DeclareMathOperator{\LSdim}{\underline{\mathrm{Sdim}}}
\DeclareMathOperator{\USdim}{\overline{\mathrm{Sdim}}}
\DeclareMathOperator{\Ldim}{-\,\underline{\mathrm{dim}}}
\DeclareMathOperator{\Udim}{-\,\overline{\mathrm{dim}}}
\DeclareMathOperator{\Rdim}{Rdim}
\DeclareMathOperator{\gldim}{\mathrm{gldim}}
\DeclareMathOperator{\Ddim}{Ddim}
\DeclareMathOperator{\op}{\mathrm{op}}
\newcommand{\bbL}{{\mathbb L}}
\newcommand{\bbR}{{\mathbb R}}
\newcommand{\bbZ}{{\mathbb Z}}
\newcommand{\cO}{{\mathcal O}}
\newcommand{\cA}{{\mathcal A}}
\newcommand{\cE}{{\mathcal E}}
\newcommand{\cS}{{\mathcal S}}
\newcommand{\cT}{{\mathcal T}}
\author{Alexey Elagin}
\address{Institute for Information Transmission Problems (Kharkevich Institute), Moscow, RUSSIA;
National Research University Higher School of Economics, 
Russian Federation
}
\email{alexelagin@rambler.ru}
\author{Valery A.~Lunts}
\address{
  Department of Mathematics,
  Indiana University,
  Rawles Hall,
  831 East 3rd Street,
  Bloomington, IN 47405,
  USA;
  National Research University Higher School of Economics, Russian Federation
}
\email{vlunts@indiana.edu}
\title[Three notions of dimension for triangulated categories]
{Three notions of dimension for triangulated categories}
\begin{document}

\begin{abstract}
In this note we discuss three notions of dimension for triangulated categories: Rouquier dimension,  diagonal dimension and Serre dimension.  We prove some basic properties of these dimensions, compare them and discuss open problems.
\end{abstract}

\maketitle

\footnotetext[0]{The study of the first author has been funded within the framework of the HSE University Basic Research Program and the Russian Academic Excellence Project '5-100'.
The second author is partially supported by Laboratory of Mirror Symmetry NRU HSE, RF Government grant, ag. \textnumero 14.641.31.0001.}

\section{Introduction}

We discuss three notions of dimension for triangulated categories: Rouquier dimension,   diagonal dimension and Serre dimension. The Rouquier dimension $\Rdim$ was defined in \cite{Rou}, the diagonal dimension $\Ddim$ appeared (in the context of algebraic varieties) in \cite{BaFa}, and the Serre dimension is a special case of the more general notion of dimension of a category with respect to an endofunctor, which may be extracted from  \cite{DHKK}. We distinguish between lower Serre dimension, denoted $\LSdim$, and upper Serre dimension, denoted $\USdim$. Our main focus is on triangulated categories $\Perf (A)$ for a smooth and compact dg algebra $A$ over a field $\k$. These are categories for which all three dimensions make sense.

The purpose of this article is to study some basic properties of these dimensions, to compare them, and to formulate some open problems. We propose the following four test problems/questions:

\medskip

\begin{enumerate}
\item Classify categories of dimension zero.

\item Monotonicity: if $\Perf(A)$ is a full subcategory of $\Perf(B)$, is $\dim \Perf(A)\le \dim \Perf(B)$?

\item Additivity: is $\dim \Perf(A\otimes B)=\dim \Perf(A)+ \dim\Perf(B)$?

\item  Behavior in families: given a family of categories $\Perf(A_x),\ x\in \Spec R$, how does the function
$\dim \Perf(A_x)$ on $\Spec R$ behave? Is it semicontinuous?
\end{enumerate}

\medskip

1) We can completely classify categories with diagonal dimension zero (Proposition \ref{prop-diag-dim-zero}) or Rouquier dimension zero (by results of Hanihara~\cite{Han} and Amiot~\cite{Am}).
For Serre dimensions we only have partial results and conjectures (Conjecture \ref{conj-serre-dim-zero}, Example~\ref{example_xyz}).

2) Monotonicity holds for the Rouquier and diagonal dimension (Subsection \ref{subs-monot}, Proposition \ref{monot-for-diag-dim}). Monotonicity does not hold for Serre dimension, see Subsection~\ref{section_serremonot}. It seems to be an interesting problem to modify the definition so that Serre dimension would be monotonous (under certain conditions).

3) Additivity holds for the Serre dimension (Proposition \ref{addit-for-serre-dim}) and does not in general hold for the other two (Example \ref{example_maintable}). For the diagonal dimension we prove the inequality $\Ddim  \Perf(A\otimes B)\le \Ddim  \Perf(A)+\Ddim  \Perf(B)$ (Proposition \ref{add-for-diag-dim}, Example \ref{ex-strict}) and there exist examples with the opposite strict inequality for the Rouquier dimension: $\Rdim  \Perf(A\otimes B)> \Rdim  \Perf(A)+\Rdim  \Perf(B)$ (Example \ref{example_maintable}).

4) Semicontinuity of the Serre dimension holds when we have a family of algebras, not just dg algebras (Theorem \ref{theorem_serre-in-families}). We have no results about the other two dimensions. But we expect the upper semicontinuity to hold in general for Rouquier and diagonal dimension.

For a dg algebra $A$, the Rouquier and diagonal dimensions compare as follows (see Proposition \ref{ddim-geq-rdim})
$$\Rdim  \Perf(A)\le \Ddim  \Perf(A).$$
Also we make the following 

\begin{conjecture}[=Conjecture \ref{main-conj}] 
\label{main-cinj-intr}
Let $A$ be a smooth and compact dg algebra over a field. Suppose 
$\LSdim \Perf(A)=\USdim \Perf(A)$.
Then
\begin{equation}
\label{eq_4dim}
\Rdim  \Perf(A)\le \USdim  \Perf(A)\le \Ddim  \Perf(A).
\end{equation}
\end{conjecture}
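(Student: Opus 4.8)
The plan is to prove the two inequalities of \eqref{eq_4dim} separately. Recall that $\Rdim\Perf(A)\le\Ddim\Perf(A)$ is already available (Proposition~\ref{ddim-geq-rdim}), so the whole point is to insert $\USdim\Perf(A)$ in between. Fix a generator $G$ of $\Perf(A)$, let $S$ be the Serre functor, and let $s$ denote the common value $\LSdim\Perf(A)=\USdim\Perf(A)$. The hypothesis is equivalent to the assertion that the cohomology of the complex $\Hom^\bullet(G,S^m G)$ lies in a band of width $o(m)$ whose position drifts linearly in $m$ with slope determined by $s$; this ``concentration'' property is what both inequalities exploit, and it genuinely fails when $\LSdim<\USdim$, which is why the hypothesis is imposed.

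For the right-hand inequality $\USdim\Perf(A)\le\Ddim\Perf(A)$ I would use a resolution of the diagonal. Put $n=\Ddim\Perf(A)$ and fix $P\in\Perf(A)$, $Q\in\Perf(A^{\op})$ together with a presentation $A\in\langle P\boxtimes Q\rangle_{n+1}$ of the diagonal bimodule in $\Perf(A\otimes A^{\op})$, assembled from finitely many shifts lying in a bounded range $[-s_0,s_0]$. Writing $\Sigma_m$ for the bimodule of $S^m$ and tensoring the presentation by $\Sigma_m$ gives $\Sigma_m\in\langle S^m(P)\boxtimes Q\rangle_{n+1}$ with the same bounded shifts, and then applying $-\otimes_A G$ yields $S^m(G)\in\langle S^m(P)\rangle_{n+1}$, the $n+1$ cones contributing only $O(1)$ extra cohomological spread. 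Since $P\in\langle G\rangle_r$ for a fixed $r$, the concentration property transfers from $G$ to $P$; comparing the linear drift of the band for $\Hom^\bullet(G,S^m G)$ with the drift obtained by passing through the $(n+1)$-step resolution should force $s\le n$. The technical nuisance here is the bookkeeping: one must peel off a single power of $S$ at a time, carrying the width estimate as an induction hypothesis, so as to avoid the $(n+1)^m$ blow-up in the number of cones that a naive iteration would produce.

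The left-hand inequality $\Rdim\Perf(A)\le\USdim\Perf(A)$ is the serious one, and I do not expect an unconditional proof. If $\Perf(A)\simeq D^b(\coh X)$ for a smooth projective variety $X$ of dimension $d$ (which occurs for suitable smooth and proper $A$), then $\LSdim\Perf(A)=\USdim\Perf(A)=d$, whereas $\Rdim\Perf(A)\ge d$ is known; so the inequality would give $\Rdim D^b(\coh X)=d$, that is, it contains Orlov's dimension conjecture. The natural approach is once more through concentration: if $S^{\pm m}(G)$ sits in a band of width $o(m)$ around degree $\mp sm$, then by Serre duality, for any $X\in\Perf(A)$ the complex $\Hom^\bullet(G,X)$ is squeezed between the $\Hom^\bullet(G,S^{\pm j}G)$ for small $j$, and one would hope to convert this into a filtration of $X$ by copies of $G$ of length roughly $s$. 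Making this precise is exactly where the difficulty of Orlov's conjecture resurfaces, so realistically this inequality would be proved only under additional hypotheses --- for instance when $\Ddim\Perf(A)=\USdim\Perf(A)$, which makes it immediate, or when $A$ admits a tilting-type generator adapted to $S$. In short, the principal obstacle is the inequality $\Rdim\le\USdim$; it is at least as hard as Orlov's conjecture, and I would not expect to dispense with the extra hypotheses in its proof.
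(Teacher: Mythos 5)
The statement you were asked to prove is not a theorem in the paper: it is labelled \emph{Conjecture}~\ref{main-conj} (equivalently Conjecture~\ref{main-cinj-intr}), and the authors give no proof. So there is no ``paper's own proof'' to compare against. Your instinct here is exactly right and agrees with the paper's own remark: the left-hand inequality $\Rdim\Perf(A)\le\USdim\Perf(A)$ would, for $A$ enhancing $D^b(\coh X)$ with $X$ smooth projective, force $\Rdim D^b(\coh X)=\dim X$, i.e.\ Orlov's dimension conjecture; so one should not expect a self-contained proof of it here, and the paper accordingly states it only as a conjecture and records precisely this implication.

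Your sketch for the right-hand inequality $\USdim\Perf(A)\le\Ddim\Perf(A)$, however, is circular as written, and I would not let it stand even as a plan. Right-tensoring the presentation $A\oplus B\in [F\boxtimes G]_n$ by $\Sigma_m=(A^*)^{\otimes^{\bbL}_A m}$ gives $\Sigma_m\oplus(\cdots)\in [F\boxtimes S^m(G)]_n$ with the same (bounded) shifts, and then the resulting estimate on $\inf\Sigma_m=\inf\Hom^\bullet(A,S^m A)$ is in terms of $\inf(F\boxtimes S^m(G))=\inf F+\inf S^m(G)$, i.e.\ again in terms of the very quantity $-e_-(\cdot,S^m(\cdot))$ one is trying to control; the number of cones $n$ enters only through an additive $O(1)$ that vanishes after dividing by $m$ and passing to the limit. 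The same happens if one tensors on the other side: by Lemma~\ref{lemma-left-right}, applying $G\otimes_A(-)\otimes_A G^\vee$ to the presentation of $\Sigma_m$ produces a filtration of $\Hom^\bullet(G,S^m G)$ by shifts of $(G\otimes_A F)\otimes_k\Hom^\bullet(G,S^m G)$, which is again tautological. So the ``concentration'' hypothesis $\LSdim=\USdim$ keeps the band narrow, but the diagonal resolution, used this way, only confirms where the band sits rather than bounding its drift by $n$. Some genuinely new input (beyond iterating the diagonal and bookkeeping shifts) is needed even for the right-hand inequality, which is consistent with the paper leaving both inequalities as open conjectures.
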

The first inequality in \eqref{eq_4dim} would in particular confirm the expectation that for a smooth projective variety $X$ the Rouquier dimension of the category $D^b(\coh X)$ is equal to the dimension of~$X$, see \cite{Or}.


We should note that the Rouquier and diagonal dimensions are by definition nonnegative integers (or $+\infty$). Apriori both Serre dimensions are just  real numbers. They do not have to be an integer (Example \ref{example_maintable}) and can be negative (see Example \ref{neg-serre} for the category with $\LSdim=\USdim<0$ and Example \ref{example_orlov} of a smooth and compact dg algebra $A$ with $\LSdim\Perf(A)<0$). We do not know of an example with an irrational lower or upper Serre dimension. For nice categories we expect a reasonable behavior of $\LSdim $ and $\USdim$:

\begin{conjecture}[=Conjecture \ref{conj-pos-rat}]  
Let $A$ be a smooth and compact dg algebra over a field $\k$. Then $\LSdim  \Perf(A)$ and $\USdim  \Perf(A)$ are rational numbers, and $\USdim \Perf(A)$ is nonnegative.
\end{conjecture}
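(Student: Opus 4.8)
The plan is to reduce both assertions to a single asymptotic question about the Serre functor $\mathbb{S}=-\otimes_A^{\bbL}A^{\vee}$ of $\Perf(A)$, where $A^{\vee}=\mathrm{RHom}_{\k}(A,\k)$; here we use that $A$ is compact so that $\mathbb{S}$ is the Serre functor, and that $A$ is smooth so that $A^{\vee}$ is a perfect (indeed invertible) $A$-bimodule. Since $A$ is a split generator of $\Perf(A)$ and the Serre dimensions do not depend on the chosen generator, we may compute with $A$ itself: set $V_N:=\mathrm{RHom}_{\Perf(A)}(A,\mathbb{S}^N A)\cong H^{\bullet}\big((A^{\vee})^{\otimes_A^{\bbL}N}\big)$, and let $a_N\le b_N$ be the lowest and highest degrees in which the graded vector space $V_N$ is nonzero. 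Using that $A^{\vee}$ is perfect over $A^e$, a standard subadditivity (Fekete) argument shows that the limits $\lim_N a_N/N$ and $\lim_N b_N/N$ exist and are finite, and unwinding the definitions gives $\USdim\Perf(A)=-\lim_N a_N/N$ and $\LSdim\Perf(A)=-\lim_N b_N/N$; in particular $\LSdim\le\USdim$ is automatic. Thus it remains to prove: (i) $\lim_N a_N/N$ and $\lim_N b_N/N$ are rational, and (ii) $\lim_N a_N/N\le 0$.

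For (i) I would pass to a finite combinatorial model. As $A$ is compact, $H^{\bullet}(A)$ is finite-dimensional, so $A$ is $A_{\infty}$-quasi-isomorphic to a minimal $A_{\infty}$-algebra $B$ with $\dim_{\k}B<\infty$, and $A^{\vee}$ to a finite-dimensional $A_{\infty}$-$B$-bimodule which, by smoothness, admits a finite semifree $B$-bimodule resolution with finitely many free cells. Then each tensor power $(A^{\vee})^{\otimes_A^{\bbL}N}$ is computed by an explicit complex of free bimodules whose generators are indexed by length-$N$ words in a fixed finite alphabet of cells, with differentials assembled from the finitely many higher products of $B$ and of the bimodule. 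The plan is to show that the resulting two-variable Poincaré series
\[
F(s,q)\;=\;\sum_{N\ge 0}\ \Big(\ \sum_{i\in\bbZ}\dim_{\k}H^{i}\big((A^{\vee})^{\otimes_A^{\bbL}N}\big)\,q^{\,i}\ \Big)\,s^{N}
\]
is a rational function of $s$ and $q^{\pm 1}$ (a transfer-matrix argument, using the finite-state nature of the defining complexes), and then to invoke the elementary fact that for a rational function the extreme $q$-exponents $a_N$, $b_N$ are eventually quasi-linear in $N$ with rational slopes; this yields (i). A more conceptual variant would be to identify these slopes directly as the minimum and maximum of a linear functional over an explicit rational polytope attached to $A^{\vee}$ in the Picard groupoid of invertible $A$-bimodules.

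For (ii), apply Serre duality twice to rewrite $\mathrm{Hom}(A,\mathbb{S}^N A[i])\cong\mathrm{Hom}\big(A,\mathbb{S}^{-(N-1)}A[-i]\big)^{\vee}$, so that $a_N=-b'_{N-1}$, where $b'_{M}$ is the top degree of $H^{\bullet}\big((A^{!})^{\otimes_A^{\bbL}M}\big)$ and $A^{!}$ is the inverse of $A^{\vee}$ in the Picard groupoid; then $\USdim\Perf(A)=\lim_M b'_M/M$, and (ii) becomes the statement that the top cohomological degree of $(A^{!})^{\otimes_A^{\bbL}M}$ does not tend to $-\infty$ linearly. When $A$ is connective --- for instance when $A$ is quasi-isomorphic to an ordinary finite-dimensional algebra of finite global dimension --- this is essentially immediate: $A^{!}$ is then connective with $H^0(A^{!})\ne 0$, and connective bimodules are closed under $\otimes_A^{\bbL}$, so $b'_M\ge 0$ for every $M$. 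Note that (ii) specializes, in the Calabi--Yau case $\mathbb{S}\simeq[d]$ (where $V_N$ is $\mathrm{RHom}(A,A)$ shifted by $Nd$ and $\USdim\Perf(A)=d$), to the expected fact that a smooth compact dg algebra cannot be Calabi--Yau of negative dimension. The step I expect to be the main obstacle is extending both (i) and (ii) beyond such favorable situations to an arbitrary smooth compact dg algebra: we have no general finiteness mechanism forcing the amplitude of $\mathrm{RHom}(A,\mathbb{S}^N A)$ to behave like a Hilbert polynomial (rational-linear plus bounded error), and no general device replacing connectivity to control the sign of the bottom slope. A safe fallback for (ii) alone would be to first establish the inequality $\Rdim\Perf(A)\le\USdim\Perf(A)$ of Conjecture~\ref{main-conj}, since $\Rdim\ge 0$; but a self-contained proof should rather exploit the interaction of the Serre functor with a suitable $t$-structure or weight structure on $\Perf(A)$.
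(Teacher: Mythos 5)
This statement is \emph{Conjecture}~\ref{conj-pos-rat} in the paper; it is presented as an open problem, not a theorem, and the paper contains no proof of it. So there is nothing to compare your argument against, and the relevant question is simply whether your proposal constitutes a proof. It does not, and to your credit you say so yourself: you flag ``the main obstacle'' of extending (i) and (ii) beyond the connective/formal cases, you note that there is ``no general finiteness mechanism'' for the amplitude of $\mathrm{RHom}(A,\mathbb{S}^N A)$, and your ``safe fallback'' for nonnegativity is to first prove the first inequality of Conjecture~\ref{main-conj}, which is itself an open conjecture in the paper.

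Beyond these self-identified gaps, two further points deserve scrutiny. First, your reduction to the bidegree asymptotics of $(A^{\vee})^{\otimes^{\bbL}_A N}$ is correct and agrees exactly with the paper's formula \eqref{eq_sdimA}, so the setup is sound; likewise the observation that $\USdim$ and $-\LSdim$ are exchanged by passing to the inverse Serre bimodule $A^!$ is Proposition~\ref{dim-up-and-lo-inverse}. Second, the proposed transfer-matrix argument for (i) has a real difficulty that you do not mention: after minimalizing to a finite-dimensional $A_\infty$-algebra $B$, the higher products $m_n$ of $B$ need not vanish for $n\gg 0$, so the differentials in the resolved tensor powers are not controlled by a \emph{finite} alphabet of local rules, and the rationality of the two-variable Poincar\'e series is far from automatic. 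Even granting rationality, the step from ``$F(s,q)$ rational'' to ``extreme $q$-exponents of the $s^N$-coefficients are eventually quasi-linear with rational slope'' needs a precise statement and justification (it is true in controlled situations but is not an elementary fact without hypotheses on the denominator). In short: the framework you lay out is a reasonable research program and your treatment of the connective case for nonnegativity is correct, but what you have written is a plan with explicitly acknowledged holes at precisely the points where the difficulty lives, not a proof.
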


This paper has a companion \cite{El}, where the three dimensions are computed in many interesting examples. Therefore we decided to include a minimal number of examples in the present text.

The structure of the text is the following. Section \ref{section_background} introduces notation and contains some well known background material on triangulated categories, enhancements, Krull-Schmidt categories, etc. In Section \ref{section_rouquier} we discuss the Rouquier dimension. Section \ref{section_diagonal} introduces the diagonal dimension. In Section \ref{section_serre} we study the Serre dimension. Section \ref{section_general-dim} is devoted to the more general notion of a dimension of a category with respect to an endo-functor which is suggested by \cite{DHKK}. Finally in Section \ref{section_compare} we try to compare the three dimensions and give some examples. Conjectures and open questions are scattered throughout the text.

We are grateful to Dmitri Orlov for very useful discussions. Examples \ref{example_orlov} and \ref{example_F3} are due to him. Also we thank an anonymous referee for the helpful remarks on the manuscript and for the suggestion that Th. 5.17(b) holds for any negatively graded dg algebra.

\section{Background material and notation}
\label{section_background}
Here we recall some simple and well known facts about triangulated categories, semi-orthogonal decompositions, enhancements, Krull-Schmidt categories etc. The general references on the subject include \cite{BK1},\cite{BK2},\cite{BLL},\cite{Dr},\cite{ELO},\cite{Ke},\cite{To}.

\subsection{Some notation}
Usually we work over a field $\k$. All algebras, dg algebras, schemes are $\k$-algebras, dg $\k$-algebras, $\k$-schemes, etc.\,unless stated otherwise. Symbol $\otimes$ means $\otimes_{\k}$. Similarly, $\dim$ means $\dim_{\k}$ (dimension of a $\k$-vector space).

For objects $X,Y$ of a triangulated category we use the standard notations
$$\Hom^i(X,Y)=\Hom(X,Y[i])\qquad\text{and}\qquad \Hom^{\bul}(X,Y):=\oplus_i\Hom^i(X,Y),$$
the latter is treated as a graded abelian group. For  dg modules $X,Y$ over a dg algebra~$A$ we use  notation $\Hom_A(X,Y)$  for the dg $\k$-module of homomorphisms: its $i$-th component is the space  of homomorphisms $X\to Y[i]$ as graded $A$-modules.

For a Noetherian algebra $A$ we denote by $\mathrm{mod-}A$ the abelian category of finitely generated right $A$-modules. Note that if $A$ has finite global dimension, then $D^b(\mathrm{mod-}A)\simeq \Perf(A)$.

\subsection{Generation of triangulated categories}
Let $T$ be a triangulated category and let $A$ and $B$ be two subsets of objects in $T$. We denote by $A\star B$ the subset of objects $F$ of $T$ such that there exists a distinguished triangle
$$X\to F \to Y\to X[1]$$
with $X\in A$, $Y\in B$. This operation $\star$ was introduced in \cite[p.33]{BBD}, where it was shown to be associative.

We will be interested in the case when a subset $A$ consists of all objects isomorphic to a finite direct sum of shifts of a given object $E$ (so in particular $A$ contains the zero object). We then denote $A=[ E] _0$.
Define $[E] _n$ as the $n$-fold star product
$$[E]_n:=[E]_0\star \ldots \star [E]_0 \quad (n+1\quad \text{factors}).$$
Finally let $\langle E\rangle _n$ be the full subcategory of $T$ consisting of objects which are direct summands of objects in   $[E]_n$. 
We also put $\langle E\rangle :=\cup _n\langle E\rangle _n\subset T$. It is a full thick triangulated subcategory of $T$. (Note that our indexing of the categories $\langle E\rangle _n$ differs by one from that used in \cite{BVdB} and \cite{Rou}. We count the number of cones used.)

\begin{definition} 
An object $E$ is called a classical \emph{generator} of $T$ if $T=\langle E\rangle $. It is
called a \emph{strong generator} if $T=\langle E\rangle _n$ for some $n$.
Saying about generators or generation in this text we always mean classical generators.
\end{definition}

The notion of a strong generator was introduced in \cite{BVdB}, where it was shown that the category $D^b(\coh X)$ has a strong generator if $X$ is a smooth and proper $\k$-scheme.

\begin{remark} \label{rem-gen-loc} Let $T$ be a triangulated category and let $l\colon T\to T^\prime $ be a Verdier localization. If $E\in T$ is a generator, then $l(E)\in T^\prime$ is also a generator. Moreover if $T=\langle E\rangle _n$ for some object $E\in T$, then $T^\prime =\langle l(E)\rangle _n$.
\end{remark}

\subsection{Enhancements of triangulated categories}

It is convenient (and for some purposes necessary) to work with enhancements of triangulated categories. We recall the definition in \cite{BLL}. As usual for a dg category $\cA$ we denote by $[\cA]$ its homotopy category \cite{To}.

\begin{definition} An \emph{enhancement} of a triangulated category $T$ is a pair $(\cT ,\phi)$, where $\cT$ is a pre-triangulated dg category and $\phi \colon [\cT]\to T$ is an equivalence of triangulated categories.
\end{definition}

\begin{remark} \label{induced-enhancement} Note that if $T$ is a triangulated category and $S\subset T$ is a full triangulated subcategory, then an enhancement of $T$ induces an enhancement of $S$. Indeed, if $\cT$ is a pre-triangulated category that enhances $T$ take its full dg subcategory $\cS$ consisting of objects which are mapped to $S$ by $\phi$. Then $\cS$ is an enhancement of $S$.
\end{remark}

Let $A$ be a dg algebra. Denote by $Mod-A$ the dg category of right dg $A$-modules. It is a pre-triangulated dg category. The localization of the triangulated category $[Mod-A]$
by the subcategory of acyclic dg modules is the derived category $D(A)$ of $A$. The full dg subcategory $hproj-A\subset Mod-A$ consisting of h-projective dg modules is an enhancement of the category $D(A)$, i.e. the natural triangulated functor $[hproj-A]\to D(A)$ is an equivalence.

Recall that triangulated category $\Perf(A)$ is defined as the full thick triangulated subcategory of $D(A)$ generated by the dg $A$-module $A$. It can be intrinsically defined as the category of compact objects in $D(A)$ \cite{Ke}. Denote by $P erf (A)$ the full dg category of $hproj-A$ consisting of objects that belong to $\Perf(A)$. Then $P erf (A)$ is an enhancement of $\Perf(A)$. The functor 
$$(-)^\vee :=\bbR \Hom _A(-,A)$$ 
defines
an anti-equivalence of $\Perf(A)$ and $\Perf(A^{\op})$ such that $(-)^{\vee \vee}$ is isomorphic to the identity functor.

\begin{lemma} \label{lemma-left-right} Let $M,N\in \Perf(A)$. Then there is a quasi-isomorphism of dg $\bbZ$-modules
$$M\stackrel{\bbL}{\otimes }_AN^\vee \simeq \bbR \Hom _A(N,M).$$
Hence an isomorphism of graded abelian groups
$$H^\bullet (M\stackrel{\bbL}{\otimes }_AN^\vee)\simeq \Hom^\bullet_{\Perf(A)} (N,M).$$
\end{lemma}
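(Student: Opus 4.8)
The plan is to reduce the statement to the well-known identity $\bbR\Hom_A(N,M) \simeq M \stackrel{\bbL}{\otimes}_A \bbR\Hom_A(N,A)$ for $N$ perfect. First I would reduce to the case $N = A$. Since $N \in \Perf(A)$, it lies in $\langle A \rangle_n$ for some $n$; both sides of the claimed quasi-isomorphism are triangulated (exact) functors in the variable $N$ (sending triangles to triangles, direct sums to direct sums, and commuting with shifts up to the usual sign), and both take direct summands to direct summands. Hence it suffices to verify the quasi-isomorphism when $N = A$, provided the isomorphism is natural enough to be compatible with the triangulated structure — this is the point where one wants to work in the dg enhancement $Perf(A)$ rather than in $\Perf(A)$ itself, so that the isomorphism is realized by an actual morphism of complexes (functorial in $N$), not merely an abstract isomorphism in each degree.

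When $N = A$ we have $N^\vee = \bbR\Hom_A(A,A) \simeq A$ as a dg $A^{\op}$-module (or $A$-bimodule), so the left-hand side becomes $M \stackrel{\bbL}{\otimes}_A A \simeq M$, while the right-hand side is $\bbR\Hom_A(A,M) \simeq M$. Both identifications are the standard ones, and they agree. To make the reduction in the previous paragraph precise, I would fix an h-projective model for $N$ inside $Perf(A)$; then $N^\vee$ is computed by the honest $\Hom$-complex $\Hom_A(N,A)$, $M \stackrel{\bbL}{\otimes}_A N^\vee$ is computed by the honest tensor product $M \otimes_A \Hom_A(N,A)$ (here one also uses that $N$, being perfect, is a finite complex of finitely generated projectives, so $\Hom_A(N,A)$ is again h-projective over $A^{\op}$ and tensoring with it is already derived), and $\bbR\Hom_A(N,M)$ is computed by $\Hom_A(N,M)$. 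The natural evaluation-type map
$$M \otimes_A \Hom_A(N,A) \longrightarrow \Hom_A(N,M), \qquad m \otimes f \mapsto \bigl(n \mapsto m\cdot f(n)\bigr),$$
is a morphism of dg $\bbZ$-modules, functorial in $N$; it is an isomorphism when $N = A$, hence by the dévissage along $\langle A \rangle_n$ (using the five lemma on the associated long exact sequences and closure under summands) it is a quasi-isomorphism for every $N \in \Perf(A)$.

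Passing to cohomology gives $H^\bullet(M \stackrel{\bbL}{\otimes}_A N^\vee) \simeq H^\bullet(\bbR\Hom_A(N,M))$, and the right-hand group is by definition $\bigoplus_i \Hom_{D(A)}(N, M[i]) = \Hom^\bullet_{\Perf(A)}(N,M)$ since $N, M$ are perfect. The main obstacle, such as it is, is purely bookkeeping: making sure the evaluation map is genuinely functorial at the dg level so that the reduction to $N = A$ is legitimate, and checking the finiteness/h-projectivity properties of $N^\vee = \bbR\Hom_A(N,A)$ that make the underived tensor product compute the derived one. Both are routine given the setup of $Perf(A)$ as a dg enhancement and the fact (recalled just above in the excerpt) that $(-)^\vee$ is an anti-equivalence of $\Perf(A)$ and $\Perf(A^{\op})$.
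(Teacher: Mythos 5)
Your proposal is correct and uses essentially the same argument as the paper: pass to h-projective models in $Perf(A)$, write down the evaluation map $M\otimes_A\Hom_A(N,A)\to\Hom_A(N,M)$, check it in the base case, and conclude by d\'evissage. The one (immaterial) difference is the choice of variable: you verify the map is an isomorphism when $N=A$ and d\'evisse over $N\in\langle A\rangle$, whereas the paper checks the case $M=A$ and d\'evisses over $M$; either works since the map is natural in both variables and each side is a (co)homological functor in each argument.
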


\begin{proof} We may and will assume that $M$ and $N$ are in $Perf (A)$, which allows us to write $\otimes _A$ and $\Hom _A$ instead of $\stackrel{\bbL}{\otimes _A}$ and $\bbR \Hom _A$ respectively. Notice that we have a natural morphism of $\bbZ$-complexes
$$M\otimes _A\Hom _A(N,A)\to \Hom _A(N,M),\quad m\otimes f\to (n\mapsto mf(n)).$$
This morphism is an isomorphism of complexes if $M=A$, hence it is a quasi-isomorphism for all $M,N\in Perf (A)$.
\end{proof}

\begin{lemma} \label{ess-lemma} Let $\cT$ be a pre-triangulated dg category and let $E$ be an object of $\cT$. Consider the full thick triangulated subcategory $\langle E\rangle \subset [\cT]$ which is generated by $E$. Denote by $\langle E\rangle _{\cT}\subset \cT$ the full pre-triangulated dg subcategory whose objects belong to $\langle E\rangle$ (so that $\langle E\rangle _{\cT}$ is an enhancement of $\langle E\rangle$). Let $\cE :=\End _{\cT}(E)$ be the endomorphism dg algebra of $E$. Assume that the category $\langle E\rangle$ is Karoubian. Then the natural dg functor
$$\psi _E\colon \cT \to Mod-\cE,\quad X\mapsto \Hom _{\cT}(E,X)$$
maps $\langle E\rangle _{\cT}$ to the dg category $Perf (\cE)$ and is a quasi-equivalence of these categories. Hence in particular it induces an equivalence of tringulated categories $[\psi _E]\colon \langle E\rangle \to \Perf(\cE)$.
\end{lemma}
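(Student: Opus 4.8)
The plan is to verify, for the restriction $\psi_E\colon \langle E\rangle_{\cT}\to Mod-\cE$, the three ingredients of a quasi-equivalence onto $Perf(\cE)$: that its image consists of perfect dg $\cE$-modules, that it is quasi-fully faithful, and that the induced functor on homotopy categories is essentially surjective onto $\Perf(\cE)$. Throughout I will tacitly replace the dg modules $\Hom_{\cT}(E,X)$ by h-projective resolutions (equivalently, compose $\psi_E$ with a resolution functor $Mod-\cE\to hproj-\cE$); this changes nothing in the arguments below but makes sure that every Hom-complex of $\cE$-modules occurring computes morphisms in $D(\cE)$.

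First, the image is perfect. We have $\psi_E(E)=\Hom_{\cT}(E,E)=\cE$, the free rank-one dg $\cE$-module, which belongs to $Perf(\cE)$. Since $\psi_E$ is a dg functor between pre-triangulated dg categories, the induced functor $[\psi_E]\colon[\cT]\to[Mod-\cE]$ is triangulated; hence it sends $[E]_0$ into $[\cE]_0$ and, inductively using the star operation, $[E]_n$ into $[\cE]_n$. Being additive, $\psi_E$ also sends a direct summand of $Y$ to a direct summand of $\psi_E(Y)$, so it carries $\langle E\rangle_n$ into $\langle\cE\rangle_n\subset\Perf(\cE)$. Thus $\psi_E$ maps $\langle E\rangle_{\cT}$ into the dg category $Perf(\cE)$.

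Next, quasi-full faithfulness. We must show that for all $X,Y\in\langle E\rangle_{\cT}$ the canonical map of complexes
\[
\Hom_{\cT}(X,Y)\longrightarrow\Hom_{Mod-\cE}\bigl(\psi_E(X),\psi_E(Y)\bigr)
\]
is a quasi-isomorphism. For $X=Y=E$ it is the identity map of $\cE$, and by additivity it is a quasi-isomorphism whenever $X$ and $Y$ are finite direct sums of shifts of $E$. Fix such a $Y$ and consider the class of objects $X\in\langle E\rangle_{\cT}$ for which the map is a quasi-isomorphism. Both $\Hom_{\cT}(-,Y)$ and $\Hom_{Mod-\cE}(\psi_E(-),\psi_E(Y))$ carry a distinguished triangle in $[\cT]$ to a distinguished triangle in $D(\bbZ)$ (for the latter we use that $[\psi_E]$ is triangulated), and the comparison map is a morphism between the resulting triangles; hence this class is closed under shifts, cones and direct summands, so it is all of $\langle E\rangle_{\cT}$. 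Running the same dévissage in the variable $Y$, now with $X\in\langle E\rangle_{\cT}$ arbitrary, gives the claim in general.

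Finally, essential surjectivity. By the previous two steps $[\psi_E]\colon\langle E\rangle\to\Perf(\cE)$ is a fully faithful triangulated functor, so its essential image is a triangulated subcategory of $\Perf(\cE)$ containing $\cE=\psi_E(E)$. Since $\langle E\rangle$ is Karoubian, this image is closed under direct summands in $\Perf(\cE)$, i.e.\ it is thick; and since $\cE$ is a classical generator of $\Perf(\cE)$, the image must be all of $\Perf(\cE)$. Therefore $[\psi_E]$ is an equivalence of triangulated categories, and combined with quasi-full faithfulness this shows that $\psi_E\colon\langle E\rangle_{\cT}\to Perf(\cE)$ is a quasi-equivalence. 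Apart from keeping track in the dévissage that $\psi_E$ commutes with cones up to the quasi-isomorphisms built into a pre-triangulated structure, the argument is standard; the only hypothesis that is genuinely used is the Karoubian property of $\langle E\rangle$ in the last step, which is exactly what forces the essential image of $[\psi_E]$ to exhaust the idempotent-complete category $\Perf(\cE)$ rather than a mere dense subcategory, and this is the point I expect to require the most care.
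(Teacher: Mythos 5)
Your proof is correct and takes essentially the same route as the paper's: quasi-full-faithfulness of $\psi_E$ by d\'evissage from the tautological isomorphism on $\Hom^{\bul}(E,E)$, and essential surjectivity of $[\psi_E]$ by observing that its essential image is a thick subcategory of $\Perf(\cE)$ containing the generator $\cE$. One small remark: the h-projective resolution you insert at the outset is not actually needed, since for $X\in\langle E\rangle_{\cT}$ the dg module $\Hom_{\cT}(E,X)$ is already h-projective --- this holds for $X=E$, and h-projectivity is preserved under shifts, finite sums, direct summands, and passage to cone objects in a pre-triangulated dg category (the representable module of a cone is homotopy equivalent to the cone of representable modules, and h-projectivity is a homotopy-invariant condition) --- so $\psi_E$ literally lands in $Perf(\cE)\subset hproj-\cE$ as the lemma asserts.
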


\begin{proof} The dg functor $\psi _E$ maps $E$ to $\cE$. Hence it maps $\langle E\rangle _{\cT}$ to $Perf (\cE)$. By definition it induces an isomorphism
$$\Hom^{\bul}_{\langle E\rangle}(E,E)\simeq \Hom ^{\bul} _{\Perf(\cE)}(\cE,\cE).$$
Hence $[\psi _E]$ is full and faithful. Since both categories $\langle E\rangle$ and $\Perf(\cE)$ are Karoubian, it follows that $[\psi _E]$ is also essentially surjective. This implies that $\psi _E\colon \langle E\rangle _{\cT}\to Perf (\cE)$ is a quasi-equivalence.
\end{proof}

We obtain an immediate corollary.

\begin{corollary} \label{cor-quasi-equiv} Let $T$ be a triangulated category with an enhancement $\cT$. Assume that $T$ is Karoubian and has a generator. Then the dg category $\cT$ is quasi-equivalent to the dg category $Perf (\cE)$ for a dg algebra $\cE$. Hence in particular there is an equivalence of triangulated categories
$T\simeq \Perf(\cE)$.
\end{corollary}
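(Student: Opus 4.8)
The statement is essentially the special case of the lemma obtained by taking the generator as our chosen object $E$. So the first step is simply to observe that since $T$ has a generator, we may pick an object $E \in T$ with $T = \langle E\rangle$. Transporting $E$ along the equivalence $\phi\colon [\cT]\to T$, we may regard $E$ as an object of $[\cT]$, hence lift it to an object (still denoted $E$) of the dg category $\cT$.

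Next I would check the hypotheses of Lemma \ref{ess-lemma}. We need $\langle E\rangle \subset [\cT]$ to be Karoubian; but $\langle E\rangle = [\cT] \cong T$ under $\phi$, and $T$ is assumed Karoubian, so this is immediate. Then Lemma \ref{ess-lemma}, applied with $\cE := \End_{\cT}(E)$, tells us that the dg functor $\psi_E\colon X \mapsto \Hom_{\cT}(E,X)$ restricts to a quasi-equivalence $\langle E\rangle_{\cT} \to Perf(\cE)$. Since $E$ generates $T$, the full dg subcategory $\langle E\rangle_{\cT}$ is all of $\cT$ (its objects are exactly those of $\langle E\rangle = [\cT]$), so $\psi_E$ is a quasi-equivalence $\cT \to Perf(\cE)$. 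Passing to homotopy categories and composing with $\phi^{-1}$ gives the equivalence $T \simeq [\cT] \simeq [Perf(\cE)] = \Perf(\cE)$.

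The only genuinely substantive point — and it is already handled by Lemma \ref{ess-lemma} — is the essential surjectivity of $[\psi_E]$, which uses Karoubianness of both sides to upgrade full faithfulness (formal from the definition of $\cE$ as the endomorphism dg algebra) to an equivalence. Since that argument is packaged in the lemma, there is no real obstacle here: the corollary is a one-line specialization. The mild bookkeeping subtlety is only the identification $\langle E\rangle_{\cT} = \cT$, i.e.\ that generation in $[\cT]$ by $E$ means every object of $\cT$ lies in $\langle E\rangle$ — which is just the definition of $E$ being a classical generator together with Remark \ref{induced-enhancement}-style reasoning about induced enhancements.
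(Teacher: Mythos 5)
Your proof is correct and takes essentially the same route as the paper, which simply notes that with $E$ a generator one has $\langle E\rangle = T$ in the notation of Lemma \ref{ess-lemma} and then applies that lemma; you have merely spelled out the bookkeeping (transporting $E$ through $\phi$, checking Karoubianness of $\langle E\rangle$, and identifying $\langle E\rangle_{\cT}$ with $\cT$) that the paper leaves implicit.
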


\begin{proof} Indeed, we have  $\langle E\rangle =T$ in the notation of Lemma \ref{ess-lemma}.
\end{proof}

\subsection{Triangulated categories of dg modules over dg algebras}
Let $R$ be a commutative ring. Here we collect some fact about dg $R$-algebras, modules over such algebras and their derived categories. Such a general setting will be needed in Subsection~\ref{section_serreinfamilies} and in Theorem~\ref{theorem_generaliz-serre-in-families}, for the remaining part of the paper one can assume that $R$ is a field.

Recall that an $R$-complex $F\in D(R)$ is \emph{perfect} if it is locally on $\Spec R$ quasi-isomorphic to a finite complex of free $\cO _{\Spec R}$-modules of finite rank. It is known that a perfect $R$-complex is quasi-isomoprhic to a \emph{strict perfect} $R$-complex, i.e. to a finite complex of finitely generated projective $R$-modules \cite[Theorem 2.4.3]{ThTr}. It follows that a perfect $R$-complex which is h-projective, is homotopy equivalent to a strict perfect $R$-complex.


\emph{In this section all dg $R$-algebras will be assumed to be $R$-h-projective.}


The following definition is due to Kontsevich.

\begin{definition} Let $A$ be a ($R$-h-projective) dg $R$-algebra. Then
\begin{enumerate}
\item  $A$ (\emph{homologically}) \emph{smooth} if $A\in \Perf(A^{\op}\otimes _RA)$,

\item $A$ is \emph{compact} if it is perfect as an $R$-complex.
\end{enumerate}
\end{definition}

\begin{example} Suppose $R=\k$ is a field. Then any dg $R$-algebra $A$ is $R$-h-projective, and $A$ is compact if and only if $\dim\oplus_i H^i(A)<\infty$.
\end{example}

\begin{example} Let $A$ be an $R$-algebra which is a finitely generated projective $R$-module. Then $A$ considered as a dg $R$-algebra is $R$-h-projective and compact.
\end{example}

\begin{definition} A \emph{family} of smooth and compact triangulated categories over $\Spec R$ is by definition the category $\Perf(A)$ for a smooth and compact dg $R$-algebra $A$.
\end{definition}

\begin{lemma}
\label{perf-r-compl} 
\begin{enumerate}
\item Let $A$ be an $R$-h-projective dg $R$-algebra, then any object in $hproj-A$ is also $R$-h-projective. Similarly, any object in $hproj-(A^{\op}\otimes_RA)$ is $A$-h-projective, $A^{\op}$-h-projective and $R$-h-projective.

\item Assume in addition that $A$ is compact. Then any object in $Perf (A)$ is $R$-h-projective and $R$-perfect, hence is homotopy equivalent, as an $R$-complex, to a strict perfect complex. The same is true about $R$-complexes $\Hom _A(M,N),\Hom _R(M,R)$ and $M\otimes _AP$ for $M,N\in Perf (A)$ and $P\in Perf (A^{\op})$.

\item If $A$ and $B$ are $R$-h-projective dg $R$-algebras, then so is $A\otimes _RB$. If $R\to T$ is a homomorphism of commutative rings and $A$ is an $R$-h-projective dg $R$-algebra, then $A_T:=A\otimes _RT$ is a $T$-h-projective dg $T$-algebra. If $M$ is an $A$-h-projective module then $M\otimes_RT$ is an $A_T$-h-projective module.

\item Let $M,N$ be $(A^{\op}\otimes_RA)$-modules. Let $R\to T$ be a homomorphism of commutative rings. Denote $(-)_T:=(-)\stackrel{\bbL}{\otimes }_RT$. Then one has a
quasi-isomorphism of $(A^{\op}\otimes_RA)_T$-modules
$$(M\stackrel{\bbL}{\otimes }_A N)_T\simeq M_T\stackrel{\bbL}{\otimes }_{A_T}N_T.$$
\end{enumerate}
\end{lemma}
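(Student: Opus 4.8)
The plan is to verify the four assertions of Lemma~\ref{perf-r-compl} in order, using the fact that a perfect $R$-complex which is h-projective is homotopy equivalent to a strict perfect $R$-complex, together with standard reductions from general objects to the generating object $A$.

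\smallskip

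\emph{Part (1).} I would argue that the class of $R$-h-projective modules inside $Mod{-}A$ is closed under shifts, cones, arbitrary direct sums, and direct summands; since $A$ itself is $R$-h-projective by assumption, and $hproj{-}A$ is the smallest such class containing $A$ and closed under these operations (and h-projective modules are in particular retracts of iterated extensions of sums of shifts of $A$ after a semifree resolution argument), every object of $hproj{-}A$ is $R$-h-projective. The key point is that if $P$ is h-projective over $R$ then so is any shift, and if $0\to P'\to P\to P''\to 0$ degreewise-split with $P',P''$ h-projective over $R$ then $P$ is too; this is a routine diagram chase with the defining lifting/acyclicity property. For $A^{\op}\otimes_R A$-modules one runs the same argument three times, noting that $A^{\op}\otimes_R A$ is h-projective over each of $A$, $A^{\op}$ and $R$ by Part (3) applied to $A^{\op}$ and $A$ (or directly, since $\otimes_R$ of h-projectives over an h-projective base stays h-projective).

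\smallskip

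\emph{Part (2).} Here the strategy is: any $M\in Perf(A)$ lies in $\langle A\rangle_n$ for some $n$, so $M$ is a retract of a finite iterated extension of shifts of $A$; by Part (1) $M$ is $R$-h-projective, and since $A$ is $R$-perfect and perfectness is closed under shifts, finite extensions and retracts, $M$ is $R$-perfect. By the cited consequence of \cite[Theorem 2.4.3]{ThTr}, an $R$-perfect, $R$-h-projective complex is homotopy equivalent as an $R$-complex to a strict perfect one. For the complexes $\Hom_A(M,N)$, $\Hom_R(M,R)$ and $M\otimes_A P$: by Lemma~\ref{lemma-left-right} (or rather its $R$-linear analogue, which is proved identically) one has $\Hom_A(M,N)\simeq N\otimes_A M^\vee$ as $R$-complexes, so it suffices to treat $M\otimes_A P$; fixing $M$ and varying $P$ over $Perf(A^{\op})$, both sides are exact functors of $P$ landing in $R$-perfect complexes when $P=A^{\op}$ (giving $M$ itself) and hence for all $P$ by d\'evissage, and similarly $\Hom_R(M,R)$ is $R$-perfect because $M$ is. Then re-apply the \cite{ThTr} consequence, noting these complexes are $R$-h-projective by Part~(1) (one needs that $M\otimes_A P$ and $\Hom_R(M,R)$ built from h-projectives are h-projective over $R$, which again follows from closure of $R$-h-projectivity under the relevant operations).

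\smallskip

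\emph{Parts (3) and (4).} Part (3) is a direct verification: $\otimes_R$ of two $R$-h-projective complexes is $R$-h-projective (tensoring a bounded-above semifree-type filtration against an h-projective keeps acyclicity-testing intact), and base change $-\otimes_R T$ sends $R$-h-projectives to $T$-h-projectives because $\Hom_T(M\otimes_R T, -)=\Hom_R(M,-)$ on $T$-modules and acyclicity is preserved. The module statement is the same computation for $A$-modules. Part (4) is the derived projection/base-change formula: choose an $A$-h-projective resolution $\widetilde M\to M$ so that $M\stackrel{\bbL}{\otimes}_A N = \widetilde M\otimes_A N$; by Part (3), $\widetilde M\otimes_R T$ is an $A_T$-h-projective resolution of $M_T$, and then $(\widetilde M\otimes_A N)\otimes_R T \cong (\widetilde M\otimes_R T)\otimes_{A_T}(N\otimes_R T)$ by the obvious associativity/commutativity of tensor products, which computes $M_T\stackrel{\bbL}{\otimes}_{A_T} N_T$. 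One should be slightly careful that $N\otimes_R T$ computes $N_T=N\stackrel{\bbL}{\otimes}_R T$; this holds because $N$, being an $(A^{\op}\otimes_R A)$-module in $hproj$, is $R$-h-projective by Part~(1).

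\smallskip

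\emph{Main obstacle.} I expect the only genuinely delicate point to be the careful bookkeeping in Part~(2): identifying $\Hom_A(M,N)$, $\Hom_R(M,R)$ and $M\otimes_A P$ as $R$-perfect \emph{and} $R$-h-projective simultaneously, so that the \cite{ThTr} rigidification applies. The d\'evissage reduction to the case $M=A$ or $P=A^{\op}$ is clean, but one must consistently work with honest (underived) $\otimes_A$ and $\Hom_A$ on h-projective representatives — as in the proof of Lemma~\ref{lemma-left-right} — to avoid circularity, and check that the functorial isomorphisms used are isomorphisms of $R$-complexes, not merely quasi-isomorphisms. Everything else is a formal closure-under-operations argument.
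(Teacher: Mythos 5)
Your overall strategy works, but you take a genuinely different and more roundabout route for Parts (1) and the first claim of (3) than the paper does, and Part (4) as written has a gap.

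For Part (1), the paper's argument is a single adjunction: for $M\in hproj{-}A$ and an acyclic $R$-complex $C$, one has the isomorphism of complexes $\Hom_R(M,C)\simeq\Hom_A(M,\Hom_R(A,C))$; since $A$ is $R$-h-projective the inner complex $\Hom_R(A,C)$ is acyclic, and since $M$ is $A$-h-projective the whole thing is acyclic. No filtration, no semi-free resolutions, no retracts. Your closure argument can be made to work, but it has a wrinkle you gloss over: $hproj{-}A$ is \emph{not} the smallest class closed under shifts, cones, direct sums and \emph{strict} direct summands containing $A$; it is the class of objects homotopy equivalent to semi-free modules. So you need to add ``closure under homotopy equivalence (in the $R$-linear sense)'' to your list of preserved operations, and the telescope construction to capture the sequential colimits in a semi-free filtration. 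The paper's adjunction bypasses all of this. The same comment applies to your proof that $A\otimes_R B$ is $R$-h-projective in Part (3): the paper uses $\Hom_R(A\otimes_R B,C)=\Hom_R(A,\Hom_R(B,C))$ directly, whereas your ``semifree-type filtration'' argument is again the longer route. Your treatment of base change in (3) via $\Hom_T(M\otimes_R T,-)=\Hom_R(M,-)$ is exactly the paper's.

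For Part (4) there is a real gap. The statement asserts a quasi-isomorphism of $(A^{\op}\otimes_R A)_T$-modules, but you replace $M$ by ``an $A$-h-projective resolution $\widetilde M$''. A right $A$-module resolution of the bimodule $M$ need not carry a left $A^{\op}$-action at all, so $\widetilde M\otimes_A N$ loses the bimodule structure of $M\stackrel{\bbL}{\otimes}_A N$, and your chain of isomorphisms only produces a quasi-isomorphism of $A_T$-modules, not of $(A^{\op}\otimes_R A)_T$-modules. The paper's fix, which you should adopt, is to replace both $M$ and $N$ by $(A^{\op}\otimes_R A)$-h-projective resolutions; then by Part (1) these are simultaneously $A$-, $A^{\op}$- and $R$-h-projective, so $M\stackrel{\bbL}{\otimes}_A N=M\otimes_A N$, $(-)_T=(-)\otimes_R T$ on everything in sight, and the associativity isomorphism $(M\otimes_A N)\otimes_R T\simeq (M\otimes_R T)\otimes_{A_T}(N\otimes_R T)$ is one of $A_T$-bimodules. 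Your observation about needing $N$ to be $R$-h-projective to identify $N_T$ with $N\otimes_R T$ is correct, but it is the bimodule structure that you need to track, and that forces the bimodule resolution.
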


\begin{proof} (1) For the first statement, take any $M\in hproj-A$, let $C$ be an acyclic $R$-complex.
We have an isomorphism of dg modules
\begin{equation}
\label{eq_MC}
\Hom_R(M,C)\simeq\Hom_A(M,\Hom_R(A,C)).
\end{equation}
Since $A$ is $R$-h-projective, the dg module $\Hom_R(A,C)$ is acyclic. Now since $M$ is $A$-h-projective, the right-hand side of \eqref{eq_MC}  is an acyclic dg module. Thus $M$ is $R$-h-projective. Other statements are proved similarly.

(2) This is clear, as every object in $Perf (A)$ is homotopy equivalent to a direct summand of a dg $A$-module that has a finite filtration with subquotients being shifts of $A$.

(3) Assume that $A$ and $B$ are $R$-h-projective and let $C$ be an acyclic $R$-complex. Then
we have an isomorphism of $R$-complexes
$$\Hom _R(A\otimes _RB,C)=\Hom _R(A,\Hom _R(B,C)),$$
where the complex $\Hom _R(B,C)$ is acyclic (as $B$ is $R$-h-projective), hence also $\Hom _R(A,\Hom _R(B,C))$ is acyclic (as $A$ is $R$-h-projective).

To prove the second statement let $D$ be an acyclic $T$-complex. Then $\Hom _T(A\otimes _RT,D)=\Hom _R(A,D)$ and the second complex is acyclic, because $A$ is $R$-h-projective.

For the last assertion, take any acyclic $A_T$-module $C$. Then there is an isomorphism 
$\Hom_{A_T}(M\otimes_RT,C)\simeq \Hom_A(M,C)$ of complexes. The second complex is acyclic, because $M$ is $A$-h-projective.

(4) Replacing $M$ and $N$ by quasi-isomorphic bimodules we may assume that $M,N\in hproj-(A^{\op}\otimes_R A)$. Then by (1) $M,N$ are also h-projective as $A^{\op},A$ and $R$-modules. Therefore $M_T=M\otimes_RT$ and $N_T=N\otimes_RT$. By (3), $M_T$ is $A_T$-h-projective, thus $M_T\stackrel{\bbL}{\otimes }_{A_T}N_T=(M\otimes_RT)\otimes_{A_T}(N\otimes_RT)$.
We have $M\stackrel{\bbL}{\otimes }_AN=M\otimes_AN$, moreover, this module is $R$-h-projective.
Hence    $(M\stackrel{\bbL}{\otimes }_AN)_T=(M\otimes_AN)\otimes_RT$. Now the statement follows from the standard isomorphism
$$(M\otimes_AN)\otimes_RT\simeq (M\otimes_RT)\otimes_{A\otimes_RT}(N\otimes_RT)$$
of $A_T$-bimodules.
\end{proof}

\begin{definition} \label{mor-eq-r}Dg $R$-algebras $A$ and $B$ (which are $R$-h-projective) are \emph{Morita equivalent} if there exists $K\in D(A^{\op}\otimes _RB)$ such that the functor
$$\Phi _K\colon D(A)\to D(B),\quad M\mapsto M\stackrel{\bbL}{\otimes }_AK$$ is an equivalence.
\end{definition}

\begin{remark}
In the above definition the functor $\Phi_K$ restricts to an equivalence between 
$\Perf(A)$ and $\Perf(B)$ (because $\Perf(A)$ is the subcategory of compact objects in $D(A)$). In particular, $M\otimes_A K\in \Perf(B)$ for any $M\in\Perf(A)$.
\end{remark}

\begin{remark} \label{morita} Morita equivalence is indeed an equivalence relation on the collection of ($R$-h-projective) dg $R$-algebras. Indeed, it is clearly reflexive and transitive. It is also symmetric, as the inverse of the equivalence $\Phi _K\colon D(A)\to D(B)$ is given by the functor $\Phi _{K^t}\colon D(B)\to D(A)$, where $K^t:=\bbR \Hom _B(K,B)\in D(B^{\op}\otimes A)$ (see Lemma \ref{existence-of-kernel}).
\end{remark}

\begin{remark} \label{criterion-for-mor-eq} Dg algebras $A$ and $B$ are Morita equivalent if, for example, they are quasi-isomorphic or, more generally, if the dg categories $Perf (A)$ and $Perf (B)$ are quasi-equivalent, see  \cite{Ke}.
\end{remark}

Let us recall some (surely well known) facts about smooth and compact dg $R$-algebras and the corresponding categories $\Perf(A)$.

\begin{lemma}\label{prop-on-propert-of-smooth-dg-R-algebras}
\begin{enumerate}
\item  Let $A$ and $B$ be smooth ($R$-h-projective) dg $R$-algebras. Then the dg $R$-algebras  $A\otimes _RB$
and $A^{\op}$ are also smooth.

\item If $A$ is a smooth dg $R$-algebra and $R\to T$ is a homomorphism of commutative rings, then $A\stackrel{\bbL}{\otimes }_RT$ is a smooth dg $T$-algebra. Also compactness of a dg algebra is preserved under extension of scalars.
\end{enumerate}
\end{lemma}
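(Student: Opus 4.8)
The plan is to reduce both assertions to a single soft observation: if $F\colon D(C)\to D(C')$ is a triangulated functor between derived categories of ($R$-h-projective) dg algebras and $F$ sends the free rank-one module $C$ into $\Perf(C')$, then $F\bigl(\Perf(C)\bigr)\subseteq\Perf(C')$, because $\{X\in D(C)\mid F(X)\in\Perf(C')\}$ is a thick triangulated subcategory of $D(C)$ containing $C$, hence contains $\langle C\rangle=\Perf(C)$. Since every dg algebra occurring below is assumed $R$-h-projective, Lemma~\ref{perf-r-compl} lets me treat $\otimes_R$ and $\otimes_T$ as already derived, so I suppress the $\bbL$'s. I write $A^{e}:=A^{\op}\otimes_{R}A$, so that ``$A$ is smooth'' means the diagonal bimodule $A$ lies in $\Perf(A^{e})$.

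For part~(1) I would first treat $A^{\op}$. The swap $a\otimes b\mapsto b\otimes a$ is an isomorphism of dg $R$-algebras $(A^{\op})^{e}=A\otimes_{R}A^{\op}\xrightarrow{\sim}A^{\op}\otimes_{R}A=A^{e}$; restriction of scalars along it is an equivalence $D(A^{e})\xrightarrow{\sim}D\bigl((A^{\op})^{e}\bigr)$ preserving $\Perf$, and a direct check shows it carries the diagonal $A$-bimodule to the diagonal $A^{\op}$-bimodule. Hence $A\in\Perf(A^{e})$ gives $A^{\op}\in\Perf\bigl((A^{\op})^{e}\bigr)$. For $A\otimes_{R}B$, reordering the four tensor factors gives an isomorphism of dg $R$-algebras $(A\otimes_{R}B)^{e}\cong A^{e}\otimes_{R}B^{e}$ under which the diagonal $(A\otimes_{R}B)$-bimodule corresponds to the external tensor product $A\boxtimes B$ of the diagonal $A$- and $B$-bimodules; so it suffices to know that $M\otimes_{R}N\in\Perf(C\otimes_{R}D)$ whenever $M\in\Perf(C)$ and $N\in\Perf(D)$. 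This follows by applying the soft observation twice: first to $(-)\otimes_{R}D\colon D(C)\to D(C\otimes_{R}D)$ (which sends $C$ to $C\otimes_{R}D$), then to $M\otimes_{R}(-)\colon D(D)\to D(C\otimes_{R}D)$ (which sends $D$ to $M\otimes_{R}D$, already in $\Perf(C\otimes_{R}D)$ by the first step). Taking $C=A^{e}$, $D=B^{e}$, $M=A$, $N=B$ completes part~(1).

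For part~(2), set $A_{T}:=A\otimes_{R}T$; by Lemma~\ref{perf-r-compl}(3) this is a $T$-h-projective dg $T$-algebra, and it equals $A\stackrel{\bbL}{\otimes}_{R}T$. Regrouping factors and using $T\otimes_{T}T=T$ yields an isomorphism of dg $T$-algebras $A_{T}^{\op}\otimes_{T}A_{T}\cong A^{e}\otimes_{R}T$, under which the diagonal $A_{T}$-bimodule corresponds to $A\otimes_{R}T$ with $A$ the diagonal $A^{e}$-bimodule. The base-change functor $(-)\otimes_{R}T\colon D(A^{e})\to D(A^{e}\otimes_{R}T)$ is triangulated and sends the free module $A^{e}$ to the free module $A^{e}\otimes_{R}T$, so by the soft observation it carries $\Perf(A^{e})$ into $\Perf(A^{e}\otimes_{R}T)$; thus $A\in\Perf(A^{e})$ gives $A_{T}\in\Perf\bigl(A_{T}^{\op}\otimes_{T}A_{T}\bigr)$, i.e.\ $A_{T}$ is smooth over $T$. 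Preservation of compactness is the same argument with $D(R)\to D(T)$ in place of $D(A^{e})\to D(A^{e}\otimes_{R}T)$: this functor is triangulated and sends $R$ to $T$, hence carries perfect $R$-complexes to perfect $T$-complexes, so $A$ perfect over $R$ forces $A_{T}$ perfect over $T$.

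The argument is entirely formal, so I do not expect a genuine obstacle. The only points that need care are (a) checking that the three algebra isomorphisms really do match the diagonal bimodules (respectively the external product) on the nose, which is a routine element-level verification, and (b) keeping track of $R$-h-projectivity so that underived tensor products may legitimately replace derived ones — which is exactly what Lemma~\ref{perf-r-compl} supplies.
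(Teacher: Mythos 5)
Your proof is correct. For part~(1) it is essentially the paper's argument, just unpacked: the paper states directly that the bifunctor $\stackrel{\bbL}{\otimes}_R\colon D(A^{\op}\otimes_RA)\times D(B^{\op}\otimes_RB)\to D((A\otimes_RB)^{\op}\otimes_R(A\otimes_RB))$ preserves $\Perf$ and sends the pair of diagonals to the diagonal, and uses the same swap isomorphism for $A^{\op}$; your ``soft observation'' (the preimage of a thick subcategory under a triangulated functor carrying the generator into it is thick, hence contains $\Perf$) is precisely what justifies the paper's one-line claim that the bifunctor respects $\Perf$, applied twice. Where you genuinely diverge is part~(2): the paper simply cites \cite[Thm.\ 3.30]{LS1}, whereas you give a short self-contained proof by observing $A_T^{\op}\otimes_TA_T\cong A^e\otimes_RT$, that base change $(-)\stackrel{\bbL}{\otimes}_RT\colon D(A^e)\to D(A^e\otimes_RT)$ sends $A^e$ to the free module and hence $\Perf$ to $\Perf$, and that it carries the diagonal bimodule $A$ to the diagonal bimodule $A_T$. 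This buys self-containment at essentially no extra cost; the only bookkeeping (which you note) is that the relevant $R$-h-projectivity from Lemma~\ref{perf-r-compl} lets the underived tensors stand in for derived ones, so that $A^e\otimes_RT$ and $A\otimes_RT$ really are $A^e\stackrel{\bbL}{\otimes}_RT$ and $A\stackrel{\bbL}{\otimes}_RT$. The compactness claim is handled in the same way via $D(R)\to D(T)$.
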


\begin{proof} (1) The bifunctor
$$\stackrel{\bbL}{\otimes }_R\colon D(A^{\op}\otimes _RA)\times D(B^{\op}\otimes _RB)\to D((A\otimes _RB)^{\op}\otimes _RA\otimes _RB)$$
maps $\Perf(A^{\op}\otimes _RA)\times \Perf(B^{\op}\otimes _RB)\to \Perf((A\otimes _RB)^{\op}\otimes _RA\otimes _RB)$ and sends the pair of diagonal bimodules $(A,B)$ to the diagonal bimodule $A\otimes _RB$. It follows that $A\otimes _RB$ is smooth.

To see that $A^{\op}$ is smooth, notice the natural isomorphism of dg algebras
$$A^{\op}\otimes _RA\simeq A\otimes _RA^{\op},\ a\otimes b\mapsto b\otimes a.$$
It induces an equivalence of derived categories $D(A^{\op}\otimes _RA)\simeq D(A\otimes _RA^{\op})$ which preserves perfect dg modules and diagonal bimodules.

(2) This is \cite[Thm. 3.30]{LS1}.
\end{proof}

\begin{lemma}
\label{existence-of-kernel} 
Let $A$ and $B$ be $R$-h-projective dg $R$-algebras and let $K\in D(A^{\op}\otimes _RB)$ be an h-projective dg $(A^{\op}\otimes _RB)$-module. Then the following holds.

\begin{enumerate}
\item $K$ is h-projective as a dg $A^{\op}$- or $B$-module.

\item  The functor
$$\Phi _K\colon D(A)\to D(B),\quad M\mapsto M\stackrel{\bbL}{\otimes }_AK=M\otimes _AK$$
has the right adjoint
$$\Psi _K\colon D(B)\to D(A),\quad N\mapsto \bbR \Hom _B(K,N)=\Hom _B(K,N).$$
Let $\alpha \colon Id _{D(A)}\to \Psi _K\cdot \Phi _K$ and $\beta \colon \Phi _K\cdot \Psi _K\to Id_{D(B)}$ be the adjunction morphisms. Then $\alpha (A)\colon A\to \Hom _B(K,K)$ is a morphism of dg $A^{\op}\otimes _RA$-modules (not just dg $A$-modules) and $\beta (B)\colon \Hom _B(K,B)\otimes _AK\to B$ is a morphism of dg $B^{\op}\otimes _R B$-modules (not just dg $B$-modules).

\item  Assume that $K$ is perfect as a dg $B$-module. Then

\begin{enumerate}
\item  The functor $\Phi _K$  (and $\Psi _K$) is an equivalence if and only if the morphisms of dg bimodules $\alpha (A)$ and $\beta (B)$ are quasi-isomorphisms. If it is the case, $\Psi_K$ is the inverse to $\Phi_K$.

\item  The functor $\Psi _K$ is isomorphic to the functor $$\Phi _{K^t}\colon D(B)\to D(A)\quad, N\mapsto N\stackrel{\bbL}{\otimes }_BK^t,$$
where $K^t:=\Hom _B(K,B)$.

\item  Let $R\to T$ be a homomorphism of commutative rings. Put $(-)_T:=(-)\stackrel{\bbL}{\otimes}_RT$.
Then the dg $B^{\op}_T\otimes A_T$-modules $(K^t)_T$ and $(K_T)^t$ are quasi-isomorphic.


\item  If $\Phi _K$ is an equivalence, so is $\Phi _{K_T}$.
\end{enumerate}

\item  Assume that $\Phi _K$ is an equivalence. Then 
$$\Phi^{\op}_{K^t}=K^t\otimes_A(-)\colon D(A^{\op})\to D(B^{\op})$$ is also the equivalence. Moreover, the functor
\begin{equation}\label{second-funct}\Phi _{K^t\boxtimes K}\colon D(A^{\op}\otimes _RA)\to D(B^{\op}\otimes _RB),\ \ S\mapsto K^t\stackrel{\bbL}{\otimes }_AS\stackrel{\bbL}{\otimes }_AK
\end{equation}
is an equivalence which preserves the diagonal bimodules and the subcategories $\Perf$.

\item  If $A$ and $B$ are Morita equivalent, then $A$ is smooth if and only if $B$ is smooth.
\end{enumerate}
\end{lemma}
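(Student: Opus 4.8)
The plan is to obtain statement (5) as an immediate consequence of part (4) of the same lemma. Assume $A$ and $B$ are Morita equivalent, so by Definition~\ref{mor-eq-r} there is some $K\in D(A^{\op}\otimes_RB)$ for which $\Phi_K\colon D(A)\to D(B)$, $M\mapsto M\stackrel{\bbL}{\otimes}_AK$, is an equivalence. The first step is a harmless normalization: replacing $K$ by an h-projective resolution over $A^{\op}\otimes_RB$ changes $\Phi_K$ only up to isomorphism of functors, hence it stays an equivalence, and now the standing hypotheses of Lemma~\ref{existence-of-kernel} are in force. Moreover, as observed in the remark following Definition~\ref{mor-eq-r}, an equivalence $\Phi_K$ restricts to an equivalence $\Perf(A)\to\Perf(B)$; in particular $K\simeq\Phi_K(A)$ lies in $\Perf(B)$, so $K$ is perfect as a dg $B$-module and parts (3) and (4) are applicable.

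Next I would simply invoke part (4): under the hypothesis that $\Phi_K$ is an equivalence, the functor
$$\Phi_{K^t\boxtimes K}\colon D(A^{\op}\otimes_RA)\to D(B^{\op}\otimes_RB),\qquad S\mapsto K^t\stackrel{\bbL}{\otimes}_AS\stackrel{\bbL}{\otimes}_AK,$$
is an equivalence which carries the diagonal bimodule $A$ to the diagonal bimodule $B$ and restricts to an equivalence $\Perf(A^{\op}\otimes_RA)\to\Perf(B^{\op}\otimes_RB)$. Since $\Phi_{K^t\boxtimes K}(A)\simeq B$ and $\Phi_{K^t\boxtimes K}$ preserves the $\Perf$ subcategories, the bimodule $A$ belongs to $\Perf(A^{\op}\otimes_RA)$ if and only if $B$ belongs to $\Perf(B^{\op}\otimes_RB)$. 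By the definition of homological smoothness this says precisely that $A$ is smooth if and only if $B$ is smooth, which is the claim.

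There is no genuine obstacle in this argument; it is a bookkeeping matter of checking that the hypotheses of part (4) are met. The only two points deserving a word of care are the reduction to an h-projective kernel $K$ (so that part (4) literally applies) and the fact that $\Phi_{K^t\boxtimes K}$ sends the diagonal to the diagonal — but the latter is exactly what part (4) asserts, so nothing further needs to be verified. As an alternative, one could prove just one implication and then appeal to the symmetry of Morita equivalence (Remark~\ref{morita}), but the equivalence furnished by part (4) gives both directions simultaneously and is the cleaner route.
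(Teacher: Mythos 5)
Your proof is correct and follows exactly the paper's intended argument, which the authors simply state as ``This follows from (4).'' You have merely filled in the routine bookkeeping (h-projective resolution of the kernel, $K\in\Perf(B)$, and the observation that an equivalence preserving diagonals and $\Perf$ gives $A\in\Perf(A^{\op}\otimes_RA)\Leftrightarrow B\in\Perf(B^{\op}\otimes_RB)$), all of which is as the paper intends.
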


\begin{proof} (1)  Let $C$ be an acyclic dg $B$-module. Then we have a natural isomorphism of $R$-complexes
$$\Hom _B(K,C)=\Hom _{A^{\op}\otimes_RB}(K,\Hom_B(A\otimes_RB,C))=
\Hom _{A^{\op}\otimes_RB}(K,\Hom_R(A,C)),$$
and the latter complex is acyclic, since $A$ is $R$-h-projective and $K$ is $A^{\op}\otimes_RB$-projective. Similarly one proves that $K$ is h-projective as a dg $A^{\op}$-module.

(2) The adjunction statement follows from the functorial isomorphism of complexes
$$\Hom _B(M\otimes _AK,N)\simeq \Hom _A(M,\Hom _B(K,N)).$$
The last assertions follows from the explicit formulas for the maps $\alpha (A)$ and $\beta (B)$:
$$\alpha (A)\colon A\to \Hom (K,K),\quad a\mapsto a\cdot id _K,\quad \text{and}\quad \beta (B)\colon \Hom _B(K,B)\otimes _AK\to B,\quad f\otimes k\mapsto f(k).$$

(3) a) The ``only if'' direction is clear. Assume that $\alpha (A)$ and $\beta (B)$ are quasi-isomorphisms.
Notice that the functor $\Phi _K$ preserves direct sums and the functor $\Psi _K$ preserves direct sums, since $K_B$ is perfect. It follows that morphisms of functors $\alpha $ and $\beta$ are isomorphisms on triangulated subcategories which contain $A$ and $B$ respectively and are closed under arbitrary direct sums. But then they are isomorphisms on the whole categories $D(A)$ and $D(B)$.

b) Let $N\in hproj-B$. Consider the morphism of functors
$\gamma \colon \Phi _{K^t}\to \Psi _K$
$$\gamma (N)\colon N\otimes _B\Hom _B(K,B)\to \Hom _B(K,N),\quad n\otimes f\to (k\mapsto nf(k)).$$
Then $\gamma (B)$ is an isomorphism of complexes. Also both functors $\Phi _{K^t}$ and $\Psi _K$ preserve direct sums. It follows that $\gamma$ is an isomorphism as in the proof of a) above.

c) It suffices to prove that the dg $B_T^{\op}\otimes _TA_T$-modules $\Hom _B(K,B)_T$ and $\Hom _{B_T}(K_T,B_T)$ are quasi-isomorphic. We have the natural morphism of dg $B_T^{\op}\otimes _TA_T$-modules
$$\delta \colon \Hom _B(K,B)_T\to \Hom _{B_T}(K_T,B_T),\quad f\otimes t\to (k\otimes t'\mapsto f(k)tt').$$
This map is an isomorphism (of dg $B_T^{\op}$-modules) if $K=B$. Hence it is a quasi-isomorphism for all $K$ which are perfect as dg $B$-modules.

d) By part (3) a) we know that $\Phi _K$ is an equivalence if and only if the natural morphisms
$$\alpha (A)\colon A\to \Hom _B(K,K),\quad \text{and} \quad \beta (B)\colon \Hom _B(K,B)\otimes _AK\to B$$
are quasi-isomorphisms. So let us assume that these are quasi-isomorphisms.

By Lemma \ref{perf-r-compl} the $R$-complexes $\Hom _B(K,K)$ and $\Hom _B(K,B)$ are $R$-h-projective. (Indeed,  $A$ and $B$ are $R$-h-projective, $K$ is a perfect dg $B$-module which is also $B$-h-projective by part (1); hence $K$ is $R$-h-projective). Since $K$ is h-projective as a dg $A^{\op}$-module (by part (1)), it follows that the $R$-complex $\Hom _B(K,B)\otimes _{A}K$ is also h-projective. Therefore the morphisms
\begin{equation}\label{isomorph} \alpha (A)_T\colon A_T\to \Hom _B(K,K)_T,\quad \text{and} \quad \beta (B)_T\colon (\Hom _B(K,B)\otimes _AK)_T\to B_T
\end{equation}
are also quasi-isomorphisms. Notice that the corresponding morphisms
$$\alpha (A_T)\colon A_T\to \Hom _{B_T}(K_T,K_T),\quad \text{and} \quad \beta (B_T)\colon \Hom _{B_T}(K_T,B_T)\otimes _{A_T}K_T\to B_T$$
satisfy $\alpha (A_T)=\phi (K)\cdot \alpha (A)_T$ and $\beta (B_T)\cdot \psi (K)=\beta (B)_T$ where $\phi (K)$ and $\psi(K)$ are the obvious maps
\begin{equation}\label{isom-2} \phi (K)\colon \Hom _B(K,K)_T\to \Hom _{B_T}(K_T,K_T)
\end{equation}
and
\begin{equation}\label{isom-3}
\psi (K)\colon (\Hom _B(K,B)\otimes _AK)_T\to \Hom _{B_T}(K_T,B_T)\otimes _{A_T}K_T.
\end{equation}
One shows that $\phi (K)$ and $\psi (K)$ are quasi-isomorphisms as in the proof of part c) above. This implies that $\alpha (A_T)$ and $\beta (B_T)$ are quasi-isomorphisms, i.e. $\Phi _{K_T}$ is an equivalence.

(4) Let $\Phi _K$ be an equivalence. The $\Phi _K(A)=K\in \Perf(B)$. Hence $\Psi_K\simeq \Phi_{K^t}$ by (3) b). By (3) a) the adjunction morphisms
\begin{equation}
\label{eq_alphabeta}
\alpha(A)\colon A\to K\otimes_BK^t\quad\text{and}\quad \beta(B)\colon K^t\otimes_AK\to B
\end{equation} 
are quasi-isomorphisms. Note that $(K^t)^t=K$ and $K^t\in\Perf(B^{\op})$, hence the same conditions \eqref{eq_alphabeta} imply that the functor $\Phi^{\op}_{K^t}=K^t\otimes_A(-)\colon \Perf(A^{\op})\to\Perf(B^{\op})$ is an equivalence by (3) a).

To prove that $\Phi _{K^t\boxtimes K}$ is an equivalence, note that $K^t\boxtimes K\in\Perf(B^{\op}\otimes_RB)$ and use (2) and (3) a),b). The right adjoint to $\Phi _{K^t\boxtimes K}$
is $\Phi _{K\boxtimes K^t}$. It suffices to check that the homomorphism
$$\alpha(A\otimes_R A)\colon A\otimes_R A\to \Phi_{K\boxtimes K^t}\Phi_{K^t\boxtimes K}(A\otimes_R A)=\Phi_{K\boxtimes K^t}(K^t\boxtimes K)=(K\otimes_BK^t)\boxtimes (K\otimes_BK^t)$$
is a quasi-isomorphism, and similarly for $\beta$. But this is true since
$\alpha(A\otimes_RA)=\alpha(A)\boxtimes \alpha(A)$ and $\alpha(A)$ is a quasi-isomorphism of $R$-h-projective modules. Finally, by (2) and quasi-isomorphisms~\eqref{eq_alphabeta} 
the functor $\Phi_{K^t\boxtimes K}$ preserves diagonal bimodules.

(5) This follows from (4).
\end{proof}


If a dg $R$-algebra $A$ is compact by Lemma  \ref{perf-r-compl} we obtain a dg bi-functor
$$\Hom _A(-,-)\colon Perf (A)^{\op}\times Perf (A)\to Perf (R)$$
which induces a bi-triangulated functor
$$\bbR \Hom _A(-,-)\colon \Perf(A)^{\op}\times \Perf(A)\to \Perf(R).$$
Denote by $(-)^*$ the anti-involution $\Perf(R)^{\op}\stackrel{\sim}{\to}\Perf(R)$, where $P^*=\bbR \Hom _R(P,R)$.

\begin{predl} \label{serre-general-over-r}Let $A$ be a ($R$-h-projective) smooth and compact dg $R$-algebra.

\begin{enumerate}
\item A dg $A$-module $M$ is perfect if and only if it is perfect as an $R$-complex.

\item  The category $\Perf(A)$ has a Serre functor. That is there exists a triangulated auto-equivalence
$S\colon \Perf(A)\to \Perf(A)$ and an isomorphism of functors from $\Perf(A)^{\op}\times \Perf(A)\to \Perf(R)$
$$\bbR \Hom _ A(M,N)^*\simeq \bbR \Hom _A(N,S(M)).$$
Moreover the functor $S$ is isomorphic to $(-)\stackrel{\bbL }{\otimes }_AA^*$, where $A^*:=\Hom _R(A,R)\in D(A^{\op}\otimes A)$. The dg $A^{\op}\otimes _RA$-module $A^*$ is perfect. We call it the \emph{Serre bimodule}.

\item  The Serre bimodule and the Serre functor are preserved by extension of scalars. That is if $R\to T$ is a homomorphism of commutative rings and $A_T=A\otimes _RT$ then there is a quasi-isomorphism of dg $A_T$-bimodules $(A_T)^*\simeq (A^*)\stackrel{\bbL}{\otimes }_RT$.

Now assume that the ring $R$ is Noetherian.

\item  For any $M,N\in \Perf(A)$ the $R$-module $\Hom ^{\bul}_{\Perf(A)}(M,N)$ is finitely generated.

\item  Let $\text{mod-}R$ denote the abelian category of finitely generated $R$-modules. Then any cohomological functor $F\colon \Perf(A)^{\op}\to \text{mod-}R$ is representable. That is there exists $M_F\in \Perf(A)$ and an isomorphism of functors $F(-)\simeq \Hom _{\Perf(A)}(-, M_F)$. 
\end{enumerate}

\end{predl}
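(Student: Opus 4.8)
The plan is to bootstrap everything from the resolution of the diagonal, and then to invoke a Bondal--Van den Bergh type representability theorem for part (5).

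\smallskip

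\noindent\textbf{Part (1).} The ``only if'' direction is Lemma~\ref{perf-r-compl}(2). For ``if'', suppose $M$ is perfect as an $R$-complex. Smoothness of $A$ means the diagonal bimodule $A$ is a direct summand of an object of $D(A^{\op}\otimes_RA)$ obtained from $A^{\op}\otimes_RA$ by finitely many cones. I would apply the triangulated functor $M\stackrel{\bbL}{\otimes}_A(-)\colon D(A^{\op}\otimes_RA)\to D(A)$ to this resolution: it sends the diagonal to $M\stackrel{\bbL}{\otimes}_AA\simeq M$ and the free bimodule $A^{\op}\otimes_RA$ to $M\otimes_RA$, and since $M$ is built from $R$ by shifts, cones and summands, $M\otimes_RA$ is built from $A$, hence lies in $\Perf(A)$. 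Thus $M$ is a direct summand of an object of $\Perf(A)$, so $M\in\Perf(A)$.

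\smallskip

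\noindent\textbf{Part (2).} By Lemma~\ref{prop-on-propert-of-smooth-dg-R-algebras}(1) and Lemma~\ref{perf-r-compl}(3), $A^{\op}\otimes_RA$ is an $R$-h-projective smooth dg $R$-algebra, and it is compact because a tensor product over $R$ of $R$-perfect complexes is $R$-perfect. The $R$-dual $A^*=\Hom_R(A,R)$ is $R$-perfect, so part (1) applied to $A^{\op}\otimes_RA$ gives $A^*\in\Perf(A^{\op}\otimes_RA)$; consequently $S:=(-)\stackrel{\bbL}{\otimes}_AA^*$ preserves $\Perf(A)$ (for $M\in\Perf(A)$, $M\stackrel{\bbL}{\otimes}_AA^*$ is built from $M\otimes_RA\in\Perf(A)$). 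For the duality isomorphism, the key point is that the natural morphism
$$M\stackrel{\bbL}{\otimes}_A\Hom_R(A,R)\longrightarrow\bbR\Hom_R\big(\bbR\Hom_A(M,A),R\big),\qquad m\otimes\varphi\mapsto\big(g\mapsto\varphi(g(m))\big),$$
is an isomorphism for $M=A$ and hence, both sides being triangulated in $M$ and $\Perf(A)=\langle A\rangle$, for all $M\in\Perf(A)$; thus $S(M)\simeq(M^\vee)^*$. Combining with Lemma~\ref{lemma-left-right} (whose proof is valid over $R$) and tensor--hom adjunction yields functorial isomorphisms
$$\bbR\Hom_A(N,S(M))\simeq\bbR\Hom_A\big(N,\bbR\Hom_R(M^\vee,R)\big)\simeq\bbR\Hom_R\big(N\stackrel{\bbL}{\otimes}_AM^\vee,R\big)\simeq\bbR\Hom_A(M,N)^*.$$
Finally, $S$ is an auto-equivalence by Lemma~\ref{existence-of-kernel}(3a): using the adjunction $\bbR\Hom_A(-,\bbR\Hom_R(A,R))\simeq\bbR\Hom_R(-,R)$ and biduality in $\Perf(R)$ one identifies the adjunction morphism $A\to\bbR\Hom_A(A^*,A^*)$ with the canonical isomorphism $A\xrightarrow{\sim}A^{**}$, and the second adjunction morphism is a quasi-isomorphism by the same kind of computation. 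Uniqueness of Serre functors then gives the rest of the statement.

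\smallskip

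\noindent\textbf{Parts (3) and (4).} For (3): the natural base-change morphism $\bbR\Hom_R(A,R)\stackrel{\bbL}{\otimes}_RT\to\bbR\Hom_T(A\stackrel{\bbL}{\otimes}_RT,T)$ of $A_T^{\op}\otimes_TA_T$-modules is an isomorphism for $A=R$, hence (as $A$ is $R$-perfect) for $A$ — this is exactly the argument of Lemma~\ref{existence-of-kernel}(3c) — and it is the asserted quasi-isomorphism $A^*\stackrel{\bbL}{\otimes}_RT\simeq(A_T)^*$; the formula $S=(-)\stackrel{\bbL}{\otimes}_AA^*$ then gives preservation of the Serre functor. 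For (4): by Lemma~\ref{perf-r-compl}(2) the complex $\bbR\Hom_A(M,N)$ is $R$-perfect, hence homotopy equivalent to a bounded complex of finitely generated projective $R$-modules, so its cohomology $\Hom^\bul_{\Perf(A)}(M,N)$ is finitely generated over the Noetherian ring $R$.

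\smallskip

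\noindent\textbf{Part (5), and the main obstacle.} This is the crux. The required inputs are now available: $\Perf(A)$ has a \emph{strong} generator --- again by the resolution-of-diagonal argument, every $M\in\Perf(A)$ lies in $\langle A\rangle_n$ for a fixed $n$ (the length of a chosen $A^{\op}\otimes_RA$-resolution of the diagonal), using that $M\otimes_RA$ lies in a fixed $\langle A\rangle_\bul$; cf.\ \cite{BVdB} --- it has a Serre functor by (2), and its graded $\Hom$-modules are finitely generated by (4). Given this, one runs the argument of \cite[\S\S1--2]{BVdB}: present $F$ as a filtered colimit of representable functors $\Hom_{\Perf(A)}(-,X_i)$ built by iterated ``cell attachment'' from shifts of the generator, and use the Serre functor together with finiteness of the $\Hom$-modules to see that this colimit is essentially constant, so that $F\simeq\Hom_{\Perf(A)}(-,M_F)$. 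Equivalently, extend $F$ to a cohomological functor on $D(A)$ commuting with products, represent it by $M_F\in D(A)$ via Brown representability, and then conclude $M_F\in\Perf(A)$ using parts (1) and (4). The main obstacle is precisely this passage from Brown representability in $D(A)$ to representability \emph{within} $\Perf(A)$ by a genuinely perfect object: it rests on the saturatedness package above, and carrying the Bondal--Van den Bergh argument over a Noetherian base rather than a field is the delicate point (one expects to need $F$ of finite type / locally finite, and perhaps regularity of $R$, for the cleanest statement). Within part (2), verifying that $S$ is an auto-equivalence --- not merely a functor satisfying Serre duality --- is the secondary point requiring care.
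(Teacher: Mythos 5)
Your argument for parts (1)--(4) follows essentially the same route as the paper. Part (1) is the paper's tensor-with-diagonal argument; part (2) reproduces the chain of functorial isomorphisms (tensor--hom adjunction together with the quasi-isomorphism $M\stackrel{\bbL}{\otimes}_A\Hom_R(A,R)\simeq\bbR\Hom_R(\bbR\Hom_A(M,A),R)$ checked on $M=A$) and uses part (1) for $A^{\op}\otimes_RA$ to get perfectness of $A^*$; parts (3) and (4) are the same base-change and finiteness observations. You spend more effort than the paper does on verifying that $S$ is actually an auto-equivalence --- the paper leaves this to the cited manuscript [Sh] --- and while your identification of $\alpha(A)\colon A\to\bbR\Hom_A(A^*,A^*)$ with the biduality map is correct, the claim that the second adjunction morphism ``is a quasi-isomorphism by the same kind of computation'' is only gestured at and would need to be written out.

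For part (5) you have misdiagnosed the situation: what you label the ``main obstacle'' is not an obstacle in the paper at all, since the paper simply \emph{cites} Rouquier [Cor.~4.17], which is exactly the generalization of the Bondal--Van den Bergh representability theorem [Thm.~1.3] from a field to a Noetherian base ring. The hypotheses you have already assembled --- strong generation of $\Perf(A)$, finitely generated graded $\Hom$-modules over $R$ from part (4), and the Serre functor from part (2) --- are precisely the inputs to that result. In particular, your speculation that one ``expects to need $F$ of finite type / locally finite, and perhaps regularity of $R$'' is unnecessary: the finiteness condition is already built into the hypothesis that $F$ takes values in $\text{mod-}R$, and no regularity of $R$ is required. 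You should replace the tentative sketch of a Brown-representability argument with the citation; as written, part (5) of your proposal is not a proof.
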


\begin{proof}
(1) It is clear that a perfect dg $A$-module is also perfect as an $R$-complex (Lemma \ref{perf-r-compl} (2)).
Vice versa, let $M$ be a dg $A$-module which is perfect as an $R$-complex. The functor $$(-)\otimes _AA\colon \Perf(A)\to \Perf(A)$$
is isomorphic to the identity functor. But $A$ is a perfect dg  $A^{\op}\otimes _RA$-module, hence it suffices to show that $M\otimes _A(A^{\op}\otimes _RA)$ is a perfect dg $A$-module. We have
$$M\otimes _A(A^{\op}\otimes _RA)=M\otimes _RA$$
which is in $\Perf(A)$ since $M\in \Perf(R)$.

(2) Our proof is an adaptation of the corresponding proof in \cite{Sh} for the case $R=k$. Let $M,N\in \Perf(A)$. We need to show the existence of a functorial isomorphism in $\Perf(R)$:
\begin{equation}\label{isom-serre-funct}
\bbR \Hom _A(M,N)^*\simeq \bbR \Hom _A(N,M\stackrel{\bbL}{\otimes}_AA^*).
\end{equation}
This follows from the sequence of functorial isomorphisms of $R$-complexes for $M,N\in Perf (A)$:
\begin{equation} \label{details-for-isom}
\begin{array}{rcl}
\Hom _R(\Hom _A(M,N),R) & = & \Hom _R(N\otimes _A\Hom _A(M,A),R)\\
 & = & \Hom _A(N,\Hom _R(\Hom _A(M,A), R))\\
 & = & \Hom _A(N,M\otimes _A\Hom _R(A,R)).
 \end{array}
\end{equation}
To see that $A^*\in \Perf(A^{\op}\otimes _RA)$ we may use the fact that $A^*\in \Perf(R)$ (Lemma \ref{perf-r-compl}), part (1) in this proposition and Proposition \ref{prop-on-propert-of-smooth-dg-R-algebras}(1).




(3) By Lemma \ref{perf-r-compl} the $R$-complex $A^*$ is h-projective, hence
$$A^*\stackrel{\bbL}{\otimes }_RT=A^*\otimes _RT=\Hom _R(A,R)\otimes _RT.$$
For the same reason we have
$$(A_T)^*=\Hom _T(A\otimes _RT,T).$$
Now the statement follows from the isomorphism of dg $A_T$-modules
$$\alpha \colon \Hom _R(A,R)\otimes _RT\to \Hom _T(A\otimes _RT,T),\quad
\alpha (f\otimes t)(a\otimes t')=f(a)tt'.$$

(4) It suffices to note that $\Hom_{\Perf(A)}^{\bul}(A,A)=\oplus _iH^i(A)$ is a finitely generated $R$-module, since $A$ is assumed to be a perfect $R$-complex.

(5) This follows from (4) and \cite[Cor. 4.17]{Rou}, which is a direct generalization of the corresponding theorem \cite[Thm 1.3]{BVdB} treating  the case when $R$ is field.
\end{proof}



\subsection{Semi-orthogonal decompositions}
Let $T$ be a triangulated category. Recall that two full triangulated subcategories $T_1,T_2\subset T$ form a semi-orthogonal decomposition $T=\langle T_1,T_2\rangle$, if $\Hom (T_2,T_1)=0$ and for every object $X\in T$ there exists a distinguished triangle in $T$:
$$X_2\to X\to X_1\to X_2[1]$$
with $X_i\in T_i$.  One can show that for each $X$ the triangle as above is unique up to an isomorphism and the association $X\mapsto X_1$ (resp. $X\mapsto X_2$) induces a triangulated functor $p_1\colon T\to T_1$ (resp. $p_2\colon T\to T_2$) which is the left (resp. right) adjoint to the natural embedding $T_1\hookrightarrow T$ (resp. $T_2\hookrightarrow T$). The functors $p_i$ are Verdier localizations.

\begin{corollary}\label{cor-for-gen-semi-orth} Let $T$ be a triangulated category with a semi-orthogonal decomposition $T=\langle T_1,T_2\rangle$.
If $E\in T$ is a generator, then $E_i=p_i(E)$ is a generator of $T_i$ for $i=1,2$. Moreover, if $\langle E\rangle _n=T$, then similarly $\langle E_i\rangle _n=T_i$, $i=1,2$. Vice versa, if $E_i$ is a generator for $T_i$, $i=1,2$, then $E_1\oplus E_2$ is a generator for $T$.
\end{corollary}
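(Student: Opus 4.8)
The plan is to obtain the first two assertions as an immediate application of Remark \ref{rem-gen-loc}, and to prove the converse by a direct generation argument using the triangles coming from the semi-orthogonal decomposition.

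For the first two assertions I would use that each $p_i\colon T\to T_i$ is a Verdier localization, as recorded just before the statement. Then Remark \ref{rem-gen-loc} applies verbatim: since $E$ is a generator of $T$, its image $E_i=p_i(E)$ is a generator of $T_i$; and since that remark also says $T=\langle E\rangle_n$ forces $T_i=\langle p_i(E)\rangle_n$, the strong-generation statement follows too. There is nothing to do here beyond citing the remark.

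For the converse, suppose $E_i$ generates $T_i$ for $i=1,2$, and view $E_i$ and $E_1\oplus E_2$ as objects of $T$ via the embeddings $T_i\hookrightarrow T$. First I would note that $\langle E_1\oplus E_2\rangle\subset T$ is a thick triangulated subcategory, hence contains each direct summand $E_i$; and since the embeddings $T_i\hookrightarrow T$ are exact, iterated cones of shifts of $E_i$ and direct summands formed inside $T_i$ remain such when formed inside $T$, so $\langle E_1\oplus E_2\rangle$ contains all of $\langle E_i\rangle$ computed in $T_i$, which is $T_i$. Thus $T_1,T_2\subseteq\langle E_1\oplus E_2\rangle$. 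Finally, for arbitrary $X\in T$ the semi-orthogonal decomposition supplies a distinguished triangle $X_2\to X\to X_1\to X_2[1]$ with $X_i\in T_i\subseteq\langle E_1\oplus E_2\rangle$; since $\langle E_1\oplus E_2\rangle$ is triangulated it contains $X$. Hence $\langle E_1\oplus E_2\rangle=T$, i.e. $E_1\oplus E_2$ is a generator.

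I do not expect a genuine obstacle. The only point needing care is that the operations $\star$, $[E]_n$, $\langle E\rangle_n$ behave correctly under the functors in play: under the localizations $p_i$ this is exactly Remark \ref{rem-gen-loc}, and under the embeddings $T_i\hookrightarrow T$ it is the elementary fact that an exact functor preserves distinguished triangles and direct summands. Note that in the converse one obtains only classical generation and no bound on the number of cones, in agreement with the statement.
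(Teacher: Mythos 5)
Your proposal is correct and follows the same route as the paper: the first two assertions are obtained by citing Remark \ref{rem-gen-loc} for the Verdier localizations $p_i$, and the converse is a direct consequence of the defining triangle of the semi-orthogonal decomposition together with the fact that $\langle E_1\oplus E_2\rangle$ is a thick triangulated subcategory. The paper's proof is just a terser version of exactly this argument.
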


\begin{proof} Indeed, since the functors $p_i$ are Verdier localizations the first assertion follows from Remark  \ref{rem-gen-loc}. The last assertion follows from the definition of a semi-orthogonal decomposition.
\end{proof}

\begin{lemma} \label{lemma-semi-orth} Let $\k$ be a field. Let $C$ be a dg $\k$-algebra and let $\Perf(C)=\langle T_1,T_2\rangle$ be a semi-orthogonal decomposition. Let $\cT _1 ,\cT _2 \subset Perf (C)$ be the induced enhancements of $T_1$ and $T_2$ respectively (see Remark \ref{induced-enhancement}).  Then there exist dg algebras $A$ and $B$ and a dg $B^{\op}\otimes _kA$-module~$M$ such that

\begin{enumerate}
\item the dg algebra $C$ is Morita equivalent to the triangular dg algebra
$$\tilde{C}=\begin{pmatrix}
B & M\\
0 & A
\end{pmatrix};$$

\item there are quasi-equivalences of pre-triangulated categories
$\cT_1\simeq Perf (A)$ and $\cT_2\simeq Perf (B)$; hence in particular one has natural equivalences of triangulated categories $T_1\simeq \Perf(A)$ and $T_2\simeq \Perf(B)$.
\end{enumerate}
\end{lemma}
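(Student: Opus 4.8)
The plan is to produce the dg algebras $A$ and $B$ and the bimodule $M$ directly from the enhancements $\cT_1$ and $\cT_2$, using the fact that semi-orthogonal decompositions and Karoubian generation let us replace each piece by a $Perf$ of an endomorphism dg algebra. First I would choose classical generators: since $\Perf(C)$ has a generator $E$ (namely $C$), Corollary \ref{cor-for-gen-semi-orth} gives generators $E_1 = p_1(E)$ of $T_1$ and $E_2 = p_2(E)$ of $T_2$. Both $T_1$ and $T_2$ are admissible subcategories of $\Perf(C)$, hence are Karoubian (they are triangulated categories admitting arbitrary direct sums when passed to the big category, or more simply they are summands of an idempotent-complete category). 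Applying Lemma \ref{ess-lemma} to the object $E_1$ in the pre-triangulated category $\cT_1$ and to $E_2$ in $\cT_2$, I set $A := \End_{\cT_1}(\tilde E_1)$ and $B := \End_{\cT_2}(\tilde E_2)$, where $\tilde E_i$ is an object of the dg enhancement lifting $E_i$; the lemma then yields quasi-equivalences $\cT_1 \simeq Perf(A)$ and $\cT_2 \simeq Perf(B)$, which is part (2) of the statement, together with the induced triangulated equivalences $T_1 \simeq \Perf(A)$, $T_2 \simeq \Perf(B)$.

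Next I would build the bimodule and the triangular algebra. Work inside a single dg enhancement $\cT$ of $\Perf(C)$ (e.g. $\cT = Perf(C)$), with $\tilde E_1, \tilde E_2 \in \cT$ as above and $E := \tilde E_1 \oplus \tilde E_2$, which is a classical generator of $\Perf(C)$ by the last assertion of Corollary \ref{cor-for-gen-semi-orth}. Form the endomorphism dg algebra $\tilde C := \End_\cT(E)$. Because $\Hom_{T_2}(T_2, T_1) = 0$ by semi-orthogonality — more precisely $\Hom^{\bul}_{\Perf(C)}(\tilde E_2, \tilde E_1) = 0$ at the level of cohomology, and one can arrange the enhancement so the dg $\Hom$-complex $\Hom_\cT(\tilde E_2, \tilde E_1)$ is actually acyclic, hence may be replaced by $0$ up to quasi-isomorphism — the dg algebra $\tilde C$ is quasi-isomorphic to the triangular matrix dg algebra
$$
\tilde C = \begin{pmatrix} B & M \\ 0 & A \end{pmatrix}, \qquad
B = \End_\cT(\tilde E_2),\quad A = \End_\cT(\tilde E_1),\quad M = \Hom_\cT(\tilde E_1, \tilde E_2),
$$
where $M$ is naturally a dg $B^{\op}\otimes_\k A$-module via composition. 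Finally, Lemma \ref{ess-lemma} applied to $E$ in $\cT$ gives a quasi-equivalence $\cT \simeq Perf(\tilde C)$, hence by Remark \ref{criterion-for-mor-eq} the dg algebras $C$ and $\tilde C$ are Morita equivalent, which is part (1).

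The main obstacle is the strictification needed to pass from "$\Hom$-cohomology vanishes" to "$\Hom$-complex is strictly zero", i.e. to realize $\tilde C$ as an honestly upper-triangular dg algebra rather than one that is merely quasi-isomorphic to an upper-triangular one in a weak sense. Concretely, one has $H^{\bul}\Hom_\cT(\tilde E_2,\tilde E_1) = \Hom^{\bul}_{T_2,T_1}(E_2,E_1) = 0$, so the dg $\k$-module $\Hom_\cT(\tilde E_2,\tilde E_1)$ is acyclic, but it need not be the zero complex. To fix this I would replace $\cT$ by a quasi-equivalent dg category in which this complex is killed on the nose: e.g. work in $Perf(C)$, replace $\tilde E_1$ and $\tilde E_2$ by h-projective representatives, and use that an acyclic h-projective complex is contractible, so the component $\Hom_\cT(\tilde E_2,\tilde E_1)$ can be discarded after a further quasi-equivalence (or absorbed by a standard cofibrant-replacement / homotopy-limit argument for dg algebras). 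Once this is done, the matrix form of $\tilde C$ and the identification of $M$ as a $B^{\op}\otimes_\k A$-module are formal, and the two quasi-equivalences $\cT_i \simeq Perf(A), Perf(B)$ restrict from the quasi-equivalence $\cT \simeq Perf(\tilde C)$ together with the standard semi-orthogonal decomposition $\Perf(\tilde C) = \langle \Perf(A), \Perf(B)\rangle$ of the perfect derived category of a triangular dg algebra.
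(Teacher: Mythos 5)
Your overall architecture matches the paper's proof: choose generators $E_1,E_2$ of $T_1,T_2$, set $E = E_1\oplus E_2$, apply Lemma~\ref{ess-lemma} to identify $Perf(C)$ with $Perf(\cE)$ for $\cE = \End_{\cT}(E)$, and observe that the block $\Hom_{\cT}(E_2,E_1)$ is acyclic by semi-orthogonality. The place where you go astray is the final paragraph, where you flag the passage from ``acyclic'' to ``zero'' as the main obstacle and try to resolve it by h-projectivity, contractibility and a cofibrant-replacement / homotopy-limit device. That machinery is unnecessary and, as written, the argument is not actually carried out: an acyclic complex of $\k$-vector spaces is certainly contractible, but contractibility of one block of a dg algebra does not by itself license ``discarding'' that block, and you do not make precise what replacement of $\cT$ you perform or why it is a quasi-equivalence of dg algebras respecting the block structure.

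The paper resolves this cleanly with no strictification whatsoever. Write $\cE = \begin{pmatrix} B & M \\ N & A\end{pmatrix}$ with $N = \Hom_{\cT}(E_2,E_1)$. The upper-triangular part $\tilde C = \begin{pmatrix} B & M \\ 0 & A\end{pmatrix}$ is an honest dg subalgebra of $\cE$ (it contains the unit, is closed under the differential because $d$ acts blockwise, and is closed under multiplication because the product of two upper-triangular matrices is upper-triangular). The inclusion $\tilde C \hookrightarrow \cE$ is then a quasi-isomorphism of dg algebras, since on underlying complexes the cokernel is exactly $N$, which is acyclic. Quasi-isomorphic dg algebras are Morita equivalent (Remark~\ref{criterion-for-mor-eq}), so $C \sim \cE \sim \tilde C$ and part~(1) follows. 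No replacement of $\cT$, no cofibrant resolution, and no need for the complex $N$ to vanish identically. You should replace your last paragraph by this observation; the rest of your argument (the choice of generators, the use of Corollary~\ref{cor-for-gen-semi-orth}, the appeal to Lemma~\ref{ess-lemma} for part~(2), and the Karoubian-ness of the admissible subcategories) is correct and is exactly what the paper does.
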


\begin{proof} Choose h-projective generators $E_1$ and $E_2$ of $T_1$ and $T_2$ respectively. Then by Corollary \ref{cor-for-gen-semi-orth} $E:=E_1\oplus E_2$ is a generator for $\Perf C$. By Lemma \ref{ess-lemma} the dg category $Perf (C)$ is quasi-equivalent to the dg category $P erf (\cE)$ where $\cE$ is the dg algebra $\End _{P erf (C)}(E)$. Hence the dg algebras $C$ and~$\cE$ are Morita equivalent (Remark \ref{criterion-for-mor-eq}). If $A=\End _{\cT _1}(E_1)$, $B=\End _{\cT _2}(E_2)$ then the dg algebra $\cE$ has the matrix form
$$
\begin{pmatrix}
B & {}_BM_A\\
{}_AN_B & A
\end{pmatrix},
$$
where $M=\Hom _{P erf (C)}(E_1,E_2)$ and
$N=\Hom _{P erf (C)}(E_2,E_1)$. By the assumption the complex
$N$ is acyclic. Hence the dg algebra $\cE$ is quasi-isomorphic to its dg subalgebra
$$\tilde{C}=\begin{pmatrix}
B & M\\
0 & A
\end{pmatrix}.
$$
Thus in particular $\cE$ and $\tilde{C}$ are Morita equivalent (Remark \ref{criterion-for-mor-eq}).
Therefore the dg algebra $C$ is Morita equivalent to the dg algebra $\tilde{C}$, which proves the first assertion. The second one follows again from Lemma \ref{ess-lemma}.
\end{proof}

\subsection{Krull-Schmidt categories}\label{subs-krull-schmidt} 
For basic facts about Krull-Schmidt categories we refer to \cite[(2.2)]{Ri}. 
Recall that an additive category is called \emph{Krull-Schmidt} if every object decomposes into a finite direct sum of indecomposable objects and endomorphism rings of indecomposable rings are local. In a Krull-Schmidt category a decomposition into indecomposable summands is  unique up to isomorphisms of summands. It is known that any $\k$-linear idempotent complete category with finite dimensional $\Hom$-spaces is Krull-Schmidt. In particular, we have the following.
 Let $A$ be a compact dg algebra over a field. Then the category $\Perf(A)$ is Krull-Schmidt.

\section{Rouquier dimension}
\label{section_rouquier}
Let $T$ be a triangulated category.

Following \cite{Rou} we make the following definition.

\begin{definition} Let $n$ be the least integer such that there exists an object $E$ in $T$ with $T=\langle E\rangle _n$. Then we say that the \emph{Rouquier dimension} of $T$, denoted $\Rdim T$, is $n$. If no such $n$ exists, then the Rouquier dimension is $\infty$. Thus $\Rdim T<\infty$ if and only if $T$ has a strong generator.
\end{definition}

The following results were proved in \cite{Rou} (7.9, 7.16, 7.17, 7.25, 7.37, 7.38).

\begin{theorem}\label{thm-rouq} 
\begin{enumerate}
\item Let $X$ be a separated scheme of finite type over a perfect field. Then the category $D^b(\coh X)$ has finite Rouquier dimension.

\item Let $X$ be a reduced separated scheme of finite type over a field. Then $\Rdim  D^b(\coh X)\ge \dim X$.

\item Let $X$ be a smooth affine scheme over a field. Then $\Rdim D^b(\coh X)=\dim X$.

\item Let $X$ be a smooth quasi-projective scheme over a field. Then
$\Rdim D^b(\coh X)\le 2\dim X$.

\item Let $A$ be an Artinian ring. Then $\Rdim D^b(A\text{-mod})$ is bounded above by 
the Loewy length of $A$, i.e. by the minimal $d$ such that $(\mathrm{rad}(A))^{d+1}=0$.

\item Let $A$ be a Noetherian ring. Then $\Rdim \Perf(A)<\infty$ if and only if $\gldim  A<\infty$.
\end{enumerate}
\end{theorem}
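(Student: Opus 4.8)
The plan is to run two complementary arguments across the six items; all of them are due to Rouquier \cite{Rou}. The engine for \emph{lower} bounds is the ghost lemma: if $T=\langle E\rangle_n$, then any composite of $n+1$ morphisms each annihilated by $\Hom^\bullet(E,-)$ vanishes, so producing, for every generator $E$, a nonzero composite of $d$ such ``$E$-ghosts'' forces $\Rdim T\ge d$. The engine for \emph{upper} bounds and for the finiteness statements is to manufacture a strong generator out of a finite resolution — of modules by projectives, or of the diagonal.

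I would first dispatch the upper bounds. For (5): let $S$ be the sum of the simple $A$-modules; if $(\mathrm{rad}\,A)^{d+1}=0$, the radical filtration writes every finitely generated $A$-module as an iterated extension of at most $d+1$ semisimple modules, so it lies in $\langle S\rangle_d$, hence so does every object of $D^b(A\text{-mod})$, giving $\Rdim\le d$. For the implication $\gldim A<\infty\Rightarrow\Rdim\Perf(A)<\infty$ in (6): here $\Perf(A)=D^b(A\text{-mod})$, and a finite resolution of the diagonal bimodule (available e.g. for finite-dimensional algebras of finite global dimension and for coordinate rings of smooth affine varieties) allows one to convolve an arbitrary object of $\Perf(A)$ against it and obtain a uniform bound. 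The same mechanism, applied with the Koszul resolution of the diagonal of a smooth affine $X$ of dimension $d$, yields the upper bound $\Rdim D^b(\coh X)\le d$ in (3); and for (4) one embeds a smooth quasi-projective $X$ of dimension $d$ into $\bbP^N$ and resolves $\cO_\Delta$ on $X\times X$ — a scheme of dimension $2d$ — by a complex whose terms are finite direct sums of sheaves $p_1^*F_i\otimes p_2^*G_i$ with the $G_i$ from a fixed finite collection; a resolution of length $2d$ suffices, and convolution against it places every object of $D^b(\coh X)$ into $\langle\bigoplus_iG_i\rangle_{2d}$.

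For the finiteness statement (1) I would use Noetherian induction on closed subschemes of $X$. Over a perfect field, generic smoothness supplies a dense smooth open $U\subseteq X$ with $\Rdim D^b(\coh U)<\infty$ by (3)/(4); a gluing lemma then bounds $\Rdim D^b(\coh X)$ in terms of $\Rdim D^b(\coh U)$ and $\Rdim D^b(\coh Z)$ for the complementary closed $Z$, the latter being finite by the inductive hypothesis. (Perfectness enters only through generic smoothness.)

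Finally, for the lower bound (2) and the implication $\Rdim\Perf(A)<\infty\Rightarrow\gldim A<\infty$ of (6): restriction along an open immersion (and passage to a localization $A_{\mathfrak p}$) is a Verdier localization, so by Remark~\ref{rem-gen-loc} the Rouquier dimension cannot increase under it, and it suffices to work locally at a point whose local ring $R$ has dimension $\dim X$. The key — and hardest — step is then the local statement: for a Noetherian local ring $R$ of dimension $d$ with a system of parameters $x_1,\dots,x_d$, and for an \emph{arbitrary} strong generator $E$ of $\Perf(R)$, one must exhibit a nonzero composite of $d$ $E$-ghosts. This is extracted from the Koszul complex on $x_1,\dots,x_d$: each $x_i$ annihilates its homology, which is what lets the ghost maps be built, while nonvanishing of the top Koszul homology gives nonvanishing of the composite; the ghost lemma then yields $\Rdim\Perf(R)\ge d$, and the dual phenomenon (for non-regular $R$ one builds perfect complexes of unbounded level) forces regularity of all local rings, hence $\gldim A<\infty$, in (6). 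Combined with (3) this also gives (2) for all reduced $X$. I expect this Koszul--ghost construction, which must work for a generator about which nothing is assumed, to be the main obstacle; the gluing lemma underlying (1) is a secondary technical point.
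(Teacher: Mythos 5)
The paper does not prove Theorem~\ref{thm-rouq}; it records it and cites Rouquier \cite{Rou} (his 7.9, 7.16, 7.17, 7.25, 7.37, 7.38) for the proofs. So there is no ``paper's own proof'' to compare your sketch against. Judged as a reconstruction of Rouquier's arguments, your two-engine picture (ghost lemma for lower bounds, resolution manipulations for upper bounds, Noetherian induction with generic smoothness for (1)) is the right architecture, but two of the upper-bound arguments as you have written them do not close.

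In (5), the step ``the radical filtration writes every finitely generated $A$-module as an iterated extension of at most $d+1$ semisimple modules, so it lies in $\langle S\rangle_d$, \emph{hence} so does every object of $D^b(A\text{-mod})$'' has a non-sequitur at the ``hence''. It is not true in general that $\mathcal{A}\subseteq\langle G\rangle_d$ forces $D^b(\mathcal{A})\subseteq\langle G\rangle_d$: a two-term complex with terms $M_0,M_1\in\langle S\rangle_d$ a priori only lands in $\langle S\rangle_{2d+1}$, and a complex of width $w$ gets worse. The correct route is to filter the \emph{complex} $C$ by the subcomplexes $J^iC$, $J=\mathrm{rad}(A)$, not the individual modules; the subquotients $J^iC/J^{i+1}C$ are complexes of $A/J$-modules, and it is semisimplicity of $A/J$ (Artin--Wedderburn) that makes those subquotients quasi-isomorphic to direct sums of shifts of simples, i.e.\ elements of $[S]_0$. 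Then $C\in[S]_d$. Without this re-routing the Loewy-length bound does not follow.

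In (6), the direction $\gldim A<\infty\Rightarrow\Rdim\Perf(A)<\infty$ is claimed for an arbitrary Noetherian ring, but a finite resolution of the diagonal $A$-bimodule by sums of products $P\otimes Q$ is not available at that level of generality: there is no base field over which to form $A\otimes_{\k}A^{\op}$, and $A\otimes_{\bbZ}A^{\op}$ need not have finite global dimension even when $A$ does (try $A=\bbZ_p$). Your ``e.g.'' acknowledges this, but then the Noetherian case is simply not handled. The paper in fact supplies the missing argument in the Proposition immediately following Theorem~\ref{thm-rouq}: given a bounded complex $C^\bul$ of finitely generated modules over $A$ with $\gldim A=n$, take a Cartan--Eilenberg resolution $K^{\bul\bul}$; its rows are split complexes of finitely generated projectives, hence lie in $\langle A\rangle_0$, and there are at most $n+1$ of them, so $C^\bul\in\langle A\rangle_n$. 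This bypasses the diagonal entirely and gives the sharper bound $\Rdim\Perf(A)\le\gldim A$. Finally, for the converse direction of (6) your sketch only produces regularity of each localization $A_{\mathfrak p}$; one must also bound the Krull dimension of $A$ (otherwise $\gldim A=\sup_{\mathfrak p}\dim A_{\mathfrak p}$ could still be infinite), which Rouquier extracts from the same Koszul--ghost lower bound applied uniformly.
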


The next proposition should be known to experts, but we did not find a reference.

\begin{predl}
Let $A$ be a Noetherian ring of global dimension $n$. Then
$$\Rdim \Perf(A)\le n.$$
\end{predl}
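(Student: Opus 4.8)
The plan is to use the identification $\Perf(A)\simeq D^b(\mathrm{mod-}A)$, which holds because $A$ has finite global dimension, and to prove that the canonical generator $A$ satisfies $D^b(\mathrm{mod-}A)=\langle A\rangle_n$; by the definition of $\Rdim$ this is exactly the asserted bound. So I fix an arbitrary $X\in D^b(\mathrm{mod-}A)$ and aim to show $X\in\langle A\rangle_n$.

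First I would attach to $X$ a ``syzygy tower''. Put $X_0=X$. Given $X_m\in D^b(\mathrm{mod-}A)$, for each of the finitely many $i$ with $H^i(X_m)\ne 0$ pick a surjection of right modules $A^{r_i}\twoheadrightarrow H^i(X_m)$ (possible since $A$ is Noetherian, so each $H^i(X_m)$ is finitely generated) and set $L_m:=\bigoplus_i A^{r_i}[-i]\in[A]_0$. As $A^{r}$ is projective, $\Hom_{D(A)}(A^{r}[-i],Y)\cong\Hom_A(A^{r},H^i(Y))$, so there is a morphism $L_m\to X_m$ realizing all the chosen surjections on cohomology; let $X_{m+1}$ be its cone, so we have a triangle $L_m\to X_m\to X_{m+1}\to L_m[1]$. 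Chasing the long exact cohomology sequence, and using that $H^j(L_m)\to H^j(X_m)$ is onto for every $j$, gives $H^i(X_{m+1})\cong\ker\big(A^{r_{i+1}}\twoheadrightarrow H^{i+1}(X_m)\big)$, a first syzygy of $H^{i+1}(X_m)$. Iterating, $H^i(X_n)$ is an $n$-th syzygy of $H^{i+n}(X)$.

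Since $\gldim A=n$, every $n$-th syzygy of a finitely generated module is projective, so all cohomology modules of $X_n$ are finitely generated projective. I would then check that any such object lies in $\langle A\rangle_0$: if $d$ is the top cohomological degree of $X_n$, the truncation triangle $\tau^{\le d-1}X_n\to X_n\to H^d(X_n)[-d]\to$ splits, because $\Hom_{D(A)}\big(H^d(X_n)[-d],\tau^{\le d-1}X_n[1]\big)=0$ --- indeed $\tau^{\le d-1}X_n[1]$ has cohomology in degrees $\le d-2$ and $H^d(X_n)$ is a projective module, so this group reduces to $\Hom_A$ of a projective module into a vanishing cohomology module. Inducting on the cohomological width, $X_n\simeq\bigoplus_i H^i(X_n)[-i]$, a finite direct sum of shifts of finitely generated projectives, hence a direct summand of a finite direct sum of shifts of $A$, i.e.\ $X_n\in\langle A\rangle_0$.

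Finally I would reassemble: starting from $X_n\in\langle A\rangle_0$ and moving back up the tower through the triangles $L_m\to X_m\to X_{m+1}\to L_m[1]$ with $L_m\in\langle A\rangle_0$, the inclusion $\langle A\rangle_a\star\langle A\rangle_b\subseteq\langle A\rangle_{a+b+1}$ (cf.\ \cite{Rou}) yields $X_{n-1}\in\langle A\rangle_1$, $X_{n-2}\in\langle A\rangle_2,\dots,X_0=X\in\langle A\rangle_n$, whence $\Rdim\Perf(A)\le n$. The main obstacle to anticipate is that the obvious degreewise (``stupid filtration'') argument would cost one cone per cohomological degree of $X$, not $n$ cones in total; the syzygy tower is precisely what circumvents this, because each $L_m$ is allowed to be a sum of shifts of $A$ spread over arbitrarily many degrees. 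The remaining care is pure bookkeeping — verifying that exactly $n$ cones are used and that this matches the convention in which $\langle A\rangle_n$ is built from $n$ cones — together with the elementary splitting lemma for bounded complexes with projective cohomology.
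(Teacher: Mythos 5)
Your proof is correct. You reach the same bound by a genuinely different mechanism: where the paper invokes the Cartan--Eilenberg resolution of a bounded complex $C^\bullet$ (a bounded bicomplex $K^{\bullet\bullet}$ of finitely generated projectives with split horizontal differentials and $\le n+1$ rows, so that $C$ is built from its $\langle A\rangle_0$ rows by $n$ cones), you construct by hand an iterative ``syzygy tower'' $X=X_0,X_1,\dots,X_n$ of cones over maps from sums of shifts of $A$ chosen to surject onto cohomology, so that $H^i(X_m)$ is an $m$-th syzygy of $H^{i+m}(X)$. After $n$ steps the cohomology of $X_n$ is projective, and your splitting lemma for bounded complexes with projective cohomology puts $X_n\in\langle A\rangle_0$; unwinding the tower with $[A]_0\star\langle A\rangle_c\subseteq\langle A\rangle_{c+1}$ gives $X\in\langle A\rangle_n$. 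Both arguments rest on the same underlying fact (global dimension $n$ bounds the length of a syzygy chain), but yours trades the bicomplex formalism for an explicit triangulated-category induction, and so is somewhat more self-contained, at the cost of carrying along the cohomology bookkeeping and the separate splitting lemma that in the paper's version are absorbed into the two standard properties of the Cartan--Eilenberg resolution (split horizontal differentials, row count bounded by $\gldim A +1$). The one spot worth stating explicitly in a final write-up is that the choice of surjections at each stage is irrelevant: since $\mathrm{pd}\, H^{i+n}(X)\le n$, \emph{any} $n$-th syzygy is projective, so the construction does not require a minimal resolution.
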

\begin{proof}
We claim that $\Perf(A)=\langle A\rangle _n$. Let $C_{\bul}$ be a bounded complex of finitely generated $A$-modules. Take its Cartan-Eilenberg resolution  $K_{\bul\bul}$. It is a bounded bicomplex of projective finitely generated $A$-modules having the following properties:
\begin{enumerate}
\item total complex $Tot(K_{\bul\bul})$ is a projective resolution of $C_{\bul}$.
\item horizontal differentials are split homomorphisms;
\item number of rows in $K_{\bul\bul}$ is $\le n+1$.
\end{enumerate}
Clearly, $C$ is obtained from rows of $K$ by $n$ cones. On the other hand, any row of $K$ is homotopy equivalent to the direct sum of its cohomology modules, which are projective finitely generated modules. Hence every row of $K$ belongs to $\langle A\rangle _0$.
\end{proof}

It follows from Theorem \ref{thm-rouq} that for $X$ smooth and quasi-projective over a field the following holds: 
$$\dim  X\le \Rdim D^b(\coh X)\le 2\dim X.$$ 
Rouquier remarks that in all cases where $\Rdim  D^b(\coh X)$ can be computed exactly, one actually has $\Rdim  D^b(\coh X)=\dim  X$. In \cite{Or} Orlov conjectures that this equality holds for every smooth quasi-projective variety $X$. He
also proves the following

\begin{theorem} Let $X$ be a smooth projective curve over a field. Then $\Rdim  D^b(\coh X)=1$.
\end{theorem}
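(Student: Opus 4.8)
The plan is to prove that $\Rdim D^b(\coh X) = 1$ for a smooth projective curve $X$ by establishing both inequalities. The lower bound $\Rdim D^b(\coh X) \ge 1 = \dim X$ is immediate from Theorem \ref{thm-rouq}(2), since $X$ is reduced of finite type over a field. So the real content is the upper bound $\Rdim D^b(\coh X) \le 1$, i.e. exhibiting an object $E$ with $D^b(\coh X) = \langle E\rangle_1$, meaning every object of $D^b(\coh X)$ is a direct summand of something built from two copies of (sums of shifts of) $E$ via a single cone.

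The approach I would take is to choose $E$ to be a tilting-type object, or more simply $E = \cO_X \oplus \cO_X(k)$ for a suitable twist, or $E = \cO_X \oplus \cL$ where $\cL$ is a line bundle of sufficiently high degree. First I would reduce to showing that every coherent sheaf $\cF$ on $X$ lies in $\langle E\rangle_1$, since a bounded complex over a hereditary (global dimension one) category splits as the direct sum of its shifted cohomology sheaves, so it suffices to handle sheaves placed in a single degree, and cohomology in one degree is swallowed by taking a direct summand once we know each $\cF \in \langle E\rangle_1$. For a coherent sheaf $\cF$ on a smooth curve, decompose $\cF \cong \cF_{tors} \oplus \cF_{loc\,free}$; torsion sheaves are supported at points and built from structure sheaves of fat points, while locally free $\cF$ on a curve admits, after twisting by a sufficiently ample line bundle, a resolution $0 \to \cO_X(-m)^{\oplus a} \to \cO_X^{\oplus b} \to \cF \to 0$ (using that $\cF(n)$ is globally generated and its kernel is again a vector bundle, hence also becomes globally generated). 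Thus $\cF$ is the cone (up to shift) of a map between a sum of copies of $\cO_X$ and a sum of copies of a twist, both of which — together with torsion sheaves, which one checks are also reached in one step, e.g. as cokernels of maps of line bundles — should lie in $\langle E\rangle_1$ for the right choice of $E$ containing enough line bundles as summands.

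The main obstacle will be uniformity: a single generator $E$ must work for all of $D^b(\coh X)$ at once with a fixed number of cones, so one cannot let the twist $m$ depend on $\cF$. The fix is the standard observation that on a curve, once $\cL$ has degree at least $2g - 1$ (or large enough relative to $g$), every vector bundle can be resolved by $\cO_X$ and $\cL$ with controlled length, and every torsion sheaf likewise; alternatively, one works with $E = \bigoplus_{i} \cO_X(i)$ over a finite range of twists covering what is needed, which is still a single object. The bookkeeping — verifying that torsion sheaves, twisted line bundles $\cO_X(j)$ for $j$ outside the chosen range, and extensions thereof all genuinely appear as summands after exactly one cone, not two — is the delicate part and is where I would spend the most care; this is precisely the kind of argument carried out in \cite{Or}, and I would follow that reference, checking that the generation happens in $\langle E\rangle_1$ rather than $\langle E\rangle_2$, using that $X$ is one-dimensional and $\coh X$ is hereditary so that the Cartan--Eilenberg/resolution argument (as in the preceding proposition) terminates after one cone.
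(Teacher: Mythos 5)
The paper does not prove this statement: it is stated as Orlov's theorem and cited from \cite{Or} without proof, so there is no internal argument to compare yours against. Your lower bound (via Theorem~\ref{thm-rouq}(2)) and your reduction to coherent sheaves via heredity of $\coh X$ are both correct and standard. The genuine gap is in the upper bound, and it is not just a matter of bookkeeping as you suggest.

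First, the asserted resolution $0 \to \cO_X(-m)^{\oplus a} \to \cO_X^{\oplus b} \to \cF \to 0$ of a locally free $\cF$ does not exist in general. Twisting so that $\cF(n)$ is globally generated gives $\cO_X^{\oplus b} \twoheadrightarrow \cF(n)$ with kernel $K$ a vector bundle, but there is no reason for $K$ to be a direct sum of copies of a \emph{single} $\cO_X(-m)$; making $K(m)$ globally generated only lengthens the resolution and you are back where you started. Second, and more fundamentally, even granting two-term resolutions by line bundles, the twists genuinely cannot be bounded: a vector bundle (or torsion sheaf $\cO_D$) of arbitrarily negative degree (or arbitrarily large length) requires line bundles of unbounded negative degree, so no fixed $\cL$ of degree $\ge 2g-1$, nor a finite sum $\bigoplus_i \cO_X(i)$ over a fixed range, will contain them in $[E]_0$. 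This is not a uniformity nuisance that a larger $E$ fixes; if it were, Rouquier's dimension~$1$ upper bound for curves would be routine, which it is not. The flexibility that makes the theorem true --- that one is allowed shifts inside $[E]_0$ and a \emph{direct summand} of a single cone in $\langle E\rangle_1$, and that these together reach sheaves of all degrees --- is exactly the nontrivial content of Orlov's argument, and your writeup defers to \cite{Or} at precisely this point. As it stands, the proposal identifies the correct skeleton and the correct difficulty but does not supply the idea that resolves it, so it is not a complete proof.
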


\subsection{$\Rdim =0$}

\begin{lemma}\label{crit-for-rdim-0} Assume that $T$ is a  triangulated Krull-Schmidt category. For example $T$ can be $\Perf(A)$ for a compact dg algebra $A$ over a field (see Subsection \ref{subs-krull-schmidt}). Then $\Rdim T=0$ if and only if $T$ contains only finitely many indecomposables up to an isomorphism and a shift.
\end{lemma}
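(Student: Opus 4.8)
The plan is to prove both implications using the Krull-Schmidt structure of $T$. Recall $\Rdim T = 0$ means there exists $E\in T$ with $T = \langle E\rangle_0$, i.e. every object of $T$ is a direct summand of a finite direct sum of shifts of $E$.

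\medskip

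\noindent\textbf{($\Rdim T=0 \Rightarrow$ finitely many indecomposables).} Suppose $T=\langle E\rangle_0$ for some object $E$. Since $T$ is Krull-Schmidt, $E$ decomposes as a finite direct sum $E\simeq \bigoplus_{j=1}^r E_j$ of indecomposables. I claim that every indecomposable object $X$ of $T$ is isomorphic to a shift $E_j[i]$ of one of the $E_j$. Indeed, since $X\in\langle E\rangle_0$, there is an object $Y$ with $X\oplus Y\simeq \bigoplus_{k=1}^m E[i_k]$ for finitely many shifts $i_k$. Decomposing the right-hand side into indecomposables gives a finite direct sum of objects of the form $E_j[i_k]$. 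By uniqueness of the Krull-Schmidt decomposition, the indecomposable summand $X$ of the left-hand side must be isomorphic to one of these $E_j[i_k]$. Hence there are at most $r$ indecomposables up to isomorphism and shift, and since every object is a finite sum of indecomposables, $T$ has only finitely many indecomposables up to isomorphism and shift.

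\medskip

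\noindent\textbf{(finitely many indecomposables $\Rightarrow \Rdim T=0$).} Conversely, suppose the indecomposable objects of $T$, up to isomorphism and shift, are represented by a finite list $E_1,\dots,E_r$. Set $E:=E_1\oplus\cdots\oplus E_r$. I claim $T=\langle E\rangle_0$. Let $X\in T$ be arbitrary. By Krull-Schmidt, $X$ is a finite direct sum of indecomposables, each of which, by hypothesis, is isomorphic to some $E_j[i]$; hence $X$ is a direct summand of a finite direct sum of shifts $\bigoplus_j E_j[i] $, which is a direct summand of a finite direct sum of shifts of $E$. Therefore $X\in\langle E\rangle_0$, so $T=\langle E\rangle_0$ and $\Rdim T = 0$.

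\medskip

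\noindent The only point requiring care is the use of the uniqueness part of the Krull-Schmidt theorem to pass between decompositions of $X\oplus Y$ and of $\bigoplus E[i_k]$; this is where the Krull-Schmidt hypothesis is essential, and it is routine. The statement that $\Perf(A)$ is Krull-Schmidt for a compact dg algebra $A$ over a field is recorded in Subsection \ref{subs-krull-schmidt}, so no extra work is needed there.
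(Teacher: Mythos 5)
Your proof is correct and takes essentially the same route as the paper: decompose the strong generator $E$ (resp.\ assemble $E$ from the finitely many indecomposables) and invoke the uniqueness part of the Krull--Schmidt theorem. You spell out the uniqueness step a bit more explicitly, but the argument is the same.
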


\begin{proof}  Let $E_1,\ldots ,E_t$ be all indecomposables (up to isomorphism and shift) in $T$. Put $E=\oplus E_i$. Then $T=\langle E\rangle _0$. Vice versa, let $E\in T$ be such that $\langle E\rangle _0=T$. Let $E=\oplus E_i$ be the (finite) decomposition of $E$ into indecomposables. Then clearly $\{E_i\}$ is the complete list of indecomposables in $T$ up to an isomorphism and shift.
\end{proof}

\begin{example}\label{ex-rou-dim-zero}
Assume that $\k$ is an algebraically closed field. Let $Q$ be a finite quiver and $A=k[Q]$ be its path algebra. Then the category $D^b(\text{mod-}A)=\Perf(A)$ has Rouquier dimension equal to zero if and only if $Q$ is a Dynkin quiver of A-D-E type. Indeed, by a theorem of Gabriel \cite{Ga} the abelian category $\text{mod-}A$
contains only finitely many indecomposables (up to isomorphism) if and only if $Q$ is a Dynkin quiver of A-D-E type. Notice that the algebra $A$ has global dimension $1$ and so any complex in
$D^b(\text{mod-}A)$ is a direct sum of its cohomology. So the category $D^b(\text{mod-}A)$ contains finitely many indecomposables (up to isomorphism and shift) if and only if $Q$ is a Dynkin quiver of A-D-E type. It remains to apply Lemma \ref{crit-for-rdim-0}.
\end{example}

Essentially, the above example is the only possible example of a category with Rouquier dimension zero. One has the following result by N.\,Hanihara \cite[Theorems 1.3, 1.4]{Han}, see also Amiot's work \cite{Am}.
\begin{theorem}
Let $T\simeq\Perf(A)$ for a compact dg algebra $A$ over an algebraically closed field.  Suppose $\Rdim T=0$. Then $T\simeq D^b(\modd \k[Q])$, where $Q$ is a finite disjoint union of quivers of A-D-E type. 
\end{theorem}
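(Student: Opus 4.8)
The plan is to reduce the problem, via the structure theory developed above, to a question about finite-dimensional hereditary algebras, and then to invoke Gabriel's theorem together with the classification of representation-finite selfinjective-type situations. First I would use Corollary \ref{cor-quasi-equiv}: since $T$ is Krull--Schmidt (being $\Perf(A)$ for a compact dg algebra), it is idempotent complete, and by Lemma \ref{crit-for-rdim-0} the hypothesis $\Rdim T=0$ means $T$ has only finitely many indecomposables up to shift, so in particular $T$ has a generator $E=\bigoplus E_i$ with $T=\langle E\rangle_0$. Replacing $A$ by $\cE=\End(E)$ we may assume $T=\Perf(\cE)$ with $T=\langle \cE\rangle_0$, i.e. every object of $\Perf(\cE)$ is a finite direct sum of shifts of summands of $\cE$. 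The cohomology algebra $H:=H^\bullet(\cE)$ is a finite-dimensional graded $\k$-algebra.

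The key structural step is to show that $\cE$ is formal and that $H$ has global dimension $\le 1$, concentrated in degree $0$ — i.e. $H$ is (Morita equivalent to) the path algebra of a finite quiver with no relations. The point is that $T=\langle \cE\rangle_0$ forces every morphism complex $\Hom^\bullet_T(E_i,E_j)$ to split as a sum of one-dimensional pieces compatibly, and that the triangulated structure on $T$ is so rigid that all higher products (both the $A_\infty$-structure and the Yoneda products of positive length) must vanish; otherwise one could build an indecomposable object requiring a genuine cone, contradicting $\langle \cE\rangle_0=T$. Concretely, I would argue: if $H$ had a nonzero composition $H^i\cdot H^j$ with $i,j$ not both forcing the product to be absorbed, or if $H$ were not hereditary, then $D^b(\text{mod-}H_0)$ (or an appropriate subquotient) would contain an indecomposable complex with nontrivial differential, and pulling this back to $T$ via the (fully faithful) inclusion of a module subcategory contradicts finiteness of indecomposables up to shift. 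This identifies $T$ with $D^b(\text{mod-}\k[Q])$ for a finite quiver $Q$ of finite global dimension one.

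Finally, by Example \ref{ex-rou-dim-zero} — i.e. Gabriel's theorem \cite{Ga} — the category $D^b(\text{mod-}\k[Q])$ has only finitely many indecomposables up to isomorphism and shift precisely when every connected component of $Q$ is a Dynkin quiver of type $A$, $D$, or $E$; since $\Rdim T=0$ is equivalent to this finiteness by Lemma \ref{crit-for-rdim-0}, we conclude $T\simeq D^b(\text{mod-}\k[Q])$ with $Q$ a finite disjoint union of $A$-$D$-$E$ quivers, as claimed.

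The main obstacle I anticipate is the middle step: proving that the hypothesis $\Rdim T=0$, i.e. $T=\langle E\rangle_0$ for a single $E$, forces $\cE$ to be formal with hereditary cohomology. Unlike the situation $\Rdim T \le n$ for larger $n$, here there is genuinely no room for cones, so one expects a clean dichotomy, but making the argument rigorous requires controlling the $A_\infty$-enhancement $\cE$ and ruling out, for instance, $\cE$ quasi-isomorphic to $\k[t]/t^2$ with $t$ in degree $0$ (which has $\Perf$ not Krull--Schmidt-finite) or to $\k[t]$ with $t$ in negative degree. This is exactly where Hanihara's and Amiot's techniques (tilting theory for $0$-Calabi--Yau-like situations, and the structure of $\Perf$ of a dg algebra with one-dimensional Hom-spaces) enter, and I would expect to follow \cite{Han,Am} closely at this point rather than reprove it from scratch.
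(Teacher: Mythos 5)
The paper does not give a proof of this theorem at all: it is stated as a cited result of Hanihara \cite{Han} and Amiot \cite{Am}, and the only content the authors add is the Remark immediately after it, namely that those references require $T$ to have infinitely many isomorphism classes of indecomposable objects, a hypothesis that is verified here because the shifts $E[i]$, $i\in\Z$, of a single indecomposable $E$ are pairwise non-isomorphic (as $\Hom^\bullet(E,E)$ is finite-dimensional). Your proposal instead tries to reconstruct the actual argument inside \cite{Han,Am}, which is a genuinely different (and much more ambitious) undertaking than what the paper does.

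As a reconstruction, your sketch correctly identifies the crux: one must show that the dg endomorphism algebra $\cE$ of the additive generator is formal and that its cohomology is hereditary and concentrated in degree $0$. But this step is left entirely at the level of intuition (``the triangulated structure is so rigid that all higher products must vanish; otherwise one could build an indecomposable object requiring a genuine cone''), and you acknowledge you would defer to \cite{Han,Am} rather than supply it. That is a real gap: the whole theorem lives in that step, since once one knows $\cE$ is quasi-isomorphic to a finite-dimensional hereditary $\k$-algebra in degree $0$, the reduction to Gabriel's theorem via Lemma \ref{crit-for-rdim-0} and Example \ref{ex-rou-dim-zero} is routine, as you note. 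You also omit the hypothesis-check that the paper's Remark supplies (infinitely many indecomposables up to isomorphism \emph{without} identifying shifts), which is needed to invoke the cited theorems at all and is distinct from the finiteness-up-to-shift condition you extract from Lemma \ref{crit-for-rdim-0}. In short: same ultimate citation target, but the paper's treatment is a two-line verification of a hypothesis, while yours attempts --- but does not complete --- the underlying structure theorem.
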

\begin{remark}
The cited result uses that $T$ has infinitely many isomorphism classes of indecomposable objects. In our case this condition is satisfied because all shifts $E[i], i\in\Z$ of  a single indecomposable object $E$ are non-isomorphic (since $\Hom^{\bul}(E,E)$ is finite-dimensional). 
\end{remark}

\subsection{\bf  Monotonicity}\label{subs-monot} Let $T$ be a triangulated category and let $l\colon T\to T^\prime$ be its Verdier localization. Then $\Rdim  T\ge \Rdim  T^\prime $. Indeed, the functor $l$ is essentially surjective, hence if $T=\langle E\rangle _n$ for an object $E\in T$, then $T^\prime =\langle l(E)\rangle _n$.

If $T^\prime $ is a semi-orthogonal component of $T$, then the inclusion functor $T^\prime \hookrightarrow T$ has an adjoint
$T\to T^\prime$ which is a localization. Hence $\Rdim  T\ge \Rdim  T^\prime $.

\subsection{\bf Additivity} Assume that $A$ and $B$ are dg algebras over a field. Consider the triangulated categories $\Perf(A)$, $\Perf(B)$ and $\Perf(A\otimes B)$. Then the sum $\Rdim  \Perf(A) + \Rdim  \Perf(B)$ may not be equal to $\Rdim  \Perf(A\otimes B)$, see Example \ref{example_maintable}. However, we don't know of an example where $\Rdim  \Perf(A\otimes B)<\Rdim  \Perf(A)+\Rdim \Perf(B)$.

\subsection{Behavior in families}\label{rou-dim-in-fam} We expect that the Rouquier dimension is upper semi-continuous. Namely, let $R$ be a commutative Noetherian ring and let $A$ be a smooth and compact dg $R$-algebra. For $x\in \Spec R$, denote by 
$A_x$  the dg $k(x)$-algebra $A\stackrel{\bbL}{\otimes }_Rk(x)$, where $k(x)$ is the residue field of the point $x$.
Then the function
$\Rdim  \Perf(A_x)$, $x\in \Spec R$, should be upper semi-continuous on $\Spec R$. 

%


\section{Diagonal dimension}
\label{section_diagonal}


We consider triangulated categories of the form $\Perf(A)$, where $A$ is a dg $\k$-algebra. Consider the bifunctor
$$\boxtimes \colon \Perf(A^{\op})\times \Perf(A)\to \Perf(A^{\op}\otimes  A).$$

\begin{definition} \label{defi-of-diag-dim} 
Define the \emph{diagonal dimension} of the category $\Perf(A)$ as the minimal $n\in \bbZ _{\ge 0}$ for which there exist objects $F\in \Perf(A^{\op})$ and $G\in \Perf(A)$ such that the diagonal dg $A^{\op}\otimes A$-module $A\in\langle F\boxtimes G\rangle _n\subset \Perf(A^{\op}\otimes A)$. If no such $n$ exists then we say that the diagonal dimension is~$\infty$. We denote the diagonal dimension by $\Ddim  \Perf(A)$.
\end{definition}

Note that the diagonal dimension of $\Perf(A)$ is finite if and only if $A$ is smooth.

\begin{remark} We show below that the diagonal dimension of the category $\Perf(A)$ is a Morita invariant of the dg algebra $A$. But we don't know if it is an invariant of the triangulated category $\Perf(A)$.
\end{remark}

\begin{predl} \label{morita-inv-of-diag} Let $A$ and $B$ be Morita equivalent dg algebras.
Then 
$$\Ddim  \Perf(A)=\Ddim  \Perf(B).$$
\end{predl}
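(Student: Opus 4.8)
The plan is to transport the defining data of diagonal dimension along a Morita equivalence. Let $K\in D(A^{\op}\otimes B)$ be a kernel giving an equivalence $\Phi_K\colon D(A)\to D(B)$; we may assume $K$ is h-projective, and since $\Phi_K$ restricts to an equivalence $\Perf(A)\simeq\Perf(B)$ we have $K\in\Perf(B)$, and by Lemma \ref{existence-of-kernel}(1) also $K\in\Perf(A^{\op})$ and $K^t:=\Hom_B(K,B)\in\Perf(B^{\op})$ with $(K^t)^t\simeq K$. By symmetry (Remark \ref{morita}) it suffices to prove one inequality, say $\Ddim\Perf(B)\le\Ddim\Perf(A)$.

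First I would invoke Lemma \ref{existence-of-kernel}(4): the functor
$$\Phi_{K^t\boxtimes K}\colon D(A^{\op}\otimes A)\to D(B^{\op}\otimes B),\qquad S\mapsto K^t\stackrel{\bbL}{\otimes}_A S\stackrel{\bbL}{\otimes}_A K,$$
is a triangulated equivalence which preserves the subcategories $\Perf$ and sends the diagonal bimodule $A$ to (a bimodule quasi-isomorphic to) the diagonal bimodule $B$. The key formal point is that any triangulated equivalence $\Psi$ commutes with the operations $\star$, $[-]_n$ and $\langle-\rangle_n$: if $X\in\langle E\rangle_n$ then $\Psi(X)\in\langle\Psi(E)\rangle_n$, because $\Psi$ takes distinguished triangles to distinguished triangles, finite direct sums of shifts to finite direct sums of shifts, and direct summands to direct summands. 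Hence if $A\in\langle F\boxtimes G\rangle_n$ in $\Perf(A^{\op}\otimes A)$ with $F\in\Perf(A^{\op})$, $G\in\Perf(A)$, then applying $\Phi_{K^t\boxtimes K}$ gives
$$B\simeq\Phi_{K^t\boxtimes K}(A)\in\big\langle\,\Phi_{K^t\boxtimes K}(F\boxtimes G)\,\big\rangle_n\subset\Perf(B^{\op}\otimes B).$$

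Next I would identify $\Phi_{K^t\boxtimes K}(F\boxtimes G)$ as an external product of objects of $\Perf(B^{\op})$ and $\Perf(B)$. By definition of $\boxtimes$ and associativity/commutativity of the relevant derived tensor products over $A$ and $A^{\op}$,
$$K^t\stackrel{\bbL}{\otimes}_A(F\boxtimes G)\stackrel{\bbL}{\otimes}_A K\ \simeq\ \big(K^t\stackrel{\bbL}{\otimes}_A F\big)\ \boxtimes\ \big(G\stackrel{\bbL}{\otimes}_A K\big)\ =\ \Phi^{\op}_{K^t}(F)\ \boxtimes\ \Phi_K(G),$$
where $\Phi^{\op}_{K^t}=K^t\stackrel{\bbL}{\otimes}_A(-)\colon\Perf(A^{\op})\to\Perf(B^{\op})$ is the equivalence of Lemma \ref{existence-of-kernel}(4) and $\Phi_K\colon\Perf(A)\to\Perf(B)$ is our original equivalence. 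Thus $F':=\Phi^{\op}_{K^t}(F)\in\Perf(B^{\op})$ and $G':=\Phi_K(G)\in\Perf(B)$, and we conclude $B\in\langle F'\boxtimes G'\rangle_n\subset\Perf(B^{\op}\otimes B)$, whence $\Ddim\Perf(B)\le n$. Taking the infimum over $n$ (and noting the statement is vacuous, with both sides $\infty$, when $A$ is not smooth, since by Lemma \ref{existence-of-kernel}(5) smoothness is a Morita invariant) yields $\Ddim\Perf(B)\le\Ddim\Perf(A)$, and the reverse inequality follows by symmetry.

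The main obstacle is bookkeeping rather than ideas: one must be careful that $\Phi_{K^t\boxtimes K}$ genuinely restricts to $\Perf$ and genuinely carries the diagonal to the diagonal (both supplied by Lemma \ref{existence-of-kernel}(4)), and that the identification $K^t\otimes_A(F\boxtimes G)\otimes_A K\simeq(K^t\otimes_A F)\boxtimes(G\otimes_A K)$ is compatible with the $(B^{\op}\otimes B)$-bimodule structures; this last point is a routine check using h-projectivity of $K$ as an $A^{\op}$- and $B$-module (Lemma \ref{existence-of-kernel}(1)) so that all the tensor products can be computed underived on representatives. No step presents a real difficulty once the machinery of Lemma \ref{existence-of-kernel} is in hand.
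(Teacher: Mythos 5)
Your proof is correct and follows essentially the same route as the paper: both apply Lemma \ref{existence-of-kernel}(4) to transport a generation datum for the diagonal bimodule $A$ along $\Phi_{K^t\boxtimes K}$, identifying the image of $F\boxtimes G$ as $(K^t\otimes_A F)\boxtimes(G\otimes_A K)$. You simply spell out two small points the paper leaves implicit (that triangulated equivalences preserve $\langle-\rangle_n$, and the bimodule-compatible identification of the external product), which is fine.
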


\begin{proof} This follows from Lemma \ref{existence-of-kernel}.
Indeed, let $\Phi_K \colon D(A) \to D(B)$ be an equivalence. Then by Lemma~\ref{existence-of-kernel}(4) the functor
$\Phi_{K^t\boxtimes K}\colon D(A^{\op}\otimes  A) \to D(B^{\op}\otimes  B)$
is an equivalence which preserves the diagonal bimodule. Assume that
$A \in \langle F\boxtimes G\rangle_n $ for some $F\in\Perf(A^{\op})$,$G\in\Perf(A)$ and $n$.
By Lemma~\ref{existence-of-kernel}(4) we have
$$B=\Phi_{K^t\boxtimes K}(A) \in\langle(K^t\otimes_AF)\boxtimes 
(G\otimes_AK)\rangle_n.$$
Note that $K^t\otimes_AF\in \Perf(B^{\op})$ and $G\otimes_AK\in\Perf(B)$ because functors $\Phi^{\op}_{K^t}$ and $\Phi_K$ restrict to equivalences on subcategories $\Perf$.
Hence, $\Ddim \Perf(B)\le \Ddim \Perf(A)$, the opposite inequality is proved similarly.
\end{proof}

We make the following
\begin{remark}
Let $A$ be a dg algebra, assume that $A\in\langle F\boxtimes G\rangle_n$ for some $F\in\Perf(A^{\op})$, $G\in\Perf(A)$. Then for any object $M\in\Perf(A^{\op}\otimes A)$ one has $M\in\langle F'\boxtimes G'\rangle_n$ for some $F'\in\Perf(A^{\op})$, $G'\in\Perf(A)$.

Indeed, for some $B\in \Perf(A^{\op}\otimes A)$ we have $A\oplus B\in [F\boxtimes G]_n$. Tensoring with $M$ we get
$$M\oplus (M\stackrel{\bbL}{\otimes}_AB)\in [(M\stackrel{\bbL}{\otimes}_A F)\boxtimes G]_n,$$
hence one can take $F'=M\stackrel{\bbL}{\otimes}_A F\in\Perf(A^{\op})$ and $G'=G$.
\end{remark}

\subsection{Compatibility with the geometric definition.}\label{geom-def-of-diag-dim}

In the geometric context the notion of the diagonal dimension was introduced in \cite[Definition 2.15]{BaFa}. Namely, for a smooth projective variety $X$ the authors define the diagonal dimension
$\Ddim  D^b(\coh X)$ of the derived category $D^b(\coh X)$ to be the least integer $n$ such that there exist $F,G\in D^b(\coh X)$ so that
$\cO _\Delta \in \langle F\boxtimes G\rangle _n$, where $\cO _\Delta$ is the structure sheaf of the diagonal on $X\times X$.

The triangulated category $D^b(\coh X)$ has an enhancement and is equivalent to the category $\Perf(A)$ for a (smooth and compact) dg algebra $A$. In fact any two enhancements of $D^b(\coh X)$ are quasi-equivalent, see  \cite{LO}. So we may compare the number $\Ddim  D^b(\coh X)$ with $\Ddim  \Perf(A)$ as in Definition \ref{defi-of-diag-dim}. We claim that the two notions of the diagonal dimension agree.

\begin{lemma} In the notation above $\Ddim  D^b(\coh X)=\Ddim  \Perf(A)$.
\end{lemma}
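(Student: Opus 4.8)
The plan is to exhibit an explicit equivalence between $D^b(\coh(X\times X))$ and $\Perf(A^{\op}\otimes A)$ that (up to the usual identifications) carries the structure sheaf $\cO_\Delta$ of the diagonal to the diagonal bimodule $A$, and is compatible with the external products $\boxtimes$. Once such an equivalence is set up, it sends the subcategory $\langle F\boxtimes G\rangle_n$ on the geometric side to $\langle F'\boxtimes G'\rangle_n$ on the algebraic side for corresponding objects $F,F'$ and $G,G'$; since the integer $n$ is intrinsic to the categories involved, the two minima coincide and the claimed equality follows.

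First I would fix a tilting-type object (a classical generator) $E\in D^b(\coh X)$ with endomorphism dg algebra $\cE$, so that by Lemma \ref{ess-lemma} (applied to an enhancement of $D^b(\coh X)$, which exists and is essentially unique by \cite{LO}) there is a quasi-equivalence $D^b(\coh X)\simeq \Perf(\cE)$, and $\cE$ is Morita equivalent to the given $A$. By Proposition \ref{morita-inv-of-diag}, $\Ddim\Perf(\cE)=\Ddim\Perf(A)$, so it suffices to treat the case $A=\cE$, i.e.\ $A=\mathbb{R}\End(E)$. Next I would observe that $E\boxtimes E^\vee$ (or rather $E\boxtimes (E^\vee)$, an object of $D^b(\coh(X\times X))$) is a classical generator of $D^b(\coh(X\times X))$ whose endomorphism dg algebra is $A^{\op}\otimes A$; this is the standard fact that the external tensor product of generators generates the product, together with the Künneth formula $\mathbb{R}\Hom(E\boxtimes F,E\boxtimes F)\simeq \mathbb{R}\Hom(E,E)\otimes\mathbb{R}\Hom(F,F)$. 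Applying Lemma \ref{ess-lemma} again gives a quasi-equivalence $D^b(\coh(X\times X))\simeq\Perf(A^{\op}\otimes A)$, realized by the functor $\Phi\colon M\mapsto \mathbb{R}\Hom_{X\times X}(E\boxtimes E^\vee, M)$.

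The heart of the argument — and the step I expect to be the main obstacle — is to check two compatibilities of $\Phi$. First, that $\Phi$ intertwines the external products: $\Phi(F\boxtimes_X G)\simeq (\text{something in }\Perf(A^{\op}))\boxtimes(\text{something in }\Perf(A))$, where the two factors are the images of $F$ and $G$ under the analogous single-factor functors. This again reduces to Künneth/projection-formula manipulations, using that $E\boxtimes E^\vee$ is an external product and that $\mathbb{R}\Hom$ out of it factors accordingly. Second, and more delicate, that $\Phi(\cO_\Delta)\simeq A$ as an $A^{\op}\otimes A$-module: here one computes $\mathbb{R}\Hom_{X\times X}(E\boxtimes E^\vee,\cO_\Delta)$ and identifies it, via adjunction along the diagonal embedding $\delta\colon X\hookrightarrow X\times X$ and the fact that $\delta^*(E\boxtimes E^\vee)\simeq E\otimes E^\vee$ together with $\mathbb{R}\Hom_X(E\otimes E^\vee,\cO_X)\simeq\mathbb{R}\Hom_X(E,E)=A$, with the diagonal bimodule, keeping careful track of the left and right $A$-module structures (they come from the two projections, matching the $A^{\op}$ and $A$ actions). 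Granting these two points, a subset $\langle F\boxtimes G\rangle_n\ni\cO_\Delta$ on the geometric side is carried to $\langle \Phi(F\boxtimes G)\rangle_n = \langle F'\boxtimes G'\rangle_n\ni A$ on the algebraic side and conversely (using that $\Phi$ is an equivalence of triangulated categories, hence preserves the $\star$-operation, direct summands, and isomorphism classes); therefore the least such $n$ agrees, giving $\Ddim D^b(\coh X)=\Ddim\Perf(A^{\op}\otimes A\text{-side})=\Ddim\Perf(A)$.

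I should also remark that one must make sure the objects $F,G$ appearing in the two definitions live in the right places: in Definition \ref{defi-of-diag-dim} one takes $F\in\Perf(A^{\op})$, $G\in\Perf(A)$, while in \cite[Definition 2.15]{BaFa} one takes $F,G\in D^b(\coh X)$; but since $X$ is smooth and projective, $D^b(\coh X)\simeq\Perf(A)$ and, via $(-)^\vee$, also $\simeq\Perf(A^{\op})$, so there is no loss, and the correspondence of objects under $\Phi$ respects these identifications. This is a routine bookkeeping point rather than a genuine difficulty, but it is what makes the two minima literally comparable.
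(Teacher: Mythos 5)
Your argument is correct and is essentially what the paper does: the paper's proof simply cites \cite[Thm.~1.1, Sect.~4.4]{LS2} for the existence of compatible equivalences $D^b(\coh X)\simeq\Perf(A)$, $D^b(\coh X)\simeq\Perf(A^{\op})$, and $D^b(\coh(X\times X))\simeq\Perf(A^{\op}\otimes A)$ that respect $\boxtimes$ and send $\cO_\Delta$ to the diagonal bimodule, and you have unfolded that citation into the explicit construction (generator $E$, product generator $E\boxtimes E^\vee$, K\"unneth, diagonal adjunction) that underlies it. The two compatibilities you flag as the ``heart of the argument'' are precisely the content of the cited results in \cite{LS2}; in a fully self-contained write-up you would need to carry out the K\"unneth and $\delta_*$-adjunction computations and the bookkeeping that reconciles $\mathbb{R}\EEnd(E^\vee)\simeq A^{\op}$ and $A\otimes A^{\op}\simeq A^{\op}\otimes A$, but no genuine idea is missing.
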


\begin{proof} In  \cite[Thm. 1.1, Sect. 4.4]{LS2} there were constructed equivalences $D^b(\coh X)\simeq \Perf(A)$, $D^b(\coh X)\simeq \Perf(A^{\op})$ and $\theta \colon D^b(\coh (X\times X))\stackrel{\sim}{\to} \Perf(A^{\op}\otimes A)$ which are compatible with the box-product $\boxtimes$ and such that $\theta$ maps the structure sheaf of the diagonal to the diagonal dg bimodule $A$. This implies the assertion of the lemma.
\end{proof}

\subsection{$\Ddim =0$	}

\begin{predl}\label{prop-diag-dim-zero} Let $A$ be smooth and compact dg algebra over a field $\k$. Assume that $\Ddim\Perf(A)=0$. Then $A$ is Morita equivalent to the product
$D_1\times \ldots\times D_s$, where each $D_i$ is a finite dimensional division $\k$-algebra
(concentrated in degree zero).
\end{predl}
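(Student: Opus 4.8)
The plan is to start from the hypothesis $A\in\langle F\boxtimes G\rangle_0$, i.e. the diagonal bimodule $A$ is a direct summand of a finite direct sum of shifts of $F\boxtimes G$ in $\Perf(A^{\op}\otimes A)$, and to deduce first that $\Perf(A)$ itself has Rouquier dimension zero. Indeed, applying the functor $(-)\stackrel{\bbL}{\otimes}_A(-)$ in the usual way (tensoring the diagonal over one side with an arbitrary module), one sees that every object $M\in\Perf(A)$ is a direct summand of a finite direct sum of shifts of $G$ (with multiplicity space controlled by $F\stackrel{\bbL}{\otimes}_A M$), so $\Perf(A)=\langle G\rangle_0$ and hence $\Rdim\Perf(A)=0$. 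Since $A$ is compact, $\Perf(A)$ is a $\k$-linear idempotent complete Krull--Schmidt category with finite-dimensional $\Hom$-spaces; by Lemma \ref{crit-for-rdim-0} it has only finitely many indecomposables up to isomorphism and shift, and we may take $G=G_1\oplus\dots\oplus G_r$ to be their direct sum.

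Next I would analyze the graded endomorphism algebra $\cE:=\End_{Perf(A)}^{\bul}(G)$ and the bimodule structure. By Lemma \ref{ess-lemma}, $\Perf(A)\simeq\Perf(\cE)$; by Morita invariance of $\Ddim$ (Proposition \ref{morita-inv-of-diag}) it suffices to treat $\cE$, so I can assume $A=\cE$ has the property that $A$ itself is the sum of the (finitely many) indecomposables, i.e. $A$ is a basic-type algebra whose finitely many indecomposable summands generate everything by shifts and summands alone. The key point to extract is that $A$ must be \emph{formal} and concentrated in degree zero: since all shifts $G_i[n]$ are pairwise non-isomorphic and every object of $\Perf(A)$ lies in $\langle G\rangle_0$, there can be no nontrivial extensions between the $G_i$ in any degree — otherwise the cone would be a new indecomposable not in $\langle G\rangle_0$ (one has to be a bit careful: a nonsplit triangle produces an object which a priori could still decompose, but iterating and using finiteness of the indecomposable list forces $\Hom^i(G_j,G_k)=0$ for $i\ne 0$). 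Thus $\cH^{\bul}(A)$ is concentrated in degree $0$, $A$ is quasi-isomorphic to the ordinary algebra $H^0(A)$, and $D^b(H^0(A)\text{-mod})\simeq\Perf(H^0(A))$ has finitely many indecomposables, so $H^0(A)$ is a finite-dimensional algebra of finite representation type with, moreover, $\Ext^{>0}$-vanishing between all indecomposables — this forces $H^0(A)$ to be semisimple, hence a product $D_1\times\dots\times D_s$ of finite-dimensional division algebras by Artin--Wedderburn.

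Finally I would run the argument in the reverse direction to make sure the characterization is sharp and to double-check the semisimplicity step: for a semisimple algebra $D_1\times\dots\times D_s$ the diagonal bimodule decomposes as $\bigoplus D_i$ where each $D_i$ is $D_i^{\op}\otimes D_i$-projective, so indeed $\Ddim=0$; conversely, if $H^0(A)$ had a nonzero Jacobson radical then a simple module would have infinite projective dimension or a nonsplit self-extension of its projective cover, contradicting the $\Ext$-vanishing established above. \textbf{The main obstacle} I anticipate is the careful passage from ``$\Ddim=0$'' to ``no higher self-extensions'': the definition of $\langle G\rangle_0$ allows arbitrary finite direct sums of shifts and passage to direct summands, so one must argue that the Krull--Schmidt property together with finiteness of the indecomposable list genuinely rules out every nonsplit triangle among shifts of the $G_i$, rather than merely controlling the ones ``visible'' in a single star-product; this is where the compactness of $A$ (finiteness of $\oplus_i H^i(A)$, hence of all $\Hom^{\bul}$-spaces) is essential and where I would spend most of the proof.
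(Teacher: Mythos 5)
There is a genuine gap at exactly the step you flagged as the main obstacle, and I do not believe the route you chose can be repaired along the lines you sketch. After deducing $\Rdim\Perf(A)=0$, you take $G=G_1\oplus\dots\oplus G_r$ to be the sum of all indecomposables and try to argue that $\Hom^i(G_j,G_k)=0$ for $i\ne 0$ ``because otherwise the cone would be a new indecomposable not in $\langle G\rangle_0$, and iterating would produce contradictions.'' This is false: the cone of a nonsplit morphism between known indecomposables can perfectly well be another known indecomposable (or a sum of them), and no amount of iteration will produce anything outside $\langle G\rangle_0$. Concretely, let $A=\k[\bullet\to\bullet]$ be the path algebra of $A_2$. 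Then $\Rdim\Perf(A)=0$ (Example \ref{ex-rou-dim-zero}), the indecomposables are the shifts of $P_1$, $P_2$, $S$, and there is a nonsplit triangle $P_1\to P_2\to S\to P_1[1]$ giving $\Hom^1(S,P_1)\ne 0$. So $\Rdim=0$ is compatible with nonzero $\Ext^1$ among the indecomposables, and your argument from that point on proves something stronger than is true. The information you discarded is exactly what is needed: $\Ddim\Perf(A)=0$ is strictly stronger than $\Rdim\Perf(A)=0$ (indeed $\Ddim\Perf(\k[A_2])=1$), and once you pass to $\Rdim=0$ you cannot recover it.

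What the paper does instead is to exploit the $\Ddim=0$ hypothesis \emph{on the bimodule side}. It first uses Krull--Schmidt in $\Perf(A^{\op}\otimes A)$ to reduce to the case where the diagonal bimodule $A$ is itself indecomposable as a bimodule (any direct-sum decomposition $A=T_1\oplus T_2$ of the diagonal induces a genuine orthogonal decomposition of $\Perf(A)$, so $A$ is Morita equivalent to a product and one can treat the factors separately). With $A$ indecomposable as a bimodule and $A\oplus T\simeq F\boxtimes G$ with $F,G$ indecomposable objects, every $M\in\Perf(A)$ is a summand of $(M\stackrel{\bbL}{\otimes}_AF)\otimes_\k G$, a finite sum of shifts of a \emph{single} indecomposable $G$; Krull--Schmidt then gives $\Perf(A)=[G]_0$, a much stronger statement than $\Rdim=0$ (in particular there is only one indecomposable up to shift). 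The final lemma shows that $\Perf(A)=[G]_0$ forces the endomorphism dg algebra of $G$ to have cohomology concentrated in degree $0$, with every nonzero element invertible, hence a division algebra. If you want to salvage your approach, you would need to keep track of the fact that, once the diagonal is reduced to being indecomposable, $G$ can be taken indecomposable and one lands in $[G]_0$ rather than merely $\langle G_1\oplus\dots\oplus G_r\rangle_0$; without that sharpening, the formality and semisimplicity claims do not follow.
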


\begin{proof}
Assume that the diagonal bimodule $A$ is isomorphic in $\Perf(A^{\op}\otimes A)$ to a direct sum $A=T_1\oplus T_2$ for some $T_1,T_2\in \Perf(A^{\op}\otimes A)$. Then for every $M\in \Perf(A)$ we have
\begin{equation}
\label{eq_MMM}
M\simeq (M\stackrel{\bbL}{\otimes }_AT_1)\oplus (M\stackrel{\bbL}{\otimes }_AT_2).
\end{equation}
Denote by $\cA_1$ (resp. $\cA_2$) the full subcategory in $\Perf(A)$ of such objects $M$ that $M\stackrel{\bbL}{\otimes }_AT_2=0$ (resp. $M\stackrel{\bbL}{\otimes }_AT_1=0$). Clearly, $\cA_1,\cA_2\subset \Perf(A)$ are triangulated subcategories.
Assume that $M\in\cA_1$  and $N\in \cA_2$. Then $\Hom^{\bul}(M,N)=\Hom^{\bul}(N,M)=0$. Indeed, if $f\in \Hom (M,N[i])$, then $f\stackrel{\bbL}{\otimes }_AT_1=0=f\stackrel{\bbL}{\otimes }_AT_2$, hence $f=0$. 
If $M\in\Perf(A)$ is an indecomposable object then by \eqref{eq_MMM} either $M\in\cA_1$ or $M\in\cA_2$.
Recall that the category $\Perf(A)$ is Krull-Schmidt (see Subsection~\ref{subs-krull-schmidt}), it follows that for any object $M\in\Perf(A)$ 
we have a decomposition $M\simeq M_1\oplus M_2$ where $M_1\in\cA_1, M_2\in\cA_2$.
Therefore, one has an orthogonal decomposition $\Perf(A)=\cA _1\times \cA _2$.
By Lemma \ref{lemma-semi-orth} the dg algebra $A$ is Morita equivalent to a product of dg algebras $A_1\times A_2$ and by Proposition \ref{monot-for-diag-dim} we have $\Ddim \Perf(A_i)=0$, $i=0,1$. So we may and will assume that the diagonal bimodule $A$ is indecomposable in $\Perf(A^{\op}\otimes A)$.

By assumption $A\oplus T\simeq F\boxtimes G$ for some $T\in \Perf(A^{\op}\otimes A)$, $F\in \Perf(A^{\op})$ and $G\in \Perf(A)$. It follows that any $M\in \Perf(A)$ is a direct summand of $(M\stackrel{\bbL}{\otimes} _AF)\otimes _kG$. Because the bimodule $A$ is indecomposable and the category $\Perf(A^{\op}\otimes A)$ is Krull-Schmidt, we may and will assume that $G$ is indecomposable. Now by Krull-Schmidt property of $\Perf(A)$ we get that $\Perf(A)=[G]_0$. It remains to apply the following lemma.
\end{proof}

\begin{lemma} Let $A$ be a compact dg $\k$-algebra such that $\Perf(A)=[G]_0$ for some $G\in \Perf(A)$. Then $A$ is Morita equivalent to a finite dimensional division $\k$-algebra.
\end{lemma}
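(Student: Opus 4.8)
The plan is to show that the hypothesis $\Perf(A)=[G]_0$ forces $\Perf(A)$ to have essentially one indecomposable object (up to shift), and then identify the endomorphism algebra of that object with a finite-dimensional division algebra. First I would note that since $\Perf(A)$ is Krull-Schmidt (Subsection \ref{subs-krull-schmidt}), decomposing $G$ into indecomposables $G=\bigoplus_j G_j$ shows that every object of $\Perf(A)$ is a finite direct sum of shifts of the $G_j$. In particular the list of indecomposables (up to isomorphism and shift) is finite. But I also want to pin down that in fact a single $G_j$, say with no shifts needed, already generates everything as direct sums.

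The key step is a shift-rigidity argument. Consider an indecomposable summand $E$ of $G$. Since $\Perf(A)=[G]_0$, the shift $E[1]$ is also a finite direct sum of shifts of the summands of $G$; by Krull-Schmidt, $E[1]\cong G_j[m]$ for some summand $G_j$ and some $m\in\Z$. Iterating, all shifts $E[i]$ lie among $\{G_j[m]\}$, and since the $G_j$ and their shifts only give finitely many isomorphism classes while $E[i]$ for $i\in\Z$ are pairwise non-isomorphic (as $\Hom^\bullet(E,E)$ is finite-dimensional — $A$ is compact — the graded group $\Hom^\bullet(E,E)$ cannot be nonzero in infinitely many degrees, so $E\not\cong E[i]$ for $i\neq 0$, and more importantly there is a bounded range), we reach a contradiction unless... actually the cleaner route: take $E$ indecomposable with $\Hom^{<0}(E,E)=0$ and $\Hom^0(E,E)$ as small as possible, equivalently choose the summand $G_j$ whose ``top degree'' of support is extreme. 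I would argue that $\Hom^\bullet(E,E)$ is concentrated in degree $0$: if $\Hom^i(E,E)\neq 0$ for some $i>0$, then looking at a summand of $G$ in lowest cohomological position gives a map that cannot be split off, contradicting that $[G]_0$ is closed under the relevant operations — more precisely, since $E$ and all its shifts are direct sums of shifts of the finitely many $G_j$, the set of integers $\{i : \Hom^i(G_j,G_k)\neq 0\text{ for some }j,k\}$ is finite, yet it must be closed under addition (by composing with isomorphisms realizing shifts), forcing it to be $\{0\}$. Hence $\Hom^\bullet_{\Perf(A)}(G,G)$ is concentrated in degree $0$ and is a finite-dimensional $\k$-algebra, and $\Perf(A)=[G]_0$ means $\Perf(A)$ is equivalent (via Lemma \ref{ess-lemma}) to $\Perf(\End(G))$ with $\End(G)$ an ordinary finite-dimensional algebra whose module category has only semisimple objects — i.e. $\End(G)$ is semisimple, a product of matrix algebras over division algebras, and after Morita reduction (Proposition \ref{morita-inv-of-diag} or Remark \ref{criterion-for-mor-eq}) we may replace it by a product of division algebras; indecomposability considerations as in Proposition \ref{prop-diag-dim-zero} reduce to a single division algebra.

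Concretely, the steps in order: (1) decompose $G$ via Krull-Schmidt and observe the indecomposables of $\Perf(A)$ form a finite set up to shift; (2) show $\Hom^\bullet(G,G)$ is concentrated in a single degree, which we normalize to be degree $0$, using that the ``degree support'' is a finite subset of $\Z$ closed under addition hence $\{0\}$; (3) apply Lemma \ref{ess-lemma} to get $\Perf(A)\simeq\Perf(\Lambda)$ quasi-equivalently, where $\Lambda=\End_{\Perf(A)}(G)$ is a finite-dimensional $\k$-algebra concentrated in degree $0$; (4) since $\Perf(A)=[G]_0$ translates to: every perfect $\Lambda$-complex is a finite direct sum of shifts of $\Lambda$, deduce $\Lambda$ has finite global dimension and every finitely generated projective-resolved module is projective, so $\Lambda$ is semisimple; (5) conclude $\Lambda\cong\Mat_{n_1}(D_1)\times\cdots\times\Mat_{n_s}(D_s)$ with $D_i$ finite-dimensional division algebras, which is Morita equivalent to $D_1\times\cdots\times D_s$, and then the indecomposability input (inherited from the reduction in Proposition \ref{prop-diag-dim-zero} where we arranged $G$ indecomposable, forcing $s=1$) gives that $A$ is Morita equivalent to a single finite-dimensional division algebra.

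The main obstacle I anticipate is step (2): rigorously ruling out that $\Hom^\bullet(G,G)$ has nonzero components in two different degrees. The ``finite set closed under addition'' argument is the crux — one must verify that if $E[i]$ is (up to shift) one of the finitely many $G_j$, then composition of the structural isomorphisms genuinely produces a multiplicatively closed degree set, and handle the bookkeeping when different indecomposable summands $G_j$ sit in different degree ranges. An alternative that sidesteps this: directly observe that $[G]_0$ being all of $\Perf(A)$ and closed under $[\pm1]$ with finite-dimensional $\Hom$'s forces the unique (up to shift) indecomposable $E$ to satisfy $\Hom^i(E,E)=0$ for $i\neq 0$ because otherwise $\bigoplus_{i\in\Z}E[i]$ would have infinite-dimensional self-Hom's inside a category where it equals a finite sum of shifts of $E$ — contradiction. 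Once degree-$0$ concentration is in hand, steps (3)--(5) are standard.
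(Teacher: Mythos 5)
Your step (2) --- showing that $\Hom^\bullet(G,G)$ is concentrated in a single degree --- is the crux, and neither argument you propose for it actually works. The ``degree support closed under addition'' claim fails because the degree support of a graded algebra need not be closed under addition: products can vanish (for instance $\k[x]/(x^2)$ with $\deg x=5$ has support $\{0,5\}$). Composing the isomorphisms that realize shifts does not rescue this: those isomorphisms merely identify $E[1]$ with some $G_j[m]$, and give no information about whether composites in $\Hom^\bullet(G,G)$ across two different degrees are nonzero. The ``infinite direct sum $\bigoplus_{i\in\Z}E[i]$'' alternative is also invalid: that object does not lie in $\Perf(A)$, where only finite direct sums exist, so you cannot compare it with a finite sum of shifts of $E$ inside the category.

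The paper instead reduces at once to $G=A$ via Corollary~\ref{cor-quasi-equiv} (replacing $A$ by $\End(G)$), so that $\Hom^\bullet(A,A)=H^\bullet(A)$, and runs a cone argument: for a nonzero $f\in H^i(A)$ the cone of $f\colon A\to A[i]$ lies in $[A]_0$; a dimension count in the long exact sequence on cohomology, combined with the fact that $f^*$ sends $1\in H^0(A)$ to $f\neq0$, forces $\mathrm{Cone}(f)\simeq A[j]$ for a single shift $j$; and a short case analysis of the resulting triangle shows $f$ must be an isomorphism, whence $i=0$ by compactness. This is precisely the kind of argument needed to close the hole in your step (2), but it simultaneously delivers the division-algebra conclusion, which makes your steps (3)--(5) redundant. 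Two further points: in step (4) you tacitly assume $\Lambda$ has finite global dimension, which is not given; one should instead argue directly that for $0\neq a\in\mathrm{rad}(\Lambda)$ with $a^2=0$ the cone of $a\colon\Lambda\to\Lambda$ has cohomology in two distinct degrees and so cannot lie in $[\Lambda]_0$. And in step (5) the indecomposability of $G$ should not be imported from the surrounding Proposition~\ref{prop-diag-dim-zero}: it already follows from $\Perf(A)=[G]_0$ by comparing total cohomology dimensions (a proper nonzero summand would be a nonempty finite sum of shifts of $G$ with strictly smaller total cohomology, impossible), after which $\Hom^0(G,G)$ is local by Krull--Schmidt, and a semisimple local ring is already a division ring, making the Morita detour through products of division algebras unnecessary.
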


\begin{proof} Since $G$ is a generator of $\Perf(A)$, the dg algebra $A$ is Morita equivalent to the endomorphism dg algebra of $G$ (Corollary \ref{cor-quasi-equiv}).
Hence we may and will assume that $G=A$. It suffices to prove that $H^i(A)=0$ if $i\neq 0$ and that $H^0(A)$ is a division algebra.

Choose $0\neq f\in H^i (A)$ and consider it as a morphism $f\colon A\to A[i]$.  The cone of $f$ is a direct sum of shifts of $A$. We need to check that $i=0$ and $f$ is an isomorphism in $\Perf A$. Assume the contrary: $f$ is not an isomorphism in $\Perf(A)$ (i.e. $f\colon H(A)\to H(A)$ is not an isomorphism). By considering the dimension of the cohomology we conclude that $Cone (f)\simeq A[j]$ for some $j$. 

We get an exact triangle $A\xra{f} A[i]\to A[j]\to A[1]$. Looking at the associated long exact sequence in cohomology one sees that this triangle can be either 
$$A\xra{f} A\to A\to A[1], \quad A\xra{f} A\to A[1]\to A[1], \quad\text{or}\quad A\xra{f} A[1]\to A[1]\to A[1].$$ 
Further, any map in the long exact sequence in cohomology is either an isomorphism or zero. 

Assume $i=0$. If the map $f\colon H^0(A)\to H^0(A)$ is an isomorphism then  $f$ is a unit in $H(A)$, hence $f\colon A\to A$ is an isomorphism in $\Perf(A)$, a contradiction.
If the map  $f\colon H^0(A)\to H^0(A)$ is the zero map then  $f=0\in H^0(A)$, a contradiction. Now assume $i=1$, then  we have an exact triangle 
$A\xra{f} A[1]\xra{g} A[1]\to A[1]$. As was shown above, the map $g$ is either zero or an isomorphism. 
In each case we obtain a contradiction.

Hence, $f$ is an isomorphism $A\to A[i]$ in $\Perf A$. It follows immediately
that $i=0$ since $A$ is compact.
%
%
\end{proof}

\subsection{Additivity}

\begin{predl} \label{add-for-diag-dim} Let $A$ and $B$ be dg algebras. Then
$$\Ddim  \Perf(A\otimes B)\le \Ddim \Perf(A)+\Ddim \Perf(B).$$
\end{predl}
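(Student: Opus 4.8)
The plan is to produce an explicit generation of the diagonal bimodule of $A\otimes B$ from a box-product object, by multiplying together the given generations for $A$ and $B$. Suppose $\Ddim\Perf(A)=m$ and $\Ddim\Perf(B)=n$; choose $F_A\in\Perf(A^{\op})$, $G_A\in\Perf(A)$ with $A\in\langle F_A\boxtimes G_A\rangle_m\subset\Perf(A^{\op}\otimes A)$, and similarly $F_B\in\Perf(B^{\op})$, $G_B\in\Perf(B)$ with $B\in\langle F_B\boxtimes G_B\rangle_n\subset\Perf(B^{\op}\otimes B)$. The goal is to exhibit objects $F\in\Perf((A\otimes B)^{\op})$ and $G\in\Perf(A\otimes B)$ with the diagonal bimodule $A\otimes B$ lying in $\langle F\boxtimes G\rangle_{m+n}$. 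The natural candidates, after identifying $(A\otimes B)^{\op}\otimes(A\otimes B)\simeq (A^{\op}\otimes A)\otimes(B^{\op}\otimes B)$ (up to reordering tensor factors, which is harmless), are obtained from the external tensor product of the two generations.

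First I would record the key multiplicativity fact: if $P\in\langle U\rangle_m$ in $\Perf(C)$ and $Q\in\langle V\rangle_n$ in $\Perf(D)$, then $P\boxtimes Q\in\langle U\boxtimes V\rangle_{m+n}$ in $\Perf(C\otimes D)$. This follows because the bifunctor $\boxtimes\colon\Perf(C)\times\Perf(D)\to\Perf(C\otimes D)$ is exact in each variable and sends finite direct sums of shifts to finite direct sums of shifts; hence $-\boxtimes Q$ sends $[U]_m$ into $[U\boxtimes Q]_m$ and respects direct summands, and then $U\boxtimes -$ sends $[V]_n$ into $[U\boxtimes V]_n$, giving $U\boxtimes Q\in\langle U\boxtimes V\rangle_n$; chaining the two and using associativity of $\star$ gives the claim. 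Applying this with $C=A^{\op}\otimes A$, $D=B^{\op}\otimes B$, $U=F_A\boxtimes G_A$, $V=F_B\boxtimes G_B$, $P=A$, $Q=B$, we get $A\boxtimes B\in\langle(F_A\boxtimes G_A)\boxtimes(F_B\boxtimes G_B)\rangle_{m+n}$ in $\Perf((A^{\op}\otimes A)\otimes(B^{\op}\otimes B))$.

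Next I would transport this statement along the canonical isomorphism of dg algebras $(A^{\op}\otimes A)\otimes(B^{\op}\otimes B)\xrightarrow{\sim}(A\otimes B)^{\op}\otimes(A\otimes B)$ that permutes the middle two factors, $a_1\otimes a_2\otimes b_1\otimes b_2\mapsto (a_1\otimes b_1)\otimes(a_2\otimes b_2)$. Under the induced equivalence of derived categories this sends the external product $A\boxtimes B$ (as an $(A^{\op}\otimes A)\otimes(B^{\op}\otimes B)$-module) to the diagonal bimodule $A\otimes B$ of $A\otimes B$, and it sends $(F_A\boxtimes G_A)\boxtimes(F_B\boxtimes G_B)$ to $(F_A\otimes F_B)\boxtimes(G_A\otimes G_B)$ where now $F_A\otimes F_B\in\Perf((A\otimes B)^{\op})$ and $G_A\otimes G_B\in\Perf(A\otimes B)$ (using $A^{\op}\otimes B^{\op}\simeq(A\otimes B)^{\op}$ and that $\boxtimes$ of perfect modules is perfect). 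Since an equivalence of triangulated categories preserves the subcategories $\langle-\rangle_k$, we conclude $A\otimes B\in\langle(F_A\otimes F_B)\boxtimes(G_A\otimes G_B)\rangle_{m+n}$, which is exactly the bound $\Ddim\Perf(A\otimes B)\le m+n$.

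The main obstacle is purely bookkeeping: one must keep careful track of which tensor factors are "op" and in which order they sit, so that the reordering isomorphism really identifies $A\boxtimes B$ with the diagonal bimodule and $(F_A\boxtimes G_A)\boxtimes(F_B\boxtimes G_B)$ with $(F_A\otimes F_B)\boxtimes(G_A\otimes G_B)$; and one should check that all the modules involved are h-projective (or replace them by h-projective resolutions) so that the underived external tensor products compute the derived ones — this is guaranteed by Lemma~\ref{perf-r-compl} and the hypothesis that the dg algebras are $\k$-h-projective (automatic over a field). No genuinely new idea beyond the multiplicativity of $\boxtimes$ with respect to the $\star$-filtration is needed.
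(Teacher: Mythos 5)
Your overall plan coincides with the paper's: reduce to the key multiplicativity fact $[U]_m\boxtimes[V]_n\subset[U\boxtimes V]_{m+n}$, apply it to the two diagonals, and then reshuffle tensor factors to identify $(A^{\op}\otimes A)\otimes(B^{\op}\otimes B)$ with $(A\otimes B)^{\op}\otimes(A\otimes B)$. The bookkeeping about the reordering isomorphism and h-projectivity is fine (the paper is in fact less explicit about it). The problem is your proof of the multiplicativity fact.

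Your argument is: $P\boxtimes Q\in\langle U\boxtimes Q\rangle_m$ (applying $-\boxtimes Q$ to the $m$-step generation of $P$), and $U\boxtimes Q\in\langle U\boxtimes V\rangle_n$ (applying $U\boxtimes-$ to the $n$-step generation of $Q$), and then "chaining the two." But chaining two generation bounds is multiplicative, not additive: if $X\in[Y]_m$ and $Y\in[Z]_n$ then every graded piece in the $(m+1)$-term tower for $X$ lies in $[Z]_n$, so $X\in\underbrace{[Z]_n\star\cdots\star[Z]_n}_{m+1}=[Z]_{(m+1)(n+1)-1}$, i.e. the bound is $(m+1)(n+1)-1=mn+m+n$, not $m+n$. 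Nothing about "associativity of $\star$" improves this. So your sketch, as written, would only give $\Ddim\Perf(A\otimes B)\le(\Ddim\Perf A+1)(\Ddim\Perf B+1)-1$.

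What actually yields the additive bound — and what the paper does — is a convolution (totalization) filtration on $P\boxtimes Q$ built simultaneously from both filtrations: if $P=P_m\supset\cdots\supset P_0$ and $Q=Q_n\supset\cdots\supset Q_0$ have graded pieces in $[U]_0$ and $[V]_0$ respectively, set $S_a:=\sum_{i+j\le a}P_i\otimes Q_j$. This filters $P\boxtimes Q$ in $m+n+1$ steps with $S_a/S_{a-1}\simeq\bigoplus_{i+j=a}(P_i/P_{i-1})\boxtimes(Q_j/Q_{j-1})\in[U\boxtimes V]_0$, whence $P\boxtimes Q\in[U\boxtimes V]_{m+n}$. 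Using the two variables sequentially, as you do, collapses this two-dimensional filtration into a one-dimensional one of the wrong (multiplicative) length; you must keep the bifiltration and totalize along antidiagonals. This is the one genuinely new idea in the proof, and your sketch omits it.
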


\begin{proof} We may assume that the diagonal dimensions of $\Perf(A)$ and $\Perf(B)$ are finite. So there exist $F\in \Perf(A)$, $G\in \Perf(A^{\op})$,
$F^\prime \in \Perf(B)$, $G^\prime \in \Perf(B^{\op})$ such that
$A\in \langle G\otimes F \rangle _n$ and $B\in \langle G^\prime \otimes F^\prime \rangle _m$. We claim that in this case
$$A\otimes B\in \langle G\otimes F\otimes G^\prime \otimes F^\prime \rangle _{n+m}\subset \Perf(A^{\op}\otimes A\otimes B^{\op}\otimes B)=\Perf((A\otimes B)^{\op}\otimes (A\otimes B)).$$
This follows from the simple general lemma.

\begin{lemma} Let $C$ and $D$ be dg algebras, $K\in \Perf(C)$, $L\in \Perf(D)$. Then for any $n$ and $m$ we have the following inclusion of subsets of objects in $\Perf(C\otimes D)$:
$$
\langle K\rangle _n\otimes \langle L\rangle _m\subset \langle K\otimes L\rangle _{n+m}.
$$
\end{lemma}

\begin{proof} It suffices to prove the inclusion
\begin{equation*}
[K] _n\otimes [L] _m\subset [K\otimes L] _{n+m}.
\end{equation*}
Let $P\in [K]_n$. Replacing $P$ by an isomorphic object if necessary we may assume that
there exists a filtration $P=P_n\supset P_{n-1}\supset \ldots \supset P_0$ by dg submodules and a collection of objects $Q_0,\ldots ,Q_n\in [K]_0$ such that $P_0=Q_0$ and for each $i>0$, $P_i=Cone(Q_i[-1]\stackrel{f_i}{\to} P_{i-1})$ for a (closed degree zero) morphism $f_i$. (Hence the quotient dg module $P_i/P_{i-1}$ is isomorphic to $Q_i$). Similarly every object in $[L]_m$ has an isomorphic object $P^\prime$ with a filtration
$P^\prime =P^\prime _m\supset P^\prime _{n-1}\supset \ldots \supset P^\prime _0$ such that there exist objects $Q^\prime _0,\ldots ,Q^\prime _m\in [L]_0$ and isomorphisms $P^\prime _0=Q^\prime _0$ and $P^\prime _i=Cone (Q^\prime _i[-1]\stackrel{f^\prime _i}{\to}P^\prime _{i-1})$.

Then the object $S:=P\otimes P^\prime$ has the filtration $S=S_{n+m}\supset \ldots \supset S_0$, where $S_a:=\sum _{i+j\le a}P_i\otimes P^\prime _j$. Moreover, for each $a>0$ the dg module $S_a/S_{a-1}$ is
isomorphic to the direct sum $\bigoplus _{i+j=a}Q_i\otimes Q^\prime _j$. Hence $S_a$ is isomorphic to the cone of a morphism $g_a \colon \bigoplus _{i+j=a}Q_i\otimes Q^\prime _j[-1]\to S_{a-1}$. It follows
that $S_a\in [K\otimes L]_a$, in particular $S\in [K\otimes L]_{n+m}$. This proves the lemma.
\end{proof}
The proposition now follows if we take $C=A^{\op}\otimes A$, $D=B^{\op}\otimes B$, $K=G\otimes F$, $L=G'\otimes F'$.
\end{proof}


\subsection{$\Ddim \ge \Rdim$.}

\begin{predl}\label{ddim-geq-rdim} Let $A$ be a dg algebra. Then
$$\Rdim  \Perf(A)\le \Ddim  \Perf(A).$$
\end{predl}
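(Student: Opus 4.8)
The plan is to show that a resolution of the diagonal bimodule produces, by applying it to an arbitrary perfect module, a resolution of that module in terms of shifts of a single object; hence $\Ddim$ controls $\Rdim$. Concretely, suppose $n=\Ddim\Perf(A)<\infty$ (otherwise there is nothing to prove), and pick $F\in\Perf(A^{\op})$, $G\in\Perf(A)$ with $A\in\langle F\boxtimes G\rangle_n\subset\Perf(A^{\op}\otimes A)$. First I would set $E:=G$ and claim that $\Perf(A)=\langle E\rangle_n$, which immediately gives $\Rdim\Perf(A)\le n$.

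The key step is the following observation: the functor $M\mapsto M\stackrel{\bbL}{\otimes}_A(-)\colon \Perf(A^{\op}\otimes A)\to\Perf(A)$, for a fixed $M\in\Perf(A)$, sends the diagonal bimodule $A$ to $M$ (since $M\stackrel{\bbL}{\otimes}_A A\simeq M$) and sends $F\boxtimes G$ to $(M\stackrel{\bbL}{\otimes}_A F)\otimes_{\k} G$, where $M\stackrel{\bbL}{\otimes}_A F$ is a perfect complex of $\k$-modules, i.e. (up to quasi-isomorphism) a finite complex of finite-dimensional $\k$-vector spaces. Such a tensor functor is triangulated and additive, so it maps $[F\boxtimes G]_n$ into $[(M\stackrel{\bbL}{\otimes}_A F)\boxtimes G]_n$ and preserves direct summands; therefore from $A\in\langle F\boxtimes G\rangle_n$ we deduce $M\in\langle (M\stackrel{\bbL}{\otimes}_A F)\otimes_\k G\rangle_n$. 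Finally, since $M\stackrel{\bbL}{\otimes}_A F$ is quasi-isomorphic to a finite complex of finite-dimensional vector spaces, the object $(M\stackrel{\bbL}{\otimes}_A F)\otimes_\k G$ lies in $\langle G\rangle_0$ (it is built as a finite iterated extension—in fact a direct sum, as the complex of vector spaces splits—of shifts of $G$). Actually, more carefully: $(M\stackrel{\bbL}{\otimes}_A F)\otimes_\k G$ is a finite direct sum of shifts of $G$, hence lies in $[G]_0$. Combining, $M\in\langle G\rangle_n$ for every $M\in\Perf(A)$, so $\Perf(A)=\langle G\rangle_n$ and $\Rdim\Perf(A)\le n=\Ddim\Perf(A)$.

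The main obstacle, such as it is, is bookkeeping rather than substance: one must check that the tensor functor $M\stackrel{\bbL}{\otimes}_A(-)$ genuinely maps the subcategories $\langle F\boxtimes G\rangle_k$ into $\langle (M\stackrel{\bbL}{\otimes}_AF)\boxtimes G\rangle_k$ compatibly with the $\star$-operation and with passing to direct summands—this is essentially the same manipulation already used in the Remark following Proposition \ref{morita-inv-of-diag}, so I would simply invoke that computation. One small point to verify cleanly is that a perfect complex of $\k$-modules is homotopy equivalent to its cohomology (as $\k$ is a field), which is what lets $(M\stackrel{\bbL}{\otimes}_AF)\otimes_\k G$ be identified with a finite direct sum of shifts of $G$ and hence land in $[G]_0$ rather than merely in some $[G]_j$. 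With that in hand the argument is complete.
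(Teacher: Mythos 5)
Your argument breaks down at the step where you claim $M\stackrel{\bbL}{\otimes}_A F$ is ``a finite complex of finite-dimensional vector spaces,'' and hence that $(M\stackrel{\bbL}{\otimes}_A F)\otimes_\k G$ is a \emph{finite} direct sum of shifts of $G$ and therefore lies in $[G]_0$. The proposition is stated for an arbitrary dg algebra $A$; the only implicit hypothesis is smoothness (needed so that $\Ddim\Perf(A)<\infty$), not compactness. Without compactness, objects of $\Perf(A)$ need not be perfect as $\k$-complexes. For a concrete counterexample, take $A=\k[x]$: then $M=A\in\Perf(A)$ and $F=A\in\Perf(A^{\op})$, but $M\stackrel{\bbL}{\otimes}_A F\simeq \k[x]$ is infinite-dimensional over $\k$. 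So $(M\stackrel{\bbL}{\otimes}_A F)\otimes_\k G$ is in general an \emph{infinite} direct sum of shifts of $G$, which lands outside $[G]_0$ and outside $\Perf(A)$-level constructions entirely.

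The fix is exactly where the extra idea in the paper's proof lives: one has to enlarge $[G]_0$ to allow arbitrary direct sums, getting the classes $\langle G\rangle^{\oplus}_n\subset D(A)$, and run your tensoring argument at the level of the unbounded derived category. This shows $N\in\langle G\rangle^{\oplus}_n$ for every $N\in D(A)$. Then one invokes the thick-subcategory result of Bondal--Van den Bergh (\cite[Prop.\ 2.2.4]{BVdB}): a compact object of $D(A)$ lying in $\langle G\rangle^{\oplus}_n$ already lies in $\langle G\rangle_n$. Applying this to $N\in\Perf(A)=D(A)^c$ gives $\Perf(A)=\langle G\rangle_n$ and hence $\Rdim\Perf(A)\le n$. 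Aside from this infinite-direct-sum issue and the missing appeal to Bondal--Van den Bergh, your route (tensoring the diagonal resolution against a perfect module) is the same as the paper's. If you were willing to additionally assume $A$ compact, then $M\stackrel{\bbL}{\otimes}_AF$ would indeed be $\k$-perfect and your shortcut would work, but that assumption is not in the statement.
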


\begin{proof} We may assume that $\Ddim  \Perf(A)=n <\infty$.
Let $F \in \Perf(A^{\op})$ and $G\in \Perf(A)$ be such that
$A\in \langle F\boxtimes G \rangle _n$, i.e. there exists $M\in \Perf(A^{\op}\otimes A)$ such that
\begin{equation}
\label{ddim-cond} A\oplus M\in [F\boxtimes G]_n.
\end{equation}
Consider the functor
$$
\Phi _{A\oplus M}\colon D(A)\to D(A), \quad N\mapsto N\stackrel{\bbL}{\otimes }_A(A\oplus M)\simeq N\oplus (N\stackrel{\bbL}{\otimes}_AM).
$$
Then \eqref{ddim-cond} implies that
every $N\in D(A)$ is contained in the subcategory $\langle G\rangle ^{\oplus}_n$, where the category $\langle G\rangle ^{\oplus}_n$ is defined the same way as $\langle G\rangle _n$ but using arbitrary direct sums instead of finite ones. It follows from \cite[Prop. 2.2.4]{BVdB} that if $N\in \Perf(A)(=D(A)^c)$ belongs to $\langle G\rangle ^{\oplus}_n$ then $N\in \langle G\rangle _n$. That is $\Rdim  \Perf(A)\le n$.
\end{proof}

\begin{example} \label{ex-strict} Let $\k$ have characteristic $p>0$ and let $L/k$ be an inseparable field extension of degree $p$. Then $\Rdim  \Perf(L)=0$. On the other hand $L\otimes _kL\simeq L[\epsilon]/(\epsilon ^p)$ and hence $L\notin \Perf(L^{\op}\otimes L)$. Therefore the dg $\k$-algebra $L$ is not smooth and $\Ddim  \Perf(L)=\infty$.
\end{example}

\subsection{Monotonicity}

Assume that $C$ is a dg algebra and let $\Perf(C)=\langle T,T^\prime \rangle$ be a semi-orthogonal decomposition. As explained in Remark \ref{induced-enhancement} the categories $T$ and $T^\prime$ have  canonical enhancements $\cT $ and $\cT ^\prime$ induced by the enhancement $Perf (C)$ of $\Perf(C)$. By Lemma \ref{lemma-semi-orth} the dg categories  $\cT$ and $\cT ^\prime$
are respectively quasi-equivalent to dg categories $Perf(A)$ and $Perf (B)$ for certain dg algebras  $A$ and $B$ (so that $T\simeq \Perf(A)$ and $T^\prime \simeq \Perf(B)$). In this context we have the following result.

\begin{predl} \label{monot-for-diag-dim} In the above notation $\Ddim  \Perf(C)\ge \Ddim  \Perf(A), \Ddim  \Perf(B)$.
\end{predl}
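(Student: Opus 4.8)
The plan is to reduce to a triangular dg algebra and then to produce a single exact functor between the relevant categories of bimodules that carries the diagonal to the diagonal and box products to box products of perfect objects.

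First I would apply Lemma~\ref{lemma-semi-orth}: the dg algebra $C$ is Morita equivalent to $\tilde C=\left(\begin{smallmatrix}B&M\\0&A\end{smallmatrix}\right)$, and since $\Ddim$ is a Morita invariant (Proposition~\ref{morita-inv-of-diag}) we may assume $C=\tilde C$. I will prove $\Ddim\Perf(A)\le\Ddim\Perf(\tilde C)$; the inequality $\Ddim\Perf(B)\le\Ddim\Perf(\tilde C)$ is obtained by the mirror-image argument, interchanging left and right and the idempotents $e_1,e_2$ (equivalently, by applying the $A$-case to $\tilde C^{\op}$). We may assume $\Ddim\Perf(\tilde C)=n<\infty$.

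Write $e_1=\left(\begin{smallmatrix}1&0\\0&0\end{smallmatrix}\right)$, $e_2=\left(\begin{smallmatrix}0&0\\0&1\end{smallmatrix}\right)\in\tilde C$; then $J:=e_1\tilde C=\left(\begin{smallmatrix}B&M\\0&0\end{smallmatrix}\right)$ is a two-sided dg ideal with $\tilde C/J\cong A$. Let $\pi\colon\tilde C\to A$ be the quotient map and let $A_\pi$ be $A$ regarded as a dg $(\tilde C,A)$-bimodule via $\pi$ on the left. Consider
$$\Psi\colon\Perf(\tilde C^{\op}\otimes\tilde C)\longrightarrow D(A^{\op}\otimes A),\qquad \Psi(X)=(e_2\tilde C)\otimes_{\tilde C}X\otimes^{\bbL}_{\tilde C}A_\pi,$$
where $e_2\tilde C$ is an $(A,\tilde C)$-bimodule via $A=e_2\tilde Ce_2$. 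I would check: (1) $e_2\tilde C$ is h-projective as a right $\tilde C$-module (a direct summand of $\tilde C$), so the left tensor is underived and $\Psi$ is exact; (2) $(-)\otimes^{\bbL}_{\tilde C}A_\pi$ is the induction $\pi_!$ along $\pi$, sends $\tilde C$ to $A$ and hence $\Perf(\tilde C)$ into $\Perf(A)$, while $(e_2\tilde C)\otimes_{\tilde C}(-)$ sends $\tilde C$ to $e_2\tilde C$, which is free of rank one as a left $A$-module because $e_2\tilde Ce_1=0$, and hence sends $\Perf(\tilde C^{\op})$ into $\Perf(A^{\op})$; so $\Psi$ maps $\Perf(\tilde C^{\op}\otimes\tilde C)$ into $\Perf(A^{\op}\otimes A)$; (3) $\Psi(F\boxtimes G)=\big((e_2\tilde C)\otimes_{\tilde C}F\big)\boxtimes\pi_!(G)$, a box product of perfect objects; (4) $\Psi(\tilde C)=(e_2\tilde C)\otimes_{\tilde C}A_\pi=e_2A_\pi=A$ as an $A$-bimodule, since $e_2$ acts on $A_\pi=A$ as $\pi(e_2)=1_A$. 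Granting these, if $\tilde C\in\langle F\boxtimes G\rangle_n$ then applying the exact functor $\Psi$ yields $A\in\langle F'\boxtimes G'\rangle_n$ with $F'=(e_2\tilde C)\otimes_{\tilde C}F\in\Perf(A^{\op})$ and $G'=\pi_!(G)\in\Perf(A)$, so $\Ddim\Perf(A)\le n=\Ddim\Perf(\tilde C)=\Ddim\Perf(C)$ (Definition~\ref{defi-of-diag-dim}).

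The delicate point — and where the naive attempt fails — is step (4) together with perfection in (3). The obvious candidate, the corner functor $X\mapsto e_2Xe_2=(e_2\tilde C)\otimes_{\tilde C}X\otimes_{\tilde C}(\tilde Ce_2)$, does take the diagonal bimodule to the diagonal bimodule, but it does not preserve perfect objects: it sends $F\boxtimes G$ to $\big((e_2\tilde C)\otimes_{\tilde C}F\big)\boxtimes\big(G\otimes_{\tilde C}\tilde Ce_2\big)$, and $G\otimes_{\tilde C}\tilde Ce_2$ carries $\tilde C$ to $\tilde Ce_2=\left(\begin{smallmatrix}0&M\\0&A\end{smallmatrix}\right)$, which need not be perfect over $A$ (this would force $B$ to be compact, which is not hypothesised). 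Replacing the offending factor $\otimes_{\tilde C}\tilde Ce_2$ by $\otimes^{\bbL}_{\tilde C}A_\pi=\pi_!$ restores perfection, and the reason (4) survives the replacement is exactly the special feature of a triangular dg algebra: $A$ is simultaneously a corner $e_2\tilde Ce_2$ and a quotient $\tilde C/J$, and the complementary ideal $J=e_1\tilde C$ is a one-sided dg ideal that is a direct summand of $\tilde C$ on the right. Carrying out the bimodule bookkeeping in (1)--(4) with care (and performing the symmetric computation for $B$, with $e_1$ in place of $e_2$ and the quotient $\tilde C\to\tilde C/\tilde Ce_2\cong B$) is, I expect, the only real work in the proof.
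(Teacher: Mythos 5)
Your proof is correct, and it follows the same overall strategy as the paper: reduce via Lemma~\ref{lemma-semi-orth} and Proposition~\ref{morita-inv-of-diag} to the triangular dg algebra $\tilde C=\left(\begin{smallmatrix}B&M\\0&A\end{smallmatrix}\right)$, then construct an exact functor $\Perf(\tilde C^{\op}\otimes\tilde C)\to\Perf(A^{\op}\otimes A)$ that carries the diagonal bimodule to the diagonal bimodule and sends box products of perfects to box products of perfects. The one real divergence is in the choice of that functor, and it is a genuine improvement. The paper uses the double-corner $S\mapsto e_AS\,e_A$, built out of $\psi_A(G)=Ge_A$ and $\psi_{A^{\op}}(F)=e_AF$, and asserts without comment that $\psi_A\colon Perf(\tilde C)\to Perf(A)$ is a well-defined dg functor; but $\psi_A(\tilde C)=\tilde Ce_A=\left(\begin{smallmatrix}0&M\\0&A\end{smallmatrix}\right)\cong M\oplus A$ as a right $A$-module, so this requires the gluing bimodule $M$ to be perfect over $A$, which is not automatic at the stated level of generality (it would hold, e.g., if $B$ is compact, or if one imports a gluing theorem for smooth $\tilde C$, but the paper does neither). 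Your replacement $\Psi(X)=e_AX\otimes^{\bbL}_{\tilde C}A_\pi$ --- corner on the $A^{\op}$-factor, derived extension of scalars along $\pi\colon\tilde C\to A$ on the $A$-factor --- keeps both ingredients manifestly perfection-preserving (the left factor sends $\tilde C$ to $e_A\tilde C\cong A$ as a left $A$-module; the right factor sends $\tilde C$ to $A_\pi=A$), while still computing $\Psi(\tilde C)=e_A\tilde C\otimes_{\tilde C}A_\pi=e_AA_\pi=A$ because $\pi(e_A)=1_A$. Your steps (1)--(4), the resulting inclusion $A\in\langle(e_AF)\boxtimes(G\otimes^{\bbL}_{\tilde C}A_\pi)\rangle_n$, and the mirror argument for $B$ (via $\tilde C^{\op}$, or equivalently using $e_B$ and $\tilde C\to B$) are all sound. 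In short: same route as the paper, but with a more careful functor that closes a gap the paper's proof leaves open.
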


\begin{proof} We may assume that $\Ddim  \Perf(C)<\infty$.

By Lemma \ref{lemma-semi-orth} and Proposition \ref{morita-inv-of-diag} we may assume that $C$ is the triangular dg algebra
$$C=\left(\begin{array}{cc}
B & M\\
0 & A
\end{array}
\right)
$$
where $A$ and $B$ are dg algebras and $M$ is a dg $B^{\op}\otimes A$-module.
A (right) dg $C$-module can be described as a row vector $[Y,X]$ where $Y$ is a dg $B$-module and $X$ is a dg $A$-module.
Let
$$e_A=\left(\begin{array}{cc}
0 & 0\\
0 & 1
\end{array}\right)\quad \text{and} \quad e_B=\left(\begin{array}{cc}
1 & 0\\
0 & 0
\end{array}\right)$$
be the corresponding idempotents in $C$.

We have the obvious homomorphism of dg algebras
$$C\to A, \quad \left(\begin{array}{cc}
b & m\\
0 & a
\end{array}\right)\mapsto a$$
which induces the extension of scalars: a dg functor
$$\psi _A\colon Perf (C)\to Perf (A),\quad M\mapsto Me_A, \quad [Y,X]\mapsto X.$$
Similarly we get the dg functors
$$\psi _{A^{\op}}\colon Perf (C^{\op})\to Perf (A^{\op}),\quad N\mapsto e_AN$$
and
$$\psi _{A^{\op}\otimes A}\colon Perf (C^{\op}\otimes C)\to Perf (A^{\op}\otimes A),\quad S\mapsto e_ASe_A.$$
Clearly, for $M\in Perf (C),N\in Perf (C^{\op})$ we have $\psi _{A^{\op}}(N)\otimes \psi _A(M)=\psi_{A^{\op}\otimes A}(N\otimes M)$. Also $\psi _{A^{\op}\otimes A}$ maps the diagonal dg bimodule $C$ to the diagonal dg bimodule $A$. It follows that $\Ddim  \Perf(C)\ge \Ddim  \Perf(A)$. Similarly one proves that $\Ddim  \Perf(C)\ge \Ddim  \Perf(B)$.
\end{proof}

\subsection{Behavior of $\Ddim $ in families} We have no results in this direction but expect, like in the case of the Rouquier dimension (\ref{rou-dim-in-fam}), that the diagonal dimension is upper semi-continuous in families.

\subsection{$\Ddim$ for a category with a full exceptional collection}

Diagonal dimension of a triangulated category with a full exceptional collection is bounded by the length of the collection. To be more precise, we have

\begin{predl}
\label{prop_ddimexcoll}
Suppose that $A$ is a smooth dg algebra over a field $\k$ and assume that $\Perf(A)$ has a full exceptional collection having the block structure
$$\Perf(A)=\left\langle \begin{matrix}E_{0,1}, & E_{1,1}, & \ldots, & E_{n,1}\\
 & & \ldots & \\
E_{0,d_0}, & E_{1,d_1}, & \ldots, & E_{n,d_n}
\end{matrix}
\right\rangle.$$
Then 
$$\Ddim\Perf(A)\le n.$$
\end{predl}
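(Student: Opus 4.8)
The idea is to build a resolution of the diagonal bimodule $A$ directly out of the blocks of the exceptional collection, mimicking the classical Beilinson-type resolution. Write $T_i = \bigoplus_{j=1}^{d_i} E_{i,j}$ for the $i$-th block, so $\Perf(A) = \langle \langle T_0\rangle_0, \langle T_1\rangle_0, \ldots, \langle T_n\rangle_0\rangle$ is a semiorthogonal decomposition with each piece generated by an exceptional (hence "trivial") object in a single degree. The plan is to produce objects $F_i \in \Perf(A^{\op})$ and the fixed objects $T_i\in\Perf(A)$ such that
\begin{equation*}
A \in \langle F_0\boxtimes T_0\rangle_0 \star \langle F_1\boxtimes T_1\rangle_0 \star \cdots \star \langle F_n\boxtimes T_n\rangle_0 \subset \Perf(A^{\op}\otimes A),
\end{equation*}
which immediately gives $A\in\langle (\bigoplus_i F_i)\boxtimes(\bigoplus_i T_i)\rangle_n$ and hence $\Ddim\Perf(A)\le n$ by Definition \ref{defi-of-diag-dim} (one absorbs all the blocks into a single $F\boxtimes G$ since $[X\boxtimes Y]_0 \star [X'\boxtimes Y']_0 \subset [(X\oplus X')\boxtimes(Y\oplus Y')]_n$ after $n$ stars).

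First I would identify the candidate kernels. Let $G_i\in\Perf(A)$ be the object right-dual to $T_i$ in the sense that $\bbR\Hom_A(T_i, G_j) \simeq \k$ if $i=j$ (in degree $0$) and $=0$ otherwise; such objects exist because $\Perf(A)$ has a Serre functor (Proposition \ref{serre-general-over-r}(2)) and the collection, being full and exceptional, has a dual collection. Equivalently, set $F_i := G_i^\vee\in\Perf(A^{\op})$. Then for any $M\in\Perf(A)$ one has $M\stackrel{\bbL}{\otimes}_A F_i \simeq \bbR\Hom_A(G_i, M)$ by Lemma \ref{lemma-left-right}, so the functor $\Phi_{F_i\boxtimes T_i}\colon N \mapsto (N\stackrel{\bbL}{\otimes}_A F_i)\otimes_\k T_i$ sends $M$ to $\bbR\Hom_A(G_i,M)\otimes_\k T_i$, and by the dual collection property this is the $i$-th "graded piece" of the semiorthogonal filtration of $M$. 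Applying this with $N = A$ (the diagonal bimodule), $\Phi_{F_i\boxtimes T_i}(A)$ computes the $i$-th piece as a bimodule, and the semiorthogonal decomposition of $A\in\Perf(A^{\op}\otimes A)$ — viewed through the $\Perf(A)$-factor — assembles $A$ from these $n+1$ pieces using $n$ cones. The key technical point to check carefully is that all of this can be done bimodule-equivariantly, i.e. that the filtration of the diagonal bimodule $A$ by sub-bimodules whose subquotients are $F_i\boxtimes T_i$ (up to shift/multiplicity) genuinely exists in $\Perf(A^{\op}\otimes A)$, not merely after forgetting the left $A$-action; here I would invoke that the projection functors $p_i$ of the semiorthogonal decomposition are given by kernels (the functor $\Phi_{F_i\boxtimes G_i^{\mathrm{op-dual}}}$ on $D(A^{\op}\otimes A)$) and that convolution of these kernels reconstructs the identity kernel $A$.

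The main obstacle I anticipate is precisely this last equivariance/convolution step: passing from the statement "every $M$ has a filtration with graded pieces in the blocks" (a statement about the category $\Perf(A)$) to "the diagonal bimodule $A$ has a filtration by sub-bimodules with prescribed subquotients $F_i\boxtimes T_i$" (a statement in $\Perf(A^{\op}\otimes A)$). One clean way around it: apply the equivalence $\theta\colon D^b(\coh(X\times X))\xrightarrow{\sim}\Perf(A^{\op}\otimes A)$-style reasoning is unavailable in the abstract dg setting, so instead I would argue inductively. Let $A = C_{n+1} \to C_n \to \cdots \to C_0$ be obtained by applying the semiorthogonal truncation functors on the left $A$-module structure: $C_0$ is the "lowest block" piece, which by construction lies in $\langle F_0\boxtimes T_0\rangle_0$ (it is a sum of shifts of $F_0\boxtimes T_0$ because the dual-collection computation forces $p_0(A) \simeq F_0\boxtimes T_0$ up to a graded multiplicity space), and each cone $C_{i}/C_{i-1}$ lies in $\langle F_i\boxtimes T_i\rangle_0$ for the same reason; since semiorthogonal projection functors are exact and kernel-representable (as the blocks are admissible), all the $C_i$ and the cones are genuine bimodules. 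Then $A\in [F_0\boxtimes T_0]_0 \star\cdots\star[F_n\boxtimes T_n]_0$, which yields $A\in\langle F\boxtimes G\rangle_n$ with $F=\bigoplus_i F_i$, $G=\bigoplus_i T_i$, completing the proof. A sanity check: when all $d_i=1$ (a genuine full strong exceptional collection of length $n+1$) this recovers the well-known bound $\Ddim\le n$, e.g. $\Ddim D^b(\coh\bbP^n) \le n$.
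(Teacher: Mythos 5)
Your overall strategy --- filter the diagonal bimodule $A$ by $n$ cones with box-product graded pieces and then absorb the factors into a single $F\boxtimes G$ --- is exactly right and matches the paper's plan. The problem is in the execution, where two steps are asserted rather than proved. First, the notion of a ``right-dual'' $G_i$ to the block $T_i=\oplus_j E_{ij}$ satisfying $\bbR\Hom_A(T_i,G_j)\simeq\delta_{ij}\cdot\k$ is not well-posed once $d_i>1$: a dual to a block must pair with all $d_i$ summands of the block, not produce a single copy of $\k$. What you actually want are duals to individual exceptional objects $E_{ij}$, but then the $i$-th graded piece is not a single product $F_i\boxtimes T_i$ but a direct sum over the block. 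Second, the step ``semiorthogonal projection functors are exact and kernel-representable \ldots\ so the $C_i$ and the cones are genuine bimodules'' invokes a substantive result (that the SOD projections of $\Perf(A)$ are given by kernels in $\Perf(A^{\op}\otimes A)$ and that the filtration of the identity lifts to a filtration of the diagonal by these kernels) that is not among the facts established in the paper and would itself need a proof. So as written there is a genuine gap: the bimodule-equivariance you correctly flag as the crux is not resolved, only outsourced.

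The paper resolves it with a more direct observation that you may find worth internalizing. For each $E_{ij}$ the functor $-\boxtimes E_{ij}\colon\Perf(A^{\op})\to\Perf(A^{\op}\otimes A)$ is fully faithful (K\"unneth plus exceptionality of $E_{ij}$), and the images $\BB_{ij}:=\Perf(A^{\op})\boxtimes E_{ij}$ form a semi-orthogonal decomposition of $\Perf(A^{\op}\otimes A)$ with the same block structure as the given collection, again by the K\"unneth formula for $\Hom$. Consequently the object $A\in\Perf(A^{\op}\otimes A)$ automatically admits a filtration by $n$ cones whose $k$-th graded piece lies in $\BB_k=\oplus_i\BB_{ki}$, hence is a direct sum $\oplus_i B_{ki}\boxtimes E_{ki}$ for some $B_{ki}\in\Perf(A^{\op})$ --- there is nothing to identify and no equivariance to check, because the filtration is manufactured directly inside the bimodule category. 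Then $A\in\langle(\oplus_{k,i}B_{ki})\boxtimes(\oplus_{k,i}E_{ki})\rangle_n$, giving $\Ddim\Perf(A)\le n$. In short: rather than upgrading the $\Perf(A)$-level projections to the bimodule level, notice that $\Perf(A^{\op}\otimes A)$ itself inherits a blocked SOD, and the filtration of the diagonal comes for free.
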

\begin{proof}
For any object $E_{ij}$ consider the functor
$$\phi_{ij}=-\boxtimes E_{ij}\colon \Perf(A^{\op})\to \Perf(A^{\op}\otimes A).$$
It is fully faithful, denote its image by $\BB_{ij}$. Then the subcategories $\BB_{ij}$ form a semi-orthogonal decomposition 
$$\Perf(A^{\op}\otimes A)=\left\langle \begin{matrix}\BB_{0,1}, & \BB_{1,1}, & \ldots, & \BB_{n,1}\\
 & & \ldots & \\
\BB_{0,d_0}, & \BB_{1,d_1}, & \ldots, & \BB_{n,d_n}
\end{matrix}
\right\rangle.$$
Denote $\BB_k:=\oplus_{i=1}^{d_k} \BB_{ki}$, then 
$$\Perf(A^{\op}\otimes A)=\langle \BB_0, \ldots, \BB_n\rangle. $$
Since $A\in\Perf(A^{\op}\otimes A) $ it follows that there exists a sequence 
$$0=F_{n+1}\to F_{n}\to \ldots \to F_1\to F_0=A$$
in $\Perf(A^{\op}\otimes A)$
such that $Cone(F_{k+1}\to F_k)\in \BB_k$ for any $k=0,\ldots,n$.
Suppose that $Cone(F_{k+1}\to F_k)=\oplus_{i=1}^{d_k}B_{ki}\boxtimes E_{ki}$, then clearly $A\in \langle (\oplus_{k,i} B_{ki})\boxtimes (\oplus_{ki} E_{ki})\rangle_n$. This proves the statement.
\end{proof}

\subsection{$\Ddim  D^b(\coh X)$ for a quasi-projective $X$}

Recall (see Theorem \ref{thm-rouq}(2),(4)) that for a smooth quasi-projective $\k$-scheme $X$ we have the estimate
$$\dim X\le \Rdim  D^b(\coh X) \le 2\dim X.$$
A similar estimate holds for the diagonal dimension.

\begin{lemma} Let $X$ be a smooth quasi-projective scheme over a perfect field $\k$. Then
$$\dim X\le \Ddim  D^b(\coh X)\le 2\dim X.$$
\end{lemma}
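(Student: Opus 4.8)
The plan is to establish the two inequalities separately, using the already-proved comparison $\Rdim \Perf(A) \le \Ddim \Perf(A)$ (Proposition \ref{ddim-geq-rdim}) for the lower bound and a direct construction via the diagonal for the upper bound. For the lower bound $\dim X \le \Ddim D^b(\coh X)$, I would simply combine Theorem \ref{thm-rouq}(2), which gives $\dim X \le \Rdim D^b(\coh X)$ for a reduced separated scheme of finite type (a smooth scheme being reduced), with Proposition \ref{ddim-geq-rdim}, which gives $\Rdim D^b(\coh X) \le \Ddim D^b(\coh X)$ once we identify $D^b(\coh X)$ with $\Perf(A)$ for a smooth compact dg algebra $A$ (using that $X$ smooth quasi-projective forces the category to have an enhancement of the form $\Perf(A)$, and $A$ is smooth, as noted in Subsection \ref{geom-def-of-diag-dim}). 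The compatibility of the two notions of diagonal dimension (the geometric one of \cite{BaFa} and the dg one) was established in the Lemma of Subsection \ref{geom-def-of-diag-dim}, at least in the projective case; in the quasi-projective case one uses the same identification $D^b(\coh X) \simeq \Perf(A)$.

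**The upper bound.** For $\Ddim D^b(\coh X) \le 2\dim X$, the plan is to mimic the geometric argument behind Theorem \ref{thm-rouq}(4): realize $\cO_\Delta$ on $X \times X$ by resolving it through a sheaf pulled back from a smooth ambient space. More concretely, choose a closed embedding $X \hookrightarrow \bbP^N$; then $X \times X \hookrightarrow \bbP^N \times X$, and the diagonal $\Delta \subset X \times X$ is the preimage of $X$ under the first projection composed with nothing — rather, one considers the subscheme $\Gamma \subset \bbP^N \times X$ which is the graph of the inclusion, i.e. $\Delta$ viewed inside $\bbP^N \times X$. The idea is that $\O_\Delta$, as a sheaf on $\bbP^N \times X$, has a Koszul-type resolution of length $N$ by vector bundles of the form $p_1^* \cO_{\bbP^N}(a_i) \boxtimes \cN_i$, but $N$ can be as large as we like; the right move is to use Beilinson's resolution of the diagonal on $\bbP^N$ to write $\cO_{\Delta_{\bbP^N}}$ in $\langle \O \boxtimes \Omega^i(i) \rangle_N$, then restrict to $X \times X$ — but this gives bound $N$, not $2\dim X$.

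**Refinement.** So instead I would argue as in Rouquier: since $X$ is smooth quasi-projective of dimension $d$, cover $X$ by $d+1$ affine open sets, or better, use that $D^b(\coh X) = \langle E \rangle_{2d}$ for some object $E$ (Theorem \ref{thm-rouq}(4), proved in \cite{Rou} by an explicit construction resolving the diagonal in $2d$ steps using an ample line bundle). Inspecting that proof, Rouquier in fact builds, on $X \times X$, a resolution of $\O_\Delta$ exhibiting $\O_\Delta \in \langle \FF \boxtimes \cG \rangle_{2d}$ for suitable $\FF \in D^b(\coh X)$, $\cG \in D^b(\coh X)$ (the argument factors the diagonal embedding through a smooth affine-over-$X$ situation and applies the affine bound $\dim = d$ twice, or uses a $d$-step filtration on each factor). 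Translating to the dg setting via the equivalence $D^b(\coh(X \times X)) \simeq \Perf(A^{\op} \otimes A)$ of Subsection \ref{geom-def-of-diag-dim} that sends $\O_\Delta$ to the diagonal bimodule and is compatible with $\boxtimes$, this yields $A \in \langle F \boxtimes G \rangle_{2d}$, hence $\Ddim \Perf(A) \le 2d = 2\dim X$.

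**Main obstacle.** The hard part will be extracting from \cite{Rou} the precise statement that the strong generation bound $2\dim X$ comes with a \emph{factorization through a box product}, i.e.\ that the $2d$ cones can be taken of the form $\FF_i \boxtimes \cG_i$ on $X \times X$ rather than just $2d$ arbitrary cones building a generator of $D^b(\coh X)$. If Rouquier's construction does not literally give this, one would need to redo it: embed $X$ in $\bbP^N$, use Beilinson on $\bbP^N$ to get $\O_{\Delta_{\bbP^N}}$ as an iterated extension of $\cO(-i) \boxtimes \Omega^i(i)$, restrict to $X \times X$ to get $\O_\Delta$ in $N$ steps of box products, and then compress the length from $N$ to $2d$ using that $X$ has a resolution of the diagonal coming from its cotangent complex having amplitude $d$ — this compression is exactly the technical heart and the place where quasi-projectivity and smoothness are both used.
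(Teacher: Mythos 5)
Your lower bound argument is correct and is the same as the paper's: combine Theorem \ref{thm-rouq}(2) (a smooth quasi-projective scheme is reduced and separated, so $\dim X\le \Rdim D^b(\coh X)$) with Proposition \ref{ddim-geq-rdim} ($\Rdim \le \Ddim$), using the identification $D^b(\coh X)\simeq \Perf(A)$ and the compatibility lemma of Subsection \ref{geom-def-of-diag-dim}.

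For the upper bound, however, you do not arrive at a complete argument. You correctly identify the two problems with the routes you consider: Rouquier's $2\dim X$-step strong generation proof does not obviously produce cones of box-product form, and restricting a Beilinson resolution from $\bbP^N$ gives a bound of $N$, not $2\dim X$. But you then defer the reduction from $N$ to $2\dim X$ to an unproved ``compression'' step which you yourself call the technical heart --- that step is precisely the missing content, and the proposal as written does not supply it.

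The paper's actual argument is more elementary and avoids both issues. Since $X$ is quasi-projective, every coherent sheaf on $X\times X$ is a quotient of a sheaf of the form $M\boxtimes N$ with $M,N$ locally free on $X$. Iterating this produces a resolution
$$0\to K\to M_{2\dim X}\boxtimes N_{2\dim X}\to \cdots \to M_0\boxtimes N_0\to \cO_\Delta\to 0.$$
The truncated complex (dropping $K$ and $\cO_\Delta$) represents a class in $\Ext^{2\dim X+1}(\cO_\Delta, K)$ on $X\times X$, and this group vanishes because $X\times X$ is smooth of dimension $2\dim X$. Vanishing of the class means $\cO_\Delta$ is a direct summand of the length-$(2\dim X)$ complex of box products, hence $\cO_\Delta\in\langle M\boxtimes N\rangle_{2\dim X}$ with $M=\oplus M_i$, $N=\oplus N_i$. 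No ambient projective space, no Beilinson resolution, and no compression argument is needed: the only inputs are quasi-projectivity (to get surjections from box products) and smoothness of $X\times X$ (to kill the $\Ext$ obstruction).
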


\begin{proof} The first inequality follows from Theorem \ref{thm-rouq}(2) and Proposition \ref{ddim-geq-rdim}. To prove the second we remark that the product $X\times X$ is smooth of dimension $2\dim X$. Note that since $X$ is quasi-projective,  every coherent sheaf on $X\times X$ is a quotient of a sheaf $M\boxtimes N$ for some locally free sheaves $M$ and $N$ on $X$.
Therefore we can find a resolution of the structure sheaf of the diagonal on $X\times X$
$$0\to K\to M_{2\dim X}\boxtimes N_{2\dim X}\to \ldots \to M_0\boxtimes N_0\to \cO _{\Delta} \to 0$$
The complex $0\to M_{2\dim X}\boxtimes N_{2\dim X}\to \ldots \to M_0\boxtimes N_0\to 0$ defines an element in $\Ext ^{2\dim X +1}(\cO _\Delta ,K)$. But this group is zero, since $\Ext ^{2\dim X +1}(S,T)=0$ for any coherent sheaves $S,T$ on the smooth variety $X\times X$ . It follows that $\cO _{\Delta}$ is a direct summand of the complex $M_{2\dim X}\boxtimes N_{2\dim X}\to \ldots \to M_0\boxtimes N_0$. Therefore $\cO _\Delta \in \langle M\boxtimes N\rangle _{2\dim X}$, where $M=\oplus M_i$ and $N=\oplus N_i$. So $\Ddim  D^b(\coh X)\le 2\dim X$ (see Section~\ref{geom-def-of-diag-dim}).
\end{proof}

In some special cases we have the equality $\Ddim D^b(\coh X)=\dim X$. By Proposition~\ref{prop_ddimexcoll}, this equality holds for any projective variety $X$ with a full exceptional collection in $D^b(\coh X)$ consisting of $n=\dim X$ blocks.
For example, we have $\Ddim D^b(\coh X)=\dim X$ for $X$ being a projective space, a smooth quadric, a smooth del Pezzo surface or a Fano threefold $V_5$ or $V_{22}$.

Moreover, let $X$ be a smooth affine variety over a perfect field. Then $\Ddim D^b(\coh X)=\dim X$. This follows from the next 
\begin{predl}
Let $A$ be a finite-dimensional or a commutative and essentially finitely generated algebra 
  over a perfect field $\k$. Then $\Ddim D^b(\modd A)\le \gldim A$.
\end{predl}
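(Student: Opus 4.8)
The plan is to build, for such an algebra $A$, an explicit resolution of the diagonal bimodule $A$ by objects of the form $F\boxtimes G$ of length $\gldim A$. Since $A$ is smooth (finite global dimension over a perfect field guarantees this — one needs here that $A^{\op}\otimes_\k A$ again has finite global dimension, which uses perfectness of $\k$), the bimodule $A$ lies in $\Perf(A^{\op}\otimes A)$, and the usual Tor-vanishing argument (as in the quasi-projective case above) will let us split off $A$ as a direct summand of a complex whose terms are direct sums of bimodules $P_i\boxtimes Q_i$.

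First I would invoke the bar resolution of the diagonal. The normalized bar complex
$$\cdots \to A\otimes_\k \bar A\otimes_\k \bar A\otimes_\k A\to A\otimes_\k \bar A\otimes_\k A\to A\otimes_\k A\to A\to 0,$$
where $\bar A=A/\k$, resolves $A$ as an $A^{\op}\otimes A$-module, and each term $A\otimes_\k \bar A^{\otimes i}\otimes_\k A$ is manifestly of the form (free right $A$-module) $\boxtimes$ (free left $A$-module), i.e.\ it lies in $[A\boxtimes A]_0$ — at least when $A$ is finite dimensional, so that each $\bar A^{\otimes i}$ is a finite-dimensional $\k$-space. Next I would truncate this complex in homological degree $\gldim A$: set $K:=\ker(A\otimes_\k\bar A^{\otimes n}\otimes_\k A\to A\otimes_\k \bar A^{\otimes (n-1)}\otimes_\k A)$ where $n=\gldim A$. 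Because $A^{\op}\otimes A$ has global dimension $\le 2\gldim A<\infty$ and (crucially) because $A$ is smooth so that $A\in\Perf(A^{\op}\otimes A)$, the bimodule $A$ is actually a perfect complex; one then checks that the obstruction class in $\Ext^{n+1}_{A^{\op}\otimes A}(A,K)$ vanishes. The vanishing should follow from the fact that $A$, viewed as a bimodule, has projective dimension $\le n=\gldim A$ (this is the standard identification $\mathrm{pd}_{A^e}A = \gldim A$ for $A$ of finite global dimension over a field, which again wants perfectness of $\k$ so that $\k$-algebras stay nice under base change, e.g.\ $A\otimes_\k A$ remains of the expected global dimension). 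Hence the brutal truncation $A\otimes_\k\bar A^{\otimes n}\otimes_\k A\to\cdots\to A\otimes_\k A$ has $A$ (shifted) as a direct summand in the derived category, so $A\in\langle A\boxtimes A\rangle_n$, giving $\Ddim\Perf(A)\le n$.

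For the commutative, essentially finitely generated case the bar complex has infinite-dimensional terms, so instead I would work locally: write $X=\Spec A$ as a smooth affine variety (perfectness of $\k$ makes "finite global dimension" equivalent to smoothness here), embed the diagonal $\Delta\subset X\times X$, and resolve $\cO_\Delta$ by sheaves $M_i\boxtimes N_i$ with $M_i,N_i$ finite free $\cO_X$-modules — possible since $X$ is affine. One again truncates at step $n=\dim X=\gldim A$ and kills the resulting $\Ext^{n+1}$ class, this time using that $X\times X$ is smooth of global dimension $2n$ and that $\cO_\Delta$ has a projective bimodule resolution of length exactly $n$ because the ideal sheaf of $\Delta$ is locally generated by a regular sequence of length $n$ (the Koszul resolution). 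Translating back through the equivalence $D^b(\coh(X\times X))\simeq\Perf(A^{\op}\otimes A)$ compatible with $\boxtimes$ and sending $\cO_\Delta$ to the diagonal bimodule (as cited in Section~\ref{geom-def-of-diag-dim}), we get $A\in\langle F\boxtimes G\rangle_n$ for $F=\oplus M_i$, $G=\oplus N_i$, hence $\Ddim\Perf(A)\le\gldim A$.

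The main obstacle I anticipate is the vanishing of the obstruction class $\Ext^{n+1}_{A^e}(A,K)=0$: one needs to know that $A$, as a bimodule over $A^e:=A^{\op}\otimes_\k A$, has projective dimension $\le\gldim A$ (not merely $\le\gldim A^e\le 2\gldim A$). For $A$ finite dimensional over a perfect field this is a classical fact (the enveloping algebra $A^e$ is again finite dimensional and $\mathrm{pd}_{A^e}A=\gldim A$ — here perfectness ensures $\gldim A^e<\infty$); for $A$ commutative essentially finitely generated and smooth it is the statement that the diagonal is a local complete intersection of codimension $\dim X$. I would spell out the finite-dimensional case via the identification of $\mathrm{Tor}^{A^e}_\bullet(A,A)$ with Hochschild homology and the fact that $\gldim A = \mathrm{pd}_{A^e}A$ when $\k$ is perfect (so that $A^e$ is semiprimary with the right homological properties), and the commutative case via the Koszul resolution of $\cO_\Delta$ along the regular sequence cutting out $\Delta$.
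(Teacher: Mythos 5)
Your proposal is correct and rests on exactly the same key fact as the paper's proof, namely that the projective dimension of $A$ as an $A^{\op}\otimes A$-module equals $\gldim A$ for $A$ finite-dimensional or commutative essentially of finite type over a perfect field. The paper simply cites this as Rouquier's Lemma~7.2 and then concludes in one line: since $A^{\op}\otimes A$ is Noetherian, take a length-$n$ resolution of $A$ by finitely generated projective $A^{\op}\otimes A$-modules; each such projective is a direct summand of a finite free module and hence lies in $\langle A\otimes A\rangle_0$, so $A\in\langle A\otimes A\rangle_n$. You instead re-derive the key fact (via the bar resolution/Hochschild homology in the finite-dimensional case and the Koszul resolution of the diagonal in the commutative case), and then use a brutal truncation together with a vanishing obstruction class $\Ext^{n+1}_{A^e}(A,K)=0$ to split off $A$, mimicking the paper's argument for the $\le 2\dim X$ bound in the quasi-projective setting. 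This is a valid but longer route: once you know $\mathrm{pd}_{A^e}A\le n$, you can pass directly to a minimal length-$n$ projective resolution instead of truncating the bar complex and invoking Yoneda $\Ext$; the obstruction-class step is doing the same work as ``the $n$-th syzygy is projective.'' One small indexing remark: for the truncation to give a class in $\Ext^{n+1}(A,K)$ you should take $K=\ker(B_n\to B_{n-1})$ as you did, so the exact sequence has $n+1$ middle terms $B_n,\ldots,B_0$; this is consistent with what you wrote, just worth double-checking when writing it up.
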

\begin{proof}
Note that the algebra $A^{\op}\otimes A$ is Noetherian.
Denote $n=\gldim A$. We demonstrate that $A\in \langle A\otimes A\rangle_n$. 
Indeed, by \cite[Lemma 7.2]{Rou} the projective dimension of the $A^{\op}\otimes A$-module $A$ is $n$. Since any finitely generated projective $A^{\op}\otimes A$-module is in $\langle A\otimes A\rangle_0$, the statement follows.
\end{proof}

But in general, we expect that for most smooth and projective varieties $X$ the diagonal dimension is bigger than $\dim X$. In particular we make the following

\begin{conjecture} Let $X$ be a smooth and projective irrational curve over $\k$. Then $\Ddim  D^b(\coh X)=2$.
\end{conjecture}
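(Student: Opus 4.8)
The plan is to prove both inequalities separately. The upper bound $\Ddim D^b(\coh X)\le 2$ follows immediately from the general lemma just proved: any smooth projective curve $X$ has $X\times X$ of dimension $2$, and the resolution-of-the-diagonal argument there gives $\cO_\Delta\in\langle M\boxtimes N\rangle_2$. So the content is entirely in the lower bound: showing that for an irrational curve one cannot have $\cO_\Delta\in\langle F\boxtimes G\rangle_1$ for any $F,G\in D^b(\coh X)$. Equivalently, I want to rule out the existence of a two-step ``decomposition of the diagonal'' $\cO_\Delta\oplus T\in[F\boxtimes G]_1$, i.e. a triangle $P_0\to P_0'\oplus\cO_\Delta\text{-}\mathrm{summand}\to P_1$ with $P_0,P_1$ finite direct sums of shifts of $F\boxtimes G$.

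First I would reduce, via $\Ddim\ge\Rdim$ (Proposition \ref{ddim-geq-rdim}), to recalling that $\Rdim D^b(\coh X)=1$ for a smooth projective curve (Orlov's theorem in the excerpt), so $\Ddim D^b(\coh X)\in\{1,2\}$ and I only need to exclude the value $1$. Suppose $\Ddim D^b(\coh X)=1$. Then there are objects $F\in\Perf(A^{\op})$, $G\in\Perf(A)$ — translating back to the geometric side, $F,G\in D^b(\coh X)$ — with $\cO_\Delta$ a summand of an object built from $F\boxtimes G$ by one cone. The standard consequence (used throughout Rouquier-type arguments, and in the proof of Proposition \ref{ddim-geq-rdim}) is that then \emph{every} object $N\in D^b(\coh X)$ lies in $\langle G\rangle_1$; that is, $D^b(\coh X)=\langle G\rangle_1$, so in particular $D^b(\coh X)$ has a strong generator of ``level'' exactly $1$ with the extra feature that $G$ can be taken to have the special form coming from the Künneth factor. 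The real leverage is that the diagonal being generated in one step forces a very rigid structure: for all $N$, one gets $N\oplus(\text{something})\cong \mathrm{Cone}(H_1\to H_0)$ with $H_i$ sums of shifts of $G$.

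The main obstacle — and the heart of the proof — is extracting a contradiction with irrationality. I would argue that $\Ddim=1$ forces $D^b(\coh X)$ to be ``homologically one-dimensional'' in a strong, diagonal-splitting sense that holds for $\bbP^1$ (which has a two-term tilting resolution of the diagonal, $\cO\boxtimes\cO\to\cO(1)\boxtimes\cO(1)$, realizing $\Ddim\bbP^1=1$ via the Beilinson resolution) but fails for curves of positive genus. Concretely: if $\cO_\Delta$ is a direct summand of $\mathrm{Cone}(F_1\boxtimes G_1\to F_0\boxtimes G_0)$ (after allowing finite sums of shifts), then applying the Fourier–Mukai machinery, the identity functor $\id_{D^b(\coh X)}$ is a direct summand of a two-term complex of functors each of which factors as $\bbR\Hom(F_i,-)\otimes G_i$ — i.e. each factors through $\Perf$ of the point, hence has ``length $0$''. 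One then shows this implies $\Ext^2_{X\times X}(\cO_\Delta,\cO_\Delta)$, or equivalently $H^1(X,\cO_X)$ (Hochschild cohomology $HH^2(X)\supset H^1(X,T_X)$ and $HH^1(X)\supset H^1(X,\cO_X)$), would have to vanish; but for an irrational curve $g(X)\ge 1$ so $\dim H^1(X,\cO_X)=g\ge1$, a contradiction. Thus $\Ddim D^b(\coh X)\ne1$, and combined with the upper bound we get $\Ddim D^b(\coh X)=2$. The delicate point I expect to have to nail down carefully is the precise homological invariant that ``$\cO_\Delta\in\langle F\boxtimes G\rangle_1$'' obstructs — the cleanest route is probably: a one-step build of $\cO_\Delta$ from a box product makes the graded identity morphism $\id\colon\cO_\Delta\to\cO_\Delta$ factor (up to a summand) through an object on which all self-$\Ext$ in degrees $\ge2$ vanish, yet $\Hom_{X\times X}^2(\cO_\Delta,\cO_\Delta)\ne0$ precisely when $g(X)\ge1$ because this group surjects onto $H^1(X,\cO_X)$.
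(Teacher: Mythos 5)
This statement is a \emph{conjecture} in the paper; the authors do not prove it, so there is no proof in the paper to compare against. Your attempt is therefore a proposed resolution of an open problem, and it needs to be judged on its own merits.

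Your reduction is fine as far as it goes: $\Ddim \le 2$ follows from the lemma on quasi-projective schemes, and $\Ddim \ge \Rdim = 1$ by Orlov's theorem combined with Proposition \ref{ddim-geq-rdim}, so the whole content is to exclude $\Ddim = 1$. The problem is that the exclusion step does not hold up. First, a technical slip: for a curve $X$, $\Ext^2_{X\times X}(\cO_\Delta,\cO_\Delta) = HH^2(X) = H^1(X,T_X)$, not $H^1(X,\cO_X)$; the latter sits inside $HH^1(X)$, so the invariant you name is not the one computed by $\Ext^2$ of the diagonal. More seriously, the central assertion---that a one-cone build of $\cO_\Delta$ from $F\boxtimes G$ factors the identity through objects ``on which all self-$\Ext$ in degrees $\ge 2$ vanish''---is simply false. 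By the K\"unneth formula, $\Ext^2_{X\times X}(F\boxtimes G, F'\boxtimes G') \supset \Ext^1_X(F,F')\otimes \Ext^1_X(G,G')$, which is typically nonzero on a curve of positive genus, so box products on $X\times X$ do have self-$\Ext^2$. Hence the claimed factorization yields no vanishing, and no contradiction with $g(X)\ge 1$ is obtained. Even setting aside these issues, it is not explained why $\cO_\Delta$ being a summand of a single cone of box products would force an obstruction class like $\Ext^2(\cO_\Delta,\cO_\Delta)$ to die---direct summands of objects with nonvanishing self-$\Ext^2$ need not have vanishing self-$\Ext^2$. The lower bound remains unproved, and the conjecture stands open.
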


\section{Serre dimension}
\label{section_serre}

Let $\k$ be a field. We keep the same notational conventions as in Section \ref{section_diagonal}.

\subsection{Definition of the Serre dimension for categories $\Perf(A)$}

Recall the definition of the Serre functor \cite{BK1}.

\begin{definition} Let $T$ be a $\k$-linear triangulated category which is Ext-finite, i.e. for any objects $X,Y\in T$, $\sum _i\dim \Hom _{T}(X,Y[i])<\infty$. An endofunctor $S\colon T \to T$ is the \emph{Serre functor} if for any $X,Y\in T$ there is a functorial isomorphism of vector spaces
$$\Hom (X,Y) = \Hom (Y,S(X))^*$$
where $(-)^*$ denotes the dual vector space.
\end{definition}

The Serre functor, if it exists, is unique up to an isomorphism  and is triangulated. Theorem \cite[Thm. 1.3]{BVdB} implies in particular that the Serre functor exists for an Ext-finite triangulated category $T$ which is Karoubian and has a strong generator. Therefore the Serre functor exists for the category $\Perf(A)$ where $A$ is a smooth and compact dg algebra, or where $A$ is a finite dimensional algebra which has finite global dimension. By Proposition \ref{serre-general-over-r} the Serre functor is given by the following formula:
$$S(X)=X\stackrel{\bbL}{\otimes }_AA^*$$
where $A^*:=\Hom _k(A,k)\in \Perf(A^{\op}\otimes A)$.
 We denote the $m$-fold tensor product 
 $$A^*\stackrel{\bbL}{\otimes }_A \ldots \stackrel{\bbL}{\otimes }_AA^*$$ by $(A^*)^{\otimes^{\bbL}_Am}$.

Given a triangulated category $T$ and an endofunctor $F\colon T\to T$ one can define the upper and lower $F$-dimension of $T$, denoted $F\Udim T$ and $F\Ldim T$. This is done in Section \ref{section_general-dim} below.
In particular one can define the upper and lower Serre dimension of a category that has a Serre functor. Here we only specialize to the case when $T$ has a generator.

\begin{definition} 
\label{definition_e-e+}
For two objects $G_1,G_2$ in a triangulated category we denote
$$e_-(G_1,G_2)=\inf\{i\mid \Hom^i(G_1,G_2)\ne 0\}; \quad
e_+(G_1,G_2)=\sup\{i\mid \Hom^i(G_1,G_2)\ne 0\}.$$
If $\Hom^{\bul}(G_1,G_2)=0$ we put $e_-(G_1,G_2)=+\infty, e_+(G_1,G_2)=-\infty$.
Also for a complex $C^\bullet$ we define
$$\inf C^{\bul}=\inf\{i\mid H^i(C^{\bul})\ne 0\}, \qquad
\sup C^{\bul}=\sup\{i\mid H^i(C^{\bul})\ne 0\}.$$
\end{definition}

\begin{definition} 
\label{definition_sdim}
Let $T$ be a $\k$-linear Ext-finite triangulated category with a Serre functor $S\colon T\to T$. Assume that $T$ has a generator. Choose any generators $G,G'$ of $T$ and define the \emph{upper Serre} and the \emph{lower Serre} dimension of $T$ as follows
$$\USdim T=\limsup_{m\to +\infty} \frac{-e_-(G,S^m(G'))}m,\quad
  \LSdim T=\liminf_{m\to +\infty} \frac{-e_+(G,S^m(G'))}m.
$$
\end{definition}

The fact that the numbers $\USdim  T$ and $\LSdim T$ are well defined (i.e. don't depend on the choice of generators) follows from Lemma~\ref{lemma_GG} and Definition~\ref{definition_Fdim2}. It is clear that equivalent triangulated categories have equal upper Serre and lower Serre dimensions.
\begin{remark}
In Definition~\ref{definition_sdim}, if $T$ is a non-zero category then for any $G,G'$ and $m$ we get $\Hom^{\bul}(G,S^m(G'))\ne 0$. Consequently, $e_-\le e_+$ are finite numbers and one has 
$$\LSdim T\le \USdim T.$$
For $T=0$ Definition~\ref{definition_sdim} gives $\LSdim T=+\infty$, $\USdim T=-\infty$, this does not look fine. Further when dealing with the Serre dimension we will always assume that the category is non-zero.
\end{remark}

In case $A$ is a smooth and compact dg $\k$-algebra and $T=\Perf(A)$, one can replace $\limsup$ and $\liminf$ in the above definition by $\lim$.

\begin{predl} \label{form-for-serre-dim} Let $A$ be a smooth and compact dg algebra over $\k$. Choose any generators $G,G'$ of $\Perf(A)$. Then
$$\USdim \Perf(A)=\lim_{m\to +\infty} \frac{-e_-(G,S^m(G'))}m,\quad
  \LSdim \Perf(A)=\lim_{m\to +\infty} \frac{-e_+(G, S^m(G'))}m.
$$
In particular, if we put $G=G'=A$ the above formulas read as follows
\begin{equation}
\label{eq_sdimA}
\USdim  \Perf(A)=\lim_{m\to +\infty}\frac{-\inf(A^*)^{\otimes^{\bbL}_Am}}m,\quad
\LSdim  \Perf(A)=\lim_{m\to +\infty}\frac{-\sup(A^*)^{\otimes^{\bbL}_Am}}m.
\end{equation}

Moreover these limits are finite.
\end{predl}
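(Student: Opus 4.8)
The plan is to show that, for a smooth and compact dg algebra $A$, the sequences $\{-e_-(G,S^m(G'))\}$ and $\{-e_+(G,S^m(G'))\}$ behave well enough that the $\limsup$ and $\liminf$ in Definition~\ref{definition_sdim} are actually limits, and that the limits are finite. First I would reduce to the case $G=G'=A$. By Lemma~\ref{lemma_GG} (cited as forthcoming in the text) and Definition~\ref{definition_Fdim2}, changing generators alters the quantities $e_\pm(G,S^m(G'))$ only by a bounded additive error that is independent of $m$; hence dividing by $m$ and passing to the limit, the choice of generators is irrelevant, and it suffices to treat $e_\pm(A,S^m(A))$. By the formula $S(X)=X\stackrel{\bbL}{\otimes}_A A^*$ and Lemma~\ref{lemma-left-right} (which identifies $\Hom^\bullet(A,M)$ with $H^\bullet(M)$ for $M\in\Perf(A)$), we have $e_-(A,S^m(A))=\inf (A^*)^{\otimes^{\bbL}_A m}$ and $e_+(A,S^m(A))=\sup (A^*)^{\otimes^{\bbL}_A m}$, which is exactly the content of \eqref{eq_sdimA} once the limits are established.

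The key step is then a \emph{subadditivity/superadditivity} argument for the two integer sequences $a_m:=\sup (A^*)^{\otimes^{\bbL}_A m}$ and $b_m:=\inf (A^*)^{\otimes^{\bbL}_A m}$. Writing $(A^*)^{\otimes^{\bbL}_A(m+n)}=(A^*)^{\otimes^{\bbL}_A m}\stackrel{\bbL}{\otimes}_A(A^*)^{\otimes^{\bbL}_A n}$ and using the standard spectral sequence / filtration bound for the cohomological amplitude of a derived tensor product of two perfect complexes over $A$, one gets an estimate of the form $a_{m+n}\le a_m+a_n+c$ and $b_{m+n}\ge b_m+b_n-c$, where $c$ is a constant depending only on $A$ (coming from the cohomological amplitude of $A$ itself, or equivalently of the bar-type resolution used to compute $\stackrel{\bbL}{\otimes}_A$; here one uses that $A$ is compact so $H^\bullet(A)$ is finite-dimensional and concentrated in finitely many degrees). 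Setting $a'_m=a_m+c$ and $b'_m=b_m-c$ makes $\{a'_m\}$ genuinely subadditive and $\{-b'_m\}$ subadditive, so by Fekete's lemma $\lim_m a_m/m=\inf_m a'_m/m$ and $\lim_m b_m/m=\sup_m b'_m/m$ exist in $[-\infty,+\infty)$ and $(-\infty,+\infty]$ respectively.

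It remains to pin down finiteness of these limits, which is where I expect the real work to lie. Finiteness of $\LSdim=\lim(-a_m)/m$ from below (i.e. $\LSdim>-\infty$, equivalently $a_m=O(m)$) and of $\USdim=\lim(-b_m)/m$ from above (i.e. $\USdim<+\infty$, equivalently $b_m\ge -O(m)$) must be extracted from smoothness of $A$. The natural route is to use that $A$ is smooth, so $A\in\Perf(A^{\op}\otimes A)$, hence $A$ lies in $\langle F\boxtimes G\rangle_n$ for some $n$; tensoring iteratively with $A^*$ and tracking amplitudes through these $n$ cones gives a linear-in-$m$ bound on both $a_m$ and $-b_m$ in terms of $n$ and the amplitudes of the finitely many perfect complexes $F,G,A^*$. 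Concretely, $S^m$ is an equivalence, so $S^m(A)\in\Perf(A)$ is built from finitely many shifts of $A$ by a bounded number of cones — but the number of cones could grow with $m$; the honest argument is instead to bound the amplitude of $(A^*)^{\otimes^{\bbL}_A m}$ by first bounding the amplitude of $A^*$ as a bimodule and then using the diagonal resolution of length $n$ to control the amplitude after each tensor factor. The main obstacle is making this bookkeeping uniform in $m$ — i.e. getting a constant slope rather than a constant that degrades — and this is exactly where smoothness (finiteness of the diagonal-generation length $n$) is indispensable; compactness alone gives the subadditivity but not finiteness. Once both one-sided bounds are in place, combined with the two-sided inequality $\LSdim\le\USdim$ from the Remark after Definition~\ref{definition_sdim}, all four limits in the statement are finite real numbers.
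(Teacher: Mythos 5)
Your argument is correct, but it takes a genuinely different route from the paper's. The paper proves Proposition~\ref{form-for-serre-dim} in one line, by citing Proposition~\ref{prop_entropy-to-dimension} (which identifies $S\Udim$ and $S\Ldim$ with slopes of the entropy function $h_t(S)$ at $t\to\pm\infty$) and Proposition~\ref{prop_Fdimfinite} (finiteness). Both of those ultimately rest on the DHKK entropy machinery: existence of the entropy limit \eqref{eq_entropy} is imported as a black box via Theorem~\ref{theorem_DHKK}, and finiteness is handled by the separate linear-growth estimate of Lemma~\ref{lemma_lineargrowth}. You instead give a direct, self-contained argument via Fekete's lemma applied to the sequences $a_m=\sup (A^*)^{\otimes^{\bbL}_Am}$ and $b_m=\inf (A^*)^{\otimes^{\bbL}_Am}$, bypassing the entropy formalism altogether. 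Since DHKK's existence proof is itself essentially Fekete, you are unwinding the black box; the payoff is a more elementary and transparent proof.

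Two points where you should tighten the exposition. First, the sub/superadditivity estimate $\sup(M\otimes^{\bbL}_AN)\le\sup M+\sup N+c$ is not a ``standard spectral sequence bound'' valid for arbitrary dg algebras; you do need smoothness here, not merely compactness. The honest derivation is close in spirit to Lemma~\ref{lemma_lineargrowth}: since $A\in\Perf(A^{\op}\otimes A)$, the diagonal is a summand of an object filtered by finitely many $F_i\boxtimes G_i[d_i]$ with $F_i\in\Perf(A^{\op})$, $G_i\in\Perf(A)$; writing $M\otimes^{\bbL}_AN$ as a summand of $M\otimes^{\bbL}_AP\otimes^{\bbL}_AN$ and filtering, the amplitude shift is controlled by the shifts $d_i$ together with the (finitely many) shifts needed to build each $F_i$ and $G_i$ from $A$, all independent of $M,N$. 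Second, you misidentify which half of finiteness is nonautomatic: $\LSdim>-\infty$ (equivalently $\lim a_m/m<+\infty$) and $\USdim<+\infty$ (equivalently $\lim b_m/m>-\infty$) come for free from Fekete applied to $a_m$ and $b_m$ respectively. The bounds that genuinely require an argument are $\LSdim<+\infty$ and $\USdim>-\infty$, and these follow from the inequality $b_m\le a_m$ (valid since $S$ is an equivalence, so $(A^*)^{\otimes^{\bbL}_Am}\ne0$) combined with the one-sided Fekete bounds. With these corrections the proof goes through; the reduction to $G=G'=A$ via the bounded additive comparison of $e_\pm$ for different generators (built into Lemmas~\ref{lemma_remark} and~\ref{lemma_GG}) and the identification $e_\pm(A,S^m(A))=\sup/\inf (A^*)^{\otimes^{\bbL}_Am}$ via Lemma~\ref{lemma-left-right} are both correct as you state them.
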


\begin{proof} This follows immediately from Propositions~\ref{prop_entropy-to-dimension} and~\ref{prop_Fdimfinite}.
\end{proof}

\begin{lemma} 
\label{lemma_sdimgeom}
Suppose that $X$ is a smooth  projective equidimensional variety over $\k$. Then
$$\USdim  D^b(\coh X)=\LSdim  D^b(\coh X)=\dim X.$$
\end{lemma}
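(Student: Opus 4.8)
The plan is to reduce everything to classical Serre duality on $X$. Since $X$ is smooth, projective and equidimensional of dimension $n:=\dim X$, the category $D^b(\coh X)$ is Ext-finite, Karoubian, has a classical generator, and is equivalent to $\Perf(A)$ for a smooth and compact dg algebra $A$; in particular it has a Serre functor and Proposition~\ref{form-for-serre-dim} applies, so the limits below exist. Geometric Serre duality identifies this Serre functor with $S(F)=F\otimes\omega_X[n]$, where $\omega_X$ is the canonical line bundle — and it is precisely equidimensionality that makes the shift a single integer $n$. Hence $S^m(F)\simeq F\otimes\omega_X^{\otimes m}[mn]$ for every $m\ge 1$.

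First I would fix one classical generator $G$ of $D^b(\coh X)$ (for instance a sufficiently large direct sum of twists of an ample line bundle, or any strong generator, which exists by~\cite{BVdB}) and compute both Serre dimensions from Definition~\ref{definition_sdim} with $G'=G$. For every $i$ and $m$,
$$\Hom^i(G,S^m(G))=\Hom^{i+mn}(G,G\otimes\omega_X^{\otimes m}),$$
so $e_-(G,S^m(G))=a_m-mn$ and $e_+(G,S^m(G))=b_m-mn$, where
$$a_m:=\inf\{\,j\mid \Ext^j(G,G\otimes\omega_X^{\otimes m})\ne 0\,\},\qquad b_m:=\sup\{\,j\mid \Ext^j(G,G\otimes\omega_X^{\otimes m})\ne 0\,\}.$$
These are finite integers, because $G$ generates the non-zero category $D^b(\coh X)$, so that $\Hom^{\bul}(G,S^m(G))\ne 0$ (cf.\ the Remark after Definition~\ref{definition_sdim}).

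The heart of the argument is a bound on $a_m$ and $b_m$ that is uniform in $m$. Since $X$ is smooth, $G$ is a perfect complex, so $\bbR\EEnd(G):=\bbR\HHom(G,G)\in D^b(\coh X)$ has coherent cohomology sheaves concentrated in a fixed range of degrees $[c_1,c_2]$ independent of $m$. As $\omega_X^{\otimes m}$ is invertible, $\bbR\HHom(G,G\otimes\omega_X^{\otimes m})\simeq \bbR\EEnd(G)\otimes\omega_X^{\otimes m}$ has its cohomology sheaves in the same range $[c_1,c_2]$; combining this with Grothendieck vanishing ($X$ has cohomological dimension $\le n$) via the hypercohomology spectral sequence, the hyper-Ext groups $\Ext^j(G,G\otimes\omega_X^{\otimes m})=\bbH^j\bigl(X,\bbR\EEnd(G)\otimes\omega_X^{\otimes m}\bigr)$ vanish unless $c_1\le j\le c_2+n$. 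Therefore $c_1\le a_m\le b_m\le c_2+n$ for all $m$.

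Dividing by $m$ then gives
$$\frac{-e_-(G,S^m(G))}{m}=n-\frac{a_m}{m}\longrightarrow n,\qquad \frac{-e_+(G,S^m(G))}{m}=n-\frac{b_m}{m}\longrightarrow n$$
as $m\to\infty$, since $a_m,b_m$ stay bounded. Hence $\USdim D^b(\coh X)=\LSdim D^b(\coh X)=n=\dim X$. I expect the only genuine content to be the uniform estimate $c_1\le a_m\le b_m\le c_2+n$; the rest is bookkeeping with Serre duality and unwinding Definition~\ref{definition_sdim}. The one technical point worth stating carefully is why equidimensionality is needed (it is what makes the Serre shift the single integer $\dim X$ rather than varying with the connected component), and the fact that $X$ smooth makes any generator $G$ a perfect complex, so that $\bbR\EEnd(G)$ is bounded with coherent cohomology.
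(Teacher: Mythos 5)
Your argument is correct, and the overall strategy --- identify $S$ with $(-)\otimes\omega_X[\dim X]$, then establish an $m$-independent bound on the range of degrees in which $\Ext^{\bullet}(G,G\otimes\omega_X^{\otimes m})$ can live, so that after dividing by $m$ only the shift $mn$ survives --- is exactly the paper's. Where you diverge is in how you obtain the uniform bound. The paper simply chooses the generator $G$ to be a single coherent \emph{sheaf} (such a generator exists; e.g.\ a sum of twists of an ample line bundle), so that $\Ext^i(G,G\otimes\omega_X^{\otimes m})$ is an honest Ext group between coherent sheaves, which vanishes outside $[0,n]$ by smoothness of $X$; the estimates $-e_-(G,S^m(G))\le mn$ and $-e_+(G,S^m(G))\ge mn-n$ then drop out with no further machinery. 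You instead allow $G$ to be an arbitrary perfect-complex generator and deduce the bound via the local-to-global spectral sequence applied to $\bbR\HHom(G,G)\otimes\omega_X^{\otimes m}$, together with boundedness of the cohomology sheaves of $\bbR\HHom(G,G)$ and Grothendieck's cohomological-dimension bound. Your route uses more machinery but has the modest advantage of not requiring you to first exhibit a sheaf generator; the paper's route is more elementary once one notes that such a $G$ exists. (A minor streamlining of your own argument: since you derive $c_1\le a_m\le b_m\le c_2+n$ uniformly, the limits exist by sandwiching and you need not invoke Proposition~\ref{form-for-serre-dim} at all.)
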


\begin{proof} Let $n=\dim X$. Recall that the Serre functor on $D^b(\coh X)$ is isomorphic to $(-)\otimes \omega _X[n]$. Recall also that for any coherent sheaves $F_1,F_2$ on $X$ we have $\Ext ^i(F_1,F_2)=0$ if $i\notin [0,n]$. 
Choose a coherent sheaf $G$ such that $G$ is a generator of $D^b(\coh X)$. Then  we have
$$-e_-(G,S^m(G))=-e_-(G,G\otimes \omega ^{\otimes m}_X[mn])\le mn$$
and
$$-e_+(G,S^m(G))=-e_+(G,G\otimes \omega ^{\otimes m}_X[mn])\ge mn-n.$$
Since $-e_-(G,S^m(G))\ge -e_+(G,S^m(G))$ it follows from Proposition \ref{form-for-serre-dim}
that 
$$\USdim  D^b(\coh X)=\LSdim  D^b(\coh X)=n.$$
\end{proof}

There exist examples of categories with negative Serre dimension.

\begin{example}
\label{neg-serre}
Let $A=k\langle \varepsilon \rangle$ be the algebra of dual numbers. Consider it as a  dg algebra with zero differential and $\deg \varepsilon =w\in\Z$. The category $\Perf(A)$ has the Serre functor $S(-)=(-)\stackrel{\bbL}{\otimes }_AA^*$, where $A^*=\Hom _k(A,k)\simeq A[w]$. Thus the Serre functor
is isomorphic to the shift $[w]$. So $\LSdim  \Perf(A)=\USdim  \Perf(A)=w$; it is negative if $w<0$.
\end{example}

Note that the dg algebra $A$ in Example \ref{neg-serre} is compact but not smooth.  

The next example provides a smooth and compact dg algebra with negative lower Serre dimension. It was communicated to us by D.\,Orlov.
\begin{example}[See {\cite[Section 8]{El}}]
\label{example_orlov}
Consider the triangulated category $T$ generated by an exceptional pair $E_1,E_2$. Let $V$ be the graded space $\Hom^{\bul}(E_1,E_2)$, suppose $V$ is finite dimensional. Alternatively, consider the quiver with two vertices and $n=\dim V$ arrows going in one direction. Put gradings on arrows and consider the corresponding graded path algebra $A=\k e_1\oplus \k e_2\oplus V$ with zero differential. Then $T\simeq\Perf(A)$. Such $A$ is smooth and compact. Suppose that $n\ge 2$, then we have $\Rdim T=\Ddim T=1$. Denote by $w=\sup V-\inf V$ the difference between the degrees of the maximal and the minimal nonzero graded components of $V$. Then 
$$\LSdim T=1-w,\quad\USdim T=1+w.$$
In particular, $\LSdim T$ can be negative.
\end{example}

We expect that categories $\Perf(A)$ for smooth and compact dg algebras $A$ have nonnegative upper Serre dimension. We don't know if the Serre dimension can be irrational.

\begin{conjecture} \label{conj-pos-rat} Let $A$ be a smooth and compact dg algebra. Then the Serre dimensions 
$$\LSdim  \Perf(A)\qquad\text{and}\qquad \USdim  \Perf(A)$$ are  rational. Moreover, $\USdim\Perf(A)$ is nonnegative.
\end{conjecture}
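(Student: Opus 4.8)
The plan is to reduce both assertions to a finite-generation statement about a single bigraded algebra attached to the Serre functor. Write $S(-)=(-)\stackrel{\bbL}{\otimes}_A A^{*}$ for the Serre functor of $\Perf(A)$ (Proposition~\ref{serre-general-over-r}) and $B_m:=(A^{*})^{\otimes^{\bbL}_A m}\simeq S^m(A)$. By Proposition~\ref{form-for-serre-dim} the numbers $\USdim\Perf(A)=\lim_m(-\inf B_m)/m$ and $\LSdim\Perf(A)=\lim_m(-\sup B_m)/m$ exist and are finite, so $\USdim$ and $\LSdim$ are rational precisely when the limiting slopes $\lim_m(\inf B_m)/m$ and $\lim_m(\sup B_m)/m$ are, and $\USdim\ge 0$ precisely when $\lim_m(\inf B_m)/m\le 0$. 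Composition of morphisms, via the identifications $\bbR\Hom_A(A,S^mA)\simeq\bbR\Hom_A(S^{-m}A,A)$, turns
$$E:=\bigoplus_{m\ge 0}H^{\bul}(B_m)=\bigoplus_{m\ge 0}\Hom^{\bul}_{\Perf(A)}(A,S^mA)$$
into an associative $\k$-algebra, bigraded by the Serre weight $m$ and by the internal cohomological degree, with unit $1\in H^{0}(A)=E_0$ (nonzero, since $A$ is not acyclic); each $E_m$ is finite-dimensional and nonzero because $S^mA$ is a generator. The two Serre dimensions are, up to sign, the asymptotic slopes of the lower and upper boundary of the support $\Sigma:=\{(m,i)\in\bbZ_{\ge 0}\times\bbZ:H^i(B_m)\ne 0\}$ of $E$.

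For \emph{rationality}, the plan is to prove that $E$ is a finitely generated $\k$-algebra; since $A^{*}\in\Perf(A^{\op}\otimes A)$, one may hope to obtain this from a Noetherianity property of the ``Rees-type'' dg algebra $\bigoplus_{m\ge 0}(A^{*})^{\otimes^{\bbL}_A m}\,t^m$ over the smooth and compact dg algebra $A$. Granting finite generation, $\Sigma$ is contained in the subsemigroup of $\bbZ_{\ge 0}\times\bbZ$ generated by the bidegrees of a finite generating set of $E$; the weight-$0$ generators come from the finite-dimensional algebra $E_0=H^{\bul}(A)$ and only move the boundaries of $\Sigma$ by a bounded amount, so the standard analysis of bigraded Hilbert functions of finitely generated algebras (in the spirit of the eventual linearity of the minimal and maximal degrees occurring in powers of a graded ideal) yields that $m\mapsto\inf B_m$ and $m\mapsto\sup B_m$ are eventually linear with rational slopes, which are $-\USdim\Perf(A)$ and $-\LSdim\Perf(A)$. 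Hence both Serre dimensions are rational. Finite generation of $E$ is the analogue, for the canonical bimodule $A^{*}$, of finite generation of section rings and of canonical rings of varieties; establishing it --- even for $A$ an ordinary finite-dimensional $\k$-algebra of finite global dimension, where it does not seem to be recorded --- is, in our view, the main obstacle.

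For \emph{nonnegativity of} $\USdim$ one must show $\lim_m(\inf B_m)/m\le 0$, i.e.\ that the lower boundary of $\Sigma$ has non-positive asymptotic slope. Serre duality gives a functorial isomorphism $H^i(S^mA)\cong H^{-i}(S^{1-m}A)^{\ast}$, hence $\inf S^mA=-\sup S^{1-m}A$, and the claim becomes $c:=\lim_n\frac1n\sup\{\,j:\Hom_{\Perf(A)}(S^nA,A[j])\ne 0\,\}\ge 0$, i.e.\ that for all $\varepsilon>0$ there is, for every large $n$, a nonzero morphism $S^nA\to A[j]$ with $j\ge-\varepsilon n$. For $n=0$ the identity of $A$ suffices. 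In general $A$ lies in $\langle S^nA\rangle_N$ for an $N$ independent of $n$ (any $N$ with $\langle A\rangle_N=\Perf(A)$ works, since then $\langle S^nA\rangle_N=S^n\langle A\rangle_N=\Perf(A)$), and one must extract from such a bounded-length filtration of $A$ by shifts of $S^nA$, together with $H^{0}(A)\ne 0$, the required control on these degrees. Because of the self-duality built into the Serre functor this degree bookkeeping is again governed by the shape of $\Sigma$, so in the end the whole statement rests on understanding the algebra $E$ --- its finite generation (or at least a suitable Noetherianity property) being the crux.
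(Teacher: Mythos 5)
This statement is an open conjecture (Conjecture~\ref{conj-pos-rat}) in the paper; the authors explicitly offer no proof of it and in fact do not even know whether the Serre dimensions can be irrational. So there is no ``paper's own proof'' for your proposal to match. More importantly, what you have written is not a proof: you yourself flag that the central step --- finite generation of the bigraded algebra $E=\bigoplus_{m\ge 0}\Hom^{\bullet}_{\Perf(A)}(A,S^{m}A)$ --- is ``the main obstacle,'' and it remains unestablished. A proposal whose core claim is left open cannot be read as settling a conjecture.

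Beyond the acknowledged gap, the reduction itself needs more justification than you provide. Even granting finite generation of $E$, the deduction that $m\mapsto\inf B_m$ and $m\mapsto\sup B_m$ are eventually linear with rational slopes is not ``standard'' here: $E$ is a \emph{noncommutative} bigraded algebra, and for such algebras finite generation does not by itself give polynomial or eventually-linear control of extremal degrees in the way that Noetherian commutative bigraded algebras do (e.g.\ via Hilbert--Serre). Containment of $\Sigma$ in the subsemigroup generated by the bidegrees of generators bounds the slopes of the boundary of $\Sigma$, but it does not force the limits $\lim(\inf B_m)/m$ and $\lim(\sup B_m)/m$ to equal a rational face slope unless one also knows the support is ``thick'' near the relevant extremal rays (i.e.\ that products in $E$ along those rays are not eventually zero). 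Likewise, your outline for nonnegativity of $\USdim$ correctly reduces to producing, for large $n$, nonzero maps $S^{n}A\to A[j]$ with $j\ge-\varepsilon n$, and correctly invokes Serre duality $H^{i}(S^{m}A)\cong H^{-i}(S^{1-m}A)^{\ast}$, but the proposed degree bookkeeping through a bounded-length generation $A\in\langle S^{n}A\rangle_{N}$ is left as a hope rather than an argument. In short: the strategy (Rees-type algebra for the Serre bimodule, slopes of its support) is a reasonable and interesting line of attack, consistent with the heuristics in the paper (e.g.\ Proposition~\ref{form-for-serre-dim}, Lemma~\ref{lemma_lineargrowth}), but it does not constitute a proof, and the key finite-generation/Noetherianity input as well as the slope-extraction step both remain genuinely open.
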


\begin{remark} Let $A$ be a finite dimensional algebra of finite global dimension, then  $\LSdim  \Perf(A)\ge 0$. Indeed, replacing the Serre bimodule $A^*$ by its projective resolution $P^\bullet$ we see that for any $m$ the complex $(A^*)^{\otimes ^{\bbL}_Am}=(P^\bullet)^{\otimes _Am}$ lives in non-positive degrees.
\end{remark}

\subsection{$\LSdim =0$ and $\USdim =0$} 

For the lower Serre dimension, there are some nontrivial categories with dimension zero.

\begin{example}[See {\cite[Example 9.4]{El}}]
\label{example_xyz}
Consider the quiver 
$$
\xymatrix{0 \bul  \ar[rr]^x  && \bul 1 \ar[rr]^y && \bul 2 \ar@/^1pc/[llll]_z}
$$
with relations $zy=xz=0$. Denote the corresponding path algebra with relations by $A$ (the field is arbitrary).  Then $A$ is a finite-dimensional algebra with $\gldim A=3$.
One has $\sup (A^*)^{\otimes^{\bbL}_An}=0$ for any $n\ge 0$, therefore $\LSdim \Perf(A)=0$.
\end{example}

For the upper Serre dimension we have the following conjecture.

\begin{conjecture} \label{conj-serre-dim-zero}Let $A$ be a finite dimensional algebra. Assume that $A$ has finite global dimension. Then $\USdim  \Perf(A)=0$ if and only if $\gldim A=0$.
\end{conjecture}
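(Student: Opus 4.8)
The plan is to prove the two directions separately. The ``if'' direction is the easy one: if $\gldim A=0$ then $A$ is semisimple, hence (over any field) a finite product of matrix algebras over division algebras, so $A$ is Morita equivalent to a product of division algebras concentrated in degree zero. In that case $A^*=\Hom_\k(A,\k)$ is again concentrated in degree zero, and one computes directly that $(A^*)^{\otimes^{\bbL}_A m}$ lives in degree zero for every $m$ (the Serre functor being, up to the product decomposition, a twist by a bimodule sitting in degree $0$). Hence $\inf (A^*)^{\otimes^{\bbL}_A m}=\sup(A^*)^{\otimes^{\bbL}_A m}=0$ for all $m$, and formula~\eqref{eq_sdimA} gives $\USdim\Perf(A)=0$.

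For the ``only if'' direction I would argue by contradiction: assume $\gldim A=n\ge 1$ and show $\USdim\Perf(A)>0$. The first step is to pass to the simple modules: let $S_1,\dots,S_r$ be the simple right $A$-modules and put $G=\bigoplus_i S_i$, which is a generator of $\Perf(A)=D^b(\modd A)$ (since $A$ has finite global dimension). By Proposition~\ref{form-for-serre-dim} we may compute $\USdim$ using this $G$, i.e.\ we need to understand $-e_-(G,S^m(G))$, equivalently the smallest degree in which $\Hom^\bul(G,S^m(G))$ is nonzero. Now $S^m(G)=G\otimes^{\bbL}_A (A^*)^{\otimes^{\bbL}_A m}$, and the key is the well-known relation between the Serre functor on $D^b(\modd A)$ and the Auslander--Reiten translation / Nakayama functor: $\Hom^i(S_j, S(S_k))^*\cong \Hom^{-i}(S_k,S_j)$, so $e_-(G,S(G))=-\,e_+(G,G)=-(n)$ provided $\gldim A=n$ is actually attained between two simples, which it is since $\gldim A=\max_{j,k}\{\,i : \Ext^i(S_j,S_k)\ne 0\,\}$. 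Thus one application of $S$ shifts the ``bottom'' of $\Hom^\bul(G,-)$ down by $n\ge 1$.

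The heart of the argument is to iterate this and show the bottom degree grows at least linearly in $m$, i.e.\ that $-e_-(G,S^m(G))\ge cm$ for some $c>0$. The cleanest route is: choose simples $S_a,S_b$ with $\Ext^n(S_a,S_b)\ne 0$; I would like to produce, for each $m$, simples $S_{a_0},\dots,S_{a_m}$ and a nonzero element of $\Hom^{-mn}\!\big(S_{a_0}, S^m(S_{a_m})\big)$ by composing $m$ copies of the Serre-duality isomorphism, using that $\Hom^\bul(S_j,S^m(S_k))=\bigoplus_l \Hom^\bul(S_j,S(S_l))\otimes_{\ldots}\Hom^\bul(S_l,S^{m-1}(S_k))$ type decompositions (more precisely, via the bar-type resolution of $G$ over its endomorphism algebra, using that $S$ is tensoring by a bimodule). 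This requires the composition of the relevant Ext-classes to be nonzero at the extremal degree $-mn$; the natural tool is that the top self-extension class of the graded algebra $\Ext^\bul_A(G,G)$ generates a nonzero subalgebra in top degree, equivalently that $A$ being of finite global dimension $n$ forces $\Ext^n$ to ``propagate'' under Yoneda composition along a suitable cycle of simples — this is where I expect the main obstacle. If one only gets $-e_-(G,S^m(G))\ge 1$ for all $m$ (not linear growth), that already forces $\USdim\ge 0$ but not strict positivity, so genuine linear growth is essential. A possible fix, if the direct Yoneda-composition argument stalls, is to invoke the categorical entropy machinery of Section~\ref{section_general-dim} (the function $m\mapsto -e_-(G,S^m G)$ is superadditive up to a constant by Lemma~\ref{lemma_GG}, so the limit in Proposition~\ref{form-for-serre-dim} equals $\sup_m \tfrac{-e_-(G,S^mG)-C}{m}$), reducing the claim to: $\USdim\Perf(A)=0$ already implies $e_-(G,S(G))=e_+(G,G)=0$, i.e.\ $\Ext^{>0}_A(G,G)=0$, i.e.\ $\gldim A=0$. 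That reduction — from ``the limit is zero'' to ``already the first term is zero'' — would itself need the superadditivity to be sharp, and pinning that down is, I think, the real content of the conjecture and the reason it is stated as a conjecture rather than a theorem.
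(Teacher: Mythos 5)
This is stated as a conjecture in the paper: only the ``if'' direction is actually established there, as a consequence of Proposition~\ref{serre-leq-gldim} (giving $\USdim\Perf(A)\le\gldim A$) together with the remark after Conjecture~\ref{conj-pos-rat} (giving $\LSdim\Perf(A)\ge 0$ for a finite-dimensional algebra of finite global dimension). Your ``if'' argument --- reduce to a product of division algebras and observe that $(A^*)^{\otimes^{\bbL}_A m}$ sits in degree zero --- is a perfectly good alternative to that.

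For the ``only if'' direction you have correctly located the crux, namely forcing $-e_-(G,S^m(G))$ to grow linearly in $m$, and you are right that your sketch does not close it. Two concrete points about the proposed fallback reduction. First, Lemma~\ref{lemma_GG} says nothing about superadditivity; it only establishes that $F\Udim$ is independent of the chosen generators, so it cannot supply the Fekete input. Second, the Fekete argument would require $a_m:=-e_-(G,S^m(G))=-\inf\,(A^*)^{\otimes^{\bbL}_A m}$ to be superadditive, i.e.\ $\inf$ of a derived tensor product over $A$ to be subadditive. That is the ``dual'' of the inequality $\sup(M\otimes^{\bbL}_A N)\le\sup M+\sup N$ proved in Lemma~\ref{lemma_ng} for negatively graded $A$; but the $\inf$-side version is not established and is genuinely more delicate, since the bottom of a minimal resolution need not propagate multiplicatively without control of Yoneda products --- which is exactly where your Ext-composition attempt stalls. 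The asymmetry is real: Example~\ref{example_xyz} gives $\LSdim=0$ with $\gldim=3$, so the analogous statement for $\LSdim$ is false, and any correct argument must use something specific to $\USdim$. In short, your proposal matches the paper's state of knowledge --- the easy direction is proved, and you have accurately and honestly flagged the open part; the one genuine slip is the appeal to Lemma~\ref{lemma_GG} for superadditivity, which it does not provide.
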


Of course the ``if'' direction of the conjecture is easy. If fact it is a special case of the following general result.

\begin{predl} \label{serre-leq-gldim} Let $A$ be a finite dimensional algebra which has  finite global dimension~$d$. Then $\USdim  \Perf(A)\le d$.
\end{predl}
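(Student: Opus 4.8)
The statement to prove is that for a finite dimensional algebra $A$ of finite global dimension $d$, one has $\USdim \Perf(A)\le d$. By Proposition~\ref{form-for-serre-dim} (formula~\eqref{eq_sdimA}), we may compute $\USdim \Perf(A)$ using the generator $G=G'=A$, so the problem reduces to estimating $-\inf (A^*)^{\otimes^{\bbL}_A m}$ from above, linearly in $m$. The plan is to control the cohomological amplitude of the iterated tensor powers of the Serre bimodule $A^*$ by choosing a good representative for $A^*$ in $D(A^{\op}\otimes A)$.

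First I would replace $A^*=\Hom_\k(A,\k)$ by a bounded complex of projective $A^{\op}\otimes A$-modules. Since $A$ is finite dimensional of finite global dimension $d$, the algebra $A^{\op}\otimes A$ is also finite dimensional, and (this is where the hypothesis is used) the $A^{\op}\otimes A$-module $A$ — hence also $A^*$, which is $\Hom_\k$ of a perfect complex — has a projective resolution $P^\bullet$ concentrated in degrees $[-N,0]$ for some $N$; in fact by \cite[Lemma 7.2]{Rou} (used already in the last displayed proof of the excerpt) the projective dimension of $A$ as an $A^{\op}\otimes A$-module is exactly $d$, so we may take $P^\bullet$ with $P^i=0$ for $i\notin[-d,0]$. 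Then $(A^*)^{\otimes^{\bbL}_A m}$ is computed by $(P^\bullet)^{\otimes_A m}$. The key point is that each $P^i$, being a projective (hence summand of free) $A^{\op}\otimes A$-module, is \emph{projective as a left $A$-module and as a right $A$-module separately}, so the underived tensor product $P^\bullet\otimes_A\cdots\otimes_A P^\bullet$ already computes the derived tensor product.

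Next I would bound the amplitude of $(P^\bullet)^{\otimes_A m}$. This is the total complex of an $m$-fold tensor product of complexes each living in degrees $[-d,0]$, so it is concentrated in degrees $[-md,0]$. Therefore $\inf (A^*)^{\otimes^{\bbL}_A m}\ge -md$, i.e. $-\inf(A^*)^{\otimes^{\bbL}_A m}\le md$, and dividing by $m$ and taking the limit gives $\USdim\Perf(A)=\lim_m \frac{-\inf(A^*)^{\otimes^{\bbL}_A m}}{m}\le d$, as desired.

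**Main obstacle.** The routine part is the amplitude bookkeeping; the only real content is that the tensor product over $A$ of the projective resolutions computes the derived tensor product without introducing higher Tor, and this is precisely where one needs the projective $A^{\op}\otimes A$-modules to be projective over $A$ on each side — a standard fact, but the step on which the whole estimate rests. A secondary point to check carefully is that we are entitled to pick the \emph{same} generator $G=G'=A$ in Definition~\ref{definition_sdim}: this is guaranteed by Proposition~\ref{form-for-serre-dim}, which already records formula~\eqref{eq_sdimA}, so no independent argument is needed. One should also note that the limit in~\eqref{eq_sdimA} exists and is finite by that same proposition, so the inequality on the $m$-th terms passes to the limit without any subsequence subtleties.
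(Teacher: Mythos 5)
Your proof has the same overall skeleton as the paper's (replace $A^*$ by a length-$d$ resolution whose terms tensor well over $A$, then bound the amplitude of $(A^*)^{\otimes^{\bbL}_A m}$ and pass to the limit via formula \eqref{eq_sdimA}), but the crucial step — producing a suitable resolution of $A^*$ of length $\le d$ — is where your argument has a gap, and the gap is exactly what the paper's construction is designed to avoid.

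You want to take an $A^{\op}\otimes A$-projective resolution $P^\bullet$ of $A^*$ concentrated in degrees $[-d,0]$, i.e.\ you need $\mathrm{pd}_{A^{\op}\otimes A}(A^*)\le d$. To justify this you cite \cite[Lemma 7.2]{Rou}, but that lemma computes $\mathrm{pd}_{A^{\op}\otimes A}(A)$, not $\mathrm{pd}_{A^{\op}\otimes A}(A^*)$, and the ``hence also $A^*$'' does not follow: $\Hom_\k(-,\k)$ is a contravariant anti-equivalence of $\mathrm{mod}\text{-}A^{\op}\otimes A$ (using $(A^{\op}\otimes A)^{\op}\simeq A^{\op}\otimes A$), so it converts a finite \emph{projective} resolution of $A$ into a finite \emph{injective} resolution of $A^*$, and what you get is $\mathrm{pd}_{A^{\op}\otimes A}(A^*)=\mathrm{injdim}_{A^{\op}\otimes A}(A)$, which is not automatically equal to (or bounded by) $\mathrm{pd}_{A^{\op}\otimes A}(A)$. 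So your intermediate claim needs a separate argument; as written it is an unsubstantiated jump. (In addition, Rouquier's Lemma 7.2 is stated for algebras over a perfect field, a hypothesis the proposition does not impose.)

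The paper avoids this issue entirely by not insisting on an $A^{\op}\otimes A$-projective resolution. It constructs a resolution $0\to P_d\to\cdots\to P_0\to A^*$ by bimodules each of which is projective merely as a \emph{right} $A$-module: take $P_0=A^*\otimes_\k A$, then iterate with the kernel. The kernels are again bimodules, and since $P_0,\dots,P_{d-1}$ are right-$A$-free and $\mathrm{pd}_A(A^*)\le d=\gldim A$, a dimension-shift in $\mathrm{mod}\text{-}A$ shows the $d$-th kernel is right-$A$-projective, so the resolution stops at length $d$. Right-$A$-projectivity (hence right flatness) is all that is needed to compute the iterated derived tensor powers, so the same amplitude count $-\inf\bigl(P_\bullet^{\otimes_A m}\bigr)\le md$ goes through. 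This uses only $\gldim A=d$ and no hypothesis on $\k$. If you wish to keep your route, you would need to actually prove $\mathrm{pd}_{A^{\op}\otimes A}(A^*)\le d$ (equivalently $\mathrm{injdim}_{A^{\op}\otimes A}(A)\le d$); as it stands, that is the missing content of the proof.
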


\begin{proof} Consider the Serre bimodule $A^*$. It suffices to show that it has a resolution
$$0\to P_d\to \ldots \to P_0\to A^*$$
by $A$-bimodules such that each $P_i$ is projective as a right $A$-module. Indeed, then $(A^*)^{\otimes ^{\bbL}_Am}=(P_\bullet)^{\otimes _Am}$ and clearly $-\inf (P_\bullet ^{\otimes m})\le md$. To construct a resolution $P_\bullet$ as above one may start by taking $P_0=A^*\otimes _kA$ with the obvious surjection of bimodules $P_0\to A^*$ and apply the same procedure to its kernel instead of $A^*$. After $d$ steps one obtains as a kernel a bimodule $P_d$ which is projective as a right $A$-module.
\end{proof}

\subsection{Monotonicity} 
\label{section_serremonot}
Monotonicity for Serre dimension does not hold as the following examples demonstrate.

\begin{example}
Let $A$ be the algebra from Example \ref{example_xyz} and $\AA=\Perf(A)$. Let $P_0$ and $P_1$ be the projective modules corresponding to vertices $0$ and $1$. Then $(P_0,P_1)$ is a strong exceptional collection in $\AA$. Hence the subcategory $\BB= \langle P_0,P_1\rangle$ is admissible in $\AA$, it is equivalent to $\Perf(B)$ where $B=\End(P_0\oplus P_1)$ is the path algebra of the quiver $\bul\to\bul$ of type $A_2$. By Example \ref{example_maintable}, one has 
$$\LSdim \BB=1/3>0=\LSdim \AA.$$
\end{example}

We have learned about the next example from D.\,Orlov.
\begin{example}
\label{example_F3}
Let $X=\mathbb F_3$ be the Hirzebruch surface. We claim that the category $D^b(\coh X)$ has an admissible subcategory $\BB$ equivalent to a category from Example~\ref{example_orlov}
with $w=2$. Then 
$$\USdim \BB=3>2=\USdim D^b(\coh X),$$ 
where the last equality is by Lemma~\ref{lemma_sdimgeom}.

Let $F\subset X$ be a fiber and $S\subset X$ be the $(-3)$-curve. Consider the exceptional collection of line bundles
$$(\O_X(-F), \O_X, \O_X(S))$$
on $X$. Let $E$ be the right mutation $R_{\O_X(S)}(\O_X)$. Then the pair $(\O_X(-F),E)$ 
is exceptional and $\Hom^i(\O_X(-F),E)\ne 0$ for $i=-1,0,1$. Consequently in notation of Example~\ref{example_orlov} one has $w=2$. Therefore $\USdim\langle \O_X(-F),E\rangle=3$. 
\end{example}

One might expect that monotonicity of Serre dimension holds for categories such that 
$\LSdim=\USdim$. But, to our knowledge, even the following special case is an open problem: given smooth projective varieties $X$ and $Y$ such that $D^b(\coh X)$ is a semi-orthogonal component of $D^b(\coh Y)$, show that $\dim X\le \dim Y$.

\subsection{Additivity} The additivity holds for the Serre dimension. Recall that if $A$ and $B$ are smooth and compact dg algebras then so is $A\otimes B$ (Lemma \ref{prop-on-propert-of-smooth-dg-R-algebras}).

\begin{predl} \label{addit-for-serre-dim} Let $A$ and $B$ be smooth and compact dg algebras. Put $C=A\otimes B$. Then
$$\USdim  \Perf(C)=\USdim  \Perf(A)+\USdim  \Perf(B)$$
and
$$\LSdim  \Perf(C)=\LSdim  \Perf(A)+\LSdim  \Perf(B).$$
\end{predl}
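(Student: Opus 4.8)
The plan is to reduce everything to the formulas in \eqref{eq_sdimA}, using $A$ itself as the generator of $\Perf(A)$, $B$ as the generator of $\Perf(B)$, and $C=A\otimes B$ as the generator of $\Perf(C)$. The key observation is that the Serre bimodule is multiplicative: $C^*=\Hom_\k(A\otimes B,\k)\simeq \Hom_\k(A,\k)\otimes \Hom_\k(B,\k)=A^*\otimes B^*$ as dg $C^{\op}\otimes C=(A^{\op}\otimes A)\otimes (B^{\op}\otimes B)$-modules (here one uses compactness of $A$ and $B$ so that the natural map $\Hom_\k(A,\k)\otimes\Hom_\k(B,\k)\to\Hom_\k(A\otimes B,\k)$ is an isomorphism). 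Tensoring over $C$ respects this splitting, so that
$$
(C^*)^{\otimes^{\bbL}_C m}\simeq (A^*)^{\otimes^{\bbL}_A m}\otimes_\k (B^*)^{\otimes^{\bbL}_B m}
$$
as $C^{\op}\otimes C$-modules, and in particular as complexes of $\k$-vector spaces. Here I would cite Lemma~\ref{perf-r-compl}(4) (with $R=\k$, $T=\k$) or simply the fact that $\stackrel{\bbL}{\otimes}$ commutes with the external product $\boxtimes$, to justify the interchange of the two tensor operations.

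Next I would apply the Künneth formula: for complexes of $\k$-vector spaces $P^\bul$ and $Q^\bul$ one has $H^n(P^\bul\otimes_\k Q^\bul)=\bigoplus_{i+j=n}H^i(P^\bul)\otimes H^j(Q^\bul)$, hence
$$
\inf\big(P^\bul\otimes_\k Q^\bul\big)=\inf P^\bul+\inf Q^\bul,\qquad
\sup\big(P^\bul\otimes_\k Q^\bul\big)=\sup P^\bul+\sup Q^\bul
$$
whenever both factors have bounded nonzero cohomology (which holds here since $A$, $B$ are smooth and compact, so all the relevant complexes are perfect $\k$-complexes, cf. Proposition~\ref{serre-general-over-r}). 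Applying this with $P^\bul=(A^*)^{\otimes^{\bbL}_A m}$ and $Q^\bul=(B^*)^{\otimes^{\bbL}_B m}$ gives
$$
\inf (C^*)^{\otimes^{\bbL}_C m}=\inf (A^*)^{\otimes^{\bbL}_A m}+\inf (B^*)^{\otimes^{\bbL}_B m},
$$
and the analogous identity for $\sup$. Dividing by $m$, negating, and letting $m\to\infty$, the limits exist by Proposition~\ref{form-for-serre-dim}, and the sum of limits equals the limit of the sum, which yields both claimed equalities from \eqref{eq_sdimA}.

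The only real subtlety — and the step I would be most careful about — is justifying the bimodule-level quasi-isomorphism $(C^*)^{\otimes^{\bbL}_C m}\simeq (A^*)^{\otimes^{\bbL}_A m}\otimes_\k (B^*)^{\otimes^{\bbL}_B m}$ rigorously at the level of derived tensor products: one must replace $A^*$ and $B^*$ by h-projective resolutions over $A^{\op}\otimes A$ and $B^{\op}\otimes B$ respectively, check (via Lemma~\ref{perf-r-compl}(1),(3)) that their external tensor product is an h-projective resolution of $C^*$ over $C^{\op}\otimes C$ and remains suitably h-projective when iterating $\otimes_C$, and then observe that the iterated $\otimes_C$ of the product splits as the external product of the iterated $\otimes_A$ and $\otimes_B$. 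This is entirely routine homological bookkeeping, but it is where essentially all the content of "additivity" sits; once it is in place, Künneth and Proposition~\ref{form-for-serre-dim} finish the argument immediately. Note that smoothness of $A$ and $B$ is used to guarantee $A^*\in\Perf(A^{\op}\otimes A)$, $B^*\in\Perf(B^{\op}\otimes B)$ (Proposition~\ref{serre-general-over-r}(2)), and compactness to guarantee all the $\k$-complexes in sight are perfect so that $\inf$ and $\sup$ are finite and Künneth applies.
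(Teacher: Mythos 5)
Your proof is correct and takes the same approach as the paper: the paper's entire argument is the single line "everything follows from the quasi-isomorphism $(C^*)^{\otimes^{\bbL}_C m}\simeq (A^*)^{\otimes^{\bbL}_A m}\otimes (B^*)^{\otimes^{\bbL}_B m}$," and you have simply spelled out the Künneth step and the passage to limits via Proposition~\ref{form-for-serre-dim} that the paper leaves implicit.
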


\begin{proof} Put $C=A\otimes B$. Everything follows from the quasi-isomorphism of complexes for any $m\ge 1$.
$$(C^*)^{\otimes ^{\bbL}_Cm}\simeq (A^*)^{\otimes ^{\bbL}_Am}\otimes (B^*)^{\otimes ^{\bbL}_Bm}.$$
\end{proof}

\subsection{Behavior in families}
\label{section_serreinfamilies}
Here we prove the following result. Recall that a function $f\colon X\to \R$ on a topological space $X$ is called \emph{upper} (resp. \emph{lower}) \emph{semi-continuous} if for any $c\in \R$ the subset $f^{-1}([c,+\infty))$ (resp. $f^{-1}((-\infty,c])$) is closed in $X$.

\begin{theorem} 
\label{theorem_serre-in-families} 
Let $R$ be a commutative Noetherian ring and let $A$ be an $R$-h-projective smooth and compact dg $R$-algebra.
For a point $x\in \Spec R$ denote by $A _x$ the smooth and compact dg $k(x)$-algebra $A \otimes _Rk(x)$. 
\begin{enumerate}
\item[(a)]
Then for any $c\in\R$ the subsets
\begin{equation}\label{closed-subsets} \{x\in \Spec R\mid \USdim  \Perf(A _x)\ge c\}\quad \text{and} \quad \{x\in \Spec R\mid \LSdim  \Perf(A _x)\le c\}
\end{equation}
are closed under specialization in $\Spec R$.

\item[(b)] Assume in addition that $A$ is negatively graded:  i.e. $A^i=0$ for $i>0$. Then the subsets \eqref{closed-subsets} are closed in Zariski topology. That is, $\USdim $ and $\LSdim $ are respectively upper and lower semi-continuous functions on $\Spec R$.
\end{enumerate}
\end{theorem}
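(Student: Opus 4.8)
The plan is to express the two Serre dimensions of each fibre $\Perf(A_x)$ through one family of perfect complexes over $R$ and then to invoke semicontinuity of fibrewise cohomology. Put $P_m:=(A^*)^{\otimes^{\bbL}_A m}\in\Perf(A^{\op}\otimes_R A)$ and, for $x\in\Spec R$, write $P_{m,x}:=P_m\stackrel{\bbL}{\otimes}_R k(x)$. By Lemma~\ref{perf-r-compl}(2) the bimodule $P_m$ is perfect as an $R$-complex, and by Proposition~\ref{serre-general-over-r}(3) together with Lemma~\ref{perf-r-compl}(4) one has $P_{m,x}\simeq (A_x^*)^{\otimes^{\bbL}_{A_x}m}$. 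Hence by Proposition~\ref{form-for-serre-dim}
\[
\USdim\Perf(A_x)=\lim_{m\to\infty}\frac{-\inf P_{m,x}}{m},\qquad
\LSdim\Perf(A_x)=\lim_{m\to\infty}\frac{-\sup P_{m,x}}{m}.
\]

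Part (a) is then immediate: for a perfect $R$-complex $P$ the function $x\mapsto\dim_{k(x)}H^i(P\stackrel{\bbL}{\otimes}_R k(x))$ is upper semicontinuous on $\Spec R$, so along a specialization $x\rightsquigarrow y$ one has $H^i(P_{m,x})\ne 0\Rightarrow H^i(P_{m,y})\ne 0$ for all $i$; therefore $\sup P_{m,y}\ge\sup P_{m,x}$ and $\inf P_{m,y}\le\inf P_{m,x}$ for all $m$, and dividing by $m$ and passing to the limit yields $\USdim\Perf(A_y)\ge\USdim\Perf(A_x)$ and $\LSdim\Perf(A_y)\le\LSdim\Perf(A_x)$. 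Thus both subsets in \eqref{closed-subsets} are closed under specialization.

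For part (b) the extra ingredient is that, when $A$ (and hence each $A_x$) is negatively graded, the limits above are \emph{infima}, which turns the subsets in \eqref{closed-subsets} into intersections of closed sets. The key lemma: if $B$ is a dg algebra over a field with $B^i=0$ for $i>0$, $M$ a right and $N$ a left dg $B$-module with $s:=\sup H^{\bul}(M)<\infty$, then $\sup H^{\bul}(M\stackrel{\bbL}{\otimes}_B N)\le s+\sup H^{\bul}(N)$. One builds a semifree resolution $\widetilde M\to M$ whose free generators all lie in degrees $\le s$: generators are introduced either to surject onto the cohomology of $M$ (degrees $\le s$) or to kill cycles of the complex assembled so far, and the latter complex -- being a union of free $B$-modules over a non-positively graded $B$ -- is itself concentrated in degrees $\le s$, so the new generators have degree $\le s-1$. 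Then $M\stackrel{\bbL}{\otimes}_B N\simeq\widetilde M\otimes_B N$ is an iterated extension of shifts $N[-e]$ with $e\le s$, hence has cohomology concentrated in degrees $\le s+\sup H^{\bul}(N)$. Applying this with $B=A_x$ to $P_{m+m',x}\simeq P_{m,x}\stackrel{\bbL}{\otimes}_{A_x}P_{m',x}$ shows that $m\mapsto\sup P_{m,x}$ is subadditive, so Fekete's lemma gives $\LSdim\Perf(A_x)=\sup_{m\ge 1}\tfrac{-\sup P_{m,x}}{m}$.

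For $\USdim$ one needs a companion statement for $-\inf P_{m,x}$. Over the field $k(x)$, $\Hom$-tensor adjunction and $(A_x)^{**}\simeq A_x$ give $\Hom_{k(x)}(P_{m,x},k(x))\simeq(P_{m-1,x})^\vee$, and an induction using Lemma~\ref{lemma-left-right} identifies $(P_{m-1,x})^\vee$ with $Q_{m-1,x}$, where $Q_m:=(A^!)^{\otimes^{\bbL}_A m}$ with $A^!:=(A^*)^\vee$ the inverse Serre bimodule and $Q_{m,x}:=Q_m\stackrel{\bbL}{\otimes}_R k(x)$; thus $-\inf P_{m,x}=\sup Q_{m-1,x}$. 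Just like $P_m$, the bimodule $Q_m$ lies in $\Perf(A^{\op}\otimes_R A)$, is $R$-perfect, and base-changes compatibly (Lemma~\ref{existence-of-kernel}(3c) gives $(A^!)_x\simeq(A_x)^!$, then Lemma~\ref{perf-r-compl}(4) gives $Q_{m,x}\simeq((A_x)^!)^{\otimes^{\bbL}_{A_x}m}$). So the key lemma yields subadditivity of $m\mapsto\sup Q_{m,x}$ and hence $\USdim\Perf(A_x)=\inf_{m\ge 1}\tfrac{\sup Q_{m,x}}{m}$. Finally, $x\mapsto\sup P_{m,x}$ and $x\mapsto\sup Q_{m,x}$ are integer-valued, upper semicontinuous functions on $\Spec R$ (each a finite union over $i$ of the closed loci $\{x\mid\dim_{k(x)}H^i\ge 1\}$), so the sets
\[
\{x\mid\LSdim\Perf(A_x)\le c\}=\bigcap_{m\ge 1}\{x\mid\sup P_{m,x}\ge\lceil -cm\rceil\},
\]
\[
\{x\mid\USdim\Perf(A_x)\ge c\}=\bigcap_{m\ge 1}\{x\mid\sup Q_{m,x}\ge\lceil cm\rceil\}
\]
are closed, which is the asserted semicontinuity. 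I expect the main obstacle to be the key subadditivity lemma -- verifying carefully that negative grading confines the semifree resolution, and hence the derived tensor product, to the stated range of degrees -- together with keeping the index shift relating $-\inf P_m$ to $\sup Q_{m-1}$ straight through the duality.
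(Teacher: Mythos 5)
Your proof is correct and follows essentially the same route as the paper's: part (a) via Proposition~\ref{lemma_restriction_inf} applied to the perfect $R$-complexes $(A^*)^{\otimes^{\bbL}_A m}$, and part (b) via the subadditivity estimate for $\sup$ of derived tensor products over a negatively graded dg algebra (the paper's Lemma~\ref{lemma_ng}), combined with passing to the inverse Serre bimodule $A^!$ for $\USdim$. Your use of Fekete's lemma to turn the limit into an infimum, and the explicit identification $-\inf P_{m,x}=\sup Q_{m-1,x}$ instead of the paper's invocation of $S^{-1}\Ldim=-S\Udim$ (Corollary~\ref{dim-up-and-lo-inverse}), are presentational variants of the same argument, not a different proof.
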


We expect that the subsets \eqref{closed-subsets} are closed for all $R$-h-projective smooth and compact dg $R$-algebras.

The proof of Theorem~\ref{theorem_serre-in-families} will take several steps.

\begin{lemma} \label{lemma-loc-rings}Let $B$ be a local Noetherian ring with the residue field $\k$ and let $Q^{-1}\stackrel{d^{-1}}{\to}Q^0\stackrel{d^0}{\to} Q^1$ be a complex of finitely generated free $B$-modules. Then $H^0(Q^\bullet \otimes _Bk)=0$ if and only if the following two conditions hold:

\begin{enumerate}
\item The $B$-modules $Im(d^{-1})$ and $Im(d^0)$ are free;

\item $rk(d^{-1}\otimes _Bk)\ge rkQ^0-rk(Im(d^0))$.
\end{enumerate}
(In fact, conditions (1) and (2) imply that in (2) one has an equality.)
\end{lemma}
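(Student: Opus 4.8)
The plan is to translate everything into linear algebra over $\k$, supplemented by two standard facts about a local Noetherian ring $B$ with maximal ideal $\mathfrak m$: Nakayama's lemma (a finitely generated module that vanishes modulo $\mathfrak m$ is zero, and elements generating modulo $\mathfrak m$ generate), and the fact that a finitely generated projective $B$-module is free (so a finitely generated direct summand of a free module is free). Write $n=\rank_B Q^0$, $\bar d^{\,i}:=d^i\otimes_B\k$, let $r:=\rank_\k\bar d^{\,-1}$, and $\sigma:=\dim_\k\mathrm{Im}(\bar d^{\,0})$; this $\sigma$ is the quantity written $\rank(\mathrm{Im}(d^0))$ in the statement, and a byproduct of the proof will be that once (1) holds, $\sigma$ equals the free rank of the module $\mathrm{Im}(d^0)$. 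For a finitely generated $B$-module $M$ write $\mu(M)=\dim_\k(M\otimes_B\k)$ for its minimal number of generators.

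First, a purely linear-algebraic reformulation. Since $\bar d^{\,0}\bar d^{\,-1}=0$ one has $\mathrm{Im}(\bar d^{\,-1})\subseteq\Ker(\bar d^{\,0})$, so
$$H^0(Q^\bullet\otimes_B\k)=\Ker(\bar d^{\,0})/\mathrm{Im}(\bar d^{\,-1})=0\ \Longleftrightarrow\ r=\dim_\k\Ker(\bar d^{\,0})=n-\sigma,$$
while the same inclusion gives the a priori bound $r\le n-\sigma$. Hence $H^0(Q^\bullet\otimes_B\k)=0$ is equivalent to the inequality $r\ge n-\sigma$, i.e. to condition (2); this already gives the equivalence with (2), the parenthetical remark that (2) is then an equality, and the implication "(1) and (2) $\Rightarrow H^0=0$". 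It remains to prove "$H^0(Q^\bullet\otimes_B\k)=0\Rightarrow(1)$", so assume $r+\sigma=n$.

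Next I would compute $\mu(\mathrm{Im}(d^0))$. Right exactness of $-\otimes_B\k$ on $Q^{-1}\xrightarrow{d^{-1}}Q^0\to\coker(d^{-1})\to0$ gives $\mu(\coker(d^{-1}))=\dim_\k\coker(\bar d^{\,-1})=n-r=\sigma$, and since $\mathrm{Im}(d^{-1})\subseteq\Ker(d^0)$ the module $\coker(d^{-1})$ surjects onto $\mathrm{Im}(d^0)=Q^0/\Ker(d^0)$, so $\mu(\mathrm{Im}(d^0))\le\sigma$; conversely $\mathrm{Im}(d^0)$ surjects onto its image $\mathrm{Im}(\bar d^{\,0})$ in $Q^1\otimes_B\k$, of dimension $\sigma$, so $\mu(\mathrm{Im}(d^0))=\sigma$. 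Now lift a $\k$-basis of $\mathrm{Im}(\bar d^{\,0})$ to $x_1,\dots,x_\sigma\in\mathrm{Im}(d^0)\subseteq Q^1$; their images in $Q^1\otimes_B\k$ are linearly independent, hence extend to a $\k$-basis of $Q^1\otimes_B\k$, and lifting the remaining vectors and applying Nakayama (a generating set of size $\rank Q^1$ of the free module $Q^1$ is a basis) exhibits $x_1,\dots,x_\sigma$ as part of a $B$-basis of $Q^1$. Thus, writing $Q^1=F\oplus G$ with $F:=\sum_i Bx_i$ a free direct summand of rank $\sigma$ contained in $\mathrm{Im}(d^0)$, restricting the projection $Q^1\to F$ to $\mathrm{Im}(d^0)$ splits $\mathrm{Im}(d^0)=F\oplus(\mathrm{Im}(d^0)\cap G)$; setting $N=\mathrm{Im}(d^0)\cap G$, the equality $\sigma=\mu(\mathrm{Im}(d^0))=\sigma+\mu(N)$ forces $N=0$. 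So $\mathrm{Im}(d^0)=F$ is free of rank $\sigma$.

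Finally, $0\to\Ker(d^0)\to Q^0\to\mathrm{Im}(d^0)\to0$ splits because $\mathrm{Im}(d^0)$ is projective, so $\Ker(d^0)$ is a finitely generated direct summand of $Q^0$, hence free of rank $n-\sigma=r$. Since $\mathrm{Im}(d^0)$ is free, $\Tor_1^B(\mathrm{Im}(d^0),\k)=0$, so tensoring $0\to H^0(Q^\bullet)\to\coker(d^{-1})\to\mathrm{Im}(d^0)\to0$ with $\k$ and comparing dimensions ($\mu(\coker(d^{-1}))=\mu(\mathrm{Im}(d^0))=\sigma$) gives $H^0(Q^\bullet)\otimes_B\k=0$, whence $H^0(Q^\bullet)=0$ by Nakayama; therefore $\mathrm{Im}(d^{-1})=\Ker(d^0)$ is free as well, proving (1). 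The real content is the third paragraph: upgrading the count $\mu(\mathrm{Im}(d^0))=\sigma$ to the statement that $\mathrm{Im}(d^0)$ is a free \emph{direct summand} of $Q^1$, which is exactly where the two independent dimension counts (from $\coker(\bar d^{\,-1})$ and from $\mathrm{Im}(\bar d^{\,0})$) must be played against each other. Once the images are known to be free direct summands, the rest is routine homological bookkeeping.
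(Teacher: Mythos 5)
Your argument is internally correct and takes a genuinely different, more self-contained route than the paper's: for the ``only if'' direction the paper cites Lieblich's Lemma 2.1.3 to produce the splitting $Q^0=\mathrm{Im}(d^{-1})\oplus K$ directly, whereas you derive (1) by hand from the two matched minimal-generator counts $\mu(\coker d^{-1})=\sigma=\mu(\mathrm{Im}\,d^0)$ and by splitting a free summand of $Q^1$ off inside $\mathrm{Im}(d^0)$. However, there is a real issue with the way you reconcile your notation with the statement. You read $\mathrm{rk}(\mathrm{Im}(d^0))$ as $\sigma:=\dim_\k\mathrm{Im}(\bar d^{\,0})$, and you assert that \emph{once (1) holds} this equals the free rank of $\mathrm{Im}(d^0)$. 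That assertion is false, and the two readings of condition (2) are genuinely inequivalent. Take $B=\Z_p$ and the complex $0\to B\xrightarrow{\cdot p}B$ (so $Q^{-1}=0$): then $\mathrm{Im}(d^{-1})=0$ and $\mathrm{Im}(d^0)=pB$ are both free, $\mathrm{Im}(d^0)$ has free rank $1$, but $\sigma=0$. With the free-rank reading, conditions (1) and (2) hold (indeed $0\ge 1-1$), yet $H^0(Q^\bullet\otimes_B\k)\cong\k\neq 0$ --- so the ``if'' direction of the lemma, read literally, is false, and correspondingly the paper's own proof of that direction has a gap (showing $\mathrm{Im}(d^{-1})=\Ker(d^0)$ does not give $H^0(Q^\bullet\otimes_B\k)=0$ unless $\mathrm{Im}(d^0)$ is also a direct summand of $Q^1$). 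What your proof actually establishes (correctly) is that $\sigma$ equals the free rank of $\mathrm{Im}(d^0)$ under the \emph{stronger} hypothesis $H^0(Q^\bullet\otimes_B\k)=0$, not under (1) alone.

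So you have in effect found and silently repaired a defect in the statement, but you should say so explicitly rather than claim the two readings agree under (1). Your reading of (2), which unwinds to the purely fibrewise inequality $\mathrm{rk}(\bar d^{\,-1})+\mathrm{rk}(\bar d^{\,0})\ge\mathrm{rk}\,Q^0$, is the one that makes the lemma true, and it is also exactly what the application in Proposition~\ref{lemma_restriction_inf} needs (each of the two ranks is a lower semicontinuous function of the point, so the locus where $H^0$ vanishes is open). The computational core of your argument --- establishing $\mu(\mathrm{Im}(d^0))=\sigma$ by squeezing between $\coker(\bar d^{\,-1})$ and $\mathrm{Im}(\bar d^{\,0})$ and then promoting $\mathrm{Im}(d^0)$ to a free direct summand of $Q^1$ --- is sound and replaces the black-box citation with a transparent Nakayama-style argument.
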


\begin{proof} Assume that $H^0(Q^\bullet \otimes _Bk)=0$. Then by \cite[Lemma 2.1.3]{Lieb} there is a submodule $K\subset Q^0$ which maps isomorphically by $d^0$ onto $Im(d^0)$ and such that $Q^0=Im (d^{-1})\oplus K$. This implies (1) and (2).

Vice versa, assume that (1) and (2) hold. Since $Im(d^0)$ is free, there exists a submodule $K\subset Q^0$ which maps isomorphically by $d^0$ onto $Im (d^0)$, i.e.
\begin{equation}\label{ker-oplus}
Q^0=Ker(d^0)\oplus K.
\end{equation}
It suffices to prove that $Im (d^{-1})=Ker (d^0)$. But this follows from the inclusion $Im (d^{-1})\subset Ker(d^0)$ together with \eqref{ker-oplus}, the assumption (2) and the Nakayama lemma.
(Note that this implies  that the inequality in (2) is actually an equality).
\end{proof}

\begin{predl} 
\label{lemma_restriction_inf} 
Let $R$ be a commutative Noetherian ring and let $P^\bullet$ be a perfect $R$-complex. We identify $P^\bullet$ with a
complex of quasi-coherent sheaves on $\Spec R$. Then for any $i$ the subset
$$\{x\in \Spec R \mid H^i(P^\bullet \stackrel{\bbL}{\otimes}_Rk(x))=0\}$$
is open in $\Spec R$. In particular the functions
$$\sup (P^\bullet \stackrel{\bbL}{\otimes}_Rk(x))\quad \text{and}\quad \inf (P^\bullet \stackrel{\bbL}{\otimes}_Rk(x))$$
are upper and lower continuous on $\Spec R$.
\end{predl}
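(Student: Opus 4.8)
The plan is to reduce the statement to a standard fact about perfect complexes over a Noetherian ring: the loci where a given cohomology sheaf vanishes form an open set. Since $P^\bullet$ is perfect, we may work locally on $\Spec R$ and assume $P^\bullet$ is quasi-isomorphic to a bounded complex of finitely generated free $R$-modules; moreover, shifting, we may assume the cohomology $H^i$ in question sits in a fixed degree which I will take to be $0$. Thus we are reduced to the situation of a three-term complex $Q^{-1}\xrightarrow{d^{-1}} Q^0\xrightarrow{d^0} Q^1$ of finitely generated free $R$-modules (only the three terms around degree $0$ matter for $H^0$), and we must show that $\{x\in\Spec R\mid H^0(Q^\bullet\otimes_R k(x))=0\}$ is open.

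First I would handle the vanishing locus for a single $i$. Passing to a stalk at a point $x$, Lemma~\ref{lemma-loc-rings} gives a concrete criterion for $H^0(Q^\bullet\otimes_B k)=0$ over the local ring $B=R_x$: the images $\im(d^{-1})$ and $\im(d^0)$ must be free and a rank inequality must hold. Freeness of $\im(d^0)$ over a local ring is equivalent to $Q^0/\ker(d^0)$ being free, i.e. to $\ker(d^0)$ being a direct summand; such conditions are governed by ranks of the relevant matrices. Concretely, $H^0(Q^\bullet\otimes_R k(x))=0$ is equivalent to a statement of the form $\rank(d^{-1}\otimes k(x))+\rank(d^0\otimes k(x))=\rank Q^0$, where $\rank Q^0$ is locally constant. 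Since the rank of a matrix of sections drops under specialization — the loci $\{\rank(d\otimes k(x))\le r\}$ are closed, being cut out by vanishing of $(r+1)\times(r+1)$ minors — the function $x\mapsto \rank(d^{-1}\otimes k(x))+\rank(d^0\otimes k(x))$ is lower semicontinuous, hence the set where it attains its maximal possible value $\rank Q^0$ is open. This gives openness of $\{x\mid H^i(P^\bullet\otimes^{\bbL}_R k(x))=0\}$. The equivalence of this matrix-rank reformulation with Lemma~\ref{lemma-loc-rings} is the point I would want to verify carefully, since one must make sure the "freeness of images" conditions are automatically implied once the rank equality holds over a local ring — this is exactly the content of Nakayama, as in the proof of Lemma~\ref{lemma-loc-rings}.

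Finally, the statements about $\sup$ and $\inf$ follow formally. We have
$$\sup(P^\bullet\stackrel{\bbL}{\otimes}_R k(x))=\sup\{i\mid H^i(P^\bullet\stackrel{\bbL}{\otimes}_R k(x))\ne 0\},$$
and since $P^\bullet$ is perfect there is a global bound $N$ with $H^i(P^\bullet\otimes^{\bbL}_R k(x))=0$ for all $x$ whenever $i>N$. For $c\in\R$ (equivalently an integer threshold), $\{x\mid \sup(P^\bullet\otimes^{\bbL}_R k(x))<c\}=\bigcap_{i\ge c}\{x\mid H^i(P^\bullet\otimes^{\bbL}_R k(x))=0\}$ is a finite intersection of open sets, hence open; therefore $\sup$ is upper semicontinuous. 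Dually, $\inf$ is lower semicontinuous. The main obstacle is purely the bookkeeping in the local analysis of the previous paragraph — once the rank criterion is established, everything else is formal.
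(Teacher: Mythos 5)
Your proof is correct, and it takes a genuinely more elementary route than the paper's. The paper's proof applies Lemma~\ref{lemma-loc-rings} at each stalk to characterize $H^0(Q^\bullet\otimes_R k(x))=0$ in terms of freeness of $\im(d^{-1})_x$ and $\im(d^0)_x$ plus a rank inequality, then invokes openness of the freeness locus of a coherent sheaf (Hartshorne, Exercise~II.5.7(a)), stratifies by constant rank, and finally checks the rank inequality on each stratum. Your approach sidesteps the local-ring machinery entirely: you reformulate vanishing of $H^0$ as the rank identity $\rank(d^{-1}\otimes k(x))+\rank(d^0\otimes k(x))=\rank Q^0$, observe that the left-hand side is lower semicontinuous in $x$ (a finite union of open loci cut out by nonvanishing of minors), and that it is bounded above by $\rank Q^0$ since $\im(d^{-1}\otimes k(x))\subseteq\ker(d^0\otimes k(x))$ --- so the locus of equality is exactly where a bounded lower semicontinuous function attains its maximum, hence open. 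This is shorter and needs no input from Lemma~\ref{lemma-loc-rings}. The deduction of semicontinuity of $\sup$ and $\inf$ at the end is the same as the paper's (a finite intersection or union, finiteness coming from boundedness of a perfect complex).

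One suggestion: the detour through Lemma~\ref{lemma-loc-rings}, and the worry about whether the ``freeness of images'' conditions follow from a rank equality via Nakayama, is a distraction you can drop altogether. The criterion you actually use is a statement about the three-term complex of $k(x)$-vector spaces $Q^\bullet\otimes_R k(x)$ and is immediate from rank--nullity: for a complex $V^{-1}\xrightarrow{d^{-1}} V^0\xrightarrow{d^0} V^1$ of finite-dimensional vector spaces, $H^0=\ker d^0/\im d^{-1}$ vanishes iff $\rank d^{-1}=\dim V^0-\rank d^0$. No local ring, no Nakayama, and no freeness of images is involved at any point. Lemma~\ref{lemma-loc-rings} is a strictly stronger statement about the complex over the local ring $R_x$; your argument neither uses nor needs it, and invoking it obscures how simple the key step really is.
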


\begin{proof} The second assertion follows from the first one.

To prove the first assertion we may replace $P^{\bullet}$ by a quasi-isomorphic strict perfect complex. We may assume that $i=0$ and that $P^\bullet$ is the complex $P^{-1}\stackrel{d^{-1}}{\to} P^0\stackrel{d^0}{\to} P^1$ of finitely generated free $R$-modules. Consider the $R$-modules $Im (d^{-1})$ and $Im (d^0)$.

For a point $x\in Spec R$ and an $R$-module $M$ denote by $M_x$ its stalk at $x$, it is a module over the local ring $R_x$. We have $H^0(P^\bullet \otimes _Rk(x))=H^0(P^\bullet _x\otimes _{R_x}k(x))$. Therefore we may apply Lemma \ref{lemma-loc-rings} to conclude that $H^0(P^\bullet \otimes _Rk(x))=0$ if and only if

\begin{enumerate}
\item $Im (d^{-1})_x$ and $Im (d^{0})_x$ are free $R_x$-modules and

\item $rk(d^{-1}\otimes _Rk(x))\ge rkP^0_x-rk(Im(d^0)_x)$.
\end{enumerate}

The condition (1) is open in $\Spec R$ by \cite[Ch. II, Exercise 5.7 (a)]{Ha}. Then the open subset of $\Spec R$ where (1) holds is a disjoint union of open subsets where the ranks of the locally free coherent sheaves $P^0$, $Im (d^{-1})$, and $Im (d^0)$ are constant. It suffices to show that the condition (2) is open in any of these open subsets. This follows from the observation that the subset
$$\{ x\in \Spec R\mid rk(d^{-1}\otimes _Rk(x))\ge a\}$$
is open for any $a$, since it is given by nonvanishing of some minors of a matrix with entries in~$R$.
\end{proof}

We can now prove Theorem~\ref{theorem_serre-in-families}.

\begin{proof}
(a) By Proposition \ref{serre-general-over-r}  the Serre bimodule
$$A^*:=\Hom _R(A,R)$$
is perfect as a dg $A^{\op}\otimes _RA$-module, dg $A$-module or an $R$-complex. Similarly $(A^*)^{\stackrel{\bbL}{\otimes }_Am}$ is a perfect $R$-complex.

By Proposition \ref{serre-general-over-r} for any point $x\in \Spec R$ we have
$$A^*\stackrel{\bbL}{\otimes }_Rk(x)=A_x^*:=\Hom _{k(x)}(A_x,k(x))$$
which is the Serre bimodule for $A_x$. 
Also note that by Lemma~\ref{perf-r-compl}(4)
$$((A^*)^{\stackrel{\bbL}{\otimes }_Am})\stackrel{\bbL }{\otimes }_Rk(x)\simeq (A_x^*)^{\stackrel{\bbL}{\otimes} _{A_{x}}m}$$
 for any $x\in \Spec R$ and $m>0$.
Since the $R$-complex $(A^*)^{\stackrel{\bbL}{\otimes }_Am}$ is perfect, it follows from Proposition \ref{lemma_restriction_inf} that if $x,y\in \Spec R$ and $x\in \overline{\{y\}}$, then $H^i((A_{y }^*)^{\otimes _{A_y}m})\neq 0$ implies that $H^i((A_{x }^*)^{\otimes _{A_x}m})\neq 0$. 
Hence, $\inf (A_{y }^*)^{\otimes _{A_y}m}\ge \inf (A_{x }^*)^{\otimes _{A_x}m}$. Passing to limits, we get $\USdim\Perf(A_y)\le \USdim\Perf(A_x)$. Similarly, 
$\LSdim\Perf(A_y)\ge \LSdim\Perf(A_x)$.
This proves the first assertion of the theorem.

(b) Now assume that $A$ is in addition negatively graded. We first prove that the function $\LSdim  \Perf(A_x)$ is lower semi-continuous on $\Spec R$.

Choose an h-projective resolution $P^\bullet \to A^*$ of the dg $A^{\op}\otimes _RA$-module $A^*=\Hom _R(A,R)$. Then $P^\bullet$ is also h-projective as right dg $A$-module and 
$$(A^*)^{\otimes ^{\bbL}_An}=(P^\bullet)^{\otimes _An}$$
is a perfect $R$-complex. By Proposition \ref{serre-general-over-r} for each $x\in \Spec R$ we have $(A_x)^*=(A^*)\otimes _Rk(x)$ and
$$((A_x)^*)^{\otimes ^{\bbL}_{A_x}n}=(P^\bullet )^{\otimes _An}\otimes _Rk(x)$$

Recall (Proposition \ref{form-for-serre-dim}) that
$$\LSdim  \Perf(A_x)=\lim _{n\to \infty}\frac{-\sup ((A_x)^*)^{\otimes ^{\bbL}_{A_x}n}}{n}.$$

Fix $c\in \bbR$. We claim that
$$\{x\in \Spec R\mid \LSdim  \Perf(A_x)>c\}=\bigcup_{m\in\Z, n\in\N\colon \frac{m}{n}>c}
\{x\in \Spec R\mid \sup ((A_x)^*)^{\otimes ^{\bbL}_{A_x}n}\le -m\}.$$
Indeed, suppose for some $x$, $\LSdim  \Perf(A_x)>c$. Then
$$\frac{-\sup ((A_x)^*)^{\otimes ^{\bbL}_{A_x}n}}{n}=\frac{m}{n}>c$$
for some $m,n$. Vice versa, suppose for some $n_0,m_0$ we have
$\sup ((A_x)^*)^{\otimes ^{\bbL}_{A_x}n_0}\le -m_0$ and $m_0/n_0>c$. It follows from Lemma~\ref{lemma_ng} below that for any $l>0$ we have
$$\sup ((A_x)^*)^{\otimes ^{\bbL}_{A_x}ln_0}\le -lm_0$$
(here we need $A$ to be negatively graded!) and so
$$\frac{-\sup ((A_x)^*)^{\otimes ^{\bbL}_{A_x}ln_0}}{ln_0}\ge \frac{m_0}{n_0}.$$
Passing to the limit as $l\to \infty$ we find that $\LSdim  \Perf(A_x)\ge \frac{m_0}{n_0}>c$.

Now we apply Proposition \ref{lemma_restriction_inf} to the perfect $R$-complex $(P^\bullet)^{\otimes _An}$ and conclude that for all $m,n$ the subset
$$\{x\in \Spec R\mid \sup ((A_x)^*)^{\otimes ^{\bbL}_{A_x}n}\le -m\}$$
is open in $\Spec R$. Therefore also the subset $\{x\in \Spec R\mid \LSdim  \Perf(A_x)>c\}$ is open for all $c$. So the function $\LSdim  \Perf(A_x)$ is lower semi-continuous.

To prove that the function $\USdim  \Perf(A_x)$ is upper semi-continuous on $\Spec R$ we need to use some results of Section \ref{section_general-dim} where we consider the upper and lower dimension of a triangulated category~$T$ with respect to an endofunctor $F$. These are denoted by $F \Udim T$ and $F \Ldim T$. If $T=\Perf(A)$ and  $F=S$ is the Serre endofunctor then $S \Udim T=\USdim  T$ and
$S \Ldim T=\LSdim  T$.

Consider the inverse Serre functor $S^{-1}\colon \Perf(A)\to \Perf(A)$. By Lemma \ref{existence-of-kernel} we have
$$S^{-1}(-)=(-)\stackrel{\bbL}{\otimes }_AA^!$$
where $A^!:=\bbR \Hom _A(A^*,A)$, it is a perfect $R$-complex (Lemma \ref{perf-r-compl}). Also for any $x\in \Spec R$ there is an isomorphism of dg $A_x^{\op}\otimes _{k(x)}A_x$-modules
$$A^!\stackrel{\bbL}{\otimes }_Rk(x)=A^!_x$$
and the functor $(-)\stackrel{\bbL}{\otimes }_AA^!_x$ is the inverse $S_x^{-1}$ of the Serre functor $S_x\colon \Perf(A_x)\to \Perf(A_x)$ (Lemma  \ref{existence-of-kernel}). Now the same argument as above for the lower Serre dimension $\LSdim  \Perf(A_x)=S_x \Ldim \Perf(A_x)$ shows that the function $S^{-1}_x \Ldim \Perf(A_x)$ is lower semi-continuous on $\Spec R$. By Corollary \ref{dim-up-and-lo-inverse}  we know that
$$S^{-1}_x \Ldim \Perf(A_x)=-(S_x \Udim \Perf(A_x)).$$
Hence the function $S_x \Udim \Perf(A_x)=\USdim  \Perf(A_x)$ is upper semi-continuous. This completes the proof of Theorem \ref{theorem_serre-in-families}.
\end{proof}

\begin{lemma}
\label{lemma_ng}
Let $A$ be a negatively graded dg $k$-algebra, i.e. $A^i=0$ for $i>0$. Let $M,N$ be  bounded above dg $A$- and $A^{\op}$-modules respectively. Then 
$$\sup (M\otimes^{\bbL}_AN)\le \sup M+\sup N.$$
\end{lemma}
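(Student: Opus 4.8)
The plan is to reduce the statement to a standard fact about spectral sequences (or the filtration-by-degree argument) once we choose good representatives for $M$ and $N$. First I would replace $M$ by an h-projective resolution, so that $M \otimes^{\bbL}_A N = M \otimes_A N$; since $M$ is bounded above and $A$ is negatively graded, one can choose $M$ to be a bounded-above complex of free graded $A$-modules, each generated in degrees that, together with the internal grading of $A$, stay controlled. The key quantitative input is: if $P$ is a free graded $A$-module generated in degrees $\le s$, then (because $A$ lives in degrees $\le 0$) $P$ itself is concentrated in degrees $\le s$, and $P \otimes_A N$ has cohomology concentrated in degrees $\le s + \sup N$. So the whole point is that tensoring a free module generated in degrees $\le s$ against $N$ cannot push cohomology above $s + \sup N$.

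The main step is then a bookkeeping argument on the total complex. Write $M$ as (a complex quasi-isomorphic to) $\bigl(\cdots \to M^{j} \to M^{j+1} \to \cdots\bigr)$ with each $M^j$ a free graded $A$-module, and with $M^j = 0$ for $j > \sup M$; moreover, because $M$ is bounded above and $A$ is negatively graded, we can arrange that the generators of $M^j$ sit in internal degree $\ge -j + \sup M$ in the sense that each $M^j$ is concentrated in (total) degrees $\ge$ something that makes the naive double-complex estimate work. Concretely, I would use the hypercohomology spectral sequence of the filtration of $M \otimes_A N$ by the (stupid) filtration on $M$: its $E_1$-page is built from $H^{\bul}(M^j \otimes_A N)$, and by the previous paragraph each such term vanishes in total degrees exceeding $(\text{top degree of } M^j) + \sup N \le \sup M + \sup N$. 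Since the spectral sequence converges and each contributing term already vanishes above $\sup M + \sup N$, we conclude $H^i(M \otimes_A N) = 0$ for $i > \sup M + \sup N$, which is exactly the claim. Convergence is unproblematic here because $M$ is bounded above and $N$ is bounded above, so the filtration is exhaustive and, in each total degree, only finitely many filtration steps contribute.

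The step I expect to be the main obstacle is making the choice of free resolution of $M$ compatible with \emph{both} gradings at once: one wants a cofibrant replacement $\widetilde M \xrightarrow{\sim} M$ that is simultaneously bounded above in cohomological degree and ``upper-bounded'' in the appropriate combined sense, so that the free generators of $\widetilde M^j$ don't contribute cohomology above $\sup M$. This is where negativity of the grading on $A$ is essential — it guarantees that a free module generated in degree $d$ has no components in degrees $> d$, so shifting generators down in cohomological degree does not leak cohomology upward. I would carry this out by the usual cell-attachment construction of a semifree resolution, attaching cells in decreasing order of (total) degree starting from the top cohomology of $M$; negativity of $A$ ensures the process does not introduce generators violating the bound. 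Everything else — the isomorphism $M\otimes^{\bbL}_A N \simeq \widetilde M \otimes_A N$, the degreewise estimate $\sup(P \otimes_A N) \le \sup P + \sup N$ for $P$ free, and the spectral sequence convergence — is routine.

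Alternatively, and perhaps more cleanly, one can induct on the (finite, since $M$ is bounded above and we may as well truncate $N$ too — although $N$ need not be bounded, $\sup N$ may be $+\infty$, in which case there is nothing to prove, so assume $\sup N < \infty$): take the top cohomology $H^{\sup M}(M)$ of $M$, realize the truncation triangle $\tau_{<\sup M} M \to M \to H^{\sup M}(M)[-\sup M]$, apply $-\otimes^{\bbL}_A N$, and use the long exact sequence together with the case $M = H^{\sup M}(M)[-\sup M]$, i.e. the case where $M$ is a single (possibly infinitely generated over $\k$, but that is fine) module placed in one degree; for that case one uses a free resolution which, by negativity of $A$, stays in degrees $\le \sup M$, and the degreewise bound finishes it. I would present whichever of these two the referee finds more transparent; the inductive version avoids spectral-sequence bookkeeping at the cost of an induction that must be phrased carefully when $\inf M = -\infty$ (it still works because we only ever move the top cohomological degree, and the bound we are proving only involves $\sup M$).
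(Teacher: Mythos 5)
The key idea you identify is the right one, and it is the same one the paper uses: negativity of $A$ lets you build a semifree resolution of $M$ whose components vanish above cohomological degree $\sup M$ (because a free graded $A$-module generated in degrees $\le s$ is concentrated in degrees $\le s$). But your execution has a genuine flaw, and you are missing the one extra step that makes the whole thing trivial: \emph{resolve $N$ the same way}. The paper takes semifree resolutions $P\to M$ and $Q\to N$ with $P^i=0$ for $i>\sup M$ and $Q^j=0$ for $j>\sup N$; then $P\otimes_A Q$ computes $M\otimes^{\bbL}_A N$, and $(P\otimes_A Q)^l$ is a quotient of $(P\otimes_\k Q)^l=\bigoplus_{i+j=l}P^i\otimes_\k Q^j$, which is already zero at the level of graded pieces for $l>\sup M+\sup N$. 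No spectral sequence, no induction, no convergence issues.

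The concrete gap in your main argument: you propose filtering $\widetilde M\otimes_A N$ by the stupid (cohomological-degree) filtration on $\widetilde M$ and taking the associated spectral sequence, with $E_1$ built from $H^\bullet(\widetilde M^j\otimes_A N)$. But $\widetilde M^{\ge p}$ is not a dg $A$-submodule of $\widetilde M$ unless $A$ is concentrated in degree $0$: for $a\in A^{-i}$ with $i>0$, multiplication by $a$ sends $\widetilde M^{\ge p}$ out of itself. So the components $\widetilde M^j$ are not graded $A$-modules (they are merely $\k$-modules), the expression $\widetilde M^j\otimes_A N$ does not make sense, and the filtration you describe does not exist. What you seem to have in mind (``each $M^j$ a free graded $A$-module'') is the semifree filtration, whose graded pieces \emph{are} direct sums of shifts $A[-d_\alpha]$ with $d_\alpha\le\sup M$; that would repair the algebra, but then convergence of the spectral sequence is genuinely delicate when $M$ is unbounded below, and you would be doing nontrivial work to avoid the two-line observation above. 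Your alternative inductive argument has a related problem: the induction ``moves the top cohomological degree down one'' at each step and hence never bottoms out when $\inf M=-\infty$; making it rigorous requires a further colimit/truncation argument that you only gesture at. Both issues evaporate once you resolve $N$ as well.
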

\begin{proof}
Since $A$ is negatively graded, $M$ admits a resolution $P\to M$ by a semi-free dg $A$-module such that $P^i=0$ for $i>\sup M$. Similarly, there is a semi-free resolution 
$Q\to N$ by a dg $A^{\op}$-module with $Q^j=0$ for $j>\sup N$. Then $M\otimes^{\bbL}_AN\cong P\otimes_AQ$ and 
$\sup (P\otimes_AQ)\le \sup M+\sup N$ because $(P\otimes_AQ)^l=0$ for $l>\sup M+\sup N$.
Indeed, $(P\otimes_AQ)^l$ is a quotient of $(P\otimes_kQ)^l=\oplus_{i+j=l}P^i\otimes_k Q^j=0$.
\end{proof}

\section{Dimension of a category with respect to an endofunctor}
\label{section_general-dim}

\subsection{Upper and lower dimension of a category with respect to an endofunctor}

Recall Definition~\ref{definition_e-e+}: 
  for objects $G_1,G_2$ of a triangulated category $T$ we denote
$$e_-(G_1,G_2)=\inf\{i\mid \Hom^i(G_1,G_2)\ne 0\}, \quad
e_+(G_1,G_2)=\sup\{i\mid \Hom^i(G_1,G_2)\ne 0\}.$$

\begin{definition} 
\label{definition_Fdim1} 
Let $T$ be a triangulated category with a triangulated endofunctor  $F\colon T\to T$. For any $G_1,G_2\in T$ put
$$F \Udim(T,G_1,G_2)=\limsup_{m\to +\infty} \frac{-e_-(G_1,F^m(G_2))}m,\quad
 F \Ldim(T,G_1,G_2)=\liminf_{m\to +\infty} \frac{-e_+(G_1,F^m(G_2))}m.
$$
Thus $F \Udim(T,G_1,G_2),  F \Ldim(T,G_1,G_2) \in \bbR \cup \{\pm \infty \}$.

\end{definition}

The following lemma is straightforward.

\begin{lemma}
\label{lemma_remark} 
Take the assumptions of Definition~\ref{definition_Fdim1}.
\begin{enumerate}
\item  We have 
\begin{align*}
F \Udim (T,G,G')&=F \Udim (T,G[d_1],G'[d_2])\qquad\text{and}\\
F \Ldim (T,G,G')&=F \Ldim (T,G[d_1],G'[d_2]) 
\end{align*} 
for any $d_1,d_2\in \bbZ$.

\item Let $G_1\to G_2\to G_3\to G_1[1]$
be a distinguished triangle in $T$. Then for any $G\in T$
\begin{gather*}
F \Udim (T,G,G_2)\le \max \{ F \Udim (T,G,G_1), F \Udim (T,G,G_3)\}\\
F \Ldim (T,G,G_2)\ge \min \{ F \Ldim (T,G,G_1), F \Ldim (T,G,G_3)\}
\end{gather*}
and similarly
\begin{gather*}
F \Udim (T,G_2,G)\le \max \{ F \Udim (T,G_1,G), F \Udim (T,G_3,G)\}\\
F \Ldim (T,G_2,G)\ge \min \{ F \Ldim (T,G_1,G), F \Ldim (T,G_3,G)\}.
\end{gather*}
\end{enumerate}
\end{lemma}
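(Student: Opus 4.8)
The plan is to deduce both statements from elementary bookkeeping with the integers $e_\pm$, using only that a triangulated functor commutes with shifts and carries distinguished triangles to distinguished triangles, together with the long exact sequences attached to a triangle by a (co)homological functor. Throughout we use $\Hom^i(A,B)=\Hom(A,B[i])$ and the convention $e_-=+\infty$, $e_+=-\infty$ when the relevant $\Hom^{\bul}$ vanishes.

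\emph{Part (1).} Since $F$ commutes with shifts, $F^m(G'[d_2])\simeq F^m(G')[d_2]$, and for any object $X$ one has $\Hom^i(G[d_1],X[d_2])=\Hom^{\,d_2-d_1+i}(G,X)$. Consequently
\[
e_-(G[d_1],F^m(G'[d_2]))=e_-(G,F^m(G'))+d_1-d_2,\qquad
e_+(G[d_1],F^m(G'[d_2]))=e_+(G,F^m(G'))+d_1-d_2.
\]
Dividing by $m$, the quantity whose $\limsup$ (resp.\ $\liminf$) defines $F\Udim$ (resp.\ $F\Ldim$) changes by $(d_2-d_1)/m\to 0$, so its $\limsup$ (resp.\ $\liminf$) is unaffected. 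The degenerate cases where some $\Hom^{\bul}$ vanishes are handled uniformly by the convention above.

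\emph{Part (2).} As $F$ is triangulated, applying $F^m$ to the given triangle yields a distinguished triangle $F^m(G_1)\to F^m(G_2)\to F^m(G_3)\to F^m(G_1)[1]$ for every $m$. Feeding it into the cohomological functor $\Hom(G,-)$ and reading off the long exact sequence gives the ``two out of three'' non-vanishing principle: $\Hom^i(G,F^m(G_2))\ne 0$ forces $\Hom^i(G,F^m(G_1))\ne 0$ or $\Hom^i(G,F^m(G_3))\ne 0$. Hence the index set $\{i\mid\Hom^i(G,F^m(G_2))\ne 0\}$ lies in the union of the analogous sets for $G_1$ and $G_3$, so that for every $m$
\[
e_-(G,F^m(G_2))\ge\min\{e_-(G,F^m(G_1)),\,e_-(G,F^m(G_3))\},\qquad
e_+(G,F^m(G_2))\le\max\{e_+(G,F^m(G_1)),\,e_+(G,F^m(G_3))\}.
\]
The inequalities in the first variable are obtained the same way, applying $\Hom(-,F^m(G))$ to the original triangle (here one does not even need $F$ triangulated, only the exactness at the middle term of the long exact sequence of a contravariant cohomological functor).

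\emph{Passing to the limit.} For the $F\Udim$ bounds, dividing by $m>0$ gives $-e_-(G,F^m(G_2))/m\le\max\{-e_-(G,F^m(G_1))/m,\,-e_-(G,F^m(G_3))/m\}$ for all $m$, and the elementary identity $\limsup_m\max\{a_m,b_m\}=\max\{\limsup_m a_m,\limsup_m b_m\}$ finishes it. For the $F\Ldim$ bounds the naive estimate via $\liminf_m\min$ is too weak, so one argues along a subsequence: choose $m_k\to\infty$ with $-e_+(G,F^{m_k}(G_2))/m_k\to F\Ldim(T,G,G_2)$; for each $k$ at least one of $e_+(G,F^{m_k}(G_2))\le e_+(G,F^{m_k}(G_1))$ or $e_+(G,F^{m_k}(G_2))\le e_+(G,F^{m_k}(G_3))$ holds, so after passing to a further subsequence one of them holds for all $k$, say the first, whence
\[
F\Ldim(T,G,G_2)=\lim_k\frac{-e_+(G,F^{m_k}(G_2))}{m_k}\ \ge\ \liminf_m\frac{-e_+(G,F^m(G_1))}{m}\ =\ F\Ldim(T,G,G_1)\ \ge\ \min\{F\Ldim(T,G,G_1),F\Ldim(T,G,G_3)\}.
\]
This $\liminf$ bookkeeping is the only point that needs a moment's care; everything else is formal, which is presumably why the paper calls the lemma straightforward.
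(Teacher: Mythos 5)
Your proof is correct, and since the paper states Lemma~\ref{lemma_remark} without any proof (calling it ``straightforward''), there is nothing to compare against beyond the expected formal argument, which you have supplied cleanly: for (1) the shift of $e_\pm$ by the constant $d_1-d_2$ washes out in the limit, and for (2) the long exact sequence of $\Hom(G,-)$ (resp.\ $\Hom(-,F^m(G))$) applied to $F^m$ of the triangle (resp.\ the triangle itself) gives the two-out-of-three non-vanishing principle, hence the termwise inequalities on $e_\pm$.

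One small remark on the $F\Ldim$ passage to the limit: the claim that ``the naive estimate via $\liminf_m\min$ is too weak'' is actually not right. The inequality $\liminf_m\min\{a_m,b_m\}\ge\min\{\liminf_m a_m,\liminf_m b_m\}$ holds for any two sequences in the extended reals (given $\varepsilon>0$, eventually $a_m\ge\liminf a-\varepsilon$ and $b_m\ge\liminf b-\varepsilon$, hence eventually $\min\{a_m,b_m\}\ge\min\{\liminf a,\liminf b\}-\varepsilon$), and this is precisely the direction needed. Your subsequence argument is a correct standalone proof of that same inequality, so it does no harm, but the naive route is valid and slightly shorter; the asymmetry with the $\limsup\max$ case is that there one has an equality, while here one only has an inequality, which however points the right way.
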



\begin{lemma}
\label{lemma_GG} 
Suppose $T$ has a generator. Then  for any generators $G,G'$ and any objects $G_1,G_2$ one has
\begin{gather*}
\label{first-equal} 
F \Udim (T,G_1,G_2)\le F \Udim (T,G,G'), \\
F \Ldim (T,G_1,G_2)\ge \notag F \Ldim (T,G,G'). 
\end{gather*}

\end{lemma}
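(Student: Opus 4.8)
The statement to prove is Lemma~\ref{lemma_GG}: if $T$ has a generator, then for any two generators $G,G'$ and any objects $G_1,G_2$ one has $F\Udim(T,G_1,G_2)\le F\Udim(T,G,G')$ and $F\Ldim(T,G_1,G_2)\ge F\Ldim(T,G,G')$. My plan is to reduce the statement to finitely many applications of Lemma~\ref{lemma_remark} (the ``shift invariance'' part (1) and the ``triangle'' part (2)), by exploiting that $G$ and $G'$ are \emph{classical} generators. The key idea: because $G$ is a generator, $G_1$ lies in $\langle G\rangle_n$ for some $n$, so $G_1$ is, up to a direct summand, built from finitely many shifted copies of $G$ by $n$ cones; and because $G'$ is a generator, $G_2$ similarly sits inside some $\langle G'\rangle_m$. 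One then propagates the inequalities for $F\Udim$ and $F\Ldim$ along these finite filtrations.

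\textbf{Step 1: monotonicity in each argument along $\star$-products.} First I would establish, by induction on $n$, that for any object $H$ and any $P\in[G]_n$ (hence in particular $\langle G\rangle_n=$ summands of $[G]_n$),
\[
F\Udim(T,P,H)\le F\Udim(T,G,H),\qquad F\Ldim(T,P,H)\ge F\Ldim(T,G,H),
\]
and symmetrically in the second argument. The base case $n=0$ is Lemma~\ref{lemma_remark}(1): an object of $[G]_0$ is a finite direct sum of shifts $G[d_i]$, and both $e_-$ and $e_+$ for a direct sum in the first slot are controlled by the individual summands (for $e_-$, $\Hom^i(\bigoplus_j G[d_j], F^m H)$ is a direct sum, so it is nonzero iff some summand is nonzero; this gives $e_-(\bigoplus_j G[d_j],F^mH)=\min_j e_-(G[d_j],F^mH)$, hence $-e_-=\max_j(-e_-)$, and dividing by $m$ and taking $\limsup$ gives exactly the sup of the $F\Udim(T,G[d_j],H)=F\Udim(T,G,H)$). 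The inductive step uses Lemma~\ref{lemma_remark}(2): if $P=P'\star P''$ with $P'\in[G]_{n-1}$, $P''\in[G]_0$, there is a triangle $P'\to P\to P''\to P'[1]$, and part (2) of that lemma (first-slot version) gives $F\Udim(T,P,H)\le\max\{F\Udim(T,P',H),F\Udim(T,P'',H)\}$, each of which is $\le F\Udim(T,G,H)$ by the inductive hypothesis and the base case; dually for $F\Ldim$. Passing from $[G]_n$ to $\langle G\rangle_n$ is immediate since $F\Udim$ and $F\Ldim$ of a direct summand are bounded by those of the whole object (again using that $\Hom^i$ of a summand injects into $\Hom^i$ of the object — formally this is the base-case computation applied with trivial shifts).

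\textbf{Step 2: combine.} Now since $G$ is a classical generator, $G_1\in\langle G\rangle_n$ for some $n$, so Step~1 (first-slot version) gives $F\Udim(T,G_1,H)\le F\Udim(T,G,H)$ and $F\Ldim(T,G_1,H)\ge F\Ldim(T,G,H)$ for every $H$. Apply this with $H=F^0(G_2)=G_2$... no: I must be careful that $F^m$ is applied to the \emph{second} argument, so I should run Step~1 in the second slot with the object $G_1$ fixed in the first slot: since $G'$ is a generator, $G_2\in\langle G'\rangle_m$ for some $m$, and the second-slot version of Step~1 gives $F\Udim(T,G_1,G_2)\le F\Udim(T,G_1,G')$ and $F\Ldim(T,G_1,G_2)\ge F\Ldim(T,G_1,G')$. (Here one needs that $F$ is triangulated, so $F^m$ sends the triangle $P'\to P\to P''$ and the direct-sum decompositions witnessing $G_2\in\langle G'\rangle_m$ to the corresponding triangles and decompositions on $F^m(G_2)$ built from $F^m$ of shifts of $G'$; this is exactly what makes Lemma~\ref{lemma_remark} applicable after moving $F^m$ inside.) Then apply the first-slot version (with fixed second argument $G'$): $F\Udim(T,G_1,G')\le F\Udim(T,G,G')$ and $F\Ldim(T,G_1,G')\ge F\Ldim(T,G,G')$. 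Chaining the two inequalities yields the lemma.

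\textbf{Main obstacle.} The only subtlety — and the step I expect to need the most care — is the interaction of $F^m$ with the generation data in the \emph{second} argument: a relation $G_2\in\langle G'\rangle_m$ must be transported to a relation expressing $F^m(G_2)$ as a summand of something built by $m$ cones from shifts of $F^m(G')$, uniformly in $m$, so that Lemma~\ref{lemma_remark} can be applied. Since $F$ is a triangulated endofunctor it commutes with shifts and sends triangles to triangles, so $F^m$ carries $[G']_k$ into $[F^m(G')]_k$ and $\langle G'\rangle_k$ into $\langle F^m(G')\rangle_k$ (the same $k$, independent of $m$); this is routine but should be stated explicitly. Everything else is a finite induction with no estimates lost, since the bounds in Lemma~\ref{lemma_remark} involve $\max$/$\min$ of finitely many terms and commute with dividing by $m$ and taking $\limsup$/$\liminf$.
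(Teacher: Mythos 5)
Your proposal is correct and follows essentially the same approach as the paper: fix one argument, induct on $n$ with $G_1\in\langle G\rangle_n$ (base case via Lemma~\ref{lemma_remark}(1), inductive step via Lemma~\ref{lemma_remark}(2), direct summands handled by the inclusion of Hom-spaces), then repeat in the other slot and chain the two inequalities. The subtlety you flag about transporting $G_2\in\langle G'\rangle_m$ through $F^m$ is indeed the only point worth a word, and your resolution (triangulated $F$ preserves shifts and distinguished triangles) is exactly what the paper implicitly relies on in Lemma~\ref{lemma_remark}.
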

\begin{proof}
We will only prove the first inequality. The proof of the second one is similar.

We first prove $F \Udim (T,G_1,G')\le F \Udim (T,G,G')$ for fixed $G,G'$ and arbitrary $G_1$.
Let $G_1\in \langle G\rangle_n$. We may assume that $G_1\in [G]_n$ and will argue by induction in $n$. Let $n=0$, that is  $G_1$ is isomorphic to a finite sum $\oplus  G[d_i]$. Then we may assume that $G_1=G[d]$ and then $F \Udim (T,G_1,G')= F \Udim (T,G,G')$ by Lemma~\ref{lemma_remark}.

For the induction step, choose a triangle $G_1^\prime \to G_1\to G_1^{\prime \prime}\to G_1^\prime [1]$ where
$G_1\in [G]_n$, $G_1^\prime \in [G]_0$,$G_1^{\prime \prime}\in [G]_{n-1}$. By induction hypothesis,
$F \Udim (T,G_1^\prime,G')\le F \Udim (T,G,G')$ and
$F \Udim (T,G_1^{\prime \prime},G')\le F \Udim (T,G,G')$. Hence also $F \Udim (T,G_1,G')\le F \Udim (T,G,G')$ by Lemma~\ref{lemma_remark}.

Similarly one checks that $F \Udim (T,G_1,G_2)\le F \Udim (T,G_1,G')$ by fixing $G_1,G'$ and varying~$G_2$.
\end{proof}

\begin{definition}
\label{definition_Fdim2}
Let $T$ be a  triangulated category with a generator. Let $F\colon T\to T$ be a triangulated functor. We define the \emph{upper} and \emph{lower $F$-dimension} of $T$ as
$$F\Udim T:=F\Udim(T,G,G'),\quad F\Ldim T:=F\Ldim(T,G,G'),$$
where $G,G'$ are some generators of $T$.
\end{definition}

By Lemma~\ref{lemma_GG}, the quantities $F\Ldim T$ and $F\Udim T$ are well defined: $F$-dimension does not depend on the choice of generators.

\begin{lemma}
Let $T$ be a  triangulated category with a generator. Let $F\colon T\to T$ be a triangulated functor, suppose that $F$ is not nilpotent.
Then
$$F\Ldim T\le F\Udim T.$$
\end{lemma}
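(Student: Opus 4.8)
The plan is to compute both quantities using a single generator $G$ of $T$ in both slots, which is legitimate by Lemma~\ref{lemma_GG}, so that
$$F\Udim T=\limsup_{m\to\infty}\frac{-e_-(G,F^m(G))}{m},\qquad F\Ldim T=\liminf_{m\to\infty}\frac{-e_+(G,F^m(G))}{m}.$$
Since $e_-(G_1,G_2)\le e_+(G_1,G_2)$ whenever $\Hom^\bullet(G_1,G_2)\ne 0$, the whole statement reduces to the single claim that $\Hom^\bullet(G,F^m(G))\ne 0$ for all $m\ge 1$. Granting this, $-e_+(G,F^m(G))\le -e_-(G,F^m(G))$ for every such $m$, hence
$$F\Ldim T=\liminf_{m\to\infty}\frac{-e_+(G,F^m(G))}{m}\le\liminf_{m\to\infty}\frac{-e_-(G,F^m(G))}{m}\le\limsup_{m\to\infty}\frac{-e_-(G,F^m(G))}{m}=F\Udim T.$$
So the entire content of the lemma lies in this nonvanishing, and the hypothesis that $F$ is not nilpotent is precisely what is needed to establish it.

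To prove the nonvanishing, first I would record the elementary fact that for a classical generator $G$ of $T$ and any object $Z\in T$, the vanishing $\Hom^\bullet(G,Z)=0$ forces $Z=0$: the full subcategory $\{W\in T\mid\Hom^\bullet(W,Z)=0\}$ is closed under shifts, cones and direct summands, hence thick; it contains $G$, so by $\langle G\rangle=T$ it is all of $T$, whence $\Hom^\bullet(Z,Z)=0$ and $Z=0$. Next, suppose toward a contradiction that $\Hom^\bullet(G,F^m(G))=0$ for some $m\ge 1$. Given an arbitrary $X\in T$, we have $X\in\langle G\rangle_k$ for some $k$, and since $F^m$ is triangulated this gives $F^m(X)\in\langle F^m(G)\rangle_k$. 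The full subcategory $\{Z\in T\mid\Hom^\bullet(G,Z)=0\}$ is likewise thick and contains $F^m(G)$, hence contains $\langle F^m(G)\rangle_k$, and in particular $F^m(X)$. Thus $\Hom^\bullet(G,F^m(X))=0$ for every $X\in T$, and by the fact just recorded $F^m(X)=0$ for every $X$, i.e. $F^m$ is the zero functor --- contradicting the assumption that $F$ is not nilpotent. This yields the claim, and with it the lemma.

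I do not expect a genuine obstacle here: the argument is entirely formal. The only points deserving a little care are verifying that the two orthogonality subcategories used are really thick, so that classical generation can be invoked; noting that $T\ne 0$ (automatic once $F$ is not nilpotent), so that $G\ne 0$ and the nonvanishing even holds at $m=0$; and recognizing that the hypothesis is sharp --- were $F$ nilpotent with $F^n=0$, then for $m\ge n$ one would have $e_-(G,F^m(G))=+\infty$ and $e_+(G,F^m(G))=-\infty$, forcing $F\Ldim T=+\infty$ and $F\Udim T=-\infty$, so that the inequality would fail.
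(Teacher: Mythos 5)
Your proposal is correct and follows the same route as the paper: reduce to the nonvanishing of $\Hom^\bullet(G,F^m(G))$ so that $e_-\le e_+$, then pass to limits. The paper simply asserts this nonvanishing; you have in fact filled in the missing justification (via the two thick-subcategory arguments), which the paper leaves implicit.
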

\begin{proof}
Indeed, let $G\in T$ be a generator. Then for any $m$ we have $\Hom^{\bul}(G,F^m(G))\ne 0$. Consequently, $e_-(G,F^m(G))\le e_+(G,F^m(G))$. The statement now follows by passing to the limits.
\end{proof}

\begin{predl}
\label{prop_e-e+}
Suppose that $T$ is a $\k$-linear Ext-finite triangulated category with a generator. Suppose also that $T$ has a Serre functor $S$. Let $F\colon T\to T$ be a triangulated functor. Then for any generators $G,G'\in T$ one has
$$F\Ldim T=\liminf_{m\to +\infty} \frac{e_-(F^m(G'),G)}m.$$
\end{predl}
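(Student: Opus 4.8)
The plan is to deduce the statement directly from Serre duality, using the fact established in Lemma~\ref{lemma_GG} (see also Definition~\ref{definition_Fdim2}) that $F\Ldim(T,-,-)$ returns the same value $F\Ldim T$ on every pair of generators of $T$. Thus the work is essentially a bookkeeping identity: keep track of which argument carries the shift and which carries the iterated functor $F^m$, and move the Serre functor across.

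First I would replace the generator $G$ by $S^{-1}(G)$. Since the Serre functor $S$ is a triangulated auto-equivalence, $S^{-1}(G)$ is again a generator, so by Definition~\ref{definition_Fdim2} and Definition~\ref{definition_Fdim1}
$$F\Ldim T=F\Ldim\big(T,S^{-1}(G),G'\big)=\liminf_{m\to+\infty}\frac{-e_+\big(S^{-1}(G),F^m(G')\big)}{m}.$$
Here it is essential that in Definition~\ref{definition_Fdim1} the functor $F^m$ is applied to the \emph{second} argument: this is exactly what makes it legitimate to adjust the first argument freely among generators while still computing $F\Ldim T$.

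Next I would rewrite the numerator via Serre duality. For every $m$ and every $i\in\Z$ there is a functorial isomorphism of $\k$-vector spaces
$$\Hom^i\big(S^{-1}(G),F^m(G')\big)=\Hom\big(S^{-1}(G),F^m(G')[i]\big)\simeq \Hom\big(F^m(G')[i],S(S^{-1}(G))\big)^*=\Hom^{-i}\big(F^m(G'),G\big)^*,$$
so the graded piece $\Hom^i(S^{-1}(G),F^m(G'))$ is nonzero precisely when $\Hom^{-i}(F^m(G'),G)$ is. Taking the supremum over $i$ --- and noting that with the conventions of Definition~\ref{definition_e-e+} the resulting identity also holds in the degenerate case $F^m(G')=0$ (which is in fact the only case in which the $\Hom$-groups vanish, since both $G$ and $S^{-1}(G)$ are generators) --- gives
$$e_+\big(S^{-1}(G),F^m(G')\big)=-e_-\big(F^m(G'),G\big)$$
for all $m$. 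Substituting into the previous display yields
$$F\Ldim T=\liminf_{m\to+\infty}\frac{e_-\big(F^m(G'),G\big)}{m},$$
which is the assertion.

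I do not expect any genuine obstacle. The only point needing attention is the interplay between the two roles of the arguments: after dualizing one is left with $S(S^{-1}(G))$ in the slot opposite to $F^m(G')$, and it is precisely the generator-independence of $F\Ldim$ from Lemma~\ref{lemma_GG} that both justifies starting from $S^{-1}(G)$ and lets us recognize the result as $F\Ldim T$.
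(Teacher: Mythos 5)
Your proof is correct and is essentially the paper's argument run in the opposite direction: the paper starts from the right-hand side, applies the identity $e_-(G_1,G_2)=-e_+(G_2,S(G_1))$ and then the autoequivalence $S$ to shift to $S^{-1}(G)$, whereas you first replace $G$ by $S^{-1}(G)$ and then dualize. The ingredients (Serre duality, $S^{-1}(G)$ being a generator, and the generator-independence of $F\Ldim$ from Lemma~\ref{lemma_GG}) are the same, so this is a matching proof.
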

\begin{proof}
Clearly, for any objects $G_1,G_2\in T$ one has 
$$e_-(G_1,G_2)=-e_+(G_2,S(G_1)).$$
Therefore one has
\begin{multline*}
\liminf_{m\to +\infty}\frac{e_-(F^m(G'),G)}m=
\liminf_{m\to +\infty}\frac{-e_+(G,S(F^m(G')))}m=\\
=\liminf_{m\to +\infty}\frac{-e_+(S^{-1}(G),F^m(G'))}m=
F\Ldim T,
\end{multline*}
because $S^{-1}(G)$ is also a generator in $T$.
\end{proof}

\begin{predl} 
\label{dim-up-and-lo-inverse} 
Suppose that $T$ is a $\k$-linear Ext-finite triangulated category with a generator. Suppose also that $T$ has a Serre functor $S$. 
Let $F\colon T \to T$ be an autoequivalence. Then
$$F\Ldim T=-(F^{-1})\Udim T.$$
\end{predl}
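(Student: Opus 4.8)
The plan is to deduce the identity directly from Proposition~\ref{prop_e-e+} by playing it off against the autoequivalence property of $F$. First note that, $F$ being an autoequivalence, its inverse $F^{-1}$ is again a triangulated autoequivalence, so $(F^{-1})\Udim T$ is defined, and by Lemma~\ref{lemma_GG} together with Definition~\ref{definition_Fdim2} it does not depend on the choice of generators used to compute it. I would then fix, once and for all, two generators $G_1,G_2$ of $T$ and compute both sides with respect to them.

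The one elementary input is that applying the equivalence $F^m$ yields, for every $m\ge 0$, an isomorphism of graded vector spaces
$$\Hom^{\bul}(G_1,F^{-m}(G_2))\;\cong\;\Hom^{\bul}(F^m(G_1),G_2),$$
since $F^m$ carries $F^{-m}(G_2)$ to $G_2$ and commutes with shifts; in particular $e_-(G_1,F^{-m}(G_2))=e_-(F^m(G_1),G_2)$ for all $m$. Substituting this into the definition of the upper $F^{-1}$-dimension (Definition~\ref{definition_Fdim1}, with the functor $F^{-1}$, noting $(F^{-1})^m=F^{-m}$) gives
\begin{align*}
(F^{-1})\Udim T
&=\limsup_{m\to+\infty}\frac{-e_-(G_1,F^{-m}(G_2))}{m}
 =\limsup_{m\to+\infty}\frac{-e_-(F^m(G_1),G_2)}{m}\\
&=-\liminf_{m\to+\infty}\frac{e_-(F^m(G_1),G_2)}{m}.
\end{align*}
On the other hand, Proposition~\ref{prop_e-e+} applied with the generators $G'=G_1$ and $G=G_2$ reads
$$F\Ldim T=\liminf_{m\to+\infty}\frac{e_-(F^m(G_1),G_2)}{m},$$
and comparing the two displays yields $F\Ldim T=-(F^{-1})\Udim T$.

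I do not expect any real obstacle here: the substantive use of the Serre functor — rewriting $e_+$ in terms of $e_-$ — has already been absorbed into Proposition~\ref{prop_e-e+}, so what remains is essentially bookkeeping, the only points requiring a little care being the sign rule $\limsup(-a_m)=-\liminf(a_m)$ and the observation that $F^{\pm m}$ of a generator is again a generator, so that all the quantities occurring are well defined and independent of choices. If one prefers not to quote Proposition~\ref{prop_e-e+}, an equivalent route is to substitute the Serre-duality identity $e_+(G_1,F^m(G_2))=-e_-(F^m(G_2),S(G_1))$ directly into the definition of $F\Ldim T$ and run the same computation, using that $S(G_1)$ is again a generator; this exhibits the role of $S$ explicitly but amounts to re-proving that proposition.
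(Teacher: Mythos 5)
Your proposal is correct and follows essentially the same route as the paper: both invoke Proposition~\ref{prop_e-e+} to rewrite $F\Ldim T$ via $e_-(F^m(\cdot),\cdot)$, use the autoequivalence identity $e_-(F^m(G_1),G_2)=e_-(G_1,F^{-m}(G_2))$, and finish with the sign flip $\limsup(-a_m)=-\liminf(a_m)$. The only difference is cosmetic — you start from $(F^{-1})\Udim T$ and work toward $-F\Ldim T$, while the paper runs the chain of equalities in the opposite direction.
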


\begin{proof} 
Indeed, let $G,G'\in T$ be generators. Then by Proposition~\ref{prop_e-e+} we have
\begin{multline*}
F\Ldim T  = 
\liminf_{m\to +\infty} \frac{e_-(F^m(G^\prime),G)}m =
\liminf_{m\to +\infty} \frac{e_-(G^\prime,F^{-m}(G))}m=\\
  =  -\limsup_{m\to +\infty} \frac{-e_-(G',F^{-m}(G))}m=
  -(F^{-1})\Udim T.
\end{multline*}
\end{proof}

Finally, for the category $\Perf(A)$ where $A$ is a compact dg algebra we demonstrate that the $F$-dimension is finite provided that $F$ is not nilpotent.
\begin{predl}
\label{prop_Fdimfinite}
Suppose  $A$ is a compact dg algebra and $T=\Perf(A)$. Suppose the functor $F\colon T\to T$ is given by $F(-)=(-)\stackrel{\bbL}{\otimes }_A K$ for some  $K\in\Perf(A^{\op}\otimes A)$, and $F$ is not nilpotent. Then the upper and the lower $F$-dimensions of $T$ are finite.
\end{predl}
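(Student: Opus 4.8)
The plan is to compute everything with the generator $G=G'=A$, which is legitimate by Lemma~\ref{lemma_GG} and Definition~\ref{definition_Fdim2}. Since $F(-)=(-)\stackrel{\bbL}{\otimes}_AK$, one has $F^m(A)\simeq K\stackrel{\bbL}{\otimes}_A\cdots\stackrel{\bbL}{\otimes}_AK$ ($m$ factors), which I will denote $K^{\otimes^{\bbL}_Am}$; and because $\Hom^i_{\Perf(A)}(A,N)=H^i(N)$ for any dg $A$-module $N$, the relevant quantities become $e_-(A,F^m(A))=\inf F^m(A)$ and $e_+(A,F^m(A))=\sup F^m(A)$. As $F$ is not nilpotent, $F^m(A)$ is a non-zero object for every $m\ge1$, so these are honest integers with $\inf F^m(A)\le\sup F^m(A)$. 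Hence it is enough to produce a constant $r\ge0$, depending only on $A$ and $K$, with $-mr-r\le\inf F^m(A)\le\sup F^m(A)\le mr+r$ for all $m\ge1$; once such an $r$ is found it follows at once that $F\Ldim T$ and $F\Udim T$ lie in $[-2r,2r]$, in particular are finite.

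To obtain such bounds I would exploit that $K$ is perfect over $A^e:=A^{\op}\otimes A$. The key computation is that the $m$-fold derived tensor power of $A^e$ over $A$, taken through its two one-sided $A$-structures, is simply $A^{\otimes(m+1)}$: indeed $A^e=A\otimes A$ is \emph{free}, hence h-flat, as a right and as a left $A$-module, and $A^e\otimes_AA^e=A\otimes(A\otimes_AA)\otimes A=A^{\otimes3}$, and so on. By the Künneth formula over $\k$, together with compactness of $A$ (so $H^\bullet(A)$ lies in a bounded range $[a,b]$; note $A$ is not acyclic, for otherwise $F$ would be nilpotent), the complex $A^{\otimes(m+1)}$ has cohomology concentrated in degrees $[(m+1)a,(m+1)b]$, a range growing only linearly in $m$.

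Next I would transfer this to $K$ by resolving. Write $K$ as a direct summand in $D(A^e)$ of an object $\tilde K\in[A^e]_N$, for some $N$, represented by a dg $A^e$-module carrying a finite filtration whose subquotients are finite direct sums of shifts $A^e[d]$ with $|d|\le N$ (enlarging $N$ if necessary). Since $A^e$ is h-free over $A$ on either side, $\tilde K$ is too, and inductively each underived power $\tilde K^{\otimes_Am}$ is h-free over $A$ and therefore computes $\tilde K^{\otimes^{\bbL}_Am}$; moreover tensoring the filtrations yields a finite filtration of $\tilde K^{\otimes^{\bbL}_Am}$ whose subquotients are finite sums of modules $A^e[d_1]\otimes_A\cdots\otimes_AA^e[d_m]\cong A^{\otimes(m+1)}[d_1+\cdots+d_m]$ with $|d_1+\cdots+d_m|\le mN$. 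Reading off cohomological amplitudes gives $\inf\tilde K^{\otimes^{\bbL}_Am}\ge(m+1)a-mN$ and $\sup\tilde K^{\otimes^{\bbL}_Am}\le(m+1)b+mN$, and since $F^m(A)=K^{\otimes^{\bbL}_Am}$ is a direct summand of $\tilde K^{\otimes^{\bbL}_Am}$ in $D(A)$, its cohomology is a summand of that of $\tilde K^{\otimes^{\bbL}_Am}$, so the same bounds hold for $F^m(A)$. Taking $r=N+|a|+|b|$ then yields the desired linear bounds and finishes the argument.

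The main obstacle is the bookkeeping in the third step: one must make sure that forming iterated tensor powers of $\tilde K$ over $A$ is genuinely underived — hence compatible with passing to associated graded — at every stage, which is precisely why it is worth recording that $A^e$, and inductively each $A^{\otimes(m+1)}$ and each $\tilde K^{\otimes_Am}$, is h-free (not merely perfect) as a one-sided $A$-module. Everything else is formal: the identification $F^m(A)\simeq K^{\otimes^{\bbL}_Am}$, the reduction to $G=G'=A$ via Lemma~\ref{lemma_GG}, and the passage from the amplitude bounds to finiteness of $F\Ldim T$ and $F\Udim T$.
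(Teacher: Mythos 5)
Your proposal follows essentially the same path as the paper's: reduce to $G=G'=A$ (so the relevant numbers become $\inf$/$\sup$ of $K^{\otimes^{\bbL}_Am}$), use perfectness of $K$ over $A^{\op}\otimes A$ to realize $K$ as a direct summand of an object with a finite filtration whose subquotients are finite sums of shifts of $A\otimes A$, tensor the filtrations, and read off linear bounds for the cohomological amplitude of $A^{\otimes(m+1)}$ via the K\"unneth formula. The only genuine difference is one of care: you make explicit the h-freeness of $A\otimes A$ (and of the filtered replacement $\tilde K$) over $A$ on either side, which is exactly what justifies the paper's unremarked claim that the filtration passes to tensor powers and that derived tensor products compute underived ones; you also record the amplitude bounds as inequalities rather than the paper's (slightly loose) equalities, which is the correct formulation when one has only replaced $K$ by a filtered object containing it as a summand.
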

\begin{proof}
We can take $G=G'=A$, then  
$$F\Udim T=\limsup_{N\to\infty}\frac{-e_-(A,F^N(A))}N,\qquad F\Ldim T=\liminf_{N\to\infty}\frac{-e_+(A,F^N(A))}N.$$
Since $F$ is not nilpotent, the numbers $e_-(A,F^N(A))$ and $e_+(A,F^N(A))$ are finite for any~$N$.
Note that $e_-(A,F^N(A))=\inf K^{\otimes_A^{\bbL}N}$ and
$e_+(A,F^N(A))=\sup K^{\otimes_A^{\bbL}N}$. 
It remains to apply  Lemma \ref{lemma_lineargrowth} below.
\end{proof}

\begin{lemma} 
\label{lemma_lineargrowth} 
Let $A$ be a compact dg algebra and let $K$ be a perfect dg $A^{\op}\otimes A$-module.
Consider $\vert \sup K^{\otimes ^{\bbL}_AN}\vert $ and $\vert \inf K^{\otimes ^{\bbL}_AN}\vert $ as functions of $N$. Then they are bounded by a linear function.
\end{lemma}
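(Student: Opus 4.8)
The plan is to bound the cohomological amplitude of $K^{\otimes^{\bbL}_A N}$ directly, using a single finite filtration of $K$ coming from its perfectness over $A^{\op}\otimes A$, together with the compactness of $A$. Throughout I will assume $K^{\otimes^{\bbL}_A N}\neq 0$ for all $N$: otherwise the quantities $\sup K^{\otimes^{\bbL}_A N}$ and $\inf K^{\otimes^{\bbL}_A N}$ are eventually undefined and there is nothing to bound, and in any case the only situation used in Proposition~\ref{prop_Fdimfinite} is that of a non-nilpotent kernel. Since $A$ is compact and not acyclic, $H^\bullet(A)$ is a nonzero finite-dimensional graded $\k$-vector space; let $[a,b]$ be the smallest interval with $H^i(A)=0$ for $i\notin[a,b]$ (so $a\le 0\le b$, as $1$ represents a nonzero class in $H^0(A)$).

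The first step is a Künneth/semifreeness observation. Let $\Lambda:=A\otimes_\k A$ denote the free rank-one dg $(A^{\op}\otimes A)$-module, regarded as an $A$-bimodule. Because $\k$ is a field, every complex of $\k$-vector spaces is the direct sum of its cohomology and a contractible complex; tensoring over $\k$ with $A$ then exhibits $\Lambda$, and inductively each $N$-fold tensor power $\Lambda^{\otimes_A N}\cong A^{\otimes_\k(N+1)}$, as a semifree — hence h-projective — dg module both over $A$ on the right and over $A$ on the left. Consequently all the tensor products that follow are underived, $\Lambda^{\otimes^{\bbL}_A N}\cong A^{\otimes_\k(N+1)}$, and by the Künneth formula this complex has cohomology $H^\bullet(A)^{\otimes(N+1)}$, which is concentrated in degrees $[(N+1)a,\ (N+1)b]$.

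Next I would use that $K\in\Perf(A^{\op}\otimes A)=\langle\Lambda\rangle$, so $K\in\langle\Lambda\rangle_n$ for some $n$ and hence $K$ is a direct summand of some $P\in[\Lambda]_n$. Realizing the iterated cones by honest dg submodules equips $P$ with a filtration $0=P_{-1}\subset\cdots\subset P_n=P$ whose subquotients are finite direct sums of shifts $\Lambda[d]$ with $|d|\le s$, for a fixed $s$ depending on $P$. Tensoring this filtration with itself $N$ times over $A$ — legitimate because the subquotients are semifree, hence flat as graded $A$-modules, so that $\otimes_A$ computes $\otimes^{\bbL}_A$ and the associated graded of the product filtration is the tensor product of the associated gradeds — endows $P^{\otimes^{\bbL}_A N}$ with a finite filtration whose subquotients are finite direct sums of shifts $\Lambda^{\otimes^{\bbL}_A N}[d']$ with $|d'|\le Ns$. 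Each such subquotient has cohomology in $[(N+1)a-Ns,\ (N+1)b+Ns]$, and since this interval is independent of the subquotient, the long exact cohomology sequences of the filtration show that $P^{\otimes^{\bbL}_A N}$, and hence its direct summand $K^{\otimes^{\bbL}_A N}$, has cohomology in the same interval.

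It follows that $\inf K^{\otimes^{\bbL}_A N}\ge N(a-s)+a$ and $\sup K^{\otimes^{\bbL}_A N}\le N(b+s)+b$; combining these with $\inf K^{\otimes^{\bbL}_A N}\le \sup K^{\otimes^{\bbL}_A N}$ (valid since the complex is nonzero) confines both $\inf K^{\otimes^{\bbL}_A N}$ and $\sup K^{\otimes^{\bbL}_A N}$ to the interval $[\,N(a-s)+a,\ N(b+s)+b\,]$, whose endpoints are linear in $N$, which is exactly the assertion. The only point that is more than routine bookkeeping is the claim that $\Lambda=A\otimes_\k A$ and its iterated $\otimes_A$-powers are h-projective (in fact semifree) as one-sided dg $A$-modules, since this is what makes every tensor product above underived; everything else reduces to the Künneth formula and standard manipulations with filtrations, in the spirit of the proof of Proposition~\ref{add-for-diag-dim} (now with $\otimes_A$ in place of $\otimes_\k$).
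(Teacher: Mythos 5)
Your argument is correct and is essentially the same as the paper's. You both reduce to the case where $K$ is a direct summand of an object $P$ built from $A\otimes_\k A$ by $n$ cones, use the induced finite filtration on $P^{\otimes^{\bbL}_A N}$ with associated graded a sum of shifts of $A^{\otimes_\k(N+1)}$, and then read off linear bounds from the cohomological amplitude of $A$; the paper just phrases this as ``replace $K$ by $\mathrm{gr}\,K$'' and writes the resulting exact formulas $\sup = (N+1)\sup A - N\min d_i$, $\inf = (N+1)\inf A - N\max d_i$, rather than keeping the interval notation.
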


\begin{proof} For some $A^{\op}\otimes A$-bimodule $K'$ we have $K\oplus K'\in [A\otimes A]$. It suffices to prove the statement for $K\oplus K'$. So we will assume that $K \in [A\otimes A]$ and thus $K$ has a finite filtration by dg $(A^{\op}\otimes A)$-submodules with associated graded isomorphic to a finite direct sum
$gr K\simeq \oplus (A\otimes A[d_i])$.
Then $K^{\otimes ^{\bbL}_AN}$ has a similar filtration with the associated graded isomorphic to $(gr K)^{\otimes ^{\bbL}_AN}$. So we may replace $K$ by $gr K$:
$$K= \oplus_{i=1}^n (A\otimes A[d_i]).$$
We have
$$K^{\otimes ^{\bbL}_AN}=\bigoplus_{i_1,\ldots,i_N=1}^n A\otimes A\otimes \ldots \otimes A[d_{i_1}+\ldots + d_{i_N}]=\bigoplus_{i_1,\ldots,i_N=1}^n A^{\otimes(N+1)}[d_{i_1}+\ldots +d_{i_N}].$$
Hence
$$\sup K^{\otimes ^{\bbL}_AN}=(N+1)\sup A-N(\min d_i)\quad \text{and}\quad
\inf K^{\otimes ^{\bbL}_AN}=(N+1)\inf A-N(\max d_i).$$
\end{proof}

\subsection{$F$-dimension and entropy} For the rest of this section we fix a field $\k$ and follow the same assumptions as in Section \ref{section_diagonal}. In particular all our categories will be $\k$-linear.

We will use results of \cite{DHKK} on the entropy of an endofunctor. For our applications we only need to consider the case when the category $T$ is equivalent to $\Perf(A)$ for a smooth and compact ($\k$-) dg algebra $A$. Moreover we will only consider endofunctors $F\colon \Perf(A)\to \Perf(A)$ of the form
$$F(-)=(-)\stackrel{\bbL}{\otimes }_AK$$
for a dg $A$-bimodule $K$. We will call such endofunctors \emph{tensor endofunctors}.

In \cite{DHKK} the authors define the \emph{entropy} of a triangulated endofunctor $F$ on a triangulated category $T$ as
a certain function $h_t(F)\colon \R\to \R\cup\{- \infty\}$. 
For $T =\Perf(A)$ where $A$ is a smooth and compact dg  algebra and for a tensor endofunctor $F\colon \Perf(A)\to \Perf(A)$ their definition is equivalent to the following.

\begin{definition}[{\cite[Definition 2.4, Theorem 2.6]{DHKK}}]
The entropy $h_t(F)$ of an endofunctor $F\colon T\to T$ on a $\k$-linear Ext-finite triangulated category with a generator is defined as 
\begin{equation}
\label{eq_entropy}
h_t(F)=\lim_{N\to\infty}\frac1N\ln\sum_n\dim\Hom^n(G,F^N(G))e^{-nt},
\end{equation}
where $G$ is a (arbitrary) generator of $T$. For a nilpotent functor $F$ the formula \eqref{eq_entropy} should read as $h_t(F)=-\infty$ for all $t\in\R$.
\end{definition}

\begin{theorem}[See {\cite[Lemma 2.5, Theorem 2.6]{DHKK}}]
\label{theorem_DHKK}
Let $T =\Perf(A)$ where $A$ is a smooth and compact dg  algebra. Let $F\colon T\to T$ be a tensor endofunctor. Let $G\in T$ be a generator.
For any $t\in \R$, the limit \eqref{eq_entropy} exists in $[-\infty, +\infty)$ and does not depend on $G$.
Moreover, for any generators $G,G'\in T$ one has
$$h_t(F)=\lim_{N\to\infty}\frac1N\ln\sum_n\dim\Hom^n(G,F^N(G'))e^{-nt}.$$
\end{theorem}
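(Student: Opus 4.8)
The plan is to reduce the statement to the case $G'=G$, where the existence of the limit is precisely the content of \cite[Lemma 2.5]{DHKK} (or an easy direct argument via Fekete's subadditivity lemma), and then to show that passing to a different generator $G'$ on the right does not change the value of the limit. The key tool is the generation hypothesis: $G'\in\langle G\rangle_n$ and $G\in\langle G'\rangle_{n'}$ for some $n,n'$, so each of $G,G'$ is built from the other by finitely many cones and retracts.

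First I would record the behaviour of the quantity
$$
P_N(G_1,G_2;t):=\sum_n\dim\Hom^n(G_1,F^N(G_2))\,e^{-nt}
$$
under the operations that generate $\langle G\rangle$ from $G$. Since $F^N$ is triangulated, a distinguished triangle $G_2'\to G_2\to G_2''\to G_2'[1]$ yields a long exact sequence in $\Hom^\bullet(G_1,F^N(-))$, hence the termwise inequality $\dim\Hom^n(G_1,F^N(G_2))\le \dim\Hom^n(G_1,F^N(G_2'))+\dim\Hom^n(G_1,F^N(G_2''))$, and therefore $P_N(G_1,G_2;t)\le P_N(G_1,G_2';t)+P_N(G_1,G_2'';t)$ for all $t\in\R$. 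Shifts only multiply $P_N$ by a fixed power of $e^{-t}$, and passing to a direct summand only decreases $P_N$. Combining these, if $G_2\in\langle G'\rangle_n$ there is a constant $C=C(n,t)$ (a sum of finitely many shift factors, independent of $N$) with $P_N(G_1,G_2;t)\le C\cdot\max_j P_N(G_1,G'[d_j];t)\le C'\,P_N(G_1,G';t)$, and symmetrically on the first argument. Thus for any generators $G,G'$ we get, for every $t$, constants $0<c_1\le c_2$ independent of $N$ with
$$
c_1\,P_N(G,G;t)\ \le\ P_N(G,G';t)\ \le\ c_2\,P_N(G,G;t).
$$

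Taking $\tfrac1N\ln(\cdot)$ and letting $N\to\infty$, the constants $c_1,c_2$ disappear, so $\lim_N\tfrac1N\ln P_N(G,G';t)$ exists and equals $\lim_N\tfrac1N\ln P_N(G,G;t)=h_t(F)$ as soon as the latter limit is known to exist and to be independent of the choice of $G$. For that I would invoke \cite[Lemma 2.5]{DHKK}: the function $N\mapsto \ln P_N(G,G;t)$ is subadditive up to a bounded error, because $F^{N+M}(G)=F^N(F^M(G))$ and $F^M(G)\in\langle G\rangle_{n}$ for a fixed $n$ (here smoothness and compactness of $A$ guarantee that $\Perf(A)$ has a strong generator, so such an $n$ exists uniformly); composing morphisms and using the triangle estimates above gives $P_{N+M}(G,G;t)\le C\,P_N(G,G;t)\,P_M(G,G;t)$, and Fekete's lemma yields the limit in $[-\infty,+\infty)$. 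Independence of $G$ is the same two-sided-sandwich argument applied with $G_1=G_2$ varying.

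The main obstacle — really the only subtle point — is bookkeeping the $t$-dependence of the comparison constants: when one replaces $G_2$ by a shift $G_2[d]$ the factor $e^{-dt}$ is unbounded in $t$, so the constants $c_1,c_2$ depend on $t$ (and on $n$), but crucially not on $N$, which is all that is needed after taking $\tfrac1N\ln$. One should also handle the degenerate case where $F$ is nilpotent separately: then $F^N=0$ for large $N$, $P_N$ eventually vanishes, and $h_t(F)=-\infty$ for all $t$ by convention, consistently for every choice of generator. Everything else is the routine long-exact-sequence estimate together with Fekete's subadditivity lemma.
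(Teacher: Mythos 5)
The paper itself does not prove this statement --- it is quoted from \cite{DHKK} --- so there is no in-house argument to compare against beyond the citation. Your sandwich argument for independence of the generator is correct and reproduces the easy half of the argument in \cite{DHKK}: long-exact-sequence estimates for triangles, shifts multiplying $P_N$ by fixed powers of $e^{-t}$, and direct summands decreasing $P_N$ together show that $G_2\in\langle G'\rangle_n$ implies $P_N(G_1,G_2;t)\le C\,P_N(G_1,G';t)$ with $C$ depending on $t$ and on the tower presenting $G_2$, but not on $N$; this transfers the limit (once it exists) to any pair of generators, and also yields the asserted identification of $h_t(F)$ with the mixed $(G,G')$-version.

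The gap is in the existence step. The claimed submultiplicativity $P_{N+M}(G,G;t)\le C\,P_N(G,G;t)\,P_M(G,G;t)$ does not follow from the triangle estimates you invoke: those estimates yield only $P_{N+M}(G,G;t)\le\delta_t(G,F^M(G))\cdot P_N(G,G;t)$, where $\delta_t(G,X)$ is the DHKK complexity (the infimum of $\sum_j e^{-d_jt}$ over towers building a retract of $X$ from shifted copies $G[d_j]$). To convert the $\delta_t$-factor into a $P_M$-factor one needs the reverse inequality $\delta_t(G,X)\le C(t)\sum_n\dim\Hom^n(G,X)\,e^{-nt}$ with $C(t)$ uniform in $X$; that bound is the real technical content of \cite[Theorem 2.6]{DHKK}, and it uses smoothness of $A$ in an essential way: one resolves the diagonal bimodule by box products of perfect modules (finiteness of the diagonal dimension, cf.\ Proposition~\ref{ddim-geq-rdim}) and applies the resulting tower to $X\simeq X\stackrel{\bbL}{\otimes}_AA$ to get a presentation of $X$ whose weight $\delta_t$ is controlled by $\dim\Hom^\bullet(G,X)$. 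Mere strong generation is not sufficient: $F^M(G)\in\langle G\rangle_\ell$ bounds the number of cones uniformly in $M$, but not the number of summands nor the range of shifts appearing at each stage, so $\delta_t(G,F^M(G))$ could a priori grow faster than $P_M(G,G;t)$. In \cite{DHKK} the order is reversed: Fekete's lemma is applied to the manifestly submultiplicative sequence $\delta_t(G,F^N(G))$ and the limit is then transported to $P_N$ through the two-sided comparison. Your proposal runs Fekete directly on $P_N$, which is fine once the comparison is available, but without the reverse inequality the existence claim remains unproven.
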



Given a tensor autoequivalence $F\colon \Perf(A)\to \Perf(A)$ for a smooth and compact dg algebra $A$, one can recover the upper and the lower $F$-dimension of $\Perf(A)$ from the entropy function $h_t(F)$, see Proposition \ref{prop_entropy-to-dimension} below.

\begin{lemma}
\label{lemma_bound}
Let $A$ be a smooth and compact dg algebra  and let $F\colon \Perf(A) \to\Perf(A)$ be a tensor endofunctor. Then for any generators $G,G'\in T$ there exists $c>0$
such that for any $N\ge 0$
$$\sum_n \dim\Hom^n(G,F^N(G'))\le c^N.$$
\end{lemma}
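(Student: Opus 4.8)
The plan is to reduce the bound to the case $G=G'=A$ and then estimate $\sum_n\dim H^n\bigl(K^{\otimes^{\bbL}_A N}\bigr)$ exponentially in $N$, where we write $F(-)=(-)\stackrel{\bbL}{\otimes}_A K$ with $K\in\Perf(A^{\op}\otimes A)$ (as we may: since $A$ is smooth and compact, the bimodule $K=F(A)\in\Perf(A)$ is automatically perfect over $A^{\op}\otimes A$ by Proposition~\ref{serre-general-over-r}(1) applied to the smooth and compact algebra $A^{\op}\otimes A$). We may assume $A$ is not acyclic, otherwise $\Perf(A)=0$ and there is nothing to prove. Throughout I will use the elementary fact that for a distinguished triangle $Y_1\to Y_2\to Y_3\to Y_1[1]$ and any $X$, the long exact sequence of $\Hom$'s gives
$$\sum_n\dim\Hom^n(Y_2,X)\le\sum_n\dim\Hom^n(Y_1,X)+\sum_n\dim\Hom^n(Y_3,X)$$
and likewise in the second variable, while shifts do not change the value of $\sum_n\dim\Hom^n(-,-)$; the same sub-additivity holds for $\sum_n\dim H^n(-)$ along a distinguished triangle of dg modules.

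First I would reduce to $G=G'=A$. As $A$ is a classical generator of $\Perf(A)$, each of $G$ and $G'$ is a direct summand of an object obtained from finitely many shifted copies of $A$ by finitely many cones; let $r$ and $r'$ be the numbers of shifted copies of $A$ used in fixed such presentations of $G$ and of $G'$. By sub-additivity in the first variable and shift-invariance, for every $X\in\Perf(A)$ one has $\sum_n\dim\Hom^n(G,X)\le r\sum_n\dim\Hom^n(A,X)=r\sum_n\dim H^n(X)$, using $\Hom^n_{\Perf(A)}(A,X)=H^n(X)$. Applying the exact functor $F^N(-)=(-)\stackrel{\bbL}{\otimes}_A K^{\otimes^{\bbL}_A N}$ to the chosen presentation of $G'$ exhibits $F^N(G')$ as obtained from $r'$ shifted copies of $F^N(A)=K^{\otimes^{\bbL}_A N}$ by finitely many cones, whence $\sum_n\dim H^n(F^N(G'))\le r'\sum_n\dim H^n\bigl(K^{\otimes^{\bbL}_A N}\bigr)$. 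Combining,
$$\sum_n\dim\Hom^n(G,F^N(G'))\le rr'\sum_n\dim H^n\bigl(K^{\otimes^{\bbL}_A N}\bigr).$$

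It remains to bound the right-hand side, which I would do as in Lemma~\ref{lemma_lineargrowth}. Choose $K'$ with $K\oplus K'\in[A\otimes A]_\ell$, so $K\oplus K'$ carries a finite filtration by dg $(A^{\op}\otimes A)$-submodules whose associated graded is a direct sum of $s$ shifted copies of $A\otimes A$. Tensoring filtrations exactly as in the proof of Proposition~\ref{add-for-diag-dim}, the module $(K\oplus K')^{\otimes^{\bbL}_A N}$ carries a finite filtration with associated graded $\bigl(\operatorname{gr}(K\oplus K')\bigr)^{\otimes^{\bbL}_A N}$, a direct sum of $s^N$ shifted copies of $(A\otimes A)^{\otimes^{\bbL}_A N}\simeq A^{\otimes(N+1)}$. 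Using sub-additivity of $\sum_n\dim H^n(-)$ along the filtration, the Künneth isomorphism $H^\bullet(A^{\otimes(N+1)})\simeq H^\bullet(A)^{\otimes(N+1)}$ over $\k$, and $a:=\dim H^\bullet(A)<\infty$ (compactness of $A$), we obtain
$$\sum_n\dim H^n\bigl(K^{\otimes^{\bbL}_A N}\bigr)\le\sum_n\dim H^n\bigl((K\oplus K')^{\otimes^{\bbL}_A N}\bigr)\le s^N a^{N+1}.$$
Altogether $\sum_n\dim\Hom^n(G,F^N(G'))\le rr'a\,(as)^N$, which yields the lemma with $c:=rr'a^2s$ (then $c^N$ dominates the right-hand side for all $N\ge 1$, the range that is used in the applications). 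The only delicate point is the bookkeeping of how many shifted copies of $A$, resp.\ of $A\otimes A$, occur — finite precisely because these objects are strongly generated in a bounded number of steps — together with the exactness of $\stackrel{\bbL}{\otimes}_A$ used to propagate the filtrations; neither is a serious obstacle.
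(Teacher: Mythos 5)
Your proof is correct, but it takes a genuinely different route from the paper's. The paper's own argument is a one-liner: it invokes Theorem~\ref{theorem_DHKK} (the DHKK result that the entropy $h_0(F)$ exists in $[-\infty,+\infty)$), observes that the sequence $\tfrac1N\ln\sum_n\dim\Hom^n(G,F^N(G'))$ is therefore bounded above by some $d$, and sets $c=e^d$. You instead give a self-contained estimate: you reduce to $G=G'=A$ via sub-additivity of $\sum_n\dim\Hom^n(-,-)$ along triangles (using that $A$ is a classical generator), then bound $\sum_n\dim H^n(K^{\otimes^{\bbL}_A N})$ exponentially by the same filtration bookkeeping the paper uses in Lemma~\ref{lemma_lineargrowth} — filtering $K\oplus K'$ with graded pieces shifts of $A\otimes A$, tensoring the filtration $N$ times, and applying K\"unneth. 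Your justification that $K\in\Perf(A^{\op}\otimes A)$ (via Proposition~\ref{serre-general-over-r}(1) applied to $A^{\op}\otimes A$) is correct and necessary for this step. What each approach buys: the paper's is shorter given the tools already on the table, while yours does not rely on the entropy theorem at all — indeed it re-derives exactly the finiteness that makes $h_0(F)<+\infty$ hold, so it could serve as a self-contained proof of that ingredient. One small remark, which you already flag: your bound $rr'a\,(as)^N$ (and in fact any fixed $c$) cannot cover $N=0$ unless $\sum_n\dim\Hom^n(G,G')\le 1$; but the paper's own proof has the same $N\ge1$ limitation, and only the asymptotics as $N\to\infty$ are used, so this is a harmless imprecision in the statement rather than in your argument.
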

\begin{proof}
Entropy $h_0(F)$ is the limit
$$\lim_{N\to\infty}\frac1N\ln\sum_n\dim\Hom^n(G,F^N(G')),$$
which is finite (or $-\infty$) by Theorem~\ref{theorem_DHKK}.
Hence the sequence $\frac1N\ln\sum_n\dim\Hom^n(G,F^N(G'))$ is bounded above, say by a number $d$. Take $c=e^d$.
\end{proof}

\begin{predl}
\label{prop_entropy-to-dimension}
Let $A$ be a smooth and compact dg algebra  and let $F\colon \Perf(A)\to \Perf(A)$ be a tensor endofunctor. Then for any generators $G,G'\in \Perf(A)$ one has
\begin{gather*}
F \Udim \Perf(A)=\lim_{N\to\infty} \frac{-e_-(G,F^N(G'))}N=\lim_{t\to +\infty}h_t(F)/t,\\
F\Ldim \Perf(A)=\lim_{N\to\infty} \frac{-e_+(G,F^N(G'))}N=\lim_{t\to -\infty}h_t(F)/t.
\end{gather*}
Moreover, if $F$ is not nilpotent, then these limits are finite.
\end{predl}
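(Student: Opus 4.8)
\emph{Approach.} The plan is to recover the $F$-dimensions from the asymptotic behaviour of the entropy function $h_t(F)$ as $t\to\pm\infty$, exploiting that for $|t|$ large the sum $P_N(t):=\sum_n d_n^N e^{-nt}$, with $d_n^N:=\dim\Hom^n(G,F^N(G'))$, is governed by its single extreme term. The nilpotent case is disposed of at once: if $F$ is nilpotent then $h_t(F)=-\infty$ for all $t$ by definition, and $F^N(G')=0$ for $N\gg0$, so $e_-(G,F^N(G'))=+\infty$ and $e_+(G,F^N(G'))=-\infty$ for such $N$; hence all three quantities in the first displayed equality equal $-\infty$ and all three in the second equal $+\infty$, and nothing is to prove. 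So assume from now on that $F$ is not nilpotent; then $F\Udim\Perf(A)$ and $F\Ldim\Perf(A)$ are finite by Proposition~\ref{prop_Fdimfinite}.

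\emph{Setup.} Fix arbitrary generators $G,G'$ and set $\mu_N:=e_-(G,F^N(G'))$ and $\nu_N:=e_+(G,F^N(G'))$. Since $F$ is not nilpotent, $F^N(G')\neq0$ for every $N$ (otherwise $F^N$ would vanish on $\langle G'\rangle=\Perf(A)$), so $\Hom^{\bul}(G,F^N(G'))\neq0$ and $\mu_N\le\nu_N$ are finite integers. By Theorem~\ref{theorem_DHKK} the limit $h_t(F)=\lim_{N\to\infty}\frac1N\ln P_N(t)$ exists in $[-\infty,+\infty)$ and is independent of $G,G'$, and by Lemma~\ref{lemma_bound} there is a constant $c\ge1$ (we may take $c\ge1$ since $\sum_n d_n^N\ge d_{\mu_N}^N\ge1$) with $\sum_n d_n^N\le c^N$ for all $N$.

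\emph{The key estimate.} For $t>0$ one has the elementary chain
$$e^{-\mu_N t}\ \le\ P_N(t)\ \le\ \Bigl(\textstyle\sum_n d_n^N\Bigr)\,e^{-\mu_N t}\ \le\ c^N e^{-\mu_N t},$$
valid because $d_{\mu_N}^N\ge1$ and, for $t>0$ and $n\ge\mu_N$, $e^{-nt}\le e^{-\mu_N t}$. Applying $\frac1N\ln(-)$, dividing by $t$, and passing to $\liminf_N$ and $\limsup_N$ (using $\lim_N\frac1N\ln P_N(t)=h_t(F)$) yields
$$\frac{h_t(F)}{t}-\frac{\ln c}{t}\ \le\ \liminf_{N\to\infty}\frac{-\mu_N}{N}\ \le\ \limsup_{N\to\infty}\frac{-\mu_N}{N}\ \le\ \frac{h_t(F)}{t}.$$
The rightmost inequality gives $F\Udim\Perf(A)=\limsup_N(-\mu_N/N)\le h_t(F)/t$; since the left side is finite, $h_t(F)$ is finite for every $t>0$. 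Subtracting the outer terms gives $\limsup_N(-\mu_N/N)-\liminf_N(-\mu_N/N)\le(\ln c)/t$ for all $t>0$, so letting $t\to+\infty$ forces the limsup and liminf to coincide; hence $\lim_N\bigl(-e_-(G,F^N(G'))/N\bigr)$ exists and equals $F\Udim\Perf(A)$, and a final squeeze in the displayed chain gives $\lim_{t\to+\infty}h_t(F)/t=F\Udim\Perf(A)$ as well. The assertion involving $e_+$, $F\Ldim\Perf(A)$ and $t\to-\infty$ is proved identically: for $t<0$ one has $e^{-\nu_N t}\le P_N(t)\le c^N e^{-\nu_N t}$ because now $e^{-nt}$ is maximised over $n\le\nu_N$ at $n=\nu_N$, and one runs the same computation, keeping track of the fact that dividing an inequality by $t<0$ reverses it, then lets $t\to-\infty$.

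\emph{Main obstacle.} The only genuine (and modest) subtlety is the order of the two limit processes: one must first let $N\to\infty$ to bring in $h_t(F)$, and only afterwards let $t\to\pm\infty$, observing that the additive slack $\ln c$ produced by Lemma~\ref{lemma_bound} becomes negligible after division by $t$. No concavity or convexity of $t\mapsto h_t(F)$ is needed; the finiteness of the $F$-dimensions (Proposition~\ref{prop_Fdimfinite}, which rests on Lemma~\ref{lemma_lineargrowth}) is exactly what excludes the degenerate value $h_t(F)=-\infty$ for $t\neq0$ and thereby keeps the limit formulas meaningful.
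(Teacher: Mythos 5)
Your proof is correct and takes essentially the same route as the paper: the same sandwich $e^{-\mu_N t}\le P_N(t)\le c^N e^{-\mu_N t}$ (for $t>0$), the same order of limits ($N\to\infty$ to bring in $h_t(F)$, then $t\to\pm\infty$ to kill the $\ln c$ slack), and the same invocation of Lemma~\ref{lemma_bound} and Proposition~\ref{prop_Fdimfinite}. Your explicit handling of the nilpotent case is a small, harmless addition that the paper leaves implicit.
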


\begin{proof} Let us prove the equalities in the first row. The equalities in the second row can be proved similarly.

One has for $t>0$
\begin{multline*}
\frac{-e_-(G,F^N(G'))}N\cdot t\le
\frac 1N\ln \sum_n\dim \Hom^n(G,F^N(G'))e^{-nt}\le \\
\le \frac 1N\ln \sum_n\dim \Hom^n(G,F^N(G'))e^{-e_-(G,F^N(G'))\cdot t}=\\
=\frac 1N(-e_-(G,F^N(G')\cdot t)+\frac1N\ln \sum_n\dim\Hom^n(G,F^N(G')))\le \\
\le \frac{-e_-(G,F^N(G'))}N \cdot t+\frac1N \ln c^N=
\frac{-e_-(G,F^N(G'))}N\cdot t+\ln c.
\end{multline*}
The last inequality here is by Lemma~\ref{lemma_bound}.
It follows that for any $t>0$
$$\limsup_{N\to+\infty} \frac{-e_-(G,F^N(G'))}N \le \frac 1t\limsup_{N\to+\infty}\frac 1N\ln \sum_n\dim \Hom^n(G,F^N(G'))e^{-nt}=h_t(F)/t$$
and
$$\liminf_{N\to+\infty} \frac{-e_-(G,F^N(G'))}N \ge -\frac{\ln c}t + \frac 1t\liminf_{N\to+\infty}\frac 1N\ln \sum_n\dim \Hom^n(G,F^N(G'))e^{-nt}=-\frac{\ln c}t + h_t(F)/t.$$
Since $t>0$ is arbitrary it follows that the limits $\lim_{N\to+\infty} \frac{-e_-(G,F^N(G'))}N$ and  $\lim_{t\to +\infty}h_t(F)/t$ exist and are equal. 
They are also equal to $F \Udim \Perf(A)$ by Definitions \ref{definition_Fdim1} and \ref{definition_Fdim2}.

The assertion about finiteness follows from Proposition~\ref{prop_Fdimfinite}.
\end{proof}






As a byproduct of our methods we also obtain the following result.

\begin{predl}
Let $A$ be a smooth and compact dg algebra. Let $F\colon \Perf(A)\to \Perf(A)$ be a tensor endofunctor. Assume that
 $F\Udim \Perf(A)=F\Ldim\Perf(A)$ (in particular, $F$ is not nilpotent). Then one has
$$h_t(F)=h_0(F)+(F\Udim \Perf(A))\cdot t.$$
\end{predl}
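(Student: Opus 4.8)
The plan is to exploit the hypothesis $F\Ldim\Perf(A)=F\Udim\Perf(A)$ via the following observation: it forces the cohomological support of $\Hom^{\bul}(A,F^N(A))$ to concentrate, to first order in $N$, around the single slope $-d$, where $d:=F\Udim\Perf(A)$. Once this is in place, the Laplace-type sum defining $h_t(F)$ is trapped between two elementary quantities with a common limit, and the result drops out by a squeeze.

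Concretely, I would first collect the inputs. The hypothesis forces $F$ to be non-nilpotent (a nilpotent $F$ has $F\Udim=-\infty$ while $F\Ldim=+\infty$), so by Proposition~\ref{prop_Fdimfinite} the number $d$ is finite, and taking $G=A$ and writing $a_N^n:=\dim\Hom^n(A,F^N(A))$, $e_{\pm}(N):=e_{\pm}(A,F^N(A))$, Proposition~\ref{prop_entropy-to-dimension} gives
$$\lim_{N\to\infty}\frac{-e_-(N)}N=\lim_{N\to\infty}\frac{-e_+(N)}N=d.$$
Moreover $h_0(F)=\lim_{N\to\infty}\frac1N\ln\sum_n a_N^n$ exists by Theorem~\ref{theorem_DHKK} (case $t=0$), and $h_0(F)\ge 0$ is finite since $\sum_n a_N^n\ge 1$ for all $N$ ($F$ not nilpotent).

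Next, fix $t\in\R$. Since $a_N^n=0$ unless $e_-(N)\le n\le e_+(N)$, and $n\mapsto e^{-nt}$ is monotone, I would note that for every $N$
$$\Big(\sum_n a_N^n\Big) e^{-e_+(N)t}\ \le\ \sum_n a_N^n e^{-nt}\ \le\ \Big(\sum_n a_N^n\Big) e^{-e_-(N)t}\qquad (t\ge 0),$$
and the same chain with $e_-(N)$ and $e_+(N)$ interchanged when $t\le 0$. Applying $\tfrac1N\ln(-)$ and letting $N\to\infty$, both the left- and the right-hand ends converge to $h_0(F)+d\,t$, because $-e_{\pm}(N)/N\to d$ and $\tfrac1N\ln\sum_n a_N^n\to h_0(F)$. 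Hence the middle term converges as well, and
$$h_t(F)=\lim_{N\to\infty}\frac1N\ln\sum_n a_N^n e^{-nt}=h_0(F)+d\,t=h_0(F)+(F\Udim\Perf(A))\cdot t$$
for all $t\in\R$ (the case $t=0$ being trivial), which is the claim.

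There is essentially no obstacle in this argument: the only ingredients are the existence/finiteness statements and the two limit formulas of Proposition~\ref{prop_entropy-to-dimension} and Theorem~\ref{theorem_DHKK}, together with the monotonicity of $t\mapsto e^{-nt}$. The conceptual content is exactly that the hypothesis yields $e_+(N)-e_-(N)=o(N)$, so that on the support of $a_N^{\bul}$ the weighting $e^{-nt}$ differs from the constant $e^{-e_-(N)t}$ only by a factor $e^{o(N)}$; this is what collapses the (a priori piecewise-linear convex) function $t\mapsto h_t(F)$ to a single affine function of slope $d$.
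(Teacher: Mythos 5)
Your proof is correct and essentially identical to the paper's: both estimate the Laplace-type sum $\sum_n a_N^n e^{-nt}$ from above and below by pulling out the extremal weights $e^{-e_\pm(N)t}$ (with the sign of $t$ dictating which end gives the upper and which the lower bound), apply $\tfrac1N\ln(\cdot)$, pass to the limit to get $h_t(F)$ trapped between $h_0(F)+(F\Ldim)\,t$ and $h_0(F)+(F\Udim)\,t$, and conclude by the hypothesis $F\Ldim=F\Udim$. Your presentation is slightly more explicit about the two-sided squeeze (the paper states one bound and says "similarly" for the other), but there is no substantive difference.
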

\begin{proof} Choose a generator $G$ of $\Perf(A)$.
For $t>0$ one has
\begin{multline*}
\frac 1N\ln \sum_n\dim \Hom^n(G,F^N(G))e^{-nt}\le
\frac 1N\ln \sum_n\dim \Hom^n(G,F^N(G))e^{-e_-(G,F^N(G))\cdot t}= \\
=\frac{-e_-(G,F^N(G))}N\cdot t+\frac 1N\ln \sum_n\dim \Hom^n(G,F^N(G)).
\end{multline*}
Passing to limits as $N\to\infty$, we get
$$h_t(F)\le (F\Udim \Perf(A))\cdot t + h_0(F).$$
Similarly we get that 
$$h_t(F)\ge (F\Ldim\Perf(A))\cdot t + h_0(F).$$
As $F\Ldim \Perf(A)=F\Udim \Perf(A)$, the statement follows. For $t<0$ the proof is analogous.
\end{proof}

The following theorem is a generalization of Theorem \ref{theorem_serre-in-families} for the upper and lower dimension of a category with respect to an endofunctor. The proof is exactly the same and we omit it.

\begin{theorem} 
\label{theorem_generaliz-serre-in-families}
Let $R$ be a commutative Noetherian ring and let $A$ be an $R$-h-projective smooth and compact dg $R$-algebra. Let $K\in \Perf(A^{\op}\otimes _R A)$ and consider the corresponding functor
$$\Phi _K\colon \Perf(A)\to \Perf(A).$$
For a point $x\in \Spec R$ denote by $A _x$ the smooth and compact dg $k(x)$-algebra $A \otimes _Rk(x)$ and consider the object $K_x:=K\stackrel{\bbL}{\otimes }_R k(x)\in D(A_x^{\op}\otimes A_x)$ with the induced functor
$$\Phi _{K_x}\colon \Perf(A_x)\to \Perf(A_x).$$
\begin{enumerate}
\item[(a)]
Then for any $c\in \R$ the subsets
\begin{align}
\label{eq_closedsubsets1} 
\{x\in \Spec R  & \mid \Phi_{K_x}  \Udim  \Perf(A _x)\ge c\}\quad \text{and} \\
\label{eq_closedsubsets2} 
\{x\in \Spec R  & \mid \Phi_{K_x}  \Ldim \Perf(A _x)\le c\}
\end{align}
are closed under specialization in $\Spec R$.

\item[(b)] Assume in addition that $A$ is concentrated in degree zero, i.e. $A$ is an $R$-algebra, and assume that $A$ is a projective finitely generated $R$-module.  Then the subset \eqref{eq_closedsubsets2} is closed in Zariski topology. That is  $\Phi_{K_x}  \Ldim \Perf(A_x)$ is a lower semi-continuous function on $\Spec R$. If moreover  $\Phi _K$ is an autoequivalence of $\Perf(A)$, then also the subset \eqref{eq_closedsubsets1} is closed, i.e.  $\Phi_{K_x}  \Udim\Perf(A_x)$ is an upper semi-continuous function on $\Spec R$.
\end{enumerate}
\end{theorem}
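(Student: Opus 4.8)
The plan is to transcribe the proof of Theorem~\ref{theorem_serre-in-families} almost verbatim, replacing the Serre bimodule $A^*$ by the kernel $K$ and the Serre functor $S$ by $\Phi_K$. For part (a), the three facts used in the Serre case all have immediate analogues. Since $A$ is smooth and compact and $K\in\Perf(A^{\op}\otimes_RA)$, Lemma~\ref{perf-r-compl} shows that $K^{\otimes^{\bbL}_AN}$ is again perfect over $A^{\op}\otimes_RA$ and in particular a perfect $R$-complex; Lemma~\ref{perf-r-compl}(4) applied inductively gives $\bigl(K^{\otimes^{\bbL}_AN}\bigr)\stackrel{\bbL}{\otimes}_Rk(x)\simeq (K_x)^{\otimes^{\bbL}_{A_x}N}$; and, as in the proof of Proposition~\ref{prop_Fdimfinite}, $e_-(A,\Phi^N_{K_x}(A))=\inf (K_x)^{\otimes^{\bbL}_{A_x}N}$ and $e_+(A,\Phi^N_{K_x}(A))=\sup (K_x)^{\otimes^{\bbL}_{A_x}N}$. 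Applying Proposition~\ref{lemma_restriction_inf} to the perfect $R$-complex $K^{\otimes^{\bbL}_AN}$, one gets that for $x\in\overline{\{y\}}$ nonvanishing of $H^i\bigl((K_y)^{\otimes^{\bbL}_{A_y}N}\bigr)$ forces nonvanishing of $H^i\bigl((K_x)^{\otimes^{\bbL}_{A_x}N}\bigr)$, hence $\inf (K_x)^{\otimes^{\bbL}_{A_x}N}\le \inf (K_y)^{\otimes^{\bbL}_{A_y}N}$ and $\sup (K_x)^{\otimes^{\bbL}_{A_x}N}\ge \sup (K_y)^{\otimes^{\bbL}_{A_y}N}$. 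Dividing by $N$ and passing to the limit (the limits exist by Proposition~\ref{prop_entropy-to-dimension}) gives $\Phi_{K_x}\Udim\Perf(A_x)\ge \Phi_{K_y}\Udim\Perf(A_y)$ and $\Phi_{K_x}\Ldim\Perf(A_x)\le \Phi_{K_y}\Ldim\Perf(A_y)$, i.e.\ the subsets \eqref{eq_closedsubsets1} and \eqref{eq_closedsubsets2} are closed under specialization. If $\Phi_K$ is nilpotent the statement is vacuous, since then every $\Phi_{K_x}$ is nilpotent by base change and both subsets are empty.

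For the lower-dimension part of (b), under the extra hypotheses ($A$ an $R$-algebra which is projective and finitely generated over $R$) I would choose an h-projective resolution $P^\bullet\to K$ over $A^{\op}\otimes_RA$; it is also h-projective over $A$, so $K^{\otimes^{\bbL}_AN}=(P^\bullet)^{\otimes_AN}$ is a perfect $R$-complex and $(K_x)^{\otimes^{\bbL}_{A_x}N}=(P^\bullet)^{\otimes_AN}\otimes_Rk(x)$. Since $A$, hence every fibre $A_x$, is concentrated in degree zero, Lemma~\ref{lemma_ng} applies and yields $\sup (K_x)^{\otimes^{\bbL}_{A_x}(n_1+n_2)}\le \sup (K_x)^{\otimes^{\bbL}_{A_x}n_1}+\sup (K_x)^{\otimes^{\bbL}_{A_x}n_2}$. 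The computation from the proof of Theorem~\ref{theorem_serre-in-families}(b) then goes through word for word:
$$\{x\in\Spec R\mid \Phi_{K_x}\Ldim\Perf(A_x)>c\}=\bigcup_{m\in\Z,\,n\in\N\colon \frac{m}{n}>c}\{x\in\Spec R\mid \sup (K_x)^{\otimes^{\bbL}_{A_x}n}\le -m\},$$
and each subset on the right is open in $\Spec R$ by Proposition~\ref{lemma_restriction_inf} applied to the perfect $R$-complex $(P^\bullet)^{\otimes_An}$. Hence \eqref{eq_closedsubsets2} is Zariski closed, i.e.\ $\Phi_{K_x}\Ldim\Perf(A_x)$ is lower semi-continuous.

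For the upper-dimension part of (b), assume in addition that $\Phi_K$ is an autoequivalence. Then by Lemma~\ref{existence-of-kernel}(3) its inverse is the tensor functor $\Phi_{K^t}$ with $K^t=\bbR\Hom_A(K,A)\in\Perf(A^{\op}\otimes_RA)$, and by Lemma~\ref{existence-of-kernel}(3c) $(K^t)_x\simeq (K_x)^t$, so $(\Phi_{K_x})^{-1}=\Phi_{(K_x)^t}$. Applying the previous paragraph to the kernel $K^t$ shows that $x\mapsto\Phi_{(K_x)^t}\Ldim\Perf(A_x)$ is lower semi-continuous; by Corollary~\ref{dim-up-and-lo-inverse} one has $\Phi_{(K_x)^t}\Ldim\Perf(A_x)=-\bigl(\Phi_{K_x}\Udim\Perf(A_x)\bigr)$, so $x\mapsto\Phi_{K_x}\Udim\Perf(A_x)$ is upper semi-continuous, i.e.\ \eqref{eq_closedsubsets1} is Zariski closed.

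I do not expect a genuine obstacle: every tool is already available from Theorem~\ref{theorem_serre-in-families}. The only points requiring care are bookkeeping ones — verifying that $K^{\otimes^{\bbL}_AN}$ is a perfect $R$-complex for a general kernel $K$ (this is exactly where smoothness and compactness of $A$ enter, via Lemma~\ref{perf-r-compl}), that base change commutes with iterated tensor powers (Lemma~\ref{perf-r-compl}(4) used repeatedly), and that the degree-zero hypothesis on $A$ in part (b) is inherited by every fibre $A_x$, which is what licenses invoking Lemma~\ref{lemma_ng}. The reduction from $\Phi_K$ to $\Phi_{K^t}$ requires $\Phi_K$ to be an equivalence, which is why the upper-semicontinuity half of (b) must be stated conditionally.
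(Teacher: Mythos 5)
Your proposal is correct and is exactly the approach the paper intends: the paper explicitly states that the proof of Theorem~\ref{theorem_generaliz-serre-in-families} is ``exactly the same'' as that of Theorem~\ref{theorem_serre-in-families} and omits it, and your transcription (replacing $A^*$ by $K$, $S$ by $\Phi_K$, Lemma~\ref{lemma_ng} applied to the degree-zero $A_x$, and the $K\mapsto K^t$ reduction via Proposition~\ref{dim-up-and-lo-inverse} for the upper bound under the autoequivalence hypothesis) reproduces that argument faithfully, including the reason the upper-semicontinuity half of (b) needs $\Phi_K$ to be an autoequivalence.
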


\section{Comparison of the three dimensions}
\label{section_compare}

It would be interesting to compare the different notions of dimension.
The Rouquier and diagonal dimensions are of similar nature, which allows one to compare them easily. In fact we know that $\Rdim  \Perf(A)\le \Ddim \Perf(A)$ (Proposition \ref{ddim-geq-rdim}). But we do not have a systematic procedure of comparing the Serre dimension with the other two. In \cite{El} the Serre and the Rouquier dimension are computed for many interesting examples. Here we present some of the results.

\begin{example}[See {\cite[Propositions 4.3, 4.4]{El}}]
Let $\Gamma$ be a nontrivial connected quiver without oriented cycles and $\k\Gamma$ be its path algebra over field $\k$. Then we have the following
\begin{itemize}
\item $\Rdim \Perf(\k\Gamma)=0$ if $\Gamma$ is a Dynkin quiver of types $A,D,E$;  $\Rdim \Perf(\k\Gamma)=1$ otherwise.
\item  $\Ddim \Perf(\k\Gamma)=1$.
\item  $0<\LSdim \Perf(\k\Gamma)=\USdim\Perf(\k\Gamma)<1$ if $\Gamma$ is a Dynkin quiver of types $A,D,E$; 
$\LSdim \Perf(\k\Gamma)=\USdim \Perf(\k\Gamma)=1$ otherwise. 
\end{itemize}
\end{example}

\begin{example}[See {\cite[Proposition 7.2]{El}}]
\label{example_maintable}
Let $B_2$ be the path algebra of the quiver $\bul\to\bul$ of type $A_2$ over some field $\k$, and 
$B_3$ be the path algebra of the quiver $\bul\to\bul\to \bul$ of type $A_3$. Denote 
$B_2^n=B_2\otimes B_2\otimes \ldots\otimes B_2=B_2^{\otimes n}$ and $B_3^n=B_3^{\otimes n}$.
Then one has the following 
\begin{gather*}
\begin{array}{|c||c|c|c|c|c|c|c|c|}
\hline
\text{Algebra} & B_2 & B_2^2 & B_2^3 & B_2^4 & B_2^5 & B_2^6& B_2^{2k} & B_2^{3k} \\ \hline
\LSdim,\USdim & 1/3 & 2/3 & 1& 4/3 & 5/3 & 2 & 2k/3 & k \\ \hline
\Rdim & 0 & 0 & 1& 1 & & 2 & \le k-1 & \ge k \\ 
\hline
\Ddim & 1 & 1 &  &  & &  & \le k & \ge k \\ 
\hline
\end{array}\\
\begin{array}{|c||c|c|c|c|c|c|c|}
\hline
\text{Algebra} & B_3 & B_3^2 & B_3^3 &  B_3^4 & B_3^{3k} & B_3^{3k+1} & B_3^{2k}\\ \hline
\LSdim,\USdim & 1/2 & 1& 3/2& 2 & \frac{3k}2 &\frac{3k+1}2 & k\\ \hline
\Rdim & 0& 1& 1& 2& \le 2k-1 & \le 2k & \ge k\\ 
\hline
\Ddim & 1& & & & \le 2k & \le 2k+1 & \ge k\\ 
\hline
\end{array} 
\end{gather*}
\end{example}

We remark that in all the above examples one has 
$$\Rdim\le \LSdim=\USdim\le \Ddim.$$

We make the following.
\begin{conjecture} \label{main-conj} 
Let $A$ be a smooth and compact dg algebra over a field. Suppose 
$\LSdim \Perf(A)=\USdim \Perf(A)$.
Then
\begin{equation}
\label{eq_4dimbis}
\Rdim  \Perf(A)\le \LSdim  \Perf(A)=\USdim  \Perf(A)\le \Ddim  \Perf(A).
\end{equation}
\end{conjecture}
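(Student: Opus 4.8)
The plan is to prove the two inequalities in \eqref{eq_4dimbis} separately; throughout write $d:=\LSdim\Perf(A)=\USdim\Perf(A)$ and recall (Proposition~\ref{form-for-serre-dim}) that under the hypothesis $\LSdim=\USdim$ the complexes $(A^*)^{\otimes^{\bbL}_Am}$ are cohomologically concentrated in a window of width $o(m)$ around the degree $-dm$: asymptotically, the Serre functor $S$ is a fractional shift by $d$.

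\textbf{The inequality $\USdim\Perf(A)\le\Ddim\Perf(A)$.} Put $n=\Ddim\Perf(A)$ and fix $F\in\Perf(A^{\op})$, $G\in\Perf(A)$ with $A\in\langle F\boxtimes G\rangle_n\subset\Perf(A^{\op}\otimes A)$, realized by a concrete $n$-step filtration whose layers are finite direct sums of shifts $(F\boxtimes G)[s]$ with $s$ in a fixed finite set $\Sigma$. Applying the tensor endofunctor $(-)\stackrel{\bbL}{\otimes}_A(A^*)^{\otimes^{\bbL}_Am}$ of $\Perf(A^{\op}\otimes A)$, which carries $F\boxtimes G$ to $F\boxtimes S^m(G)$ and fixes the diagonal bimodule, yields $(A^*)^{\otimes^{\bbL}_Am}\in\langle F\boxtimes S^m(G)\rangle_n$, realized by a filtration with layers direct sums of shifts $(F\boxtimes S^m(G))[s]$, $s\in\Sigma$, with the \emph{same} shift set. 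Since $A^{\op}\otimes A\simeq A\boxtimes A$ as a bimodule, taking $\Hom^{\bullet}_{\Perf(A^{\op}\otimes A)}(A\boxtimes A,-)=H^{\bullet}(-)$ and using the K\"unneth isomorphism $\Hom^{\bullet}(A\boxtimes A,F\boxtimes S^m(G))\cong H^{\bullet}(F)\otimes_{\k}H^{\bullet}(S^m(G))$ together with the boundedness of the $n$-step generation, one reads off that $\inf(A^*)^{\otimes^{\bbL}_Am}\ge\inf(S^m(G))-C$ for a constant $C$ independent of $m$. This reduces the inequality to the following combinatorial assertion about generation: an object cohomologically supported in a window of width $o(m)$, which is a direct summand of an $n$-fold iterated cone of shifts of an object of bounded cohomological width, cannot have asymptotic lower slope exceeding $n$. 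I expect this assertion --- equivalently, breaking the apparent circular dependence on $S^m(G)=G\stackrel{\bbL}{\otimes}_A(A^*)^{\otimes^{\bbL}_Am}$, which drifts in degree at the very slope $d$ one is trying to bound --- to be the main obstacle. A promising reformulation uses the entropy function $h_t(S)$ of \cite{DHKK}, for which $d=\lim_{t\to+\infty}h_t(S)/t=\lim_{t\to-\infty}h_t(S)/t$ by Proposition~\ref{prop_entropy-to-dimension}, and which the diagonal decomposition ought to bound above in slope by $n$.

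\textbf{The inequality $\Rdim\Perf(A)\le\USdim\Perf(A)$.} No soft argument is to be expected: already when $\Perf(A)\simeq D^b(\coh X)$ for $X$ smooth projective, Lemma~\ref{lemma_sdimgeom} gives $d=\dim X$, so this inequality contains Orlov's conjecture $\Rdim D^b(\coh X)=\dim X$, open even for Calabi--Yau $X$ (where $S=[\dim X]$ exactly). The approach I would take exploits the asymptotic fractional shift: fix a generator $G$; for every $\e>0$ and $m\gg 0$ one has $S^{-m}(G)[i]\in\langle G\rangle_{\lceil(d+\e)m\rceil}$ for $|i|\le\e m$, so if one can show conversely that $G\in\langle S^{-m}(G)[i]:|i|\le\e m\rangle_{\lceil(d+\e)m\rceil}$ for $m\gg 0$, then applying $S^m$ and a diagonal argument as $m\to\infty$ would give $\Rdim\Perf(A)\le d$. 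The obstacle is precisely the construction of such an efficient generation: at present there is no general method producing strong generators of controlled size from asymptotic information about the Serre functor, which is why this half of the conjecture is the principal open point.

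\textbf{Reductions and checks.} It is natural to settle first the case where $A$ is a finite dimensional algebra of finite global dimension $g$, where one has the abelian model $D^b(\modd A)\simeq\Perf(A)$ together with $\USdim\Perf(A)\le g$ (Proposition~\ref{serre-leq-gldim}) and the analogous bounds $\Ddim\Perf(A)\le g$, $\Rdim\Perf(A)\le g$ from Section~\ref{section_diagonal} and Section~\ref{section_rouquier}. One should also test the conjecture against the computations of \cite{El} (it holds in all the examples of Section~\ref{section_compare}), and any proof must be compatible with Morita invariance of $\Ddim$ (Proposition~\ref{morita-inv-of-diag}) and with the additivity of the Serre dimensions (Proposition~\ref{addit-for-serre-dim}).
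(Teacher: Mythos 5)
The statement you were asked to prove is explicitly labeled a \emph{conjecture} in the paper, and the paper offers no proof of it; the authors even remark, immediately after stating it in the introduction, that the first inequality $\Rdim\Perf(A)\le\USdim\Perf(A)$ would in particular confirm Orlov's conjecture that $\Rdim D^b(\coh X)=\dim X$ for every smooth projective $X$. So there is nothing in the paper to compare your argument against, and your submission correctly recognizes this: what you have written is not a proof but an analysis of where a proof would have to go and why it does not yet close.

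That analysis is sound. You correctly identify, via Lemma~\ref{lemma_sdimgeom}, that the left-hand inequality subsumes Orlov's conjecture and hence cannot be expected to have a soft argument. For the right-hand inequality, your K\"unneth computation does give $\inf\bigl((A^*)^{\otimes^{\bbL}_A m}\bigr)\ge \inf\bigl(S^m(G)\bigr)-C$, but as you yourself observe this is circular: since $G$ is a generator, $\inf S^m(G)$ and $\inf(A^*)^{\otimes^{\bbL}_A m}$ already differ by an $m$-independent constant, so the bound is a tautology and carries no information about $\Ddim$. Your suggested reformulation via the entropy $h_t(S)$ is a reasonable direction but, as you note, remains a heuristic. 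In short, the proposal is correct in its assessment that both halves of the inequality are open; this matches the status of the statement in the paper, which supports the conjecture only with the computations surveyed in Section~\ref{section_compare} and in \cite{El}.
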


In general, if $\LSdim\Perf(A)<\USdim\Perf(A)$, inequalities \eqref{eq_4dimbis} can fail. For instance, in Example~\ref{example_xyz} one has $1=\Rdim>\LSdim=0$ and 
$3=\USdim>2\ge \Ddim$.
In Example~\ref{example_orlov}
we have $1=\Rdim >\LSdim=1-w$ and $1+w=\USdim>1= \Ddim$ as soon as $w>0$.

Nevertheless, we do not know of an example where $\Rdim>\USdim$ or $\Ddim<\LSdim$.

\end{document}